\DeclareMathOperator{\oplambda}{\Lambda}
\DeclareMathOperator{\opv}{\nu}
\DeclareMathOperator{\opd}{\delta}
\DeclareMathOperator{\opS}{{\it S}}
\DeclareMathOperator{\opsmall}{{\it s}}
\title{Hyperbolic Fourier coefficients of Poincar\'e series}
\author{Cormac O'Sullivan,  Karen Taylor\footnote{
\newline
2010 Mathematics Subject Classification 11F11, 11F30
\newline
Key words and phrases: Poincar\'e series, Fourier expansions, hyperbolic fixed points.
\newline
Support for this project was provided by a PSC-CUNY Award, jointly funded by The Professional Staff Congress and The City University of New York.}
}
\begin{document}

\maketitle

\vspace{-10mm}
\begin{center}
{\em In memory of Marvin Knopp}
\end{center}

\def\s#1#2{\langle \,#1 , #2 \,\rangle}

\def\H{{\mathbb H}}
\def\F{{\mathfrak F}}
\def\C{{\mathbb C}}
\def\R{{\mathbb R}}
\def\Z{{\mathbb Z}}
\def\Q{{\mathbb Q}}
\def\N{{\mathbb N}}
\def\O{{\mathbb O}}
\def\st{{\mathbb S}}
\def\D{{\mathbb D}}
\def\B{{\mathbb B}}
\def\fun{{\mathbb F}}
\def\G{{\Gamma}}
\def\GH{{\G \backslash \H}}
\def\g{{\gamma}}
\def\L{{\Lambda}}
\def\ee{{\varepsilon}}
\def\er{\eqref}
\def\K{{\mathcal K}}
\def\Re{\text{\rm Re}}
\def\Im{\text{\rm Im}}
\def\SL{\text{\rm SL}}
\def\GL{\text{\rm GL}}
\def\PSL{\text{\rm PSL}}
\def\sgn{\text{\rm sgn}}
\def\tr{\text{\rm tr}}
\def\lqs{\leqslant}
\def\gqs{\geqslant}
\def\res{\operatornamewithlimits{Res}}
\def\vol{\text{\rm Vol}}

\def\e{\mathbf{e}}

\def\ca{{\mathfrak a}}
\def\cb{{\mathfrak b}}
\def\cc{{\mathfrak c}}
\def\cd{{\mathfrak d}}
\def\ci{{\infty}}

\def\sa{{\sigma_\mathfrak a}}
\def\sb{{\sigma_\mathfrak b}}
\def\sc{{\sigma_\mathfrak c}}
\def\sd{{\sigma_\mathfrak d}}
\def\si{{\sigma_\infty}}

\def\se{{\sigma_\eta}}
\def\sep{{\sigma_{\eta'}}}
\def\sz{{\sigma_{z_0}}}

\def\red{\color{red} }

\newcolumntype{d}[1]{D{.}{.}{#1} }


\newtheorem{theorem}{Theorem}[section]
\newtheorem{lemma}[theorem]{Lemma}
\newtheorem{prop}[theorem]{Proposition}
\newtheorem{cor}[theorem]{Corollary}
\newtheorem{conj}[theorem]{Conjecture}
\newtheorem{remark}[theorem]{Remark}

\newcounter{coundef}
\newtheorem{adef}[coundef]{Definition}

\newcounter{thm1count}
\newtheorem{thm1}[thm1count]{Theorem}

\renewcommand{\labelenumi}{(\roman{enumi})}

\numberwithin{equation}{section}

\bibliographystyle{alpha}

\vspace{0mm}

\begin{abstract}

Poincar\'e in 1912 and Petersson in 1932 gave the now classical  expression for the parabolic Fourier coefficients of holomorphic Poincar\'{e} series in terms of Bessel functions and Kloosterman sums. Later,
in 1941, Petersson introduced hyperbolic and elliptic Fourier expansions of modular forms and the associated hyperbolic and elliptic Poincar\'{e} series.
In this paper we express the hyperbolic Fourier coefficients of Poincar\'e series, of both parabolic and hyperbolic type, in terms of hypergeometric series and Good's generalized Kloosterman sums. In an explicit example for the modular group,  we see that the hyperbolic Kloosterman sum corresponds to a sum over lattice points on a hyperbola contained in an ellipse. This allows for numerical computation of the hyperbolic Fourier coefficients.
\end{abstract}

\section{Introduction}
The group $\SL_2(\R)$ acts by linear fractional transformations on $\H \cup \R \cup\{ \infty\}$ with $\H$ denoting the upper half plane.
Let $\G \subset \SL_2(\R)$ be a  Fuchsian
group of the first kind, i.e. a discrete subgroup of $\SL_2(\R)$ so that $\G\backslash\H$ has finite hyperbolic volume. Write $Z:=\{\pm I\} \cap \G$ for $I$ the identity matrix.  Elements in $\G-Z$ may be classified as parabolic, elliptic or hyperbolic according to their types of fixed points. A function $f$ on $\H$ transforms with weight $k$ with respect to $\G$ if $
(f|_k \g)(z) = f(z)$ for all $\g \in \G$, where
$
(f|_k \g)(z)$ indicates $j(\g,z)^{-k}f(\g z)$ for $j(\g,z):=cz+d$ when $\g = \left(\smallmatrix a & b \\ c & d
\endsmallmatrix\right)$. Unless stated otherwise, we assume throughout that $k$  is even and at least $4$.

The usual way to describe such an $f$ is in terms of its Fourier expansion. For example,  the modular discriminant function is of weight $12$ for $\G=\SL_2(\Z)$, see Section \ref{num}, and its expansion begins
\begin{equation}\label{dl0}
\Delta(z)    =  q -24q^2+ 252q^3 -1472q^4+ 4830q^5  + \cdots \quad (q=e^{2\pi i z}).
\end{equation}
To describe generalizations of this Fourier expansion, we first review some basic notation and results for modular forms as described in \cite{S71}, \cite{R77} and \cite{IwTo}, for example.

The series \eqref{dl0} is the Fourier expansion corresponding to the cusp (parabolic fixed point) at $\infty$. In general, for a cusp $\ca$  for $\G$, let $\G_\ca$ be the subgroup fixing $\ca$. Then $\overline{\G}_\ca$ is isomorphic to $\Z$, where the bar means the image  under the map $\SL_2(\R) \to \SL_2(\R)/\pm I$. This isomorphism can be seen explicitly as there exists a scaling matrix $\sa \in \SL_2(\R)$ such that $\sa \infty = \ca$ and
$$
 \sa^{-1}\overline{\G}_\ca \sa  = \left\{\left. \pm \begin{pmatrix}  1 & m \\ 0 & 1  \end{pmatrix} \ \right| \  m\in \Z\right\}.
$$
The matrix $\sa$ is unique up to multiplication on the right by $\pm\left(\smallmatrix 1
& t \\ 0 & 1 \endsmallmatrix\right)$ for any  $t\in \R$.

\begin{adef} Let $f$ be holomorphic on $\H$ and of weight $k$ with respect to  $\G$. Its  {\em Fourier expansion  at $\ca$} is
\begin{equation}\label{expnpar}
\left(f|_k  \sa\right) (z)=\sum_{m \in \Z} c_\ca(m;f) e^{2\pi i m z}.
\end{equation}
\end{adef}

\begin{adef} Let $S_k(\G)$ be the set of holomorphic functions on $\H$, of weight $k>0$ with respect to $\G$, such that $y^{k/2} f(x+iy)$ is bounded for all $x+iy \in \H$.
\end{adef}

If $\G$ has cusps then $S_k(\G)$ consists of cusp forms $f$ whose coefficients $c_\ca(m;f)$ are zero at every cusp $\ca$ when $m\lqs 0$, see for example \cite[Sect. 5.1]{IwTo}. Relaxing this condition to allow $c_\ca(0;f)$ to be non-zero gives the set $M_k(\G)$ of modular forms, and allowing a finite number of $c_\ca(-m;f)$ to be non-zero for $-m<0$ gives the set $M^!_k(\G)$ of weakly holomorphic forms.  

If $\G$ has no cusps then $\G\backslash\H$ is compact and $S_k(\G)$ is the set of all holomorphic functions on $\H$ with weight $k$, since the condition that $y^{k/2} f(x+iy)$ is bounded  is automatically satisfied. Whether $\G$ has cusps or not, $S_k(\G)$ is a finite dimensional vector space over $\C$, equipped with the Petersson inner product given by $\s{f}{g}:=\int_{\G \backslash\H} y^k f(z) \overline{g(z)} \, d\mu z$
where $d\mu z := y^{-2} dx dy$.

Another result of Petersson \cite{P41} is that alongside the parabolic expansions \eqref{expnpar} there are also  elliptic Fourier expansions associated to each point in $\H$ and  hyperbolic Fourier expansions associated to each pair of hyperbolic fixed points in $\R \cup\{ \infty\}$. For example, the elliptic expansion of $\Delta$ at $i\in \H$ is given in \cite{OSR} as
\begin{equation}\label{dl1}
\bigl( \Delta|_{12} \sigma_i \bigr) (z)
 =   -64\Delta(i)\left(  1-12\frac{(r_iz)^2}{2!}+ 216\frac{(r_iz)^4}{4!}+10368\frac{(r_iz)^6}{6!}     + \dots \right)
\end{equation}
where
$r_i=-\Gamma(1/4)^{4}/(8\sqrt{3}\pi^{2})$ and $\sigma_i =\frac{1-i}{2}\left(\smallmatrix i & i \\ -1 & 1
\endsmallmatrix\right)$.

In this paper we develop the theory of hyperbolic expansions of modular forms, with the aim of expressing the hyperbolic coefficients as explicitly as possible. For example, we show that the expansion of $\Delta$ at the hyperbolic pair $\eta=(-\sqrt{2},\sqrt{2})$ is given numerically by
\begin{multline}\label{dl2}
   \frac{\bigl(\Delta|_{12} \sigma_{\eta} \bigr) (z)}{1721.23 z^{-6}}
 \approx   \cdots -3.47\times 10^{-7}q^{-4}
 +1.20\times 10^{-7} q^{-3}
 +0.00176 q^{-2}
 -0.0937 q^{-1} \\
 + 1 + 25.31 q^{1}
  + 128.12 q^2
   -2.37q^3
    -1849.07q^4 + \cdots
  \qquad(q=z^{2\pi i /\ell_\eta})
\end{multline}
for the scaling matrix $\se$  given in \eqref{d14s} and $\ell_\eta = 2\log(3+2\sqrt{2})$. (We divided by $1721.23$ to make the zeroth coefficient $\approx  1$ and the other coefficients more visible.)

Some examples of hyperbolic expansions have already appeared in the literature. Siegel in \cite[Chap. II, Sect. 3]{Si65} worked out the hyperbolic expansions of parabolic non-holomorphic Eisenstein series in terms of Hecke grossencharacter $L$-functions. In \cite[Prop. 4.2.2]{vP10}, von Pippich  computed the hyperbolic Fourier coefficients of non-holomorphic Eisenstein series of elliptic type. Legendre functions (examples of ${_2}F_1$ hypergeometric functions) appear in these coefficients. Good, in the book \cite{G83}, found the hyperbolic expansions of certain non-holomorphic Poincar\'e series. We will use much of the theory he developed, and expand some of his results that appear there in condensed form. Hiramatsu  in \cite{Hir70} worked in the holomorphic setting. He gave the hyperbolic expansion of an $f$ in $S_k(\G(p,q))$ derived from a Hilbert modular form associated to a real quadratic field. The group $\G(p,q)$ is coming from a quaternion algebra and has no cusps.
In \cite{Hir} he also  found basic bounds on the size of  hyperbolic coefficients for elements of $S_k(\G)$, as we see Subsection \ref{numb}.

\subsection{Hyperbolic definitions}

For most of the definitions and results in this subsection, see \cite{Ka92}, \cite{P41}, \cite{Hir70} and \cite{IO09}.
Let $\eta=(\eta_1,\eta_2)$ be an ordered hyperbolic fixed pair for $\G$, i.e.\ $\eta_1$, $\eta_2$ are distinct elements of $\R \cup\{ \infty\}$ so that there exists a  hyperbolic $\g \in \G$ with $\g \eta_1 = \eta_1$ and $\g \eta_2 = \eta_2$. Let $\G_\eta$ be the subgroup of all such $\g$ fixing $\eta_1$ and $\eta_2$.  There exists a  scaling matrix $\se \in \SL_2(\R)$ such that $\se 0 = \eta_1$, $\se \infty = \eta_2$ and $\se$ is unique up to multiplication on the right by $\left(\smallmatrix t
& 0 \\ 0 & 1/t \endsmallmatrix\right)$ for any $t\in \R_{\neq 0}$.
That $\overline{\G}_\eta$ is isomorphic to $\Z$ may be seen with
\begin{equation}\label{sgsh}
    \se^{-1}\overline{\G}_\eta \se = \left\{\left. \pm \begin{pmatrix}  e^{m\ell_\eta/2} & 0 \\ 0 & e^{-m\ell_\eta/2}  \end{pmatrix}\ \right| \ m\in \Z\right\}.
\end{equation}
The number $\ell_\eta$ is the hyperbolic length of the geodesic from $z$ to $\g_\eta z$ for any $z \in \H$ where $\g_\eta$ is a generator of $\overline{\G}_\eta$. We also set
\begin{equation*}
    \varepsilon_\eta:=e^{\ell_\eta/2}>1.
\end{equation*}

If $f$ has weight $k$ then $e^{k \ell_\eta w/2}\left( f|_k \se \right) (e^{\ell_\eta w})$ has period 1 in $w$ and a Fourier expansion. Rewrite this expansion with $z=e^{\ell_\eta w}$ to get the following. (Here and throughout, the expression $z^s$ for $z,$ $s\in \C$ with $z \neq 0$ means $e^{s\log z}$ using the principal branch of $\log$ with argument convention $-\pi<\arg z \lqs \pi$.)
\begin{adef} Let $f$ be holomorphic on $\H$ and of weight $k$. Its   {\em hyperbolic Fourier expansion  at $\eta$} is
\begin{equation}\label{hypexp}
\left( f|_k \se \right) (z)= \sum_{m \in \Z} c_{\eta}(m;f) z^{-k/2+2\pi i m/\ell_\eta},
\end{equation}
valid for all $z \in \H$.
\end{adef}
The coefficients $c_{\eta}(m;f)$ depend on $\se$ in a simple way:
\begin{equation} \label{hypch}
    \se \ \to \ \se \left(\smallmatrix t
& 0 \\ 0 & 1/t \endsmallmatrix\right) \qquad \implies \qquad c_{\eta}(m;f) \ \to \  c_{\eta}(m;f) \cdot (t^2)^{2\pi i m/\ell_\eta}.
\end{equation}
Also note that the expansions at $\eta$ and $\g \eta$ for $\g \in \G$ might differ by this type of $(t^2)^{2\pi i m/\ell_\eta}$ factor  unless $\sigma_{\g \eta}$ is chosen as $\g \se$. For example, with $-\infty<\eta_1 < \eta_2 < \infty$, a simple choice for the scaling matrix is
\begin{equation}\label{hypscat}
\hat\sigma_\eta :=\frac{1}{\sqrt{\eta_2-\eta_1}}\begin{pmatrix}  \eta_2 & \eta_1 \\ 1 & 1  \end{pmatrix}.
\end{equation}

With
$$
c_{\eta}(m;f) = \int_{w_0}^{w_0+1} e^{k \ell_\eta w/2}\left( f|_k \se \right) (e^{\ell_\eta w}) \cdot e^{-2\pi i m w}\, dw
$$
 we may recover the hyperbolic coefficients
for any $w_0$ satisfying $0<\Im(w_0)<\pi/\ell_\eta$. Writing this as
$$
c_{\eta}(m;f) = \int_{0}^{1} \left( f|_k \se \right) (e^{\ell_\eta (w_0+t)}) \cdot e^{(w_0+t)(k \ell_\eta/2-2\pi i m)}\, dt
$$
and using the change of variables
$
r_0 e^{i\theta_0} = e^{\ell_\eta w_0},$ $r=r_0 e^{\ell_\eta t}
$
then gives (with $\varepsilon_\eta^2 = e^{\ell_\eta}$)
\begin{equation}\label{kcq}
c_{\eta}(m;f) = \frac{e^{i\theta_0(k/2-2\pi i m/ \ell_\eta)}}{\ell_\eta} \int_{r_0}^{\varepsilon_\eta^2 \cdot r_0} \left( f|_k \se \right) (r e^{i\theta_0}) \cdot r^{k/2-2\pi i m/ \ell_\eta}\, \frac{dr}{r}
\end{equation}
valid for arbitrary $r_0>0$ and $0<\theta_0<\pi$.

\begin{adef} The (weight $k$) {\em hyperbolic  Poincar\'e series $P_{\eta, m}$}
  is defined for $m \in \Z$ as
\begin{equation}\label{poinhyp}
    P_{\eta,m}(z):=  \sum_{\g \in \G_\eta \backslash \G} z^{-k/2+2\pi i m /\ell_\eta} \left|_k \se^{-1}\g  \right.
    = \sum_{\g \in \G_\eta \backslash \G}
    \frac{(\se^{-1}\g z)^{-k/2+2\pi i m /\ell_\eta}}
    {j(\se^{-1}\g , z)^{k}}.
\end{equation}
\end{adef}
The convergence is absolute for $k>2$ and uniform for $z$ in compact sets in $\H$.
We have  $P_{\eta, m} \in S_k(\G)$ for $m \in \Z$. For $f\in S_k(\G)$ and $m \in \Z$ the Petersson inner product of $f$ with $P_{\eta, m}$ yields
\begin{equation}\label{epe}
\s{f}{P_{\eta,m}} =  c_{\eta}(m;f) \left[ \frac{\pi  \G(k-1) \ell_\eta   e^{-2\pi^2 m/\ell_\eta}}{2^{k-2} \left|\G\left(k/2+2\pi i m/\ell_\eta \right)\right|^2  } \right].
\end{equation}
It follows from \eqref{epe}
that, for fixed $\eta$ and $m \in \Z$, the series $P_{\eta, m}$   span the space $S_k(\G)$.

These hyperbolic Poincar\'e series, at least in the case $m=0$, have appeared for example in the works of Kohnen and Zagier \cite{KZ}  and Katok \cite{Ka85}, obtaining  hyperbolic rational structures on $S_k(\G)$. See  the related discussion in \cite[Sect. 3]{IO09}.  In \cite{BKK} they discover an interesting generalization of $P_{\eta, 0}$ to a locally harmonic hyperbolic Poincar\'e series of negative weight.

One advantage of the expansion \eqref{hypexp} and the series \eqref{poinhyp} is that they are always available since $\G$ always has hyperbolic elements and hyperbolic fixed points.
If $\G$ has no cusps then there are no expansions of the form \eqref{expnpar}.
The more familiar parabolic Poincar\'e series, defined next, also requires a cusp for its construction.

\begin{adef} For $m \in \Z$, the  {\em Poincar\'e series $P_{\ca, m}$
associated to the cusp $\ca$}    is defined as
\begin{equation}\label{poinpar}
    P_{\ca,m}(z):=  \sum_{\g \in \G_\ca \backslash \G} e^{2\pi i m z} \left|_k \sa^{-1}\g \right.
    = \sum_{\g \in \G_\ca \backslash \G} \frac{e^{2\pi i m (\sa^{-1}\g z)}}{j(\sa^{-1}\g, z)^k}.
\end{equation}
\end{adef}
This series converges absolutely for $k>2$ with the convergence uniform for $z$ in compact sets in $\H$. We have  $P_{\ca, m} \in S_k(\G)$ for $m \gqs 1$, $P_{\ca, 0} \in M_k(\G)$ and $P_{\ca, m} \in M^!_k(\G)$ if $m \lqs -1$. For $f\in S_k(\G)$ and $m \in \Z_{\gqs 1}$
\begin{equation}\label{pin}
    \s{f}{P_{\ca, m}} = c_\ca(m;f) \left[\frac{ \G(k-1)}{(4\pi m)^{k-1}}\right]
\end{equation}
and the series $P_{\ca, m}$ for fixed $\ca$ and $m \in \Z_{\gqs 1}$ span  $S_k(\G)$.

\subsection{Main results}
In this paper we calculate the parabolic and hyperbolic Fourier expansions of the parabolic and hyperbolic Poincar\'e series. The parabolic Fourier expansion of
$P_{\ca, m}$  for $m \in \Z$ was first found by Poincar\'e himself in \cite{P11} for $\SL_2(\Z)$, see the discussions in \cite{Pr,K10}. This was generalized  by Petersson in \cite{P30,P32} to general groups. The coefficients are expressed as series involving  Kloosterman sums, denoted $S_{\ca\cb}(m,n;C)$, multiplied by Bessel functions. To establish the first instance of the pattern we will see in the other cases, we rewrite the coefficients in terms of the ${_0}F_1$ hypergeometric function. Doing this has the added benefit of making the statement very concise, independent of  the signs of $m$ and $n$. Recall that the general hypergeometric function is given by
\begin{equation} \label{hypfn}
    {_p}F_q(a_1, \dots,a_p;b_1,\dots,b_q;x) := \sum_{n=0}^\infty \frac{(a_1)_n \cdots (a_p)_n}{(b_1)_n \cdots (b_q)_n} \frac{x^n}{n!},
\end{equation}
where $(a)_n:=a(a+1) \cdots (a+n-1)$ and $b_i \not\in \Z_{\lqs 0}$. The series \eqref{hypfn} is absolutely convergent for all $x\in \C$ if $p\lqs q$, and absolutely convergent for all  $|x|<1$ if $p= q+1$. See \cite[Chap. 2]{AAR}.

\begin{theorem}[Poincar\'e, Petersson] \label{CISpp}
For $m$, $n \in \Z$, the $n$th  coefficient in the parabolic Fourier expansion at  $\cb$ of the parabolic Poincar\'e series $P_{\ca,m}$ is given by
\begin{multline}
c_\cb(n;P_{\ca,m})  = \begin{cases}\displaystyle \frac{(2\pi i)^k n^{k-1}}{\G(k)} \sum_{C \in C_{\ca\cb} }
 {_0}F_1 \left( ;k; - \frac{4\pi^2 mn}{C^2} \right)
 \frac{S_{\ca\cb}(m,n;C)}{C^k} \text{ \ \ \ if \ $n \gqs 1$}
 \end{cases}
 \\
 +\begin{cases}
1 \text{ \ \ if \ $m=n$ \ and $\ca \equiv \cb \bmod \G$,}
\end{cases}
 \label{sum3pp}
\end{multline}
where we understand $0$ when a condition is not met.
Here, if $\ca$ and $\cb$ are $\G$-equivalent we choose $\sb=\g\sa$ for some $\g \in \G$ with $ \cb=\g \ca$.
\end{theorem}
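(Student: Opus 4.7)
The plan is to substitute the defining series \eqref{poinpar} into the Fourier integral for $c_\cb(n;P_{\ca,m})$, reorganize via a double-coset decomposition of $\G_\ca\backslash\G$, and perform the standard unfolding that replaces the Kloosterman data by its full sum and leaves a residual Bessel-type integral.

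I would begin by writing
$$ (P_{\ca,m}|_k \sb)(z) = \sum_{\tau \in T\backslash \sa^{-1}\G\sb} \frac{e^{2\pi i m \tau z}}{j(\tau,z)^k}, $$
where $T$ denotes the subgroup of integer translations, and splitting by the lower-left entry $c$ of $\tau$. The $c=0$ elements, themselves integer translations, occur precisely when $\ca \equiv \cb \bmod \G$; with the convention $\sb = \g \sa$ for the conjugating $\g \in \G$, they contribute the single term $e^{2\pi i m z}$, yielding the $\delta_{m,n}\delta_{\ca\equiv \cb}$ piece of \eqref{sum3pp}. For the $c\neq 0$ contribution I would further decompose $\G_\ca\backslash\G$ as $(\G_\ca\backslash\G/\G_\cb)\times \G_\cb$, parameterizing double-coset representatives by triples $(C,a,d)$ with $C \in C_{\ca\cb}$ and $a,d \bmod C$ satisfying $ad \equiv 1 \pmod C$.

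Extracting the $n$th Fourier coefficient via $\int_{x_0}^{x_0+1}\cdots\,e^{-2\pi i n(x+iy)}\,dx$ and unfolding with the $\sb^{-1}\G_\cb\sb$-action of integer translations extends the domain of integration from $[0,1]$ to all of $\R$. After substituting $\tau(z) = a/C - 1/(C(Cz+d))$ and changing variable to $u = Cz+d$, the $(a,d)$-sum produces exactly the Kloosterman sum $S_{\ca\cb}(m,n;C)$, and what remains is the classical integral
$$ \int_{-\infty}^\infty u^{-k}\exp\!\Big(-\frac{2\pi i m}{C u} - \frac{2\pi i n u}{C}\Big)\,du $$
for each $C$. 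Contour shifting to $+i\infty$ annihilates this integral for $n\lqs 0$; for $n\gqs 1$ a standard computation evaluates it to a constant multiple of $J_{k-1}(4\pi\sqrt{mn}/C)$. Applying the identity $J_\nu(x) = (x/2)^\nu\G(\nu+1)^{-1}\,{_0}F_1(\,;\nu+1;-x^2/4)$ absorbs the ambiguous half-integer power of $m$ into the ${_0}F_1$ argument and produces the clean $n^{k-1}/C^k$ weighting of \eqref{sum3pp}.

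The main technical hurdle is justifying the unfolding, i.e.\ the interchange of sum and integral. This is handled by the absolute convergence of \eqref{poinpar} for $k>2$ on compacta, together with a dominated-convergence estimate that bounds the $C$-sum using the trivial inequality $|S_{\ca\cb}(m,n;C)|\lqs C$ (so that the tail $\sum C^{1-k}$ converges for $k>2$). For $m \lqs 0$, where \eqref{poinpar} no longer converges as a holomorphic series, the identity is first established for the non-holomorphic Maass-Poincar\'e series $P_{\ca,m}(z,s)$ with auxiliary spectral parameter $s$, and then transferred to $P_{\ca,m} \in M_k^!(\G)$ by analytic continuation in $s$ to the specialization recovering the weakly holomorphic form.
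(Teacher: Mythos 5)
Your outline follows essentially the same route as the paper: split off the identity double coset (giving the $\delta_{m,n}$ term when $\ca\equiv\cb$), decompose the remaining sum over $\G_\ca\backslash\G$ as double cosets times $\G_\cb/Z$, reduce to the integral $\int u^{-k}\e(-m/(C^2u)-nu)\,du$, kill it for $n\lqs 0$ by pushing the contour to $+i\infty$, and identify it with a $J$-Bessel function rewritten as ${_0}F_1$. The paper executes the translation sum via Poisson summation rather than by extracting the $n$th coefficient and unfolding, but these are interchangeable (the paper itself notes the unfolding alternative in the hyperbolic setting). Two points deserve correction. First, your justification of the interchange rests on ``the trivial inequality $|S_{\ca\cb}(m,n;C)|\lqs C$,'' which holds for $\SL_2(\Z)$ but not for a general Fuchsian group, where the pointwise bound is only $\mathcal N_{\ca\cb}(C)\ll C^2$; the convergence of $\sum_C \mathcal N_{\ca\cb}(C) C^{-k}$ for $k>2$ instead follows from the average bound $\sum_{C\lqs X}C^{-1}\mathcal N_{\ca\cb}(C)\ll X$ together with partial summation, which is what the paper uses. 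Relatedly, your parameterization of the double cosets by residues $a,d \bmod C$ with $ad\equiv 1$ is again the $\SL_2(\Z)$ picture; the general case needs the representative set $R_{\ca\cb}$ with $0\lqs a/c<1$, $0\lqs d/c<1$.

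Second, your final paragraph is based on a false premise: the series \eqref{poinpar} \emph{does} converge absolutely for $k>2$ for every $m\in\Z$, including $m\lqs 0$. For the cosets with $c\neq 0$ one has $\Im(\sa^{-1}\g z)=y/|cz+d|^2$ uniformly bounded above on compacta (since $|c|$ is bounded below), so $|\e(m\,\sa^{-1}\g z)|$ is uniformly bounded regardless of the sign of $m$; only the single $c=0$ term is unbounded in $\H$, and it is just $\e(mz)$. Consequently no Maass--Poincar\'e regularization or analytic continuation in a spectral parameter is needed: the paper's proof handles all $m\in\Z$ directly, with the sign of $m$ entering only through the factor $\exp(\pi(|m|-m)/(r^2y))$ in the bound on the integral. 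Your proposed detour would presumably work, but it is solving a problem that is not there.
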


See Section \ref{secpp} for all the details. Petersson worked more generally with real weight $k$ and an associated multiplier system.

To describe the parabolic Fourier expansion of the hyperbolic series
$P_{\eta, m}$ we need the following notation.
Put $C_{\eta\ca} := \left\{ac \ \left| \ \left(\smallmatrix a
& b \\ c & d
\endsmallmatrix\right) \in \se^{-1}\G\sa \right. \right\}$. We will see later that $0 \not\in C_{\eta\ca}$.
For $C \in C_{\eta\ca}$ and $\e(z):=e^{2\pi i z}$ define
\begin{equation}\label{kloo}
S_{\eta\ca}(m,n;C):= \sum_{\substack{\g \in \G_\eta \backslash \G / \G_\ca  \\ \left(\smallmatrix a
& b \\ c & d
\endsmallmatrix\right) = \se^{-1}\g\sa, \ ac=C} } \e\left(\frac{m}{\ell_\eta} \log \left|\frac ac\right| + n \left( \frac{b}{2a}+\frac{d}{2c}\right)\right).
\end{equation}
This generalized Kloosterman sum was first identified and studied by Good in \cite{G83}.
Renormalizing \eqref{kloo} by multiplying it by $\exp\left(\pi^2 m  (\sgn(C)-1)/\ell_\eta - \pi i n/C \right)$
gives the  variant
\begin{equation}\label{kloos}
S^\star_{\eta\ca}(m,n;C):= \sum_{\substack{\g \in \G_\eta \backslash \G / \G_\ca  \\ \left(\smallmatrix a
& b \\ c & d
\endsmallmatrix\right) = \se^{-1}\g\sa, \ ac=C} } \e\left(\frac{m}{\ell_\eta} \log \left(\frac ac\right) + n  \frac{b}{a}\right)
\end{equation}
where the logarithm takes its principal value. The next theorem is proved in Section \ref{sechp}.

\begin{theorem} \label{CIShp}
For $m \in \Z$ and $n \in \Z_{\gqs 1}$, the $n$th coefficient in the parabolic Fourier expansion at  $\ca$ of the hyperbolic Poincar\'e series $P_{\eta,m}$ has the formula
\begin{equation} \label{sum3}
c_\ca(n;P_{\eta,m})  =  \frac{(2\pi i)^k n^{k-1}}{\G(k)}\sum_{C \in C_{\eta\ca} }
   {_1}F_1 \left(\frac k2 + \frac{2\pi i m}{\ell_\eta};k; \frac{2\pi i n}{C} \right)  \frac{S^\star_{\eta\ca}(m,n;C)}{ C^{k/2}}.
\end{equation}
\end{theorem}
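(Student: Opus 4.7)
The plan is to follow the standard unfolding technique: decompose the sum defining $P_{\eta,m}$ along the double cosets $\G_\eta\backslash\G/\G_\ca$, use the $\G_\ca$-translates to unroll the Fourier integral from $[0,1]$ to all of $\R$, and finally evaluate the resulting integral via a binomial expansion of $(\se^{-1}\g z)^s$, with $s:=-k/2+2\pi im/\ell_\eta$, in reciprocal powers of $cz+d$. This parallels Theorem~\ref{CISpp} (proved in Section~\ref{secpp}); the key new feature is that the Bessel-type oscillatory factor $\e(m\sa^{-1}\g z)$ of the parabolic case is replaced by the power $(\se^{-1}\g z)^s$.

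Write $M_\g:=\se^{-1}\g\sa=\left(\smallmatrix a&b\\c&d\endsmallmatrix\right)$. The intersection $\g^{-1}\G_\eta\g\cap\G_\ca$ is trivial (hyperbolic meets parabolic), so $\G_\eta\backslash\G$ decomposes into double-coset blocks, each of which is in bijection with $\overline{\G}_\ca\cong\Z$. Since conjugation by $\sa$ and by $\se$ shows that $C:=ac$ is invariant under both left $\G_\eta$- and right $\G_\ca$-multiplication, it is a well-defined function on $[\g]\in\G_\eta\backslash\G/\G_\ca$. After the standard unrolling we obtain
\begin{equation*}
c_\ca(n;P_{\eta,m}) = \sum_{[\g]}\int_{\R+iy}(cz+d)^{-k}(M_\g z)^s\,\e(-nz)\,dx,
\end{equation*}
and the fact that $0\notin C_{\eta\ca}$ (forced because $\se\cdot 0=\eta_1$ and $\se\cdot\infty=\eta_2$ are hyperbolic while $\ca$ is parabolic) gives $a,c\neq 0$ for every representative.

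Each inner integral $J_\g$ is evaluated by writing $M_\g z=(a/c)\bigl(1-1/(a(cz+d))\bigr)$ and expanding binomially; this is valid on $\R+iy$ once $y$ is taken large enough that $|a(cz+d)|>1$ throughout the contour, and $c_\ca(n;P_{\eta,m})$ is independent of $y$ so we may take $y$ as large as we like. Each resulting piece $\int(cz+d)^{-k-j}\e(-nz)\,dx$, after the substitution $u=cz+d$, becomes a meromorphic contour integral with a single pole of order $k+j$ at $u=0$; closing in the half-plane where $e^{-2\pi inu/c}$ decays (possible since $n\geqslant 1$) picks up $-2\pi i$ times the residue $(-2\pi in/c)^{k+j-1}/(k+j-1)!$. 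Summing over $j$ and simplifying yields
\begin{equation*}
J_\g=\frac{(2\pi i)^k n^{k-1}}{\G(k)}\cdot\frac{(a/c)^s}{c^k}\,\e(nd/c)\cdot{_1}F_1(-s;k;-2\pi in/C).
\end{equation*}

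Two steps finish the proof. Kummer's transformation ${_1}F_1(-s;k;-z)=e^{-z}{_1}F_1(k+s;k;z)$ changes the first parameter to $k/2+2\pi im/\ell_\eta$, and the identity $ad-bc=1$ gives $d/c-1/C=b/a$, so that $\e(nd/c)e^{-2\pi in/C}=\e(nb/a)$. A case analysis on $\sgn(C)$ then establishes $(a/c)^{-k/2}/c^k=1/C^{k/2}$ with principal branches throughout, and the surviving factor $(a/c)^{2\pi im/\ell_\eta}=\e(m\log(a/c)/\ell_\eta)$ combines with $\e(nb/a)$ to reproduce exactly the summand in \eqref{kloos}. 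Grouping the $J_\g$ by the value of $C$ and summing over $C\in C_{\eta\ca}$ then produces \eqref{sum3}. I expect the main obstacle to be the branch bookkeeping in this last step: when $C<0$, the phase $e^{i\pi s}$ arising from $\log(a/c)=\log|a/c|+i\pi$ must reconcile precisely with the renormalization factor $\exp(\pi^2m(\sgn(C)-1)/\ell_\eta-\pi in/C)$ that converts $S_{\eta\ca}$ of \eqref{kloo} into $S^\star_{\eta\ca}$. Once this is verified, absolute convergence of the outer $C$-sum follows from that of $P_{\eta,m}$ (valid for $k>2$) and Fubini justifies all earlier interchanges.
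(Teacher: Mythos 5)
Your proposal is correct, and it reaches \eqref{sum3} by a genuinely different route from the paper. The paper proves Theorem \ref{CIShp} in two stages: Theorem \ref{CIShp2} applies Poisson summation to the inner sum over $\G_\ca/Z$ and writes $c_\ca(n;P_{\eta,m})=\sum_C C^{-k/2}I_{\eta\ca}(m,n;1/(2C))S_{\eta\ca}(m,n;C)$ with the integral \eqref{ihp}, and Proposition \ref{usop} then evaluates that integral in closed form by recasting it as the tabulated integral \cite[3.384.8]{GR} for $\int(\beta+ix)^{-\mu}(\gamma+ix)^{-\nu}e^{-ipx}\,dx$. You instead unfold the single coefficient integral $\int_0^1$ along the double cosets (equivalent to the Poisson summation step) and evaluate the resulting line integral by expanding $(\se^{-1}\g z)^s=(a/c)^s\bigl(1-1/(a(cz+d))\bigr)^s$ binomially and computing each term by a residue at $u=cz+d=0$; this produces the ${_1}F_1$ directly from its defining series, followed by one application of Kummer's transformation \eqref{kummer}. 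This is precisely the hyperbolic/parabolic analogue of the referee's remark after Proposition \ref{wpro} (power series plus Hankel's formula), and I checked that your intermediate formula for $J_\g$ and the subsequent identities ($d/c-1/C=b/a$, $(a/c)^{-k/2}/c^k=C^{-k/2}$ for $k$ even, and the branch factor $e^{-2\pi^2 m/\ell_\eta}$ for $C<0$ matching the renormalization from \eqref{kloo} to \eqref{kloos}) all work out. What your route buys is a self-contained, elementary evaluation with no appeal to integral tables, and a cleaner convergence argument: since you fix $n$ from the outset, refolding $\sum_{[\g]}\int_\R$ into $\int_0^1\sum_{\G_\eta\backslash\G}$ lets Tonelli and the absolute, locally uniform convergence of $P_{\eta,m}$ for $k>2$ justify every interchange, whereas the paper, which establishes the whole expansion at once, must invoke the explicit bounds of Propositions \ref{bndhp} and \ref{schp}. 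The price is the branch bookkeeping you correctly flag as the main obstacle: one must verify $(\se^{-1}\g z)^s=(a/c)^s\bigl(1-1/(a(cz+d))\bigr)^s$ with principal branches on the contour (true for $\Im z$ large, by comparing arguments separately for $\sgn(C)=\pm1$), together with the validity of term-by-term integration of the binomial series (uniform on the contour once $\Im z>1/|C|$). These checks succeed, so the argument is sound.
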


In the case that  $\G =\SL_2(\Z)$, $\ca=\infty$ and $\eta=(-\sqrt{D},\sqrt{D})$ for $D$  a positive integer that is not a perfect square, we can give a very explicit expression for $S_{\eta\ca}(m,n;C)$. First, choose $\se=\hat\sigma_\eta$ and $\si=I$ so that $C_{\eta\ci} \subset \Z/(2\sqrt{D})$. Let $(a_0,c_0)=(a,c)$ be the minimal positive integer solution
to Pell's equation
\begin{equation}\label{pell}
     a^2-Dc^2=1.
\end{equation}
Such a solution always exists and may be found from the continued fraction expansion of $\sqrt{D}$.
 Set $\varepsilon_D:=a_0+\sqrt{D}c_0$,
$\ell_\eta :=2\log \varepsilon_D$ and
write
\begin{equation*}
    \frac{a_0+ 1}{c_0}=\frac{u_+}{v_+}, \qquad \frac{a_0- 1}{c_0}=\frac{u_-}{v_-}
\end{equation*}
in lowest terms. Also set $D_+:=u_+^2-D v_+^2$, $D_-:=u_-^2-D v_-^2$;
 we will see later that  $D_+>0$ and $D_-<0$. Define
\begin{equation*}
    \psi_D(m,n;N) := \begin{cases} (-1)^{m+c_0 \cdot n} & \quad \text{if} \quad N = D_+ \text{ or } D_-\\
    0 & \quad \text{otherwise}
    \end{cases}
\end{equation*}
and put
\begin{equation} \label{rdnst}
    R^*_D(N) := \Bigl\{  (e,g)\in \Z^2 \ \Big| \ \gcd(e,g)=1, \ e^2-Dg^2=N, \ e^2+Dg^2 \lqs a_0|N|  \Bigr\}.
\end{equation}
See Figure \ref{bfig} for an example of $R^*_D(N)$. The next result is proved in Section \ref{sect_ex}.

\begin{theorem} \label{final_k}
Let $\ci$ be the  cusp and $\eta=(-\sqrt{D},\sqrt{D})$  a hyperbolic fixed pair  for $\SL_2(\Z)$ with scaling matrices $I$  and $\hat\sigma_\eta$ respectively. Then for all $m$, $n \in \Z$
\begin{equation} \label{kloost4}
S_{\eta\infty}\left(m,n;\frac{-N}{2\sqrt{D}}\right) =  -\psi_D(m,n;N)
+ \frac 12
\sum_{(e,g)\in R^*_D(N)}
\e  \left(\frac{m}{\ell_\eta} \log \left|\frac{e+g\sqrt{D}}{e-g\sqrt{D}}\right| -  \frac{n e g^{-1}}{N} \right)
\end{equation}
where $g^{-1}$ denotes the inverse of $g \bmod N$.
If $g=0$ then $N=1$ and we may set $g^{-1} = 0$.
\end{theorem}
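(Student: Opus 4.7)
The plan is to translate the sum \eqref{kloo} into a sum over coprime solutions $(p,r)$ of $p^2-Dr^2=N$ and then execute a fundamental-domain count, paying careful attention to the boundary of the closed region $e^2+Dg^2\leq a_0|N|$.

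First, writing $\gamma=\begin{pmatrix}p & q\\r & s\end{pmatrix}\in\SL_2(\Z)$ and computing
\begin{equation*}
\hat\sigma_\eta^{-1}\gamma \;=\; \frac{1}{\sqrt{2\sqrt{D}}}\begin{pmatrix} p+r\sqrt{D} & q+s\sqrt{D}\\ -p+r\sqrt{D} & -q+s\sqrt{D}\end{pmatrix}
\end{equation*}
gives $ac=-(p^2-Dr^2)/(2\sqrt{D})$, $|a/c|=|(p+r\sqrt{D})/(p-r\sqrt{D})|$, and (after rationalising) $\tfrac{b}{2a}+\tfrac{d}{2c}=(pq-Drs)/N$. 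The condition $ac=-N/(2\sqrt{D})$ thus becomes $p^2-Dr^2=N$, while $ps-qr=1$ forces $\gcd(p,r)=1$ and hence $\gcd(r,N)=1$. A short manipulation with $ps-qr=1$ shows $-r(pq-Drs)\equiv p\pmod N$, so $(pq-Drs)/N\equiv -pr^{-1}/N\pmod 1$; under $(e,g)=(p,r)$ this is exactly the phase appearing in \eqref{kloost4}.

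Next I would analyse the group action. The generator of $\overline{\G}_\eta$ is
\begin{equation*}
\gamma_\eta \;=\; \se\begin{pmatrix}\varepsilon_D & 0\\0 & 1/\varepsilon_D\end{pmatrix}\se^{-1}\;=\;\begin{pmatrix}a_0 & Dc_0\\c_0 & a_0\end{pmatrix},
\end{equation*}
and its left action on $(p,r)$ realises $\alpha:=p+r\sqrt{D}\mapsto\alpha\varepsilon_D$ (with $\bar\alpha\mapsto\bar\alpha/\varepsilon_D$). Right multiplication by $\G_\infty$ fixes $(p,r)$ and preserves the phase, so the double-coset sum runs over coprime $(p,r)$ with $p^2-Dr^2=N$ modulo $\langle\pm I,\gamma_\eta\rangle$. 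Using $e^2+Dg^2=(\alpha^2+\bar\alpha^2)/2$ and $|\alpha\bar\alpha|=|N|$, the inequality $e^2+Dg^2\leq a_0|N|$ is equivalent to $|\alpha|/|\bar\alpha|\in[1/\varepsilon_D,\varepsilon_D]$: a closed interval of length $\ell_\eta$ on the log scale, exactly one period of the $\gamma_\eta$-action. Thus each $\gamma_\eta$-orbit meets $R^*_D(N)$ once in the interior and twice on the boundary (the two boundary hits being swapped by $\gamma_\eta$); since $-I$ acts trivially on $|\alpha/\bar\alpha|$, each full orbit contributes $2$ elements to $R^*_D(N)$ in the interior and $4$ on the boundary, and $\tfrac12\sum_{R^*_D(N)}$ correctly counts each interior orbit while over-counting each boundary orbit by exactly one copy of its phase.

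Finally, the boundary is located by solving $\alpha/\bar\alpha=\pm\varepsilon_D^{\pm1}$: cross-multiplying $e+g\sqrt D=\pm(e-g\sqrt D)(a_0\pm c_0\sqrt D)$ gives $g/e=\pm c_0/(a_0\pm1)$, which forces $(e,g)=\pm(u_+,\pm v_+)$ with $N=D_+$ (from $\alpha/\bar\alpha=\varepsilon_D^{\pm1}$) or $\pm(u_-,\pm v_-)$ with $N=D_-$ (from $-\varepsilon_D^{\pm1}$). A direct check such as $\gamma_\eta^{-1}(u_+,v_+)=(u_+,-v_+)$, using $u_+c_0=v_+(a_0+1)$, shows each such quadruple is a single $\langle\pm I,\gamma_\eta\rangle$-orbit. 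At $(u_+,v_+)$ the phase factorises: since $\log|(u_++v_+\sqrt D)/(u_+-v_+\sqrt D)|=\log\varepsilon_D=\ell_\eta/2$, the $m$-part is $(-1)^m$; working modulo $D_+$ with $u_+s_0-v_+q_0=1$ and $Dv_+^2\equiv u_+^2$ yields $u_+q_0-Dv_+s_0\equiv -u_+v_+^{-1}\pmod{D_+}$, so the $n$-part is $\e(-nu_+v_+^{-1}/D_+)$. The identity $\e(-nu_+v_+^{-1}/D_+)=(-1)^{c_0n}$, equivalently $-2u_+v_+^{-1}\equiv c_0D_+\pmod{2D_+}$, reduces after writing $D_+=2u_+/d$ with $d=\gcd(a_0+1,c_0)$ to the parity assertion $c_0+dv_+^{-1}\equiv0\pmod2$; this follows from a short case split using $\gcd(v_+,D_+)=1$ and the fact that $a_0,c_0$ cannot both be even (by Pell modulo $4$). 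The $N=D_-$ case is entirely parallel, so the over-counted phase on each boundary orbit is exactly $\psi_D(m,n;N)$, giving \eqref{kloost4}. The hardest part is this boundary bookkeeping: recognising $R^*_D(N)$ as precisely one closed fundamental domain, pinning down $\pm(u_\pm,\pm v_\pm)$ as the only primitive boundary representatives, and tracking the phase at these representatives through the parity analysis above to extract the explicit form $(-1)^{m+c_0 n}$ captured by $\psi_D$.
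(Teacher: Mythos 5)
Your proposal is correct and follows essentially the same route as the paper: the same matrix computation reducing the sum to coprime solutions of $e^2-Dg^2=N$, the same congruence identity $ef-ghD\equiv -eg^{-1}\bmod N$, the same ellipse equivalence turning the $|a/c|$-condition into $e^2+Dg^2\lqs a_0|N|$, the same identification of the boundary points as $\pm(u_\pm,\pm v_\pm)$, and the same parity case split on $c_0$ to get $(-1)^{m+c_0n}$. The only cosmetic difference is that you package the double-coset count as an orbit/closed-fundamental-domain argument (boundary orbits hit twice), whereas the paper works with the explicit half-open representative set $R_D$ and the unique completion of $(e,g)$ to a matrix in it; the correction term $-\psi_D$ arises identically either way.
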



\SpecialCoor
\psset{griddots=5,subgriddiv=0,gridlabels=0pt}
\psset{xunit=0.25cm, yunit=0.2cm}
\psset{linewidth=1pt}
\psset{dotsize=4pt 0,dotstyle=*}

\begin{figure}[h]
\begin{center}
\begin{pspicture}(-21,-10)(21,10) 

\savedata{\mydata}[
{{19.8997, 0.}, {19.8605, 0.5588}, {19.7428, 1.1154}, {19.5473,
  1.66759}, {19.2746, 2.2132}, {18.9258, 2.75008}, {18.5023,
  3.2761}, {18.0058, 3.7892}, {17.4383, 4.28734}, {16.8019,
  4.76856}, {16.0992, 5.23096}, {15.333, 5.67272}, {14.5063,
  6.09208}, {13.6223, 6.48741}, {12.6846, 6.85713}, {11.6968,
  7.1998}, {10.6628, 7.51404}, {9.58678, 7.79864}, {8.4729,
  8.05245}, {7.32559, 8.27449}, {6.14936, 8.46387}, {4.94887,
  8.61985}, {3.72884, 8.7418}, {2.4941, 8.82926}, {1.24952,
  8.88188}, {0., 8.89944}, {-1.24952, 8.88188}, {-2.4941,
  8.82926}, {-3.72884, 8.7418}, {-4.94887, 8.61985}, {-6.14936,
  8.46387}, {-7.32559, 8.27449}, {-8.4729, 8.05245}, {-9.58678,
  7.79864}, {-10.6628, 7.51404}, {-11.6968, 7.1998}, {-12.6846,
  6.85713}, {-13.6223, 6.48741}, {-14.5063, 6.09208}, {-15.333,
  5.67272}, {-16.0992, 5.23096}, {-16.8019, 4.76856}, {-17.4383,
  4.28734}, {-18.0058, 3.7892}, {-18.5023, 3.2761}, {-18.9258,
  2.75008}, {-19.2746, 2.2132}, {-19.5473, 1.66759}, {-19.7428,
  1.1154}, {-19.8605, 0.5588}, {-19.8997,
  0.}, {-19.8605, -0.5588}, {-19.7428, -1.1154}, {-19.5473,
-1.66759}, {-19.2746, -2.2132}, {-18.9258, -2.75008}, {-18.5023,
-3.2761}, {-18.0058, -3.7892}, {-17.4383, -4.28734}, {-16.8019,
-4.76856}, {-16.0992, -5.23096}, {-15.333, -5.67272}, {-14.5063,
-6.09208}, {-13.6223, -6.48741}, {-12.6846, -6.85713}, {-11.6968,
-7.1998}, {-10.6628, -7.51404}, {-9.58678, -7.79864}, {-8.4729,
-8.05245}, {-7.32559, -8.27449}, {-6.14936, -8.46387}, {-4.94887,
-8.61985}, {-3.72884, -8.7418}, {-2.4941, -8.82926}, {-1.24952,
-8.88188}, {0., -8.89944}, {1.24952, -8.88188}, {2.4941, -8.82926},
{3.72884, -8.7418}, {4.94887, -8.61985}, {6.14936, -8.46387},
{7.32559, -8.27449}, {8.4729, -8.05245}, {9.58678, -7.79864},
{10.6628, -7.51404}, {11.6968, -7.1998}, {12.6846, -6.85713},
{13.6223, -6.48741}, {14.5063, -6.09208}, {15.333, -5.67272},
{16.0992, -5.23096}, {16.8019, -4.76856}, {17.4383, -4.28734},
{18.0058, -3.7892}, {18.5023, -3.2761}, {18.9258, -2.75008},
{19.2746, -2.2132}, {19.5473, -1.66759}, {19.7428, -1.1154},
{19.8605, -0.5588}, {19.8997, 0.}}
]

\savedata{\mydatb}[
{{21.1896, -9.}, {20.8713, -8.85}, {20.5536, -8.7}, {20.2364, -8.55},
{19.9198, -8.4}, {19.6039, -8.25}, {19.2886, -8.1}, {18.974, -7.95},
{18.6601, -7.8}, {18.347, -7.65}, {18.0347, -7.5}, {17.7232, -7.35},
{17.4126, -7.2}, {17.103, -7.05}, {16.7943, -6.9}, {16.4867, -6.75},
{16.1802, -6.6}, {15.8749, -6.45}, {15.5708, -6.3}, {15.268, -6.15},
{14.9666, -6.}, {14.6667, -5.85}, {14.3684, -5.7}, {14.0717, -5.55},
{13.7768, -5.4}, {13.4838, -5.25}, {13.1928, -5.1}, {12.904, -4.95},
{12.6174, -4.8}, {12.3334, -4.65}, {12.052, -4.5}, {11.7734, -4.35},
{11.4978, -4.2}, {11.2255, -4.05}, {10.9567, -3.9}, {10.6917, -3.75},
{10.4307, -3.6}, {10.1741, -3.45}, {9.9222, -3.3}, {9.67536, -3.15},
{9.43398, -3.}, {9.19851, -2.85}, {8.96939, -2.7}, {8.74714, -2.55},
{8.53229, -2.4}, {8.32541, -2.25}, {8.12712, -2.1}, {7.93804, -1.95},
{7.75887, -1.8}, {7.59029, -1.65}, {7.43303, -1.5}, {7.28783, -1.35},
{7.15542, -1.2}, {7.03651, -1.05}, {6.93181, -0.9}, {6.84197, -0.75},
{6.76757, -0.6}, {6.70914, -0.45}, {6.66708, -0.3}, {6.64172, -0.15},
{6.63325, 0.}, {6.64172, 0.15}, {6.66708, 0.3}, {6.70914,
  0.45}, {6.76757, 0.6}, {6.84197, 0.75}, {6.93181, 0.9}, {7.03651,
  1.05}, {7.15542, 1.2}, {7.28783, 1.35}, {7.43303, 1.5}, {7.59029,
  1.65}, {7.75887, 1.8}, {7.93804, 1.95}, {8.12712, 2.1}, {8.32541,
  2.25}, {8.53229, 2.4}, {8.74714, 2.55}, {8.96939, 2.7}, {9.19851,
  2.85}, {9.43398, 3.}, {9.67536, 3.15}, {9.9222, 3.3}, {10.1741,
  3.45}, {10.4307, 3.6}, {10.6917, 3.75}, {10.9567, 3.9}, {11.2255,
  4.05}, {11.4978, 4.2}, {11.7734, 4.35}, {12.052, 4.5}, {12.3334,
  4.65}, {12.6174, 4.8}, {12.904, 4.95}, {13.1928, 5.1}, {13.4838,
  5.25}, {13.7768, 5.4}, {14.0717, 5.55}, {14.3684, 5.7}, {14.6667,
  5.85}, {14.9666, 6.}, {15.268, 6.15}, {15.5708, 6.3}, {15.8749,
  6.45}, {16.1802, 6.6}, {16.4867, 6.75}, {16.7943, 6.9}, {17.103,
  7.05}, {17.4126, 7.2}, {17.7232, 7.35}, {18.0347, 7.5}, {18.347,
  7.65}, {18.6601, 7.8}, {18.974, 7.95}, {19.2886, 8.1}, {19.6039,
  8.25}, {19.9198, 8.4}, {20.2364, 8.55}, {20.5536, 8.7}, {20.8713,
  8.85}, {21.1896, 9.}}
]

\savedata{\mydatc}[
{{-21.1896, -9.}, {-20.8713, -8.85}, {-20.5536, -8.7}, {-20.2364,
-8.55}, {-19.9198, -8.4}, {-19.6039, -8.25}, {-19.2886, -8.1},
{-18.974, -7.95}, {-18.6601, -7.8}, {-18.347, -7.65}, {-18.0347,
-7.5}, {-17.7232, -7.35}, {-17.4126, -7.2}, {-17.103, -7.05},
{-16.7943, -6.9}, {-16.4867, -6.75}, {-16.1802, -6.6}, {-15.8749,
-6.45}, {-15.5708, -6.3}, {-15.268, -6.15}, {-14.9666, -6.},
{-14.6667, -5.85}, {-14.3684, -5.7}, {-14.0717, -5.55}, {-13.7768,
-5.4}, {-13.4838, -5.25}, {-13.1928, -5.1}, {-12.904, -4.95},
{-12.6174, -4.8}, {-12.3334, -4.65}, {-12.052, -4.5}, {-11.7734,
-4.35}, {-11.4978, -4.2}, {-11.2255, -4.05}, {-10.9567, -3.9},
{-10.6917, -3.75}, {-10.4307, -3.6}, {-10.1741, -3.45}, {-9.9222,
-3.3}, {-9.67536, -3.15}, {-9.43398, -3.}, {-9.19851, -2.85},
{-8.96939, -2.7}, {-8.74714, -2.55}, {-8.53229, -2.4}, {-8.32541,
-2.25}, {-8.12712, -2.1}, {-7.93804, -1.95}, {-7.75887, -1.8},
{-7.59029, -1.65}, {-7.43303, -1.5}, {-7.28783, -1.35}, {-7.15542,
-1.2}, {-7.03651, -1.05}, {-6.93181, -0.9}, {-6.84197, -0.75},
{-6.76757, -0.6}, {-6.70914, -0.45}, {-6.66708, -0.3}, {-6.64172,
-0.15}, {-6.63325, 0.}, {-6.64172, 0.15}, {-6.66708, 0.3}, {-6.70914,
  0.45}, {-6.76757, 0.6}, {-6.84197, 0.75}, {-6.93181,
  0.9}, {-7.03651, 1.05}, {-7.15542, 1.2}, {-7.28783,
  1.35}, {-7.43303, 1.5}, {-7.59029, 1.65}, {-7.75887,
  1.8}, {-7.93804, 1.95}, {-8.12712, 2.1}, {-8.32541,
  2.25}, {-8.53229, 2.4}, {-8.74714, 2.55}, {-8.96939,
  2.7}, {-9.19851, 2.85}, {-9.43398, 3.}, {-9.67536, 3.15}, {-9.9222,
  3.3}, {-10.1741, 3.45}, {-10.4307, 3.6}, {-10.6917,
  3.75}, {-10.9567, 3.9}, {-11.2255, 4.05}, {-11.4978,
  4.2}, {-11.7734, 4.35}, {-12.052, 4.5}, {-12.3334, 4.65}, {-12.6174,
   4.8}, {-12.904, 4.95}, {-13.1928, 5.1}, {-13.4838,
  5.25}, {-13.7768, 5.4}, {-14.0717, 5.55}, {-14.3684,
  5.7}, {-14.6667, 5.85}, {-14.9666, 6.}, {-15.268, 6.15}, {-15.5708,
  6.3}, {-15.8749, 6.45}, {-16.1802, 6.6}, {-16.4867,
  6.75}, {-16.7943, 6.9}, {-17.103, 7.05}, {-17.4126, 7.2}, {-17.7232,
   7.35}, {-18.0347, 7.5}, {-18.347, 7.65}, {-18.6601, 7.8}, {-18.974,
   7.95}, {-19.2886, 8.1}, {-19.6039, 8.25}, {-19.9198,
  8.4}, {-20.2364, 8.55}, {-20.5536, 8.7}, {-20.8713,
  8.85}, {-21.1896, 9.}}
]

\psline[linecolor=gray]{->}(-21,0)(21.5,0)
\psline[linecolor=gray]{->}(0,-10)(0,10.5)
\psline[linecolor=gray](-0.15,2)(0.15,2)
\psline[linecolor=gray](-0.15,-2)(0.15,-2)
\multirput(-20,-0.3)(2,0){21}{\psline[linecolor=gray](0,0)(0,0.6)}
\multirput(-0.2,-8)(0,2){9}{\psline[linecolor=gray](0,0)(0.4,0)}

\dataplot[linecolor=orange,linewidth=0.8pt,plotstyle=line]{\mydata}
\dataplot[linecolor=red,linewidth=0.8pt,plotstyle=line]{\mydatb}
\dataplot[linecolor=red,linewidth=0.8pt,plotstyle=line]{\mydatc}

\rput(10,5.9){$(13,5)$}
\rput(4.8,2){$(7,1)$}

\psdots(7,1)(13,5)(-7,1)(-13,5)(7,-1)(13,-5)(-7,-1)(-13,-5)

\end{pspicture}
\caption{The eight elements of $R_5^*(44)$}\label{bfig}
\end{center}
\end{figure}
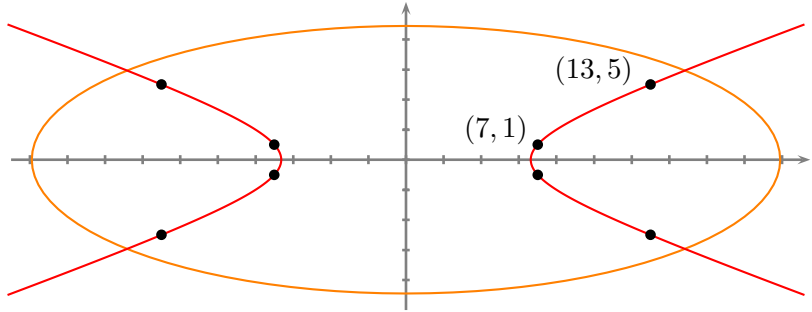



In Theorems \ref{CISph} and \ref{CISph2} of Section \ref{secph}  we also give the hyperbolic expansion of $P_{\ca,m}$, which is similar to Theorem \ref{CIShp}.
Section \ref{num} explores our formulas numerically with the result \eqref{dl2}  calculated there.
Finally,  in Section \ref{sechh} the hyperbolic expansion of $P_{\eta,m}$ is obtained. For this, first
put
\begin{equation}\label{cetet}
    C_{\eta\eta'} := \Bigl\{ad \ \Bigl| \ \left(\smallmatrix a
& b \\ c & d
\endsmallmatrix\right) \in \se^{-1}\G\sep, \ abcd \neq 0 \Bigr\}.
\end{equation}
When $C \in C_{\eta\eta'}$ and $\alpha=\pm1$  define
\begin{equation}\label{klooshh}
S^\star_{\eta\eta'}(m,n;C,\alpha):= \sum_{\substack{\g \in \G_\eta \backslash \G / \G_{\eta'}  \\ \left(\smallmatrix a
& b \\ c & d
\endsmallmatrix\right) = \se^{-1}\g\sep, \ ad=C, \ \sgn(ac)=\alpha} } \e\left(\frac{m}{\ell_\eta} \log \left(\frac ac\right) +\frac{n}{\ell_{\eta'}} \log \left(-\frac cd\right) \right).
\end{equation}
For $\eta=(\eta_1,\eta_2)$, let $\eta^*$ be the reversed pair $(\eta_2,\eta_1)$. It is easy to see that if $\se$ is a scaling matrix for $\eta$, then $\se S$ is a possible scaling matrix for $\eta^*$ where $S:=
\left(\smallmatrix
0 & -1 \\ 1 & 0
\endsmallmatrix\right)$. Also we recall the beta function $B(u,v) := \G(u)\G(v)/\G(u+v)$.

\begin{theorem} \label{CIShh}
For any $m$, $n \in \Z$, the $n$th  coefficient in the hyperbolic expansion at $\eta'$ of the hyperbolic Poincar\'e series $P_{\eta,m}$  is given by
\begin{align}\label{mull}
    c_{\eta'}(n;P_{\eta,m}) = \frac{e^{2\pi^2 n/\ell_{\eta'}}}{\ell_{\eta'}}
    & B\left( \frac{k}{2} - \frac{2\pi i n}{\ell_{\eta'}},\right.  \hspace{-1.5mm}\left.\frac{k}{2} + \frac{2\pi i n}{\ell_{\eta'}}\right)
    \left(\sum\nolimits_1+\sum\nolimits_2+\sum\nolimits_3\right)
    \\
    + & \begin{cases}
 (a^2)^{2\pi i n/\ell_{\eta'}}  \text{ \ if \ }\eta' \equiv \eta \bmod \G  \text{ and } n=m
 \end{cases} \label{xab1q}\\
  + & \begin{cases}
 (-1)^{k/2} e^{2\pi^2 n/\ell_{\eta'}}(b^2)^{-2\pi i n/\ell_{\eta'}}   \text{ \ if \ }\eta' \equiv \eta^* \bmod \G  \text{ and }  n=-m
 \end{cases} \label{yab1q}
\end{align}
where $\sum\nolimits_1$, $\sum\nolimits_2$ and $\sum\nolimits_3$ are given by
\begin{gather*}
    \sum_{C \in C_{\eta\eta'}, \ C\not\in (0,1)}
     {_2}F_1\left( \frac{k}{2} - \frac{2\pi i m}{\ell_{\eta}}, \frac{k}{2} + \frac{2\pi i n}{\ell_{\eta'}};k;\frac{1}{C}\right)
     \frac{S^\star_{\eta\eta'}(m,n;C,1) + S^\star_{\eta\eta'}(m,n;C,-1)}{C^{k/2}},
     \\
  \sum_{C \in C_{\eta\eta'} \cap (0,1)}
     {_2}F_1\left( \frac{k}{2} - \frac{2\pi i m}{\ell_{\eta}}, \frac{k}{2} + \frac{2\pi i n}{\ell_{\eta'}};k;\frac{1}{C}\right)
     \frac{S^\star_{\eta\eta'}(m,n;C,1)}{C^{k/2}},
     \\
      \sum_{C \in C_{\eta\eta'} \cap (0,1)}
     \left(\frac {C}{C-1}\right)^{2\pi i n/\ell_{\eta'}}
     {_2}F_1\left( \frac{k}{2} + \frac{2\pi i m}{\ell_{\eta}}, \frac{k}{2} + \frac{2\pi i n}{\ell_{\eta'}};k;\frac{-1}{C-1}\right)
     \frac{S^\star_{\eta\eta'}(m,n;C,-1)}{(C-1)^{k/2}},
\end{gather*}
respectively.  The sums $\sum\nolimits_2$ and $\sum\nolimits_3$ are finite. The real numbers $a$ and $b$ in \eqref{xab1q}, \eqref{yab1q} depend on the choice of scaling matrices $\se$ and $\sep$.
\end{theorem}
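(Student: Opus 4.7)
The plan is to apply the Fourier-coefficient formula \eqref{kcq} to $P_{\eta,m}|_k \sep$ at frequency $n$, interchange sum and integral, and unfold in two stages. After decomposing $\G_\eta\backslash\G$ into double cosets $\gamma_0\in\G_\eta\backslash\G/\G_{\eta'}$ followed by a sum over $\overline{\G}_{\eta'}$ (when the stabilizer of $\gamma_0$ is trivial, which happens iff $abcd\neq 0$ for $\tau_0 = \se^{-1}\gamma_0\sep = \left(\smallmatrix a&b\\c&d\endsmallmatrix\right)$), the right $\G_{\eta'}$-action translates via \eqref{sgsh} into the scaling $z\mapsto \varepsilon_{\eta'}^{2j}z$, so the $\overline{\G}_{\eta'}$-sum telescopes the period $\int_{r_0}^{\varepsilon_{\eta'}^2 r_0}$ into the full ray $(0,\infty)\cdot e^{i\theta_0}$. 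Writing $u = k/2 - 2\pi i m/\ell_\eta$ and $w = k/2 - 2\pi i n/\ell_{\eta'}$, and using $(z^{-u})|_k \tau_0 = (az+b)^{-u}(cz+d)^{u-k}$, this reduces the problem to evaluating the model integral
\begin{equation*}
I(\tau_0) = \int_0^{\infty\cdot e^{i\theta_0}}(az+b)^{-u}(cz+d)^{u-k}z^{w-1}\,dz
\end{equation*}
for each generic double coset, plus isolated exceptional contributions from the cosets with $abcd=0$.

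To evaluate $I(\tau_0)$ I would rotate the contour onto $(0,\infty)$ when possible, substitute $y = (a/b)z$, and invoke the Mellin--Euler formula $\int_0^\infty y^{w-1}(1+y)^{-u}(1+\lambda y)^{u-k}dy = B(w,k-w)\,{_2}F_1(k-u,w;k;1-\lambda)$ with $\lambda = bc/(ad)$ so that $1-\lambda = 1/C$ for $C=ad$. After the Euler transformation ${_2}F_1(k-u,w;k;z) = (1-z)^{u-w}\,{_2}F_1(u,k-w;k;z)$ and simplification with $ad-bc=1$, the prefactor collapses to
\begin{equation*}
I(\tau_0) = B(w,k-w)\cdot C^{-k/2}\cdot (a/c)^{2\pi i m/\ell_\eta}(c/d)^{2\pi i n/\ell_{\eta'}}\cdot {_2}F_1(u,k-w;k;1/C).
\end{equation*}
Converting $(c/d)^{2\pi i n/\ell_{\eta'}}$ to $(-c/d)^{2\pi i n/\ell_{\eta'}}$ via the principal branch pulls out the $e^{2\pi^2 n/\ell_{\eta'}}$ factor appearing in the theorem; grouping $\gamma_0$ by $C$ and $\alpha=\sgn(ac)$ then assembles $S^\star_{\eta\eta'}(m,n;C,\alpha)$. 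This produces $\sum\nolimits_1$ for those $C\notin(0,1)$ where the rotation to $(0,\infty)$ crosses no singularities $-b/a$, $-d/c$ of the integrand.

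For $C\in(0,1)$ one of the singularities lies on the positive real axis depending on sign. When $\alpha=+1$ the same identity still applies and contributes $\sum\nolimits_2$; when $\alpha=-1$ the contour must be deformed around $-d/c>0$, and the Pfaff transformation ${_2}F_1(a,b;c;z) = (1-z)^{-b}\,{_2}F_1(c-a,b;c;z/(z-1))$ applied at $z=1/C$ rewrites ${_2}F_1(u,k-w;k;1/C) = ((C-1)/C)^{w-k}\,{_2}F_1(k-u,k-w;k;-1/(C-1))$. Combining the resulting $C^{k-w}(C-1)^{w-k}$ with the earlier $C^{-k/2}$ gives exactly $(C-1)^{-k/2}(C/(C-1))^{2\pi i n/\ell_{\eta'}}$, the shape of $\sum\nolimits_3$. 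Both sums are finite because the discreteness of $\se^{-1}\G\sep$ together with $|bc|<1$ confines the entries to a bounded region. The exceptional cosets ($abcd=0$) occur only when $\gamma_0\eta'=\eta$ (forcing $b=c=0$, $\tau_0=\mathrm{diag}(a,1/a)$, and $\eta'\equiv\eta\bmod\G$) or $\gamma_0\eta'=\eta^*$ (forcing $a=d=0$, $\tau_0=\left(\smallmatrix 0&b\\-1/b&0\endsmallmatrix\right)$, and $\eta'\equiv\eta^*\bmod\G$); the unfolding fails here but direct slash computations give $(z^{-u})|_k\,\mathrm{diag}(a,1/a) = (a^2)^{2\pi i m/\ell_\eta}z^{-u}$, a single mode contributing only at $n=m$ and producing \eqref{xab1q}, and, via the principal-branch identity $(-1)^{-u}=(-1)^{k/2}e^{-2\pi^2 m/\ell_\eta}$, $(z^{-u})|_k\left(\smallmatrix 0&b\\-1/b&0\endsmallmatrix\right) = (-1)^{k/2}e^{-2\pi^2 m/\ell_\eta}(b^2)^{2\pi i m/\ell_\eta}z^{u-k}$, a single mode contributing only at $n=-m$ and producing \eqref{yab1q}.

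The main obstacle will be the branch-and-sign bookkeeping. Each of $a^{-u}$, $c^{u-w}$, $(-c/d)^{2\pi i n/\ell_{\eta'}}$, $(1-z)^{u-w}$, and each hypergeometric transformation is multivalued, and reconciling the integral's principal-branch powers with the principal-logarithm convention used in $S^\star_{\eta\eta'}$ is what produces the $e^{2\pi^2 n/\ell_{\eta'}}$ prefactor, the $(-1)^{k/2}$ in \eqref{yab1q}, and the $\alpha$-dependent splitting. The three-way split $\sum\nolimits_1,\sum\nolimits_2,\sum\nolimits_3$ is the external manifestation of this: each sign configuration of $(a,b,c,d)$ requires a different contour and a different hypergeometric identity to present the answer cleanly in terms of ${_2}F_1$.
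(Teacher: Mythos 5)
Your proposal is correct in outline and reaches the theorem by a route that is genuinely different from the paper's, though the two converge on the same special-function core. The paper first proves Theorem \ref{CIShh2} by expanding $(P_{\eta,m}|_k\sep)(e^A)$ over the representatives of Proposition \ref{hh_reps} and applying Poisson summation in the $\G_{\eta'}$-direction, which produces all Fourier modes simultaneously as the integrals $I_{\eta\eta'}(m,n;C,\alpha,\beta)$ of \eqref{iefa}; the price is that interchanging the resulting $n$-sum with the sum over double cosets requires the decay estimates of Proposition \ref{bndhh} and the counting bound of Proposition \ref{812}. You instead extract one coefficient at a time from the contour integral \eqref{kcq} and unfold, so the $\G_{\eta'}$-sum telescopes the period $[r_0,\varepsilon_{\eta'}^2 r_0]$ into a full ray and the only convergence input needed is the absolute, locally uniform convergence of the Poincar\'e series for $k>2$ --- a real simplification, and one the authors themselves acknowledge as an alternative starting point in Section \ref{secph}. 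Past that point the arguments coincide: your Mellin--Euler integral is exactly what $I_{\eta\eta'}$ becomes after the substitutions in Proposition \ref{long} (reducing to \cite[3.197.1]{GR}), your Euler and Pfaff transformations are the ones used there, your treatment of the $abcd=0$ cosets matches Lemma \ref{2elts} and the opening of the proof of Theorem \ref{CIShh2}, and your finiteness argument for $\sum_2$, $\sum_3$ is Lemma \ref{fic}. The one place your sketch understates the difficulty is the branch bookkeeping you flag at the end: the paper needs a dedicated lemma to decide which principal-branch evaluation of $\int_0^\infty(x+a)^{-\mu}(x+b)^{-\rho}x^{\nu-1}\,dx$ applies to each sign configuration, and your stated criterion (``the contour must be deformed around $-d/c>0$'') is not quite right --- for $C\in(0,1)$ a real singularity sits on the positive axis for both $\alpha=1$ and $\alpha=-1$ (it is $-b/a$ in the former case), so the three-way split is governed by which branch of the ${_2}F_1$ the evaluation lands on (the condition on $\arg b-\arg a$ in Proposition \ref{long}), not by whether a pole is crossed. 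This is an imprecision in the plan rather than a gap: the transformations you name do produce the stated $\sum_1$, $\sum_2$, $\sum_3$ and the exceptional contributions \eqref{xab1q}, \eqref{yab1q}.
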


\begin{remark}{\rm
With specific choices of $\se$ and $\sep$ we can make $a$ and $b$ in \eqref{xab1q}, \eqref{yab1q} explicit. For example, suppose $\eta$ and $\eta^*$ are not $\G$-equivalent. If  $\eta' =  \rho \eta$ for some $\rho \in \G$ put $\sep =  \rho \se$ and if $\eta' =  \rho \eta^*$ for some $\rho \in \G$ put $\sep =  \rho \se S$. Then  \eqref{xab1q}, \eqref{yab1q} become
\begin{align*} 
    + & \begin{cases}
 1  \text{ \ if \ }\eta' \equiv \eta \bmod \G  \text{ and } n=m
 \end{cases}\\
  + & \begin{cases}
 (-1)^{k/2} e^{2\pi^2 n/\ell_{\eta'}}  \text{ \ if \ }\eta' \equiv \eta^* \bmod \G  \text{ and }  n=-m
 \end{cases} 
\end{align*}
respectively. See Proposition \ref{trpr} for the proof of this and the general result.}
\end{remark}

In Subsection \ref{last} we test Theorem \ref{CIShh} numerically. We also  show there that a simple special case of the theorem allows us to naturally detect  when the negative Pell equation
\begin{equation}\label{npell}
    x^2-Dy^2=-1 \qquad (\text{$D \in \Z_{\gqs 1}$, non-square})
\end{equation}
has integer solutions.

\section{Good's generalized Kloosterman sums} \label{good}
The Kloosterman sums that arise in all the cases we need are covered by Good's theory as described in \cite{G83}.
Following his notation,
let $\xi$ and $\chi$ each denote either a cusp such as $\ca$ or a hyperbolic fixed pair $\eta$. If the object we are defining is independent of the particular cusp or hyperbolic fixed pair we sometimes write {\em par}  or {\em hyp}, respectively, instead.  For $M=\left(\smallmatrix
a & b \\ c & d
\endsmallmatrix\right) \in \SL_2(\R)$ define the functions $\sideset{_\xi}{_{\chi}}{\oplambda}(M)$  as follows:
\begin{align*}
    \sideset{_\text{par}}{_\text{par}}{\oplambda}(M) & := \frac ac &
    \sideset{_\text{par}}{_\text{hyp}}{\oplambda}(M) & :=  \frac a{2c} + \frac b{2d}
    \\
    \sideset{_\text{hyp}}{_\text{par}}{\oplambda}(M) & := \log \left|\frac ac \right| &
     \sideset{_\text{hyp}}{_\text{hyp}}{\oplambda}(M) & := \frac 12\log \left|\frac{ab}{cd} \right| .
\end{align*}
Good parameterized his sums with $\sideset{_\xi}{_{\chi}}{\opv}(M)$, $\sideset{_\xi}{_{\chi}}{\opd}(M)$ and $\sideset{_\xi}{_{\chi}}{\opd'}(M)$, defined as
\begin{align*}
    \sideset{_\text{par}}{_\text{par}}{\opv}(M) & := |c| &
    \sideset{_\text{par}}{_\text{par}}{\opd}(M) & := 0 &
    \sideset{_\text{par}}{_\text{par}}{\opd'}(M) & := 0
    \\
    \sideset{_\text{hyp}}{_\text{par}}{\opv}(M) & := \left|2 ac \right|^{1/2} &
    \sideset{_\text{hyp}}{_\text{par}}{\opd}(M) & := \frac{1-\sgn(ac)}2 &
    \sideset{_\text{hyp}}{_\text{par}}{\opd'}(M) & := 0
    \\
    \sideset{_\text{par}}{_\text{hyp}}{\opv}(M) & :=  \left|2 cd \right|^{1/2} &
    \sideset{_\text{par}}{_\text{hyp}}{\opd}(M) & := 0 &
    \sideset{_\text{par}}{_\text{hyp}}{\opd'}(M) & := \frac{1+\sgn(cd)}2
    \\
    \sideset{_\text{hyp}}{_\text{hyp}}{\opv}(M) & := |ad|^{1/2}+ |bc|^{1/2} &
    \sideset{_\text{hyp}}{_\text{hyp}}{\opd}(M) & := \frac{1-\sgn(ac)}2 &
    \sideset{_\text{hyp}}{_\text{hyp}}{\opd'}(M) & := \frac{1+\sgn(cd)}2 .
\end{align*}
The functions $\sideset{_\xi}{_{\chi}}{\oplambda}(M)$ and $\sideset{_\xi}{_{\chi}}{\opv}(M)$ are derived from the geometry of the fixed points of $\SL_2(\R)$ in $\H$ and double coset decompositions of $\SL_2(\R)$, see \cite[Sect. 3]{G83}.  The Iwasawa and Bruhat decompositions are generalized in Lemma 1 of \cite{G83}. The four  cases of this Lemma   we  need are given explicitly in our Lemmas \ref{brupp}, \ref{bruhp}, \ref{bruph} and \ref{bruhh}.

Let $\ell_\eta$ be as in \eqref{sgsh} and put  $\ell_\ca:=1$ for any cusp $\ca$.
For $\delta$, $\delta' \in \{0,1\}$ define the generalized Kloosterman sum, \cite[Eq. (5.10)]{G83}, as
\begin{equation}\label{kloost}
\sideset{_\xi^{\delta}}{_{\chi}^{\delta'}}{\opS}(m,n;\nu):= \sum_{\genfrac{}{}{0pt}{0}{\g \in \G_\xi \backslash \G / \G_\chi}{M = \sigma_{\xi}^{-1}\g\sigma_{\chi}}} \e\left(\frac{m}{\ell_\xi} \sideset{_\xi}{_{\chi}}{\oplambda}(M) - \frac{n}{\ell_\chi} \sideset{_\chi}{_{\xi}}{\oplambda}(M^{-1})\right)
\end{equation}
where the sum is restricted to $M$ such that
$$
   \sideset{_\xi}{_{\chi}}{\opv}(M)=\nu, \quad \sideset{_\xi}{_{\chi}}{\opd}(M)=\delta, \quad \sideset{_\xi}{_{\chi}}{\opd'}(M)=\delta'.
$$
The usual Kloosterman sum corresponds to the parabolic/parabolic combination $\sideset{_\ca^{0}}{_{\cb}^{0}}{\opS}(m,n;\nu)$  in \eqref{kloost}, see Subsections \ref{klpp} and \ref{numa}. We use the  three other families of sums with parabolic and hyperbolic combinations in our Fourier expansions in Sections \ref{sechp}, \ref{secph} and \ref{sechh}.
Including the elliptic case gives five further combinations which Good also fit into the formalism \eqref{kloost}.

In \cite{G83} these generalized Kloosterman sums are required for  the Fourier expansions of the non-holomorphic Poincar\'e series
\begin{equation*}
    P_\xi(z,s,m):= \sum_{\g \in \G_\xi\backslash \G} V_\xi(\sigma_\xi^{-1} \g z,s,m/\ell_\xi)
\end{equation*}
for $z \in \H$ and $\Re(s)>1$ where
\begin{align*}
    V_{par}(z,s,\lambda) & := \frac 1i \int_{-z}^{-\overline{z}} \e(-\lambda \rho) \left( \frac{y}{(\rho+z)(\rho+\overline{z})}\right)^{1-s} \, d\rho,\\
    V_{hyp}(z,s,\lambda) & := \frac 1i \int_{-\log z}^{-\overline{\log z}} \e(-\lambda \rho) \left( \frac{2y e^\rho}{(z e^\rho -1)(\overline{z}e^\rho -1)}\right)^{1-s} \, d\rho \qquad (\Re(z)>0).
\end{align*}
These  series are constructed to be  eigenfunctions of the hyperbolic Laplacian:
\begin{equation*}
    \Delta P_\xi(z,s,m) = -s(1-s)P_\xi(z,s,m) \qquad \text{for} \qquad \Delta := y^2 \left( \frac{\partial^2}{\partial x^2} + \frac{\partial^2}{\partial y^2}  \right), \quad z=x+iy.
\end{equation*}
See \cite[Sect. 7]{G83} for the details.

\section{Parabolic Poincar\'e series and their parabolic Fourier expansions} \label{secpp}
Let  $\ca$ and $\cb$ be two  cusps for $\G$ and let $m$ and $n$ be any two integers.
In this section we give a detailed review of the  computation of the coefficients $c_\cb(n; P_{\ca,m})$ in the parabolic Fourier expansion of $P_{\ca,m}$ at $\cb$:
$$
\left(P_{\ca,m}|_k \sb\right)(z) = \sum_{n\in \Z} c_\cb(n; P_{\ca,m}) e^{2\pi i n z}.
$$
See for example \cite[Chap. 2, 3]{IwTo} and \cite[Chap. 5]{R77} for similar treatments.   Sections \ref{sechp}, \ref{secph} and \ref{sechh}  will extend these calculations to the  cases when $\ca$ or $\cb$ equals $\eta$ or $\eta'$.
We also remark that in \cite{Pr} the Fourier expansion is computed for a very general kind of parabolic
Poincar\'e series with complex `weight' and separate multiplier system.

\subsection{An integral for the parabolic/parabolic case}
For $m$, $n \in \Z$ and $r \in \R_{\neq 0}$ define
\begin{equation}\label{ipp}
    I_{par \, par}(m,n;r) := \int_{-\infty+iy}^{\infty+iy} \e\left(-\frac{m}{r^2 u} - nu\right) u^{-k}  \, du \qquad (y>0, \ k>1).
\end{equation}
This is the integral we will  need shortly, see \eqref{inffpp} in the proof of Theorem \ref{CISpp}, and we study it here first.

\begin{prop}
The integral \eqref{ipp} is absolutely convergent. For an implied constant depending only on $k>1$,
\begin{align}
    I_{par \, par}(m,n;r) & = 0   &(n \lqs 0), \label{ippx1}\\
    I_{par \, par}(m,n;r) & \ll n^{(k-1)/2}  \exp\left(2\pi n^{1/2} \left(1+\frac{|m|-m}{2r^2} \right) \right)  & (n >0), \label{ippx2}\\
    I_{par \, par}(m,n;r) & \ll n^{k-1}  & (m, \ n >0). \label{ippx3}
\end{align}
\end{prop}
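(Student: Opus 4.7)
My plan is to prove all three estimates by shifting the contour to the optimal horizontal line in each case, plus one rescaling substitution for \eqref{ippx3}. Setting $f(u) := \e(-m/(r^2u) - nu) u^{-k}$, on $\Im u = y$ one computes
\[
|f(u)| = (x^2+y^2)^{-k/2}\exp\left(2\pi n y - \frac{2\pi m y}{r^2(x^2+y^2)}\right),
\]
which is absolutely integrable in $x$ for $k>1$. Since $f$ is holomorphic on $\Im u > 0$ and $|f| = O(R^{-k})$ on vertical segments $\{R+it : y_1 \leq t \leq y_2\}$, Cauchy's theorem makes $I_{par\,par}(m,n;r)$ independent of $y > 0$. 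This contour-shift justification is the one step requiring care; it is the main obstacle, but is routine.

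For \eqref{ippx1}, I let $y \to \infty$. With $n \leq 0$ we have $e^{2\pi n y} \leq 1$; the $m$-factor is $\leq 1$ if $m \geq 0$ and $\leq \exp(2\pi|m|/(r^2 y)) \to 1$ if $m < 0$; and $\int (x^2+y^2)^{-k/2}\,dx = C_k y^{1-k}$. Hence $|I_{par\,par}(m,n;r)| \ll y^{1-k} \to 0$, so the integral vanishes.

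For \eqref{ippx2} with $n > 0$, I take $y = 1/\sqrt{n}$, which optimally balances $y^{1-k}$ against $e^{2\pi n y}$. The same bounds on the $m$-factor give the estimate $n^{(k-1)/2} e^{2\pi\sqrt{n}}$ when $m \geq 0$ and $n^{(k-1)/2} e^{2\pi\sqrt{n}(1+|m|/r^2)}$ when $m < 0$. These merge into $n^{(k-1)/2}\exp(2\pi\sqrt{n}(1 + (|m|-m)/(2r^2)))$ since $(|m|-m)/(2r^2)$ equals $0$ or $|m|/r^2$ accordingly, yielding \eqref{ippx2}.

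For \eqref{ippx3}, with $m,n > 0$, I make the substitution $u = v/n$, which gives
\[
I_{par\,par}(m,n;r) = n^{k-1}\int_{-\infty+iy'}^{\infty+iy'} v^{-k}\,\e\left(-\frac{mn}{r^2 v} - v\right)\,dv.
\]
The remaining integral is $I_{par\,par}$ with parameters $(mn,1)$ in place of $(m,n)$, so by the $m \geq 0$ case of \eqref{ippx2} applied to it (choosing $y' = 1$) it is bounded by a constant depending only on $k$, uniformly in $mn$ since the $m'$-factor stays $\leq 1$. Multiplying by $n^{k-1}$ gives \eqref{ippx3}.
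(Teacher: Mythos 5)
Your proof is correct and follows essentially the same route as the paper: bound the integrand on the horizontal line $\Im u = y$, use holomorphy to move the contour, let $y\to\infty$ for $n\lqs 0$, and take $y=1/\sqrt{n}$ for \eqref{ippx2}. Your rescaling $u=v/n$ for \eqref{ippx3} is just a repackaging of the paper's choice $y=1/n$ (your $y'=1$ corresponds to $y=1/n$ in the original variable), so there is no substantive difference.
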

\begin{proof}
Bounding the absolute value of the integrand in \eqref{ipp} when $u=x+iy$ shows
\begin{equation}\label{gresb}
    |I_{par \, par}(m,n;r)| \lqs \exp\left(2\pi ny + \frac{\pi(|m|-m)}{r^2 y}\right) \int_{-\infty}^\infty \frac{dx}{(x^2+y^2)^{k/2}}.
\end{equation}
Clearly, the right side of \eqref{gresb} converges for $k>1$. Since the integrand is holomorphic, \eqref{ipp} is independent of $y>0$. Letting $y \to \infty$ in \eqref{gresb} shows \eqref{ippx1}.
A special case of \cite[3.251.11]{GR} implies
\begin{equation}\label{gres}
    \int_{-\infty}^\infty \frac{dx}{(x^2+y^2)^{s}} = \sqrt{\pi} \frac{\G(s-1/2)}{\G(s)} y^{1-2s} \qquad(\Re(s)>1/2).
\end{equation}
Using \eqref{gres} in \eqref{gresb} with $y=1/\sqrt{n}$  and  $y=1/n$ proves \eqref{ippx2} and \eqref{ippx3} respectively.
\end{proof}

Next we evaluate $I_{par \, par}(m,n;r)$ in terms of the  hypergeometric function ${_0}F_1(;b;z)$. Recall that for each $b\not\in \Z_{\lqs 0}$ it is a holomorphic function of $z\in \C$.

\begin{prop} \label{wpro}
Let $k \in \R_{>1}$. For all $m \in \Z$ and $n \in \Z_{\gqs 1}$
\begin{equation}\label{ipp3}
    I_{par \, par}(m,n;r) =  \frac{(2\pi)^k n^{k-1}}{e^{\pi i k/2}\G(k)} \ {_0}F_1 \left(;k; - \frac{4\pi^2 mn}{r^2} \right).
\end{equation}
\end{prop}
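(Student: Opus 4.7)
My plan is to expand the factor $\e(-m/(r^2 u))$ in the integrand of \eqref{ipp} as its Taylor series in $1/u$, swap sum and integral, evaluate the resulting integrals in closed form, and recognize the outcome as a ${_0}F_1$ series.

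To justify the interchange, I would use $|\e(-m/(r^2 u))| \lqs e^{2\pi|m|/(r^2|u|)}$ on the contour $\Im u = y$. Summing absolute values of the Taylor terms bounds them by $|u|^{-k} e^{2\pi|m|/(r^2 y)}e^{2\pi n y}$, which is integrable since $k>1$, so dominated convergence applies. This reduces the proof to evaluating, for $s \in \R_{>1}$ and $n \gqs 1$,
\begin{equation*}
    J_s := \int_{-\infty+iy}^{\infty+iy} \e(-nu) u^{-s}\, du.
\end{equation*}

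For $J_s$, the substitution $t = -2\pi i n u$ sends the horizontal line $\{\Im u = y\}$, oriented left to right, to the vertical line $\{\Re t = 2\pi n y\}$, oriented top to bottom, and transforms $\e(-nu)u^{-s}du$ into $i(2\pi n)^{s-1}e^{-\pi i s/2}e^{t}t^{-s}dt$. Applying Hankel's representation $\frac{1}{\G(s)} = \frac{1}{2\pi i}\int_{c-i\infty}^{c+i\infty}e^t t^{-s}\,dt$, valid for any $c>0$, together with the orientation reversal of the contour, yields $J_s = (2\pi)^s n^{s-1}/(e^{\pi i s/2}\G(s))$.

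Putting $J_{k+j}$ back into the series and simplifying $(-2\pi i m/r^2)^j(2\pi n)^j/i^j = (-4\pi^2 mn/r^2)^j$ together with $\G(k+j) = \G(k)(k)_j$ collapses the sum to $\G(k)^{-1}\,{_0}F_1(;k;-4\pi^2 mn/r^2)$, yielding \eqref{ipp3}. The main obstacle is pure bookkeeping: keeping track of the powers of $i$ and the orientation reversal of the contour when invoking Hankel's formula, both easy places to drop a sign.
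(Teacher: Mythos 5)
Your proof is correct, and the computation checks out: with $J_s=\int_{-\infty+iy}^{\infty+iy}\e(-nu)u^{-s}\,du=(2\pi)^s n^{s-1}/(e^{\pi i s/2}\G(s))$ the $j$th Taylor term contributes $\frac{(-2\pi i m/r^2)^j}{j!}J_{k+j}$, and the simplification $(-2\pi i m/r^2)^j(2\pi n)^j i^{-j}=(-4\pi^2 mn/r^2)^j$ together with $\G(k+j)=\G(k)(k)_j$ does collapse the sum to the stated ${_0}F_1$; your domination argument (the termwise absolute values sum to $e^{2\pi|m|/(r^2|u|)}\lqs e^{2\pi|m|/(r^2 y)}$, leaving an integrable $|u|^{-k}$ envelope for $k>1$) legitimately justifies the interchange, and the orientation reversal under $t=-2\pi i nu$ is handled correctly. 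The route is genuinely different from the paper's primary proof, which does not expand the exponential at all: there the identity \eqref{ipp3} is obtained in one step from the integral representation \eqref{jbe} of the $J$-Bessel function (a deformation of \cite[8.412.2]{GR}) combined with $J_{b-1}(2z)=z^{b-1}{_0}F_1(;b;-z^2)/\G(b)$. Interestingly, your argument is precisely the alternative proof the authors record in the remark immediately after their proof, attributed to the referee. The trade-off is the expected one: the paper's proof is shorter if one is willing to quote the Bessel integral from Gradshteyn--Ryzhik, whereas yours is more self-contained, needing only Hankel's formula for $1/\G(s)$ and elementary estimates, at the cost of the sign and orientation bookkeeping you flag.
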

\begin{proof}
The formula \eqref{ipp3} follows directly by a change of variables from
\begin{equation}\label{diri}
    {_0}F_1 \left(;b; z \right) = \frac{e^{\pi i b/2} \G(b)}{(2\pi)^b} \int_{-\infty+it}^{\infty+it} \e\left( -u + \frac{z}{4\pi^2 u}\right) u^{-b} \, du \qquad(t>0, \ \Re(b)>1).
\end{equation}
We can establish \eqref{diri} by linking it to the integral representation of the $J$-Bessel function  in \cite[8.412.2]{GR}. Provided that $\Re(b)>1$, we may deform the contour of integration in \cite[8.412.2]{GR} to a vertical line with positive real part. Multiplying the variable by $i$ then produces
\begin{equation}\label{jbe}
    J_{b-1}(2z) = \frac{z^{b-1}}{2\pi}e^{\pi i b/2} \int_{-\infty+it}^{\infty+it} \e\left( -\frac{u}{2\pi}-\frac{z^2}{2\pi u}\right) u^{-b} \, du \qquad(t>0).
\end{equation}
See also \cite[p. 156]{R77}.
The $J$-Bessel function may be expressed in terms of hypergeometric functions:
\begin{align}\label{jb1}
    J_{b-1}(2z) & = \frac{1}{\G(b)} z^{b-1} {_0}F_1 \left(;b; - z^2 \right)\\
    & = \frac{1}{\G(b)} z^{b-1} e^{-2i z} {_1}F_1 \left(b-\frac 12;  2b-1; 4iz \right) \label{jb2}
\end{align}
as in \cite[p. 200]{AAR}. Formulas  \eqref{jbe} and \eqref{jb1} together prove \eqref{diri}.
\end{proof}

The referee has pointed out that Proposition \ref{wpro} may also be quickly shown  by combining the power series expansion
for \ $\e\left(-\frac{m}{r^2 u} \right)$ with Hankel's formula for the Gamma function, as seen in \cite[p. 156]{R77},
\begin{equation*}
    \int_{-\infty+iy}^{\infty+iy} \e\left( - nu\right) u^{-k}  \, du = \frac{(2\pi)^k n^{k-1}}{e^{\pi i k/2} \G(k)} \qquad (y,n>0, \ k>1).
\end{equation*}

\subsection{Double cosets in the parabolic/parabolic case}
Let $L$ be a complete set of inequivalent representatives for $\G_\ca\backslash \G/\G_\cb$. Partition $L$ into two sets:
\begin{equation*}
    \G(\ca,\cb)_0 := \Big\{ \delta \in L \ \Big| \ \delta \cb = \ca \Big\}, \qquad \G(\ca,\cb) := \Big\{ \delta \in L \ \Big| \ \delta \cb \neq \ca \Big\}.
\end{equation*}
It is easy to see that $\G(\ca,\cb)_0$ has at most one element.

\begin{prop} \label{pp_reps}
With the above notation, a complete set of inequivalent representatives for $\G_\ca\backslash \G$ is given by
\begin{equation} \label{jiu}
    \G(\ca,\cb)_0  \cup \Big\{ \delta \tau \ \Big| \ \delta \in \G(\ca,\cb), \ \tau \in \G_\cb/Z  \Big\}.
\end{equation}
\end{prop}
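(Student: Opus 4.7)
The plan is to start from the disjoint double coset decomposition $\G = \bigsqcup_{\delta \in L} \G_\ca \delta \G_\cb$ and refine each double coset into left $\G_\ca$-cosets. For a fixed $\delta \in L$, the map $\tau \mapsto \G_\ca \delta \tau$ descends to a bijection
$$
H_\delta \backslash \G_\cb \; \longleftrightarrow \; \G_\ca \backslash (\G_\ca \delta \G_\cb), \qquad H_\delta := \delta^{-1}\G_\ca \delta \cap \G_\cb,
$$
since $\G_\ca \delta \tau_1 = \G_\ca \delta \tau_2$ is equivalent to $\tau_1 \tau_2^{-1} \in H_\delta$. So the task reduces to identifying $H_\delta$ in each case.

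The key observation is that $\delta^{-1}\G_\ca \delta$ is the stabilizer of $\delta^{-1} \ca$ in $\G$, so $H_\delta$ consists exactly of those elements of $\G$ that fix both of the parabolic points $\cb$ and $\delta^{-1}\ca$. If $\delta \in \G(\ca,\cb)_0$, then $\delta^{-1}\ca = \cb$ and $H_\delta = \G_\cb$; the whole double coset collapses to the single left coset $\G_\ca \delta$. If instead $\delta \in \G(\ca,\cb)$, the two points $\cb$ and $\delta^{-1}\ca$ are distinct elements of $\R \cup \{\infty\}$, and since every non-central element of $\G_\cb$ is parabolic with unique fixed point $\cb$, no such element can fix $\delta^{-1}\ca$. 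Hence $H_\delta = Z$, and this double coset splits as $\bigsqcup_{\tau \in \G_\cb/Z} \G_\ca \delta \tau$.

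Assembling these pieces over all $\delta \in L$ yields precisely the set \eqref{jiu}, and the required inequivalence of its elements as left $\G_\ca$-cosets follows from two observations: cosets coming from different $\delta \in L$ lie in different double cosets, while cosets coming from the same $\delta \in \G(\ca,\cb)$ are distinguished by the bijection above. The only non-routine ingredient is the stabilizer analysis for parabolic fixed points, which is immediate from the fact that a non-trivial parabolic element of $\SL_2(\R)$ fixes exactly one point of $\R \cup \{\infty\}$; this same structural observation will be the template for the analogous hyperbolic/parabolic and hyperbolic/hyperbolic decompositions in later sections.
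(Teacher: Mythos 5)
Your proof is correct and follows essentially the same route as the paper's: both refine the double coset decomposition into left $\G_\ca$-cosets and reduce everything to the observation that an element of $\G$ fixing the cusp $\cb$ (resp.\ $\ca$) and a second, distinct point must be central, since non-central elements of a cusp stabilizer are parabolic with a unique fixed point. Your packaging via the subgroup $H_\delta = \delta^{-1}\G_\ca\delta\cap\G_\cb$ and the bijection $H_\delta\backslash\G_\cb \leftrightarrow \G_\ca\backslash(\G_\ca\delta\G_\cb)$ is just the standard orbit-stabilizer form of the paper's direct element-chasing argument, and both yield the same dichotomy $H_\delta=\G_\cb$ (collapse to one coset) versus $H_\delta=Z$ (split over $\G_\cb/Z$).
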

\begin{proof}
The set $L':=\{ \delta\tau \ | \ \delta \in L, \ \tau \in \G_\cb/Z  \}$ clearly gives a complete set of representatives for $\G_\ca\backslash \G$, but some of its elements may be equivalent modulo $\G_\ca$. Suppose
\begin{equation}\label{jul}
    \G_\ca\delta\tau = \G_\ca \delta'\tau' \quad \text{ for } \quad \delta, \delta' \in L  \quad \text{and}  \quad \tau, \tau' \in \G_\cb/Z.
\end{equation}
  We must have $\delta'=\delta$ because $L$ is defined as a set of inequivalent representatives. Hence there is a $\g \in \G_\ca$ so that $\g \delta\tau = \delta\tau'$. It follows that $\g$ fixes $\ca$ and $\delta \cb$ which can only happen if $\g=\pm 1$ or if $\delta \cb = \ca$.

If $\g=\pm 1$ then $\tau=\tau'$. If $\delta \cb = \ca$ then
$\G_\cb = \delta^{-1} \G_\ca \delta$ and any $\tau \in \G_\cb$ may be written as $\delta^{-1} \g \delta$ for $\g \in \G_\ca$. Therefore, for all $\tau \in \G_\cb$, $\G_\ca\delta\tau = \G_\ca\delta (\delta^{-1} \g \delta) = \G_\ca\delta$. We have shown that \eqref{jul} implies $\delta'=\delta$, and then $\tau=\tau'$ or $\delta \cb = \ca$ and $\G_\ca\delta\tau = \G_\ca \delta'\tau' = \G_\ca \delta$. Hence, with \eqref{jiu}, we have removed all of the equivalent elements from the set $L'$ we started with.
\end{proof}

We may also characterize the sets $\G(\ca,\cb)_0$ and $\G(\ca,\cb)$ with
\begin{align*}
    \G(\ca,\cb)_0  & = \left\{  \delta  \in L \ \left| \ \sa^{-1} \delta \sb = \begin{pmatrix} a  & b \\ c & d \end{pmatrix} \text{ with } c = 0 \right.\right\}, \\
    \G(\ca,\cb)  & = \left\{  \delta  \in L \ \left| \ \sa^{-1} \delta \sb = \begin{pmatrix} a  & b \\ c & d \end{pmatrix} \text{ with } c\neq 0 \right.\right\}
\end{align*}
since
\begin{equation*}
    c=0 \iff \sa^{-1} \delta \sb \infty = \infty \iff  \delta \sb \infty = \sa \infty \iff  \delta \cb = \ca.
\end{equation*}

To describe an example of $\G(\ca,\cb)$ more explicitly, we first recall the Bruhat decomposition in the form given by \cite[Lemma 1]{G83}.
\begin{lemma} \label{brupp}
For $M=\left(\smallmatrix
a & b \\ c & d
\endsmallmatrix\right) \in \SL_2(\R)$ with $c \ne 0$,
\begin{equation} \label{bru}
    \begin{pmatrix} a & b \\ c & d \end{pmatrix} = \sgn(c) \begin{pmatrix} 1 & a/c \\ 0 & 1 \end{pmatrix} \begin{pmatrix} 0 & -1 \\ 1 & 0 \end{pmatrix}
    \begin{pmatrix} \nu & 0 \\ 0 & 1/\nu \end{pmatrix} \begin{pmatrix} 1 & d/c \\ 0 & 1 \end{pmatrix}
\end{equation}
for $\nu=\sideset{_\text{par}}{_\text{par}}{\opv}(M)=|c|$.
\end{lemma}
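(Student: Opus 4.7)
The plan is to verify the identity directly by matrix multiplication, working out the right-hand side and checking that each of the four entries reproduces $a$, $b$, $c$, $d$. Since the claim is a factorization identity in $\SL_2(\R)$, there is no conceptual obstacle — the only subtlety lies in handling signs correctly, which is exactly the purpose of isolating the $\sgn(c)$ prefactor.

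First I would multiply the three rightmost factors from right to left. The product of the diagonal matrix $\operatorname{diag}(\nu,1/\nu)$ with the upper-triangular matrix $\left(\smallmatrix 1 & d/c \\ 0 & 1\endsmallmatrix\right)$ yields $\left(\smallmatrix \nu & \nu d/c \\ 0 & 1/\nu\endsmallmatrix\right)$, and then left-multiplication by the Weyl element $\left(\smallmatrix 0 & -1 \\ 1 & 0\endsmallmatrix\right)$ gives $\left(\smallmatrix 0 & -1/\nu \\ \nu & \nu d/c\endsmallmatrix\right)$. Finally, left-multiplication by $\left(\smallmatrix 1 & a/c \\ 0 & 1\endsmallmatrix\right)$ produces
\[
    \begin{pmatrix} a\nu/c & -1/\nu + a\nu d/c^2 \\ \nu & \nu d/c \end{pmatrix}.
\]

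Next I would apply the $\sgn(c)$ prefactor. The key identity is $\sgn(c)\cdot \nu = \sgn(c)\cdot |c| = c$. Using this, the $(2,1)$ entry becomes $\sgn(c)\cdot \nu = c$, the $(2,2)$ entry becomes $\sgn(c)\cdot \nu d/c = d$, and the $(1,1)$ entry becomes $\sgn(c)\cdot a\nu/c = a$. These three matches are immediate.

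The only entry requiring an extra observation is the $(1,2)$ entry: after applying $\sgn(c)$ it equals $-1/c + ad/c = (ad-1)/c$. The hypothesis $M \in \SL_2(\R)$ gives $ad - bc = 1$, hence $ad - 1 = bc$, so the $(1,2)$ entry equals $b$, completing the verification. I expect the only pitfall is bookkeeping of the $\sgn(c)$ versus $|c|$, which is handled cleanly by the single substitution $\sgn(c)\nu = c$.
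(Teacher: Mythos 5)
Your verification is correct: the three right-hand factors multiply out exactly as you state, and the single substitution $\sgn(c)\,\nu = c$ together with $ad-bc=1$ recovers all four entries of $M$. The paper itself offers no proof of this lemma — it simply cites it as Lemma 1 of Good's book \cite{G83} — so your direct matrix computation is the natural way to supply the missing argument, and it checks out completely.
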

We see that multiplying \eqref{bru} on the left   by
$\left(\smallmatrix 1 & \ell \\ 0 & 1 \endsmallmatrix\right)$ changes $a/c$ to $\ell+a/c$ and leaves $c$ and $d$ fixed. Similarly, multiplying on the right  by
$\left(\smallmatrix 1 & \ell \\ 0 & 1 \endsmallmatrix\right)$ changes $d/c$ to $\ell+d/c$ and leaves $a$ and $c$ fixed.
Define
\begin{equation*} \label{recab}
R_{\ca\cb}  :=  \left\{ \left. \begin{pmatrix} a  & b \\ c & d \end{pmatrix} \in \sa^{-1}\G \sb \ \right|  \ c\neq 0, \ 0 \lqs  \frac ac  < 1, \ 0 \lqs \frac dc<1   \right\}.
\end{equation*}

\begin{lemma} \label{ac01pp}
We may take $\sa^{-1}\G(\ca,\cb)\sb = R_{\ca\cb}/Z$.
\end{lemma}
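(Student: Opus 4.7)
The plan is to establish that the map $\delta \mapsto \sa^{-1}\delta\sb$ sends $\G(\ca,\cb)$ bijectively onto $R_{\ca\cb}/Z$, by showing (i) every element of $\sa^{-1}\G(\ca,\cb)\sb$ can be normalized into $R_{\ca\cb}$ after adjusting $\delta$ on the left by $\G_\ca$ and on the right by $\G_\cb$, and (ii) the normalized representative is unique modulo $\pm I$.

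For (i), the key input is the observation stated just after Lemma \ref{brupp}: for $M = \left(\smallmatrix a & b \\ c & d \endsmallmatrix\right)$ with $c\neq 0$, left multiplication by $\left(\smallmatrix 1 & \ell \\ 0 & 1 \endsmallmatrix\right)$ sends $a/c$ to $a/c+\ell$ while fixing $c$ and $d$, and right multiplication by $\left(\smallmatrix 1 & \ell \\ 0 & 1 \endsmallmatrix\right)$ sends $d/c$ to $d/c+\ell$ while fixing $a$ and $c$. Since $\sa^{-1}\G_\ca\sa$ and $\sb^{-1}\G_\cb\sb$ consist precisely of matrices $\pm\left(\smallmatrix 1 & m \\ 0 & 1 \endsmallmatrix\right)$ with $m\in\Z$, starting from any $\sa^{-1}\delta\sb$ with $\delta\in\G(\ca,\cb)$ (so $c\neq 0$, by the characterization of $\G(\ca,\cb)$ given earlier in this section), I can choose integer shifts $\ell_1,\ell_2$ and signs to land in $R_{\ca\cb}$. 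This shows surjectivity.

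For (ii), suppose $M,M'\in R_{\ca\cb}$ satisfy $M' = \gamma_L M \gamma_R$ with $\gamma_L\in\sa^{-1}\G_\ca\sa$, $\gamma_R\in\sb^{-1}\G_\cb\sb$, so $\gamma_L = \epsilon_L\left(\smallmatrix 1 & \ell_L \\ 0 & 1 \endsmallmatrix\right)$ and $\gamma_R = \epsilon_R\left(\smallmatrix 1 & \ell_R \\ 0 & 1 \endsmallmatrix\right)$ for some $\epsilon_L,\epsilon_R\in\{\pm 1\}$ and $\ell_L,\ell_R\in\Z$. A direct entry-by-entry calculation gives $a'/c' = a/c + \ell_L$ and $d'/c' = d/c + \ell_R$ (the signs cancel in the ratios). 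Since both ratios on each side lie in $[0,1)$, we must have $\ell_L=\ell_R=0$, and hence $M' = \epsilon_L\epsilon_R M = \pm M$. Modding out by $Z$ absorbs this sign ambiguity, which establishes injectivity. Combining with Proposition \ref{pp_reps} one sees that choosing $\G(\ca,\cb)$ so that $\sa^{-1}\G(\ca,\cb)\sb = R_{\ca\cb}/Z$ is consistent.

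The step requiring the most care is the bookkeeping of the $\pm$ signs coming from the generators of $\sa^{-1}\G_\ca\sa$ and $\sb^{-1}\G_\cb\sb$, especially when one wants the final identification to be exactly modulo $Z$ rather than modulo $\{\pm I\}$ in the case $-I\notin\G$; but in either case the surplus sign is accounted for precisely by the quotient by $Z$. Apart from this, the lemma is a routine application of the Bruhat normal form recalled in Lemma \ref{brupp}.
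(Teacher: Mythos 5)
Your proof is correct and follows essentially the same route as the paper's: both use the shift observations following Lemma \ref{brupp} to normalize $a/c$ and $d/c$ into $[0,1)$ independently (surjectivity onto the double cosets with $c\neq 0$), and both verify uniqueness by an entry comparison, with the residual sign ambiguity absorbed exactly by the quotient by $Z$. The only cosmetic difference is that the paper splits into the cases $-I\notin\G$ and $-I\in\G$ while you carry the signs $\epsilon_L,\epsilon_R$ uniformly; the substance is identical.
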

\begin{proof}
Let $B=\left\{ \left. \left(\smallmatrix 1 & \ell \\ 0 & 1 \endsmallmatrix\right) \ \right|  \ \ell \in \Z   \right\}$ and suppose that $-I \not\in \G$. Then
\begin{equation*}
    \sa^{-1}(\G_\ca\backslash \G/\G_\cb)\sb = \sa^{-1}\G_\ca\sa \backslash \sa^{-1}\G\sb/ \sb^{-1}\G_\cb \sb = B\backslash \sa^{-1}\G\sb/ B.
\end{equation*}
It follows that $R_{\ca\cb}$ gives a complete set of representatives for $\sa^{-1}\G(\ca,\cb)\sb$.  Suppose that two elements
$\left(\smallmatrix a & b \\ c & d \endsmallmatrix\right)$,
$\left(\smallmatrix a' & b' \\ c' & d' \endsmallmatrix\right)$
of $R_{\ca\cb}$ are equivalent, i.e. $\left(\smallmatrix 1 & \ell \\ 0 & 1 \endsmallmatrix\right)
\left(\smallmatrix a & b \\ c & d \endsmallmatrix\right)
\left(\smallmatrix 1 & \ell' \\ 0 & 1 \endsmallmatrix\right) = \left(\smallmatrix a' & b' \\ c' & d' \endsmallmatrix\right)$. Then $c=c'$ and also $a=a'$, $d=d'$. This proves the lemma when $-I \not\in \G$. If $-I \in \G$ then $\sa^{-1}\G_\ca\sa =  \sb^{-1}\G_\cb\sb = -B \cup B$. Hence $\left(\smallmatrix a & b \\ c & d \endsmallmatrix\right)$ and $\left(\smallmatrix -a & -b \\ -c & -d \endsmallmatrix\right)$ are now equivalent in $R_{\ca\cb}$.
\end{proof}

We also note that if $\left(\smallmatrix a & * \\ c & * \endsmallmatrix\right) \in R_{\ca\cb}$ then $b$ and $d$ are uniquely determined. To see this, suppose $
\g=\left(\smallmatrix a & * \\ c & d \endsmallmatrix\right)$ and $\g'=\left(\smallmatrix a & * \\ c & d' \endsmallmatrix\right)$ are in $R_{\ca\cb}$. Then
$\left(\smallmatrix a & * \\ c & d \endsmallmatrix\right)^{-1} \left(\smallmatrix a & * \\ c & d' \endsmallmatrix\right)= \left(\smallmatrix * & * \\ 0 & * \endsmallmatrix\right) \in \sb^{-1}\G\sb$ and we must have $\g=\g'$. Similarly, if $\left(\smallmatrix * & * \\ c & d \endsmallmatrix\right) \in R_{\ca\cb}$ then $a$ and $b$ are uniquely determined.


\subsection{Kloosterman sums} \label{klpp}

Put $$C_{\ca\cb}:=\Bigl\{|c| \ : \ \left(\smallmatrix a
& b \\ c & d
\endsmallmatrix\right) \in \sa^{-1}\G\sb, \ c\neq 0  \Bigr\}.
$$
 We use $|c|$ instead of $c$ here because it is convenient that $\left(\smallmatrix a
& b \\ c & d
\endsmallmatrix\right)$ and $\left(\smallmatrix -a
& -b \\ -c & -d
\endsmallmatrix\right)$ (if it is in $\sa^{-1}\G\sb$) have the same representative. (We could also have used $c^2$, making the parameter a product of two matrix elements as we do in Sections \ref{sechp}, \ref{secph} and \ref{sechh}, but this goes against the conventional notation.)
For $C \in C_{\ca\cb}$  the Kloosterman sum
\begin{equation}\label{kloopp}
S_{\ca\cb}(m,n;C):= \sum_{\substack{\g \in \G_\ca \backslash \G / \G_\cb \\ \left(\smallmatrix a
& b \\ c & d
\endsmallmatrix\right) = \sa^{-1}\g\sb, \ |c|=C} } \e\left(m\frac{a}{c} + n\frac{d}{c} \right)
\end{equation}
is well defined.
Since $c \neq 0$ we could equivalently  have summed over $\g \in \G(\ca,\cb)$.  In Good's notation \eqref{kloost}, we have $S_{\ca\cb}(m,n;C)= \sideset{_\ca^{0}}{_{\cb}^{0}}{\opS}(m,n;C)$. See also \cite[Eq. (3.13)]{IwTo}, for example.
Note that the sum $S_{\ca\cb}(m,n;C)$ depends on the choice of scaling matrices $\sa$ and $\sb$ in a simple way; we assume the choice is fixed for each cusp. Replacing $\g$ by $\g^{-1}$ in \eqref{kloopp} shows
\begin{equation*}
    S_{\ca\cb}(m,n;C)  =  S_{\cb\ca}(-n,-m;C) = \overline{S_{\cb\ca}(n,m;C)}.
\end{equation*}

Now let $\mathcal N_{\ca\cb}(C) := S_{\ca\cb}(0,0;C)$ be the number of terms in the sum \eqref{kloopp}. Then $\mathcal N_{\ca\cb}(C)$ is always finite and in fact, by \cite[Prop. 2.8]{IwTo},
\begin{equation}\label{klbl}
    \sum_{C \in C_{\ca\cb}, \ C \lqs X} C^{-1} \mathcal N_{\ca\cb}(C) \ll X.
\end{equation}
From \eqref{klbl} we  deduce the bounds
\begin{align}\label{trivk}
    \mathcal N_{\ca\cb}(C) & \ll C^2, \\
    S_{\ca\cb}(m,n;C) & \ll C^2, \label{trivka}\\
    \#\{C \in C_{\ca\cb} \ : \ C \lqs X\}  & \ll X^2 \label{trivkb}
\end{align}
with implied constants depending only on $\G$, $\ca$ and $\cb$.

\subsection{The parabolic expansion of $P_{\ca,m}$}

\begin{proof}[Proof of Theorem \ref{CISpp}]
With $z=x+iy \in \H$, use Proposition \ref{pp_reps} to write
\begin{multline} \label{iiipp}
  \left(P_{\ca,m}|_k \sb\right)(z) = \sum_{\g \in \G_\ca \backslash \G }
    \frac{\e(m(\sa^{-1}\g \sb z))}
    {j(\sa^{-1}\g \sb, z)^{k}}  \\
    = \sum_{\delta \in \G(\ca,\cb)_0} \frac{\e(m(\sa^{-1}\delta \sb z))}
    {j(\sa^{-1}\delta \sb, z)^{k}} + \sum_{\g \in \G(\ca,\cb)} \sum_{\tau \in \G_\cb/Z} \frac{\e(m(\sa^{-1}\g \tau \sb z))}
    {j(\sa^{-1}\g \tau\sb, z)^{k}}
    .
\end{multline}
The first sum in \eqref{iiipp} is just $\e(mz)$ if $\G(\ca,\cb)_0$ is non-empty, which happens exactly when $\ca$ and $\cb$ are $\G$-equivalent.
Write the second sum as
\begin{equation} \label{remppx}
   \sum_{C \in C_{\ca\cb}}
    \sum_{\substack{ \g \in \G(\ca,\cb) \\ \left(\smallmatrix a
& b \\ c & d
\endsmallmatrix\right) = \sa^{-1}\g\sb,
\
 |c|=C }}
\sum_{\tau \in \G_\cb/Z} \frac{\e(m(\sa^{-1}\g\sb \cdot \sb^{-1}\tau \sb z))}
    {j(\sa^{-1}\g \sb \cdot \sb^{-1}\tau \sb, z)^{k}}
\end{equation}
and the inner sum is
\begin{equation} \label{intgpp}
   \sum_{n\in \Z}
    \frac{\e(m(\sa^{-1}\g \sb(z+n)))}
    {j(\sa^{-1}\g \sb , z+n)^{k}}.
\end{equation}
Since
$
\sa^{-1}\g\sb = \left(\smallmatrix a
& b \\ c & d
\endsmallmatrix\right)$ is in $\SL_2(\R)
$ with $c \neq 0$
we have
\begin{equation}\label{gsz}
\sa^{-1}\g\sb z = \frac ac - \frac 1{c(cz+d)}
\end{equation}
and so \eqref{intgpp} equals
\begin{equation} \label{inffpp}
\sum_{n\in \Z} f(n) \qquad \text{for} \qquad
    f(t):=\frac{\e\left( m\left(\frac ac - \frac 1{c^2(z+t+d/c)}\right)\right)}
    {c^k(z+t+d/c)^{k}}.
\end{equation}
Poisson summation gives
\begin{equation} \label{psf}
    \sum_{n\in \Z} f(n) = \sum_{n\in \Z} \hat f(n) \qquad \text{for} \qquad \hat f(n):=\int_{-\infty}^\infty f(t) \e(-nt)\, dt.
\end{equation}
To check this is valid, we may use
the convenient form of \cite[Thm. A, p. 71]{Ra}, which requires $f$ to be twice continuously differentiable on $\R$ and that $\int_{-\infty}^\infty f(t)\, dt$ and $\int_{-\infty}^\infty |f''(t)|\, dt$ exist.
It is straightforward to check that for $k>1$ our $f$ in \eqref{inffpp} meets these conditions. Hence \eqref{psf} holds.

Substituting $u=z+t+d/c$
  and recalling \eqref{ipp} shows that \eqref{remppx} is now
\begin{equation} \label{rempp}
   \sum_{C \in C_{\ca\cb}}
    \sum_{\substack{ \g \in \G(\ca,\cb) \\ \left(\smallmatrix a
& b \\ c & d
\endsmallmatrix\right) = \sa^{-1}\g\sb,
\
 |c|=C }}
\sum_{n \in \Z} e^{2 \pi i n z}\frac 1{ C^{k}} \e\left(m\frac{a}{c} + n\frac{d}{c} \right) I_{\ca\cb}(m,n;C).
\end{equation}
Taking absolute values and using \eqref{ippx1}, \eqref{ippx2}, we find that \eqref{rempp} is majorized by
\begin{equation*}
    \sum_{C \in C_{\ca\cb}}
    \sum_{\substack{ \g \in \G(\ca,\cb) \\ \left(\smallmatrix a
& b \\ c & d
\endsmallmatrix\right) = \sa^{-1}\g\sb,
\
 |c|=C }}
\sum_{n = 1}^\infty e^{-2 \pi  n y} \frac{n^{(k-1)/2}}{C^{k}}  \exp\left(2\pi n^{1/2} \left(1+\frac{|m|-m)}{C^2} \right) \right) \ll \sum_{C \in C_{\ca\cb}} \frac{\mathcal N_{\ca\cb}(C)}{C^{k}}.
\end{equation*}
With \eqref{klbl} and summation by parts, this last is convergent for $k>2$. Changing the order of summation in \eqref{rempp}  is now justified and formula \eqref{ipp3}
 completes the proof.
\end{proof}

\section{Hyperbolic Poincar\'e series and their parabolic Fourier expansions} \label{sechp}
Let $\ca$ be a cusp  and $\eta$ a hyperbolic fixed point pair for $\G$.
In this section we compute coefficients in the parabolic Fourier expansion of $P_{\eta,m}$ at $\ca$:
$$
\left(P_{\eta,m}|_k \sa\right)(z) = \sum_{n=1}^\infty c_\ca(n; P_{\eta,m}) e^{2\pi i n z}.
$$

\subsection{The hyperbolic/parabolic integral}

For $m$, $n \in \Z$ and $r \in \R_{\neq 0}$, the integral we will need is
\begin{equation}\label{ihp}
    I_{\eta \, par}(m,n;r) := \int_{-\infty+iy}^{\infty+iy} \frac{\left( \sgn(r) \frac {u-r}{u+r}\right)^{2\pi i m /\ell_\eta}e^{-2\pi i n u}}
    {( u-r)^{k/2} ( u+r)^{k/2}}  \, du \qquad (y>0, \ k>1).
\end{equation}

\begin{prop} \label{bndhp}
The integral \eqref{ihp} is absolutely convergent.
We   have $I_{\eta \, par}(m,n;r)=0$ for $n\lqs 0$ and
\begin{equation} \label{ad2b}
    I_{\eta \, par}(m,n;r) \ll  n^{k-1} \exp\left(\pi^2 (|m|-m)/\ell_\eta \right) \qquad (n>0)
\end{equation}
for an implied constant depending only on $k>2$.
\end{prop}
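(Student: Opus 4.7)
The plan is to treat the three claims in order: absolute convergence, vanishing for $n \lqs 0$, and the explicit bound for $n \gqs 1$. The integrand combines the phase factor $w^{2\pi i m/\ell_\eta}$ with $w := \sgn(r)(u-r)/(u+r)$, the exponential $e^{-2\pi i n u}$, and the denominator $(u-r)^{k/2}(u+r)^{k/2}$; I would bound each on the horizontal contour $\Im u = y$ before choosing $y$ optimally.

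The crucial geometric observation is that the M\"obius map $u \mapsto (u-r)/(u+r)$ has determinant $2r$, so it preserves $\H$ when $r>0$ and reverses orientation when $r<0$. In either case $w \in \H$ whenever $u \in \H$, so $\arg(w) \in (0,\pi)$ under the principal branch. Separating $m \gqs 0$ from $m < 0$ then yields the uniform bound
\[
|w^{2\pi i m/\ell_\eta}| = \exp\bigl(-2\pi m \arg(w)/\ell_\eta\bigr) \lqs \exp\bigl(\pi^2(|m|-m)/\ell_\eta\bigr).
\]
Since $(u \mp r)^{k/2}$ have principal branches holomorphic on $\H$, the whole integrand is holomorphic there. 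Absolute convergence for $k>1$ is then immediate: on $u = x+iy$ the integrand is bounded by the above factor times $e^{2\pi n y}|u-r|^{-k/2}|u+r|^{-k/2}$, which decays like $|x|^{-k}$ at infinity. Cauchy's theorem (the vertical sides at $|\Re u|=R$ decay like $R^{-k}$) then shows the integral is independent of $y>0$.

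Substituting $x = ys$ rewrites the $x$-integral as $y^{1-k}J(r/y)$, where
\[
J(a) := \int_{-\infty}^\infty \frac{ds}{\bigl[((s-a)^2+1)((s+a)^2+1)\bigr]^{k/4}}.
\]
For $n \lqs 0$ I would let $y \to \infty$: since $J(r/y) \to J(0) < \infty$ for $k>1$, the horizontal-line integral is $O(y^{1-k})$ and $e^{2\pi n y} \lqs 1$, so the whole estimate tends to $0$, giving $I_{\eta\, par}(m,n;r)=0$.

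For $n \gqs 1$ the optimal choice is $y = 1/n$, which makes $e^{2\pi n y} = e^{2\pi}$, and what remains is a uniform bound $J(a) \lqs C(k)$ for all $a \in \R$. This is the technical heart of the proposition and the step I expect to be the main obstacle, since without uniformity in $r$ the estimate would not survive the later coset summations. I would split at $|s| = 2|a|$: the outer piece uses $|s \pm a| \gqs |s|/2$ to bound the integrand by $C|s|^{-k}$, giving $O(|a|^{1-k})$; the inner piece uses the triangle inequality $|s+a|+|s-a|\gqs 2|a|$ to force at least one of $(s\pm a)^2+1$ to exceed $a^2+1$, extracting a factor $(a^2+1)^{-k/4}$ and leaving a translated integral bounded by $\int ds/(s^2+1)^{k/4}$, which converges precisely when $k>2$. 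Assembling these pieces gives $|I_{\eta\, par}(m,n;r)| \ll \exp(\pi^2(|m|-m)/\ell_\eta)\, n^{k-1}$ with implied constant depending only on $k$, and the convergence threshold of $J$ is exactly what forces the hypothesis $k>2$.
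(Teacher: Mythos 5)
Your proposal is correct and follows the same overall strategy as the paper's proof: the identical phase bound $|w^{2\pi i m/\ell_\eta}|\lqs \exp\left(\pi^2(|m|-m)/\ell_\eta\right)$ coming from $w\in\H$, holomorphy of the integrand to make the integral independent of $y$, letting $y\to\infty$ to get vanishing for $n\lqs 0$, and choosing $y=1/n$ for the bound when $n>0$. The only point of divergence is how the remaining $x$-integral is bounded by $C(k)\,y^{1-k}$ uniformly in $r$: the paper discards one factor via $|u+r|^{-k/2}\lqs y^{-k/2}$ and evaluates $\int_{-\infty}^{\infty}(x^2+y^2)^{-k/4}\,dx$ in closed form using \eqref{gres}, which is a one-line step and is exactly where $k>2$ enters (as $k/4>1/2$); you instead keep both factors and prove a uniform bound for the rescaled integral $J(a)$ by splitting at $|s|=2|a|$. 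Your splitting is sound in substance, but as stated the outer-piece estimate $O(|a|^{1-k})$ is not uniform as $a\to 0$; you should bound the integrand there by $\min\left(1,C|s|^{-k}\right)$ (each denominator factor is at least $1$), after which $\int_{\R}\min\left(1,C|s|^{-k}\right)ds$ depends only on $k$ and $J(a)\lqs C(k)$ holds uniformly. With that small patch your argument gives the same conclusion; the paper's shortcut simply avoids the case analysis at the cost of treating the two factors asymmetrically.
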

\begin{proof}
Notice that $w:=\sgn(r) \frac {u-r}{u+r} \in \H$ and for $y=\Im(u)$  we have the bound
\begin{equation*}
     \left|w^{2\pi i m /\ell_\eta}e^{-2\pi i n u}\right| \lqs \exp\left(\pi^2 (|m|-m)/\ell_\eta+2\pi ny\right).
\end{equation*}
It follows that \eqref{ihp} is absolutely convergent for $k>1$. If we assume $k>2$, write $u-r=x+iy$, note that $|u+r|^{-k/2} \lqs y^{-k/2}$ and
 recall \eqref{gres}, then
\begin{align}
    |I_{\eta \, par}(m,n;r)| & \lqs \exp\left(\pi^2 (|m|-m)/\ell_\eta+2\pi ny\right) y^{-k/2} \int_{-\infty}^\infty \frac{dx}{(x^2+y^2)^{k/4}} \notag\\
    & \ll \exp\left(\pi^2 (|m|-m)/\ell_\eta+2\pi ny\right) y^{1-k}.  \label{yinf}
\end{align}
Since the integrand in \eqref{ihp} is holomorphic,
$I_{\eta \, par}(m,n;r)$ is independent of $y>0$.
Letting $y \to \infty$ in \eqref{yinf} we see that $I_{\eta \, par}(m,n;r)=0$ for $n \leqslant 0$. For $n>0$ let $y =1/n$.
\end{proof}

Bounds for $I_{\eta \, par}(m,n;r)$ when $k \in (1,2]$ are of course possible. The advantage of \eqref{ad2b} for $k>2$ is that it does not depend on $r$; see \eqref{fsumhp3}.

If $m=0$ we can evaluate $I_{\eta \, par}(0,n;r)$ for $n \gqs 1$ by moving the line of integration down past the poles of order $k/2$  at $u= \pm r$ and letting $y \to -\infty$. Evaluating the residues we find for $n \gqs 1$ and even $k \gqs 4$,
\begin{equation}\label{iet}
I_{\eta \, par}(0,n;r) = 2\pi i (-1)^{k/2} \sum_{j=0}^{k/2-1}\frac{(2\pi i n)^{j}}{j!} \binom{k-2-j}{k/2-1}  \left[\frac{e^{-2\pi i n r} }{(2r)^{k-1-j}} + \frac{e^{2\pi i n r} }{(-2r)^{k-1-j}}\right].
\end{equation}
More generally, we may  express $I_{\eta \, par}(m,n;r)$ in terms of the confluent hypergeometric function ${_1}F_1$.

\begin{prop} \label{usop}
Let $k \in \R_{>1}$. For $m \in \Z$ and $n \in \Z_{\gqs 1}$, $I_{\eta\, par}(m,n;r)$  equals
\begin{equation} \label{coul}
     \frac{(2\pi)^k n^{k-1}}{e^{\pi i k/2} \G(k) } \exp\left(\frac{\pi^2 m}{\ell_\eta}(\sgn(r)-1) -2\pi i nr\right)
     \, {_1}F_1 \left(\frac{k}{2}+\frac{2\pi i m}{\ell_\eta};k;4\pi i n r\right).
\end{equation}
Also $I_{\eta\, par}(m,n;r)$ is real-valued for $k$ even.
\end{prop}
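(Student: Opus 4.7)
The plan is to reduce the integral \eqref{ihp} to a power series in $r$ by expanding one factor and integrating termwise via Hankel's formula, in analogy with the proof of Proposition \ref{wpro}. First I would strip out the sign contribution from $\bigl(\sgn(r)(u-r)/(u+r)\bigr)^{2\pi i m/\ell_\eta}$ by checking, via the principal branch convention, that for all $u\in\H$ and $r\in\R_{\neq 0}$,
\begin{equation*}
\left(\sgn(r)\frac{u-r}{u+r}\right)^{2\pi i m/\ell_\eta} = \exp\!\left(\frac{\pi^2 m(\sgn(r)-1)}{\ell_\eta}\right) (u-r)^{2\pi i m/\ell_\eta}(u+r)^{-2\pi i m/\ell_\eta}.
\end{equation*}
This is a routine branch computation: for $r>0$ both $\log(u\pm r)$ have imaginary parts in $(0,\pi)$ and combine with no $2\pi i$ correction, while for $r<0$ the factor $\log(-1)=i\pi$ contributes the $\sgn(r)-1$ exponent. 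Setting $\alpha:=k/2+2\pi im/\ell_\eta$ and $\beta:=k/2-2\pi im/\ell_\eta$, so that $\alpha+\beta=k$, and shifting $u\mapsto v+r$ gives
\begin{equation*}
I_{\eta\,par}(m,n;r) = e^{\pi^2 m(\sgn(r)-1)/\ell_\eta}\,e^{-2\pi inr}\int_{-\infty+iy}^{\infty+iy} v^{-\beta}(v+2r)^{-\alpha}e^{-2\pi inv}\,dv.
\end{equation*}

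Next I would pick $y>2|r|$, which is legal because the integrand is holomorphic on $\H$. Since $v$ and $v+2r$ both lie in $\H$ on this contour, the principal branches satisfy $\log(v+2r)=\log v+\log(1+2r/v)$, and the binomial expansion
\begin{equation*}
(v+2r)^{-\alpha} = \sum_{j=0}^{\infty} \binom{-\alpha}{j}(2r)^j v^{-\alpha-j}
\end{equation*}
converges on the contour because $|2r/v|\lqs 2|r|/y<1$. Combining $v^{-\beta}v^{-\alpha-j}=v^{-k-j}$ and applying Hankel's formula in the form
\begin{equation*}
\int_{-\infty+iy}^{\infty+iy} v^{-k-j}e^{-2\pi inv}\,dv=\frac{(2\pi)^{k+j}n^{k+j-1}}{e^{\pi i(k+j)/2}\G(k+j)} \qquad (n,y>0,\ k+j>1)
\end{equation*}
to each term, then using $\G(k+j)=(k)_j\G(k)$ and the identity $(-1)^j(2r)^j(2\pi n)^j e^{-\pi ij/2}=(4\pi inr)^j$, produces the series $\sum_j\frac{(\alpha)_j}{(k)_j\,j!}(4\pi inr)^j={_1}F_1(\alpha;k;4\pi inr)$. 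Multiplying by the prefactors already pulled out yields exactly \eqref{coul}.

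The main technical point is justifying the sum/integral interchange. For $y>2|r|$, the estimate $|\binom{-\alpha}{j}|\ll j^{\Re\alpha-1}$ (from Stirling) together with \eqref{gres} gives $\int|v|^{-k-j}\,dx\ll j^{-1/2}\,y^{1-k-j}$, so the double series is dominated by $e^{2\pi ny}y^{1-k}\sum_j j^{\Re\alpha-3/2}(2|r|/y)^j<\infty$ and Fubini applies. For the real-valuedness when $k$ is even, the prefactor $(2\pi)^k n^{k-1}e^{\pi^2 m(\sgn(r)-1)/\ell_\eta}/(e^{\pi ik/2}\G(k))$ is real because $e^{\pi ik/2}=(-1)^{k/2}\in\R$, and Kummer's transformation ${_1}F_1(\alpha;k;z)=e^z\,{_1}F_1(k-\alpha;k;-z)$ combined with $\bar\alpha=k-\alpha=\beta$ and $\overline{4\pi inr}=-4\pi inr$ shows that $e^{-2\pi inr}\,{_1}F_1(\alpha;k;4\pi inr)$ equals its own complex conjugate.
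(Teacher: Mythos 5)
Your proof is correct, but it takes a genuinely different route from the paper's. The paper evaluates $I_{\eta\,par}(m,n;r)$ in closed form by massaging \eqref{ihp} into the shape of the tabulated integral \cite[3.384.8]{GR} (formula \eqref{gri}), after isolating the same sign factor $e^{\pi^2 m(\sgn(r)-1)/\ell_\eta}$ via a branch check on $\bigl(\sgn(r)\tfrac{\gamma+ix}{\beta+ix}\bigr)^s$; you instead shift the contour to height $y>2|r|$, expand $(v+2r)^{-\alpha}$ binomially, and integrate term by term with Hankel's formula --- precisely the alternative the referee suggested for the parabolic/parabolic Proposition \ref{wpro}, here adapted to the hyperbolic/parabolic case. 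I checked the details: your branch identity splitting off $\exp(\pi^2 m(\sgn(r)-1)/\ell_\eta)$ is right (for $r<0$ the extra $i\pi$ in $\log(-w)$ supplies exactly the $\sgn(r)-1$ exponent), the bookkeeping $\binom{-\alpha}{j}=(-1)^j(\alpha)_j/j!$, $\G(k+j)=(k)_j\G(k)$ and $(-1)^j(2r)^j(2\pi n)^je^{-\pi ij/2}=(4\pi inr)^j$ reassembles the ${_1}F_1$ series correctly, and your Stirling-plus-\eqref{gres} majorant legitimately justifies the interchange since $2|r|/y<1$. Your real-valuedness argument via Kummer is essentially identical to the paper's. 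What each approach buys: the paper's is shorter once one trusts the Gradshteyn--Ryzhik entry and its branch conventions, while yours is self-contained apart from Hankel's formula (already quoted in Section \ref{secpp}), produces the hypergeometric coefficients directly, and makes the convergence issues fully explicit; the only mild cost is the need to move the contour above $2|r|$, which is harmless because Proposition \ref{bndhp} already gives independence of $y$.
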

\begin{proof}
From \cite[3.384.8]{GR}, we will use the formula
\begin{equation}\label{gri}
    \int_{-\infty}^\infty (\beta+ix)^{-\mu}(\g+ix)^{-\nu} e^{-ipx}\, dx = \frac{2\pi e^{\g p}(-p)^{\mu+\nu-1}}{\G(\mu+\nu)} \, {_1} F_1(\mu;\mu+\nu;(\beta-\g)p)
\end{equation}
 where $\Re(\beta), \Re(\g) >0$,  $\Re(\mu+\nu)>1$  and $p<0$. Rewrite \eqref{ihp}
by letting $u=x+iy$, multiplying through by $i$ and replacing $x$ by $-x$ to get
\begin{equation} \label{coul2}
    I_{\eta \, par}(m,n;r) =  \frac{e^{2\pi ny}}{e^{\pi i k/2}} \int_{-\infty}^{\infty} \frac{\left( \sgn(r) \frac {\g+ix}{\beta+ix}\right)^{2\pi i m /\ell_\eta}e^{2\pi i n x}}
    {(\beta+ix)^{k/2} ( \g+ix)^{k/2}}  \, dx
\end{equation}
for $\beta=y-ir$, $\g=y+ir$ and $\Re(\beta)=\Re(\g)=y>0$. The final step to get \eqref{coul2} into the form \eqref{gri} is to verify by checking the arguments of both sides, that for $r$, $\beta$ and $\g$ as above,
\begin{equation*}
    \left( \sgn(r) \frac {\g+ix}{\beta+ix}\right)^s = e^{\pi i s (1-\sgn(r))/2} (\g+ix)^s (\beta+ix)^{-s} \qquad (x \in \R, \ s\in \C).
\end{equation*}

To see that $I_{\eta\, par}(m,n;r) \in \R$ for $k$ even, use Kummer's transformation
\begin{equation}\label{kummer}
    {_1}F_1(a;c;z)=e^z {_1}F_1(c-a;c;-z),
\end{equation}
to show that the last part of the right side of \eqref{coul},
 \begin{equation} \label{coul3}
    e^{-2\pi i nr}
     \, {_1}F_1(k/2+2\pi i m/\ell_\eta;k;4\pi i n r),
\end{equation}
equals its conjugate. For a second proof, we see that the integral in \eqref{coul2} is real by using the fact that $\g =\overline{\beta}$ and replacing $x$ with $-x$. (We note that \eqref{coul3} takes exactly the form of
  a Coulomb wave function,  used to describe charged particles with a spherically symmetric potential as in \cite[p. 199]{AAR}.)
\end{proof}

Using \eqref{jb2} in \eqref{coul} when $m=0$ shows another version of \eqref{iet}:
\begin{equation*}
    I_{\eta \, par}(0,n;r) = (2\pi )^k \frac{\G((k+1)/2)}{e^{\pi i k/2} \G(k)} \left(\frac{n}{\pi r}\right)^{(k-1)/2} J_{(k-1)/2}(2\pi n r) \qquad(n>0).
\end{equation*}

\subsection{Double cosets in the hyperbolic/parabolic case} \label{sec42}

\begin{lemma}
If $\left(\smallmatrix a
& b \\ c & d
\endsmallmatrix\right) \in \se^{-1}\G\sa$ then $ac\neq 0$.
\end{lemma}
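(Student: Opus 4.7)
The plan is to proceed by contradiction, showing that either vanishing forces one of the hyperbolic fixed points $\eta_1,\eta_2$ to coincide with a $\G$-translate of the cusp $\ca$, which is impossible for a Fuchsian group of the first kind.

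Concretely, write $M = \se^{-1}\g\sa = \left(\smallmatrix a & b \\ c & d \endsmallmatrix\right)$ and recall that $\se\cdot 0 = \eta_1$, $\se\cdot\infty = \eta_2$, and $\sa\cdot\infty = \ca$. If $c=0$, then $M\cdot\infty = \infty$, so
\[
\eta_2 \;=\; \se\cdot\infty \;=\; \g\sa\cdot\infty \;=\; \g\ca.
\]
If instead $a=0$, then $M\cdot\infty = 0$, so
\[
\eta_1 \;=\; \se\cdot 0 \;=\; \g\sa\cdot\infty \;=\; \g\ca.
\]
Either way, one of the hyperbolic fixed points $\eta_1,\eta_2$ lies in the $\G$-orbit of the cusp $\ca$, and is therefore itself a cusp for $\G$.

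The remaining step is to observe that a point $p\in\R\cup\{\infty\}$ cannot simultaneously be a cusp and a hyperbolic fixed point of a Fuchsian group of the first kind. This is the one place where a small argument is needed: conjugating so that $p=\infty$, every element of $\G$ fixing $p$ is upper triangular, of the form $\pm\left(\smallmatrix t & s \\ 0 & 1/t\endsmallmatrix\right)$. Being a cusp gives a parabolic generator $\left(\smallmatrix 1 & s_0 \\ 0 & 1 \endsmallmatrix\right)$ in $\G_p$, while being a hyperbolic fixed point gives a hyperbolic element $\left(\smallmatrix t_0 & s_1 \\ 0 & 1/t_0 \endsmallmatrix\right)$ with $|t_0|\neq 1$ also in $\G_p$. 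Conjugating the parabolic element by powers of the hyperbolic one produces $\left(\smallmatrix 1 & t_0^{2n}s_0 \\ 0 & 1 \endsmallmatrix\right)\in\G$ for all $n\in\Z$, and as $n\to -\infty$ (or $+\infty$) these translations accumulate at the identity, contradicting the discreteness of $\G$.

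The main obstacle is really just making this last discreteness argument clean; in the paper it may simply be cited as a standard fact about Fuchsian groups of the first kind. Combining the contradiction above with this fact, neither $a=0$ nor $c=0$ is possible, so $ac\neq 0$ as claimed.
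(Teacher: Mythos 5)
Your proof is correct and follows essentially the same route as the paper: both reduce $ac=0$ to the statement that $\g\ca$ equals $\eta_1$ or $\eta_2$, i.e.\ that a cusp is a hyperbolic fixed point, and then rule this out. The only difference is that the paper simply asserts this last impossibility as standard, whereas you supply the discreteness argument (conjugating the parabolic generator by powers of the hyperbolic element to produce translations accumulating at the identity), which is a correct and welcome amplification.
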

\begin{proof}
Let $\left(\smallmatrix a
& b \\ c & d
\endsmallmatrix\right) = \se^{-1}\g\sa$. Since $\left(\smallmatrix a
& b \\ c & d
\endsmallmatrix\right) \infty = \frac{a}{c}$ we have
\begin{align*}
    ac=0  \iff & \se^{-1}\g\sa \infty = 0 \text{ \ or \ } \infty \\
    \iff & \g \sa \infty = \se 0 \text{ \ or \ } \se\infty \\
    \iff & \g\ca = \eta_1 \text{ \ or \ } \eta_2.
\end{align*}
But the cusp $\g \ca$ cannot be a hyperbolic fixed point, implying $ac\neq 0$.
\end{proof}

Since $\g \ca$ cannot be a hyperbolic fixed point, the analog of $\G(\ca,\cb)_0$ in Proposition \ref{pp_reps} is empty here.
Let $\G(\eta, \ca)$ be a complete set of inequivalent representatives for $\G_\eta\backslash \G/\G_\ca$.

\begin{prop} \label{doubhp} With the above notation, a complete set of inequivalent representatives for $\G_\eta\backslash \G$ is given by
\begin{equation} \label{hpjiu}
    \Big\{ \delta \tau \ \Big| \ \delta \in \G(\eta,\ca), \ \tau \in \G_\ca/Z  \Big\}.
\end{equation}
\end{prop}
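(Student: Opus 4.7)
The plan is to mirror the argument of Proposition \ref{pp_reps}, exploiting the fact (established in the preceding lemma and its discussion) that the image $\g \ca$ of a cusp under any $\g\in\G$ cannot be a hyperbolic fixed point. This makes the hyperbolic/parabolic case cleaner than the parabolic/parabolic case, since the analog of the exceptional set $\G(\ca,\cb)_0$ is empty.

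First I would observe that the set $L' := \{\delta\tau \mid \delta \in \G(\eta,\ca),\ \tau \in \G_\ca/Z\}$ clearly surjects onto $\G_\eta\backslash\G$: any $\g \in \G$ can be written as $\g_1 \delta \g_2$ with $\g_1 \in \G_\eta$, $\delta \in \G(\eta,\ca)$, $\g_2 \in \G_\ca$, and after reducing $\g_2$ modulo $Z$ we land in $L'$.

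The main work is showing that no two distinct elements of $L'$ are $\G_\eta$-equivalent. Suppose $\G_\eta\delta\tau = \G_\eta\delta'\tau'$ with $\delta,\delta' \in \G(\eta,\ca)$ and $\tau,\tau' \in \G_\ca/Z$. Since $\G(\eta,\ca)$ is a set of \emph{inequivalent} representatives for $\G_\eta\backslash\G/\G_\ca$, I immediately get $\delta'=\delta$. Then there exists $\g \in \G_\eta$ with $\g\delta\tau = \delta\tau'$, equivalently $\g = \delta(\tau'\tau^{-1})\delta^{-1}$. Since $\tau'\tau^{-1}\in\G_\ca$ fixes $\ca$, the element $\g$ fixes $\delta\ca$. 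Of course $\g\in\G_\eta$ also fixes $\eta_1$ and $\eta_2$.

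The key step is now the dichotomy for $\g$. If $\g \neq \pm I$, then $\g$ is hyperbolic with exactly the two fixed points $\eta_1,\eta_2$, so $\delta\ca \in\{\eta_1,\eta_2\}$. But $\delta\ca$ is a cusp, i.e.\ a parabolic fixed point, and a point of $\R\cup\{\infty\}$ cannot be simultaneously parabolic and hyperbolic for $\G$ (this is exactly the observation used in the preceding lemma). Hence $\g = \pm I$, so $\tau'\tau^{-1} = \pm I$, i.e.\ $\tau=\tau'$ in $\G_\ca/Z$, completing the proof.

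The only place to be careful is the handling of $Z = \{\pm I\}\cap\G$, which is why the inner parameter $\tau$ ranges over $\G_\ca/Z$ rather than $\G_\ca$ itself; this is precisely what prevents the trivial identification $\g=\pm I$ from producing spurious duplicate cosets, and it is the same bookkeeping already carried out in Proposition \ref{pp_reps}. I do not anticipate any substantive obstacle beyond this bookkeeping, since the geometric heart of the argument, namely parabolic and hyperbolic fixed points being disjoint, has already been recorded.
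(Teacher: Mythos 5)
Your proposal is correct and follows essentially the same route as the paper: reduce to $\delta=\delta'$ by the inequivalence of the double coset representatives, produce $\g\in\G_\eta$ fixing $\delta\ca$, and conclude $\g=\pm I$ because a nontrivial element of $\G_\eta$ is hyperbolic with fixed points exactly $\eta_1,\eta_2$, while the cusp $\delta\ca$ cannot be a hyperbolic fixed point. The paper states the last step more tersely ("Therefore $\g=\pm I$ and $\tau=\tau'$"), and your explicit appeal to the disjointness of parabolic and hyperbolic fixed points is exactly the justification it relies on.
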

\begin{proof}
The set \eqref{hpjiu} gives a complete set of representatives for $\G_\eta\backslash \G$. To see that the representatives are also inequivalent modulo $\G_\eta$, suppose
\begin{equation}\label{hpjul}
    \G_\eta\delta\tau = \G_\eta \delta'\tau' \quad \text{ for } \quad \delta, \delta' \in \G(\eta, \ca)  \quad \text{and}  \quad \tau, \tau' \in \G_\ca/Z.
\end{equation}
  We must have $\delta'=\delta$ because $\G(\eta, \ca)$ is defined as a set of inequivalent representatives. Hence there is a $\g \in \G_\eta$ so that $\g \delta\tau = \delta\tau'$. It follows that $\g$ fixes $\eta$ and $\delta \ca$. Therefore $\g =\pm I$ and $\tau=\tau'$.
\end{proof}

 Good's Lemma \cite[Lemma 1, p 20]{G83} in this hyperbolic/parabolic case is:

\begin{lemma} \label{bruhp}
For $M=\left(\smallmatrix a
& b \\ c & d
\endsmallmatrix\right) \in \SL_2(\R)$ with $ac \neq 0$ we have
\begin{equation} \label{tms}
    \begin{pmatrix} a & b \\ c & d \end{pmatrix} = \frac{\sgn(a)}{\sqrt{2}}\begin{pmatrix} \left| \frac ac \right|^{1/2} & 0 \\ 0 & \left| \frac ac \right|^{-1/2} \end{pmatrix}
    \begin{pmatrix} 1 & -\sgn(ac)  \\ \sgn(ac)  & 1  \end{pmatrix}
    \begin{pmatrix} \nu & 0 \\ 0 & 1/\nu \end{pmatrix}\begin{pmatrix} 1 & \frac{b}{2a}+\frac{d}{2c} \\ 0 & 1 \end{pmatrix}.
\end{equation}
for $\nu = \sideset{_\text{hyp}}{_\text{par}}{\opv}(M) = |2ac|^{1/2}$.
\end{lemma}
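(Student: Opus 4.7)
The plan is a direct verification: multiply the four matrices on the right-hand side of \eqref{tms} and check that every entry agrees with $M=\left(\smallmatrix a&b\\c&d\endsmallmatrix\right)$. Set $\varepsilon:=\sgn(ac)$, $t:=|a/c|^{1/2}$, $\nu:=|2ac|^{1/2}$, and $\tau:=\frac{b}{2a}+\frac{d}{2c}$; the hypothesis $ac\neq 0$ makes all of these well defined. Two simple identities will do most of the work: $t\nu=\sqrt{2}\,|a|$ and $\nu/t=\sqrt{2}\,|c|$, together with $\varepsilon\sgn(a)=\sgn(c)$. The relation $ad-bc=1$ lets us rewrite
$$
\tau=\frac{ad+bc}{2ac}=\frac{2ad-1}{2ac},
$$
which expresses the translation parameter purely in terms of $a,c,d$.

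First I would compute, from right to left,
$$
\begin{pmatrix}1&-\varepsilon\\ \varepsilon&1\end{pmatrix}\begin{pmatrix}\nu&0\\0&1/\nu\end{pmatrix}\begin{pmatrix}1&\tau\\0&1\end{pmatrix}=\begin{pmatrix}\nu&\nu\tau-\varepsilon/\nu\\ \varepsilon\nu&\varepsilon\nu\tau+1/\nu\end{pmatrix},
$$
then multiply on the left by $\mathrm{diag}(t,1/t)$ and finally by the scalar $\sgn(a)/\sqrt{2}$. The (1,1)- and (2,1)-entries collapse at once: the (1,1) entry is $\sgn(a)t\nu/\sqrt{2}=\sgn(a)|a|=a$, and the (2,1) entry is $\sgn(a)\varepsilon\nu/(t\sqrt{2})=\sgn(c)|c|=c$.

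For the remaining two entries, I would plug in $\tau=(2ad-1)/(2ac)$. The (2,2) entry is $\sgn(a)\bigl(\varepsilon|c|\tau+1/(2|a|)\bigr)$; using $\varepsilon|c|=\sgn(a)\cdot c/|{\cdot}|$-bookkeeping one gets $\sgn(a)(2ad-1)/(2a)+\sgn(a)/(2a)=\sgn(a)d$, so multiplication by $\sgn(a)$ yields $d$. The (1,2) entry is $\sgn(a)\bigl(|a|\tau-\varepsilon/(2|c|)\bigr)$; the same simplification collapses it to $(ad-1)/c=bc/c=b$, using $bc=ad-1$.

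No step is a genuine obstacle; the only subtlety is keeping the $\sgn$'s straight, and this is best done by systematically replacing $\varepsilon\sgn(a)$ by $\sgn(c)$ whenever it appears. As a small labor-saving remark, both sides of \eqref{tms} have determinant one, since $\det\left(\smallmatrix 1&-\varepsilon\\ \varepsilon&1\endsmallmatrix\right)=2$ exactly cancels the $(\sgn(a)/\sqrt{2})^{2}=1/2$ coming from the scalar prefactor; consequently any three of the four entry checks above suffice.
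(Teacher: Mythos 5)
Your verification is correct: all four entry computations check out (in particular $t\nu=\sqrt{2}|a|$, $\nu/t=\sqrt{2}|c|$, and $\tau=(2ad-1)/(2ac)$ are exactly the identities needed), and the determinant remark legitimately reduces the work to three entries. The paper gives no proof of this lemma, simply citing Good's Lemma 1, and for the analogous hyperbolic/hyperbolic decomposition it remarks that the identity "follows from a direct calculation" --- which is precisely what you carried out.
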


For a convenient choice of   $\G(\eta,\ca)$, our representatives for $\G_\eta\backslash \G/\G_\ca$,
we therefore define
\begin{equation*} \label{rec}
R_{\eta\ca}  :=  \left\{ \left. \begin{pmatrix} a  & b \\ c & d \end{pmatrix} \in \se^{-1}\G \sa \ \right|  \ \frac{1}{\varepsilon_\eta} \lqs \left| \frac ac \right| < \varepsilon_\eta, \ 0 \lqs \frac{b}{2a}+\frac{d}{2c} <1   \right\}.
\end{equation*}

\begin{lemma} \label{ac01}
We may take $\se^{-1} \G(\eta,\ca)\sa =  R_{\eta\ca} /Z$.
\end{lemma}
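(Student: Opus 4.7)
The plan is to adapt the proof of Lemma \ref{ac01pp}, with the diagonal subgroup from \eqref{sgsh} replacing the unipotent left-action subgroup attached to the cusp $\ca$. Setting
$$
A := \left\{ \left. \begin{pmatrix} \varepsilon_\eta^m & 0 \\ 0 & \varepsilon_\eta^{-m} \end{pmatrix} \ \right| \ m \in \Z \right\},
\qquad
B := \left\{ \left. \begin{pmatrix} 1 & \ell \\ 0 & 1 \end{pmatrix} \ \right| \ \ell \in \Z \right\},
$$
the content of \eqref{sgsh} together with its parabolic analogue is that, modulo $\pm I$, the subgroups $\se^{-1}\G_\eta\se$ and $\sa^{-1}\G_\ca\sa$ are carried by the quotient map to $A$ and $B$ respectively. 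In the case $-I\notin\G$ this gives $\se^{-1}(\G_\eta\backslash\G/\G_\ca)\sa = A\backslash\se^{-1}\G\sa/B$, and the statement reduces to showing that $R_{\eta\ca}$ contains exactly one representative of each double coset.

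Next I would carry out the short matrix computation describing how the two-sided action modifies the coordinates appearing in the definition of $R_{\eta\ca}$. For $M = \left(\smallmatrix a & b \\ c & d \endsmallmatrix\right)$, left multiplication by $\left(\smallmatrix \varepsilon_\eta^m & 0 \\ 0 & \varepsilon_\eta^{-m} \endsmallmatrix\right)$ sends $|a/c|$ to $\varepsilon_\eta^{2m}|a/c|$ and leaves $b/(2a)+d/(2c)$ fixed, while right multiplication by $\left(\smallmatrix 1 & \ell \\ 0 & 1 \endsmallmatrix\right)$ leaves $|a/c|$ fixed and adds $\ell$ to $b/(2a)+d/(2c)$. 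Negation $M\mapsto -M$ leaves both coordinates invariant. Hence the two actions decouple on the pair $\bigl(|a/c|,\, b/(2a)+d/(2c)\bigr)$, acting independently as multiplication by $\varepsilon_\eta^{2\Z}$ on the first coordinate and translation by $\Z$ on the second. Since $[1/\varepsilon_\eta,\varepsilon_\eta)$ is a fundamental domain for the former and $[0,1)$ for the latter, each double coset has exactly one representative in $R_{\eta\ca}$, settling the lemma when $-I\notin\G$.

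For the case $-I\in\G$, we further identify $M$ with $-M$; since negation preserves both defining coordinates, this pairs the matrices $\left(\smallmatrix a & b \\ c & d \endsmallmatrix\right)$ and $\left(\smallmatrix -a & -b \\ -c & -d \endsmallmatrix\right)$ inside $R_{\eta\ca}$, and quotienting by $Z=\{\pm I\}$ then yields $R_{\eta\ca}/Z$ as claimed. The only real obstacle is the bookkeeping for the fundamental-domain claim, and this is essentially costless because Lemma \ref{bruhp} (the Bruhat-type decomposition \eqref{tms}) was designed precisely to factor $M$ as a diagonal piece controlled by $|a/c|$ followed by an upper-unipotent piece controlled by $b/(2a)+d/(2c)$, so the $(A,B)$-action is manifestly the product of a rescaling of the first factor and a translation of the second.
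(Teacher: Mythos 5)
Your proposal is correct and follows essentially the same route as the paper: the same identification $\se^{-1}(\G_\eta\backslash\G/\G_\ca)\sa = A\backslash\se^{-1}\G\sa/B$, the same computation showing the left $A$-action rescales $|a/c|$ by $\varepsilon_\eta^{2\Z}$ while fixing $b/(2a)+d/(2c)$ and the right $B$-action translates the latter while fixing the former, and the same treatment of the $-I\in\G$ case. The only cosmetic difference is that you phrase uniqueness via the fundamental-domain property of $[1/\varepsilon_\eta,\varepsilon_\eta)\times[0,1)$, whereas the paper defers to the argument of Lemma \ref{ac01pp}; these amount to the same thing.
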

\begin{proof}
Let
$A=\left\{ \left. \left(\smallmatrix \varepsilon^m & 0 \\ 0 & \varepsilon^{-m} \endsmallmatrix\right) \ \right|  \ m \in \Z   \right\}$
for $\varepsilon = \varepsilon_\eta$ and let $B=\left\{ \left. \left(\smallmatrix 1 & \ell \\ 0 & 1 \endsmallmatrix\right) \ \right|  \ \ell \in \Z   \right\}$ as before.
Suppose that $-I \not\in \G$. Then
\begin{equation*}
    \se^{-1}(\G_\eta\backslash \G/\G_\ca)\sa = \se^{-1}\G_\eta\se \backslash \se^{-1}\G\sa/ \sa^{-1}\G_\ca \sa = A\backslash \se^{-1}\G\sa/ B.
\end{equation*}
Start with any $\left(\smallmatrix a
& b \\ c & d
\endsmallmatrix\right) \in \se^{-1}\G \sa $.
If we multiply on the left  by
$\left(\smallmatrix \varepsilon & 0 \\ 0 & 1/\varepsilon \endsmallmatrix\right)$
we obtain
$\left(\smallmatrix a\varepsilon  & b \varepsilon  \\ c/\varepsilon & d/\varepsilon \endsmallmatrix\right)$ so that $|a/c|$ becomes $\varepsilon^2 |a/c|$ and $b/2a+d/2c$ is unaffected. Multiplying $\left(\smallmatrix a
& b \\ c & d
\endsmallmatrix\right)$ on the right by
$\left(\smallmatrix 1 & 1 \\ 0 & 1 \endsmallmatrix\right)$
produces
$\left(\smallmatrix a  & a+b   \\ c & c+d \endsmallmatrix\right)$. Then $b/2a+d/2c$ becomes $b/2a+d/2c + 1$ and $a/c$ remains the same.
It follows that every element of $\se^{-1}\G(\eta,\ca)\sa$ has a  representative in $R_{\eta\ca}$ and, as in the proof of Lemma \ref{ac01pp}, the representative is unique.

If $-I \in \G$ then $-I \in \G_\eta$, $\G_\ca$ so that $\left(\smallmatrix a
& b \\ c & d
\endsmallmatrix\right)$ and
$\left(\smallmatrix -a & -b \\ -c & -d \endsmallmatrix\right)$ are now equivalent in $R_{\eta\ca}$.
\end{proof}

The reasoning after Lemma \ref{ac01pp}
 also shows that if $\left(\smallmatrix a & * \\ c & * \endsmallmatrix\right) \in R_{\eta\ca}$ then $b$ and $d$ are uniquely determined.

\subsection{The hyperbolic/parabolic Kloosterman sum} \label{sec43}

Recall the definition of  $S_{\eta\ca}(m,n;C)$ in  \eqref{kloo}.
It is related to Good's sum \eqref{kloost} by
\begin{equation*}
    S_{\eta\ca}(m,n;C) = \sideset{_\eta^{\delta}}{_{\ca}^{0}}{\opS}(m,n;|2C|^{1/2})  \quad \text{for} \quad \delta = \frac{1-\sgn(C)}{2}.
\end{equation*}
Good showed in \cite[Lemma 6]{G83} that these are finite sums. In this subsection we reprove this and find the analog of the  bound \eqref{trivka}.
First
set
\begin{align*}
    \mathcal N_{\eta\ca}(C) & :=  S_{\eta \ca}(0,0;C) \\
     & = \#\left\{ \g \in \G_\eta \backslash \G / \G_{\ca} \ \left| \ \se^{-1}\g\sa = \begin{pmatrix} a  & b \\ c & d \end{pmatrix} \text{ with } ac = C\right. \right\}.
\end{align*}
Then $\mathcal N_{\eta\ca}(C)$ is well defined and independent of the scaling matrices $\se$ and $\sa$. The next proposition is based on \cite[Prop. 2.8]{IwTo}. It requires the existence of  $M_{\ca\ca}>0$ such that $|c| \gqs M_{\ca\ca}$ for all $c\in C_{\ca\ca}$. For this see \cite[Lemma 1.25]{S71} or \cite[p. 38]{IwTo}.

\begin{prop} \label{schp}
With the above notation
\begin{equation*}
    \sum_{C \in C_{\eta\ca}, \ |C| \lqs X}  \mathcal N_{\eta\ca}(C) \ll X.
\end{equation*}
\end{prop}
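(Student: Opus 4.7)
My plan is to adapt the proof of \cite[Prop.~2.8]{IwTo} to the hyperbolic/parabolic setting. By Lemma \ref{ac01} and the uniqueness remark following it, $\sum_{|C|\lqs X} \mathcal N_{\eta\ca}(C)$ equals the number of representatives $M = \left(\smallmatrix a & b \\ c & d \endsmallmatrix\right) \in R_{\eta\ca}$ with $|ac|\lqs X$, each uniquely determined by the pair $(a,c)$. The constraint $|a/c|\in [1/\varepsilon_\eta, \varepsilon_\eta)$ together with $|ac|\lqs X$ forces $|a|, |c| \asymp X^{1/2}$, so the task reduces to counting such pairs arising from $\se^{-1}\G\sa$.

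I would then carry out a volume estimate in $\SL_2(\R)$. Using the Bruhat parametrization of Lemma \ref{bruhp}, a patch of $\SL_2(\R)$ with $ac\ne 0$ is coordinatized by $(a, c, t)$, where $t := b/(2a) + d/(2c)$ and $b,d$ are recovered from $ad - bc = 1$. A standard computation shows that Haar measure on $\SL_2(\R)$ is a fixed multiple of $da\,dc\,dt$ in these coordinates. Changing variables to $(u,v) = (a/c,\,ac)$ with $du\,dv = 2|u|\,da\,dc$, the region
\[
K_X := \{(a, c, t) : |a/c| \in [1/\varepsilon_\eta, \varepsilon_\eta),\; |ac| \lqs X,\; t \in [0,1)\}
\]
is seen to have Haar volume $\asymp X \ell_\eta$.

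Since $\G$ is Fuchsian of the first kind, $\se^{-1}\G\sa$ is discrete in $\SL_2(\R)$. A uniform lower bound on the spacing of its points in $K_X$, combined with this volume estimate, yields $\#(\se^{-1}\G\sa \cap K_X) \ll X$. The required separation is supplied by the hypothesis of the proposition: given distinct $M_1, M_2 \in R_{\eta\ca}$, the element $\sa^{-1}\se M_1 M_2^{-1} \se^{-1}\sa$ lies in $\sa^{-1}\G\sa$, and its bottom-left entry $c'$ is a polynomial in the entries of $M_1, M_2$ and of $\sa^{-1}\se$. Either $c' = 0$, in which case $\se M_1 M_2^{-1}\se^{-1} \in \G_\ca$, and the discreteness of $\se^{-1}\G_\ca\se$ together with the normalization of $R_{\eta\ca}$ forces $M_1 = \pm M_2$; or $|c'| \gqs M_{\ca\ca}$ by \cite[Lemma~1.25]{S71}, giving a quantitative lower bound on how close $(a_1, c_1)$ and $(a_2, c_2)$ can be.

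The main obstacle I anticipate is making this separation quantitative: one must compute $c'$ explicitly in terms of $(a_i, c_i)$ and the entries of $\sa^{-1}\se$, and then convert $|c'| \gqs M_{\ca\ca}$ into a concrete lower bound on a suitable distance between $(a_1, c_1)$ and $(a_2, c_2)$ that is compatible with the volume calculation above. Once this is established, a standard packing argument (in the spirit of \cite[Prop.~2.8]{IwTo}) gives the claimed estimate $\ll X$.
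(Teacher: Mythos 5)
You have correctly identified the two essential ingredients the paper uses: the reduction to counting elements of $R_{\eta\ca}$ with $|ac|\lqs X$ (each determined by $(a,c)$), and the separation supplied by the constant $M_{\ca\ca}$ bounding $|c|$ from below for non-identity elements of $\sa^{-1}\G\sa$. But your route from there diverges from the paper's and, as written, has a genuine gap in two places. First, the separation step: you propose to form $\sa^{-1}\se M_1M_2^{-1}\se^{-1}\sa$ and compute its lower-left entry in terms of the entries of $\sa^{-1}\se$, and you flag this as the main obstacle. It isn't one, because the natural element to consider is $M_1^{-1}M_2$, which already lies in $\sa^{-1}\G\sa$ (since $M_i\in\se^{-1}\G\sa$), and its lower-left entry is simply $c_1c_2\left(\frac{a_1}{c_1}-\frac{a_2}{c_2}\right)$ with no reference to $\sa^{-1}\se$ at all. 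The bound $|c''|\gqs M_{\ca\ca}$ then gives $\bigl|\frac{a_1}{c_1}-\frac{a_2}{c_2}\bigr|\gqs M_{\ca\ca}/|c_1c_2|\gqs M_{\ca\ca}/(\varepsilon_\eta X)$, using $|a/c|\gqs1/\varepsilon_\eta$ and $|ac|\lqs X$ to bound $|c|\lqs(\varepsilon_\eta X)^{1/2}$.

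Second, and more seriously, the Haar-volume packing argument is not justified and is not what this separation supports. The separation you obtain is purely one-dimensional — it separates the ratios $a/c$, not the triples $(a,c,t)$ — so it does not furnish disjoint cells of fixed Haar volume around the lattice points. Moreover the region $K_X$ is noncompact in $\SL_2(\R)$: as $ac\to0$ with $a/c$ bounded, the entries $b=a t-\frac{1}{2c}$ and $d=ct+\frac{1}{2a}$ blow up, so the standard bound $\#(\Lambda\cap K)\lqs\vol(KU)/\vol(U)$ requires controlling $\vol(K_XU)$ for a thickened noncompact set, which is not automatic. The paper avoids all of this: since the ratios $a/c$ all lie in $[-\varepsilon_\eta,\varepsilon_\eta]$ and consecutive ones are spaced at least $M_{\ca\ca}/(\varepsilon_\eta X)$ apart, summing the gaps gives at most $2\varepsilon_\eta$, hence at most $O(X)$ elements. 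This one-dimensional pigeonhole is the whole proof; once you replace $M_1M_2^{-1}$ by $M_1^{-1}M_2$ you should abandon the volume estimate and finish this way.
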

\begin{proof}
We may write $\mathcal N_{\eta\ca}(C)$ more explicitly as $\#\left\{ \left. \left(\smallmatrix
a & b \\ c & d
\endsmallmatrix\right) \in R_{\eta\ca}/Z \ \right| \ ac = C\right\}$. Also let
\begin{equation*}
    R(X):=\left\{ \left. \begin{pmatrix} a  & b \\ c & d \end{pmatrix}  \in R_{\eta\ca} \ \right| \ |ac| \lqs X \right\}
    \subset \se^{-1}\G\sa.
\end{equation*}
Suppose $\g=\left(\smallmatrix
a & b \\ c & d
\endsmallmatrix\right)$ and $\delta=\left(\smallmatrix
a' & b' \\ c' & d'
\endsmallmatrix\right)$  are in $R(X)$. Then $\g^{-1} \delta = \left(\smallmatrix
a'' & b'' \\ c'' & d''
\endsmallmatrix\right) \in \sa^{-1}\G\sa$ for
\begin{equation*}
    |c''|=\left| cc'\left(\frac{a}{c}-\frac{a'}{c'}\right) \right|.
\end{equation*}
If $c''=0$ then $\g^{-1} \delta \in \sa^{-1}\G_\ca \sa$  and so $\g=\delta$. Otherwise we have $|c''| \gqs M_{\ca\ca}>0$. Hence
\begin{equation} \label{topbhp}
    \left| \frac{a}{c}-\frac{a'}{c'}\right|  \gqs \frac{M_{\ca\ca}}{|c c'|}.
\end{equation}
For any $\left(\smallmatrix
a & b \\ c & d
\endsmallmatrix\right) \in R(X)$ we have
\begin{equation*}
    \frac{1}{\varepsilon_\eta} \lqs \left| \frac ac \right| < \varepsilon_\eta, \quad |ac| \lqs X \quad \implies \quad \frac{1}{|c|} \gqs \frac{1}{\varepsilon_\eta^{1/2} X^{1/2}}.
\end{equation*}
Therefore \eqref{topbhp} implies
\begin{equation} \label{topbhp2}
    \left| \frac{a}{c}-\frac{a'}{c'}\right|  \gqs \frac{M_{\ca\ca}}{\varepsilon_\eta X}.
\end{equation}

Since each element of $R(X)$ corresponds to a distinct $a/c \in [-\varepsilon_\eta, \varepsilon_\eta]$ with the distance between any two bounded from below by \eqref{topbhp2}, the set $R(X)$ is finite and we may list the fractions  as $a_1/c_1<a_2/c_2< \dots < a_n/c_n$.
 Then
\begin{equation*}
    \sum_{j=1}^{n-1} \left| \frac{a_{j+1}}{c_{j+1}}-\frac{a_{j}}{c_{j}}\right|
    = \sum_{j=1}^{n-1} \left(\frac{a_{j+1}}{c_{j+1}}-\frac{a_{j}}{c_{j}}  \right)
    \lqs 2 \varepsilon_\eta
\end{equation*}
and combining this with \eqref{topbhp2} shows
\begin{equation*}
    \sum_{C \in C_{\eta\ca}, \ |C| \lqs X} \Biggl[ \sum_{\left(\smallmatrix
a & b \\ c & d
\endsmallmatrix\right) \in R_{\eta\ca} ,\, ac = C} \frac{ M_{\ca\ca}}{\varepsilon_\eta X} \Biggr] \lqs 2 \varepsilon_\eta . \qedhere
\end{equation*}
\end{proof}

As a result of Proposition \ref{schp}, for  implied constants depending only on $\G$, $\eta$ and $\ca$,
\begin{equation} \label{impa}
    \mathcal N_{\eta\ca}(C) \ll C, \qquad S_{\eta\ca}(m,n;C) \ll C, \qquad \#\{C \in C_{\eta\ca} \ : \ |C| \lqs X\}   \ll X.
\end{equation}

\subsection{The parabolic expansion of $P_{\eta,m}$}

\begin{theorem} \label{CIShp2}
For $m$, $n \in \Z$, the $n$th parabolic Fourier coefficient at the cusp $\ca$ of the hyperbolic Poincar\'e series $P_{\eta,m}$ is given by
\begin{equation}
c_\ca(n;P_{\eta,m})  =  \sum_{C \in C_{\eta\ca} }
C^{-k/2} I_{\eta\ca}\left(m,n;\frac{1}{2C}\right) \cdot S_{\eta\ca}(m,n;C)
. \label{sum3b}
\end{equation}
\end{theorem}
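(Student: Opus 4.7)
The plan is to follow the same template as the proof of Theorem \ref{CISpp}: decompose the Poincar\'e sum by double cosets, reduce the inner sum over $\G_\ca/Z$ to an integral via Poisson summation, and then recognize that integral as $I_{\eta\ca}(m,n;1/(2C))$. First I would use Proposition \ref{doubhp} to write
\begin{equation*}
(P_{\eta,m}|_k \sa)(z) = \sum_{\g \in \G(\eta,\ca)} \sum_{\tau \in \G_\ca/Z} \frac{(\se^{-1}\g\tau\sa z)^{-k/2 + 2\pi i m/\ell_\eta}}{j(\se^{-1}\g\tau\sa, z)^{k}},
\end{equation*}
noting that $\G(\eta,\ca)_0$ is empty because $\g\ca$ is never a hyperbolic fixed point (Section \ref{sec42}). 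Since $\sa^{-1}\tau\sa$ acts as an integer translation on $z$, the inner sum becomes $\sum_{\ell \in \Z} f(\ell)$ for a suitable $f$ depending on $\g$.

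The next step is a change of variables tailored to the Bruhat decomposition in Lemma \ref{bruhp}. Writing $\se^{-1}\g\sa=\left(\smallmatrix a&b\\c&d\endsmallmatrix\right)$ (with $ac\ne 0$ by Section \ref{sec42}) and setting $u = z+\ell+\frac{b}{2a}+\frac{d}{2c}$, the identity $ad-bc=1$ gives $z+\ell+\tfrac{b}{a} = u-r$ and $z+\ell+\tfrac{d}{c} = u+r$ with $r = 1/(2ac)$. Then
\begin{equation*}
\se^{-1}\g\sa(z+\ell) = \frac{a}{c}\cdot\frac{u-r}{u+r}, \qquad j(\se^{-1}\g\sa,z+\ell) = c(u+r),
\end{equation*}
so after extracting the constant factor $(a/c)^{-k/2+2\pi i m/\ell_\eta}/c^{k}$ from the summand, what remains is precisely the integrand of $I_{\eta\ca}(m,n;r)$ with the correct $((u-r)/(u+r))^{2\pi i m/\ell_\eta}$-factor (including the $\sgn(r)=\sgn(ac)$ phase, handled by careful tracking of principal-branch logs as in the passage from \eqref{kloo} to \eqref{kloos}).

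Applying Poisson summation as in \eqref{psf} (justified by the same twice-differentiability/integrability check used for the parabolic/parabolic case, valid for $k>2$) converts $\sum_\ell f(\ell)$ into $\sum_n \hat f(n)e^{2\pi i n z}$, where
\begin{equation*}
\hat f(n) = \frac{1}{c^{k}}(a/c)^{-k/2+2\pi i m/\ell_\eta}\sgn(ac)^{2\pi i m/\ell_\eta}\,I_{\eta\ca}(m,n;1/(2ac)).
\end{equation*}
Combining the phase factors with the $b/(2a)+d/(2c)$ contribution from the shift produces exactly the summand $\e\!\left(\frac{m}{\ell_\eta}\log|a/c| + n(b/(2a)+d/(2c))\right)$ of the Kloosterman sum \eqref{kloo}, while the prefactor $c^{-k}(a/c)^{-k/2} = (ac)^{-k/2}\cdot c^{-k/2}\cdot(a/c)^{-k/2}\cdot (ac)^{k/2}/c^{k/2}$ collapses (using that $k$ is even) to $C^{-k/2}$ with $C=ac$. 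Grouping representatives by the value of $C$ then gives \eqref{sum3b}.

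The main obstacle will be bookkeeping of branch cuts: the fractional power $(Tz)^{-k/2+2\pi i m/\ell_\eta}$ has to be split as a product over $(u\pm r)$ in a way compatible with the principal-branch convention, and all residual $\sgn$-phases must be absorbed either into $S_{\eta\ca}$ or into the $(\sgn(r)\cdot)^{2\pi i m/\ell_\eta}$ factor of $I_{\eta\ca}$. Convergence of the outer $C$-sum and the legitimacy of interchanging it with the Poisson summation follow from Proposition \ref{bndhp}, which gives $I_{\eta\ca}(m,n;r)\ll n^{k-1}$ independent of $r$ (for $k>2$), combined with the bound $\sum_{|C|\lqs X}\mathcal N_{\eta\ca}(C)\ll X$ from Proposition \ref{schp}; summation by parts then yields absolute convergence of $\sum_{C}C^{-k/2}\mathcal N_{\eta\ca}(C)$ for $k>2$, justifying every rearrangement.
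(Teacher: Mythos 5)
Your plan is correct and follows essentially the same route as the paper's proof: Proposition \ref{doubhp} for the coset decomposition, Poisson summation on the $\G_\ca/Z$ sum justified by the $f''$ integrability check, the substitution $u=z+\ell+\frac{b}{2a}+\frac{d}{2c}$ to produce $I_{\eta\ca}(m,n;1/(2ac))$, and the interchange of summation justified by Propositions \ref{bndhp} and \ref{schp}. The only cosmetic difference is that your displayed $\hat f(n)$ omits the $\e(n(b/(2a)+d/(2c)))$ phase and places the $\sgn(ac)$ factor slightly loosely, but you correctly flag and resolve this in the surrounding branch-cut discussion, exactly as the paper does in passing from \eqref{intg2} to \eqref{fsumhp2}.
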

\begin{proof}
With Proposition \ref{doubhp} and $z=x+iy \in \H$, write the absolutely convergent
\begin{equation*}
    \left(P_{\eta,m}|_k \sa\right)(z) = \sum_{\g \in \G_\eta \backslash \G }
    \frac{(\se^{-1}\g \sa z)^{-k/2+2\pi i m /\ell_\eta}}
    {j(\se^{-1}\g \sa, z)^{k}}
\end{equation*}
as
\begin{equation} \label{fsumhp}
    \sum_{C \in C_{\eta\ca}}
    \sum_{\substack{ \g \in \G(\eta,\ca) \\ \left(\smallmatrix a
& b \\ c & d
\endsmallmatrix\right) = \se^{-1}\g\sa,
\
 ac=C }}
    \sum_{n\in \Z}
    \frac{(\se^{-1}\g \sa(z+n))^{-k/2+2\pi i m /\ell_\eta}}
    {j(\se^{-1}\g \sa , z+n)^{k}}.
\end{equation}
If we let
$
\se^{-1}\g\sa = \left(\smallmatrix a
& b \\ c & d
\endsmallmatrix\right) \in \SL_2(\R)
$ with $ac \neq 0$, then the inner sum is $\sum_{n\in \Z}f_\g(n)$ for
\begin{equation*}
    f_\g(t) = f(t):=\frac{\left( \frac{a(z+t)+b}{c(z+t)+d} \right)^{-k/2+2\pi i m /\ell_\eta}}{(c(z+t)+d)^k} = \frac{\left(\frac ac - \frac 1{c^2(z+t+d/c)}\right)^{-k/2+2\pi i m /\ell_\eta}}
    {c^k(z+t+d/c)^{k}}.
\end{equation*}
As in the proof of Theorem \ref{CISpp}, we may apply Poisson summation if
$\int_{-\infty}^\infty f(t)\, dt$ and $\int_{-\infty}^\infty |f''(t)|\, dt$  exist. The first integral exists for $k>1$ by Proposition \ref{bndhp} with $n=0$. It follows that the second exists too since
differentiating logarithmically  shows
\begin{equation*}
    f''(t)=f(t)\left( \frac{(s+k)(s+k+1)c^2}{(c(z+t)+d)^2} -\frac{2s(s+k)a c}{(a(z+t)+b)(c(z+t)+d)}+\frac{s(s-1)a^2}{(a(z+t)+b)^2}\right)
\end{equation*}
where $s=-k/2+2\pi i m /\ell_\eta$.
Therefore
\begin{equation} \label{psfhp}
    \sum_{n\in \Z} f(n) = \sum_{n\in \Z} \hat f(n) = \sum_{n\in \Z} \int_{-\infty}^\infty f(t) \e(-nt)\, dt.
\end{equation}
For the most symmetric result we substitute
\begin{equation*}
    u  = z+t+\frac{b}{2a}+\frac{d}{2c}
       = z+t+\frac{d}{c} - \frac{1}{2ac}
\end{equation*}
and find that the integral in \eqref{psfhp} equals
\begin{multline} \label{intg2}
    \e\left( n \left(z+ \frac{b}{2a}+\frac{d}{2c}\right)\right) \int_{-\infty+iy}^{\infty+iy} \frac{\left(\frac ac - \frac 1{c^2(u+1/(2ac))}\right)^{-k/2+2\pi i m /\ell_\eta}}
    {c^k (u+1/(2ac))^{k}} e^{-2\pi i n u} \, du\\
    = \frac{\e\left( n \left(z+ \frac{b}{2a}+\frac{d}{2c}\right)\right)} {(ac)^{k/2}}\int_{-\infty+iy}^{\infty+iy} \frac{\left( \frac ac  \frac {u-1/(2ac)}{u+1/(2ac)}\right)^{2\pi i m /\ell_\eta}e^{-2\pi i n u}}
    {( u-1/(2ac))^{k/2} (u+1/(2ac))^{k/2}}  \, du\\
    = e^{2 \pi i n z}\frac{1}{ (ac)^{k/2}}
    \e\left(\frac{m}{\ell_\eta} \log \left|\frac ac\right| + n \left( \frac{b}{2a}+\frac{d}{2c}\right)\right)
    I_{\eta\ca}\left(m,n;\frac{1}{2ac}\right).
\end{multline}
Therefore \eqref{fsumhp} is now
\begin{equation} \label{fsumhp2}
    \sum_{C \in C_{\eta\ca}}
    \sum_{\substack{ \g \in \G(\eta,\ca) \\ \left(\smallmatrix a
& b \\ c & d
\endsmallmatrix\right) = \se^{-1}\g\sa,
\
 ac=C }}
    \sum_{n\in\Z}
    e^{2 \pi i n z}\frac{1}{ C^{k/2}}
    \e\left(\frac{m}{\ell_\eta} \log \left|\frac ac\right| + n \left( \frac{b}{2a}+\frac{d}{2c}\right)\right)
    I_{\eta\ca}\left(m,n;\frac{1}{2C}\right).
\end{equation}
By Proposition \ref{bndhp}, \eqref{fsumhp2} is majorized by
\begin{equation} \label{fsumhp3}
    \sum_{C \in C_{\eta\ca}}
    \sum_{\substack{ \g \in \G(\eta,\ca) \\ \left(\smallmatrix a
& b \\ c & d
\endsmallmatrix\right) = \se^{-1}\g\sa,
\
 ac=C }}
    \sum_{n=1}^\infty
    e^{-2 \pi  n y}\frac{n^{k-1}}{ |C|^{k/2}}
     \exp\left(\pi^2 (|m|-m)/\ell_\eta \right) \ll \sum_{C \in C_{\eta\ca}} \frac{\mathcal N_{\eta\ca}(C)}{ |C|^{k/2}}.
\end{equation}
Using Proposition \ref{schp} and summation by parts shows the last series is convergent for $k>2$. Changing the order of summation in \eqref{fsumhp2} is therefore valid,
and moving the sum over $n$ to the outside completes the proof.
\end{proof}

Theorem \ref{CIShp} follows from Theorem \ref{CIShp2} and Proposition \ref{usop}.

\section{An example in $\G =\SL_2(\Z)$} \label{sect_ex}
\subsection{}
Set $\G =\SL_2(\Z)$. We  consider the results of the last section in the special case where
\begin{equation*}
    \ca=\infty \quad \text{and} \quad \eta=(\eta_1,\eta_2)=(-\sqrt{D},\sqrt{D})
\end{equation*}
for $D$  a positive integer that is not a perfect square.
If $\g=\left(\smallmatrix a
& b \\ c & d
\endsmallmatrix\right) \in \SL_2(\R)$ fixes $\pm \sqrt{D}$ then $cz^2+(d-a)z-b$ has $z=\pm \sqrt{D}$ as its zeros. Therefore $d-a=0$ and $b/c=D$ so that $\g=\left(\smallmatrix
a & Dc \\ c & a
\endsmallmatrix\right)$.
If $\g \in \G$ then
 $(a,c)$ is an integer solution of Pell's equation \eqref{pell}.
Let $(a_0,c_0)$ be the positive integer solution of \eqref{pell} minimizing $a>1$. Set $\varepsilon_D := a_0+\sqrt{D} c_0$ and we see that $\varepsilon_D>1$. Choose $\si= I$ and $\se = \hat\sigma_\eta$ as in \eqref{hypscat}.  Then
\begin{equation} \label{d14s}
    \sigma_\eta= \frac{1}{\sqrt{2}}\begin{pmatrix} D^{1/4} & -D^{1/4} \\ D^{-1/4} & D^{-1/4} \end{pmatrix}, \qquad
    \sigma_\eta^{-1} \begin{pmatrix} a & Dc \\ c & a \end{pmatrix} \sigma_\eta = \begin{pmatrix} a+\sqrt{D} c & 0 \\ 0 & a-\sqrt{D} c \end{pmatrix}
\end{equation}
so that
\begin{equation} \label{deth}
    \sigma_\eta^{-1} \G_\eta \sigma_\eta = \left\langle \begin{pmatrix} \varepsilon_D  & 0 \\ 0 & 1/\varepsilon_D \end{pmatrix}, \ -I\right\rangle
    = \left\{ \left. \pm \begin{pmatrix} \varepsilon_D^n  & 0 \\ 0 & \varepsilon_D^{-n} \end{pmatrix} \right| n \in \Z \right\}.
\end{equation}
(The picture for general hyperbolic points of $\SL_2(\Z)$ is not much different from the above. See, for example \cite[Sect. 3.1]{KZ}.)

For $\left(\smallmatrix
e  &  f \\ g & h
\endsmallmatrix\right) \in \G$, write
\begin{equation*}
    \se^{-1}\begin{pmatrix} e  &  f \\ g & h \end{pmatrix}\si =  \frac{1}{\sqrt{2} D^{1/4}}\begin{pmatrix} e+g\sqrt{D}   &  f+h\sqrt{D}  \\  -e+g\sqrt{D} &  -f+h\sqrt{D}  \end{pmatrix} = \begin{pmatrix} a  &  b \\ c & d \end{pmatrix}.
\end{equation*}
Then $ac=(e^2-Dg^2)/(-2\sqrt{D})$ and $C_{\eta\ci} \subset \Z/(2\sqrt{D})$. Also
\begin{equation*}
    \frac{b}{2a}+\frac{d}{2c} = \frac{f+h\sqrt{D}}{2(e+g\sqrt{D})} - \frac{f-h\sqrt{D}}{2(e-g\sqrt{D})} = \frac{ef-ghD}{e^2-g^2D}.
\end{equation*}
Set $R_D  := \se R_{\eta\ci} \si^{-1}$, so that
\begin{equation}
       R_D= \left\{ \left. \begin{pmatrix} e  &  f \\ g & h \end{pmatrix}\in \SL_2(\Z) \ \right|  \ \frac 1{\varepsilon_D} \lqs  \left|\frac{e+g\sqrt{D} }{e-g\sqrt{D} }\right| < \varepsilon_D, \ 0\lqs \frac{ef-ghD}{e^2-g^2D} < 1 \right\} \label{R_D}
\end{equation}
and
let $R_D(N)$ be the elements of $R_D$ with $e^2-Dg^2=N$.
 Combining   Proposition \ref{usop}, Lemma \ref{ac01} and Theorem \ref{CIShp2} with the work above
shows  the following.

\begin{prop} \label{ci3}
For $m \in \Z$ and $n \in \Z_{\gqs 1}$
\begin{multline*}
c_\infty(n;P_{\eta,m})  =
 \frac{(2\pi i)^k n^{k-1}}{\G(k) }
 \sum_{N\in \Z_{\neq 0} }
\left(\frac{-2\sqrt{D}}
{ N}\right)^{k/2}
 \exp\left(-\frac{\pi^2 m}{\ell_\eta}(\sgn(N)+1) +\frac{2\pi i n\sqrt{D}}{N}\right)
 \\
     \times
      {_1}F_1 \left(\frac{k}{2}+\frac{2\pi i m}{\ell_\eta};k;-\frac{4\pi i n\sqrt{D} }{N}\right)
S_{\eta\infty}\left(m,n;\frac{-N}{2\sqrt{D}}\right)
\end{multline*}
for
\begin{equation} \label{sei}
S_{\eta\infty}\left(m,n;\frac{-N}{2\sqrt{D}}\right) =
 \frac 12
\sum_{\left(\smallmatrix e & f \\ g & h \endsmallmatrix\right) \in R_D(N) }
\e\left(\frac{m}{\ell_\eta} \log \left|\frac{e+g\sqrt{D}}{e-g\sqrt{D}}\right| +   \frac{n(ef-ghD)}{N}  \right).
\end{equation}
\end{prop}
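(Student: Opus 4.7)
The plan is to specialize Theorem \ref{CIShp2} (equivalently, Theorem \ref{CIShp} via Proposition \ref{usop}) to $\G=\SL_2(\Z)$, $\ca=\ci$, $\eta=(-\sqrt{D},\sqrt{D})$, with the concrete scaling matrices $\si=I$ and $\se=\hat\sigma_\eta$ as in \eqref{d14s}, and then translate the abstract parametrization by $C \in C_{\eta\ci}$ into a parametrization by $N=e^2-Dg^2\in\Z_{\neq 0}$ using the explicit form of $\se^{-1}\g\si$.

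First I would lift the matrix computation already begun in the section to a full template: for a generic $\g=\left(\smallmatrix e & f \\ g & h\endsmallmatrix\right)\in \SL_2(\Z)$, the entries of $\left(\smallmatrix a & b \\ c & d\endsmallmatrix\right)=\se^{-1}\g\si$ are $a=(e+g\sqrt{D})/(\sqrt{2}D^{1/4})$, etc. From this one extracts $ac=-N/(2\sqrt D)$, $|a/c|=|(e+g\sqrt{D})/(e-g\sqrt{D})|$, and $b/(2a)+d/(2c)=(ef-ghD)/N$. In particular $C_{\eta\ci}\subset\{-N/(2\sqrt{D}):N\in\Z_{\neq 0}\}$, with the $N$ actually occurring being those realized as $e^2-Dg^2$ for some $(e,g)$ with $\gcd(e,g)=1$.

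Second, I would combine Theorem \ref{CIShp2} with Proposition \ref{usop} applied to $r=1/(2C)$ to write
\[
c_\ci(n;P_{\eta,m})=\frac{(2\pi i)^k n^{k-1}}{\G(k)}\sum_{C\in C_{\eta\ci}}C^{-k/2}\exp\!\left(\tfrac{\pi^2 m}{\ell_\eta}(\sgn(C)-1)-\tfrac{\pi i n}{C}\right){_1}F_1\!\left(\tfrac k2+\tfrac{2\pi i m}{\ell_\eta};k;\tfrac{2\pi i n}{C}\right)S_{\eta\ci}(m,n;C),
\]
using the identity $(2\pi)^k/e^{\pi i k/2}=(2\pi i)^k$ valid for even $k$. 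Then I would re-index via $C=-N/(2\sqrt{D})$, which converts $C^{-k/2}\mapsto(-2\sqrt D/N)^{k/2}$, $\sgn(C)\mapsto-\sgn(N)$, $-\pi i n/C\mapsto 2\pi i n\sqrt D/N$, and $2\pi i n/C\mapsto -4\pi i n\sqrt D/N$. These substitutions reproduce the first displayed formula of Proposition \ref{ci3} exactly, with the understanding that inner sums over empty $R_D(N)$ contribute zero.

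For the Kloosterman sum identity \eqref{sei}, Lemma \ref{ac01} says we may take representatives for $\G_\eta\backslash\G/\G_\ci$ to lie in $R_{\eta\ci}/Z=\se^{-1}R_D\si/\{\pm I\}$, where $Z=\{\pm I\}$ since $-I\in\SL_2(\Z)$. Noting that the two defining inequalities in \eqref{R_D} are each invariant under $\g\to-\g$, the map $R_D\to R_D/\{\pm I\}$ is exactly two-to-one, which accounts for the prefactor $\tfrac12$ in \eqref{sei}. Substituting the already-computed values $|a/c|$ and $b/(2a)+d/(2c)$ into the definition \eqref{kloo} of $S_{\eta\ci}$ gives the claimed expression.

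The main obstacle is not conceptual but bookkeeping: one has to verify that every exponential, sign and power of $-1$ emerging from Proposition \ref{usop} survives the change of variables $C=-N/(2\sqrt D)$ intact, and that the pair $(a/c)$ and $(b/(2a)+d/(2c))$ transport correctly to $(e,f,g,h)$-data. The symmetry check that $-M\in R_D\iff M\in R_D$ (needed for the factor $\tfrac12$) is easy but essential, and I would do it explicitly before stating the final formula.
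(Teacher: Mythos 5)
Your proposal is correct and follows exactly the paper's route: the paper derives Proposition \ref{ci3} by "combining Proposition \ref{usop}, Lemma \ref{ac01} and Theorem \ref{CIShp2}" with the explicit computation of $\se^{-1}\g\si$ (giving $ac=-N/(2\sqrt{D})$, $|a/c|=|(e+g\sqrt{D})/(e-g\sqrt{D})|$, $b/(2a)+d/(2c)=(ef-ghD)/N$), which is precisely your plan. Your bookkeeping of the substitutions $r=1/(2C)$, $C=-N/(2\sqrt{D})$, the identity $(2\pi)^k e^{-\pi i k/2}=(2\pi i)^k$ for even $k$, and the factor $\tfrac12$ from the $\pm I$ equivalence in $R_D/Z$ is all accurate.
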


\subsection{A more explicit form for $S_{\eta\infty}\left(m,n;-N/(2\sqrt{D})\right)$}
Recall the statement of Theorem \ref{final_k} and the definitions preceding it.
To prove this result, we begin by
examining the sets $R_D$ and $R_D(N)$ in more detail.

Given  $e$, $g \in \Z$ with $(e,g)=1$, how many ways, if any, can we complete the matrix $\left(\smallmatrix e  & * \\ g & *
\endsmallmatrix\right)$ to an element of $R_D$?
If $g=0$ then it can be quickly seen that the only way to complete $\left(\smallmatrix e  & * \\ 0 & *
\endsmallmatrix\right)$ is to $\pm I \in R_D(1)$.
For $g\neq 0$  write $\overline{e}$ for the inverse of $e \bmod |g|$, chosen with $0\lqs \overline{e}<|g|$ say. We find the solution
\begin{equation} \label{efghsol}
    (e,f_0,g,h_0)=\left(e,\frac{e\overline{e}-1}{g},g,\overline{e}\right) \qquad \text{to} \qquad eh-fg=1
\end{equation}
 and any other solution must be of the form $(e,f_0+te,g,h_0+tg)$ for $t\in \Z$. With these solutions, the second condition in $R_D$, \eqref{R_D},  becomes
\begin{equation*}
    0\lqs \frac{ef_0-gh_0 D}{e^2-g^2D} +t < 1 .
\end{equation*}
It follows from the above arguments that  there is at most one way to complete  $\left(\smallmatrix e  & * \\ g & *
\endsmallmatrix\right)$ to an element of $R_D$. The next result gives the summands in \eqref{sei} in terms of just $e$ and $g$.

\begin{lemma}
For $g \neq 0$
\begin{equation} \label{elg}
    \e\left( \frac{n(ef-ghD)}{N}  \right) = \e\left( -\frac{n e g^{-1}}{N}  \right)
\end{equation}
and for $g=0$ the left side of \eqref{elg} is $1$.
\end{lemma}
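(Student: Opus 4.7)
The plan is to show that the arguments of $\e(\cdot)$ on the two sides of \eqref{elg} differ by an integer. Since $\e(x) = e^{2\pi i x}$ is $1$-periodic, it suffices to establish the congruence
\begin{equation*}
    n(ef - ghD) \equiv -n e g^{-1} \pmod{N}
\end{equation*}
for every $n \in \Z$, which reduces to $ef - ghD \equiv -eg^{-1} \pmod{N}$ once we confirm that $g$ is invertible modulo $N$.

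First I would verify that $\gcd(g, N) = 1$. Since $N = e^2 - Dg^2$, we have $N \equiv e^2 \pmod{g}$, so $\gcd(g, N) = \gcd(g, e^2) = 1$ by the hypothesis $\gcd(e, g) = 1$ implicit in the matrix's membership in $R_D(N)$ (recall that $R_D \subset \SL_2(\Z)$, so the column $(e,g)^t$ is primitive).

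The key algebraic step is to multiply $ef - ghD$ by $g$ and exploit both defining relations $eh - fg = 1$ and $e^2 - Dg^2 = N$:
\begin{equation*}
    g(ef - ghD) = efg - h(Dg^2) = efg - h(e^2 - N) = hN - e(eh - fg) = hN - e.
\end{equation*}
Hence $g(ef - ghD) \equiv -e \pmod{N}$, and multiplying by $g^{-1}$ gives $ef - ghD \equiv -eg^{-1} \pmod{N}$, which is exactly what is needed.

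For the boundary case $g = 0$: the determinant condition $eh - fg = 1$ becomes $eh = 1$, forcing $e = h = \pm 1$, and $N = e^2 = 1$. Then $n(ef - ghD)/N = \pm nf \in \Z$, so $\e(n(ef - ghD)/N) = 1$, proving the stated claim. I expect no real obstacle here — the whole argument is the short identity $g(ef - ghD) = hN - e$; the rest is bookkeeping about invertibility and the degenerate case.
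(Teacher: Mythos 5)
Your proof is correct and follows essentially the same route as the paper's: both reduce the claim to the congruence $g(ef-ghD)\equiv -e \pmod N$ using the two relations $eh-fg=1$ and $e^2-Dg^2=N$, together with the invertibility of $g$ modulo $N$. Your version is a slight streamlining --- the paper passes through the explicit representative $(f_0,h_0)=\bigl((e\overline{e}-1)/g,\,\overline{e}\bigr)$ and reduces that, whereas you multiply by $g$ directly and handle an arbitrary completion $(f,h)$, which also makes explicit (rather than implicit) why the left side of the identity is independent of the choice of completion.
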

\begin{proof}
When $g=0$ then we must have $e=\pm 1$ and $N=1$ so that $ef-ghD \equiv 0 \bmod N$.
When $g \neq 0$, we have from \eqref{efghsol} that
\begin{equation} \label{elg2}
    ef-ghD \equiv ef_0-gh_0D   \equiv  e\frac{e\overline{e}-1}g-g\overline{e}D
     \equiv  \frac{N\overline{e} -e}g \bmod N.
\end{equation}
Note that $(g,N)=1$ since $(e,g)=1$ and $g$ has an inverse $\bmod N$. Writing \eqref{elg2} as $x \bmod N$ then implies $-e \equiv gx \bmod N$
and the lemma follows.
\end{proof}

We now examine the first condition in  \eqref{R_D}.

\begin{lemma} \label{ellipse!}
For $\varepsilon$, $e$, $g \in \R$ with  $e^2-Dg^2=N \neq 0$ and $\varepsilon>1$ we have
\begin{equation} \label{iff}
    \frac 1{\varepsilon} \lqs  \left|\frac{e+g\sqrt{D} }{e-g\sqrt{D} }\right| \lqs \varepsilon \iff e^2+Dg^2 \lqs \left(\varepsilon+\frac{1}{\varepsilon}\right)\frac{|N|}2.
\end{equation}
\end{lemma}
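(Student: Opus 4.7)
\medskip

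\noindent\textbf{Proof plan.} Set $\alpha := e+g\sqrt{D}$ and $\beta := e-g\sqrt{D}$. Since $\alpha\beta = e^2-Dg^2 = N \neq 0$, both $\alpha$ and $\beta$ are nonzero real numbers, and $|\alpha\beta| = |N|$. A direct expansion also gives
\begin{equation*}
\alpha^2 + \beta^2 \;=\; (e+g\sqrt{D})^2 + (e-g\sqrt{D})^2 \;=\; 2(e^2 + Dg^2),
\end{equation*}
so the right-hand quantity $e^2 + Dg^2$ in the lemma is just $(\alpha^2+\beta^2)/2$.

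The plan is then to reduce both sides of the claimed equivalence to a condition on the single real variable $t := |\alpha/\beta| > 0$. On the left, the condition is plainly $1/\varepsilon \lqs t \lqs \varepsilon$. On the right, dividing through by $|N| = |\alpha||\beta|$ gives
\begin{equation*}
\frac{2(e^2+Dg^2)}{|N|} \;=\; \frac{\alpha^2+\beta^2}{|\alpha\beta|} \;=\; \left|\frac{\alpha}{\beta}\right| + \left|\frac{\beta}{\alpha}\right| \;=\; t + \frac{1}{t},
\end{equation*}
so the right-hand inequality becomes $t + 1/t \lqs \varepsilon + 1/\varepsilon$.

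It then suffices to verify the elementary fact that, for $t>0$ and $\varepsilon>1$,
\begin{equation*}
 t + \tfrac{1}{t} \;\lqs\; \varepsilon + \tfrac{1}{\varepsilon} \quad\Longleftrightarrow\quad \tfrac{1}{\varepsilon} \lqs t \lqs \varepsilon.
\end{equation*}
This follows because $f(t) := t + 1/t$ is strictly decreasing on $(0,1]$ and strictly increasing on $[1,\infty)$ with $f(1/t) = f(t)$; hence on $[1/\varepsilon,\varepsilon]$ it attains its maximum $\varepsilon + 1/\varepsilon$ at the endpoints, while for $t > \varepsilon$ or $t < 1/\varepsilon$ one has $f(t) > \varepsilon + 1/\varepsilon$. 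Combining the two reductions yields the stated equivalence.

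There is no real obstacle here: the only thing to be mindful of is that $\alpha$ and $\beta$ are \emph{real} (not complex conjugates as in the imaginary-quadratic case), so $|\alpha|^2 = \alpha^2$ and the identity $\alpha^2+\beta^2 = 2(e^2+Dg^2)$ holds without a sign subtlety, while $N$ itself may be negative, which is why absolute values appear throughout.
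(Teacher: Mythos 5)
Your proposal is correct and follows essentially the same route as the paper: both reduce the right-hand inequality to a single-variable condition on $t=\bigl|\tfrac{e+g\sqrt{D}}{e-g\sqrt{D}}\bigr|$ by dividing out $|N|$ (the paper divides by $(e-g\sqrt{D})^2$ to get $t^2+1\lqs(\varepsilon+\tfrac1\varepsilon)t$, which is your $t+\tfrac1t\lqs\varepsilon+\tfrac1\varepsilon$ after dividing by $t>0$). Your explicit verification of the final elementary equivalence via the monotonicity of $t+\tfrac1t$ is a detail the paper leaves implicit, but the argument is the same.
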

\begin{proof}
The right side of \eqref{iff} is equivalent to
\begin{multline*}
    (e+g\sqrt{D})^2+(e-g\sqrt{D})^2 \lqs \left(\varepsilon+\frac{1}{\varepsilon}\right)|N|\\
    \iff \frac{(e+g\sqrt{D})^2}{(e-g\sqrt{D})^2}+1 \lqs \left(\varepsilon+\frac{1}{\varepsilon}\right)\frac{|N|}{(e-g\sqrt{D})^2}\\
    \iff \left|\frac{e+g\sqrt{D} }{e-g\sqrt{D} }\right|^2+1 \lqs \left(\varepsilon+\frac{1}{\varepsilon}\right)\left|\frac{e+g\sqrt{D} }{e-g\sqrt{D} }\right|
\end{multline*}
which is equivalent to the left side of \eqref{iff}.
\end{proof}

Since $\varepsilon_D = a_0+c_0\sqrt{D}$, we may write $\varepsilon_D+1/\varepsilon_D = 2a_0$ in \eqref{iff}.
Recall $R^*_D(N)$ defined in \eqref{rdnst}. We see from \eqref{R_D} and Lemma \ref{ellipse!} that $R_D(N)$ corresponds exactly to all pairs $(e,g) \in R^*_D(N)$  such that
\begin{equation} \label{hi}
    \left|\frac{e+g\sqrt{D} }{e-g\sqrt{D} }\right| \neq \varepsilon_D.
\end{equation}

\begin{lemma} \label{przw}
We have equality in \eqref{hi}  if and only if $(e,g)=\pm(u_+,v_+)$ or $\pm(u_-,v_-)$.
\end{lemma}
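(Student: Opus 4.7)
The plan is to translate the equality $|\tfrac{e+g\sqrt D}{e-g\sqrt D}|=\varepsilon_D$ into a linear system in $e,g$ with coefficients coming from $\varepsilon_D=a_0+c_0\sqrt D$, solve that system modulo the constraint $\gcd(e,g)=1$, and then verify the converse by a direct conjugate-multiplication identity. Since $e^2-Dg^2=N\neq 0$, the quotient $(e+g\sqrt D)/(e-g\sqrt D)$ is a well-defined nonzero real number, so equality in \eqref{hi} means
\begin{equation*}
\frac{e+g\sqrt D}{e-g\sqrt D}=\epsilon\,\varepsilon_D\quad\text{for some }\epsilon\in\{+1,-1\}.
\end{equation*}

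First consider $\epsilon=+1$. Cross-multiplying and expanding $(a_0+c_0\sqrt D)(e-g\sqrt D)$ and then equating the rational and the $\sqrt D$-parts (which is legitimate because $\sqrt D\notin\Q$) gives the two equations $e(a_0-1)=c_0 g D$ and $g(a_0+1)=c_0 e$. The second equation says $e/g=(a_0+1)/c_0$, so in lowest terms $e/g=u_+/v_+$; the coprimality condition $\gcd(e,g)=1$ then forces $(e,g)=\pm(u_+,v_+)$. (The first equation is automatic once the second holds, via the Pell relation $a_0^2-Dc_0^2=1$.) The case $\epsilon=-1$ is identical with signs flipped and yields $e/g=(a_0-1)/c_0=u_-/v_-$, so $(e,g)=\pm(u_-,v_-)$.

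For the converse, I would substitute each pair back and rationalise. For instance with $(e,g)=(u_+,v_+)$, multiplying numerator and denominator by $(u_++v_+\sqrt D)$ gives
\begin{equation*}
\frac{u_++v_+\sqrt D}{u_+-v_+\sqrt D}=\frac{(u_++v_+\sqrt D)^2}{u_+^2-Dv_+^2}=\frac{(u_++v_+\sqrt D)^2}{D_+}.
\end{equation*}
Writing $u_+=(a_0+1)/d$ and $v_+=c_0/d$ for $d=\gcd(a_0+1,c_0)$, the earlier computation of $D_+=(2a_0+2)/d^2$ collapses the right-hand side to $a_0+c_0\sqrt D=\varepsilon_D$ after the Pell relation is used to simplify the numerator. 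The $(u_-,v_-)$ case produces $-\varepsilon_D$ in the same way, and the sign changes $(e,g)\to(-e,-g)$ leave the ratio invariant.

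The only step that is not entirely routine is being sure that passing from $e/g=(a_0+1)/c_0$ to $(e,g)=\pm(u_+,v_+)$ is valid; this is exactly where the hypothesis $\gcd(e,g)=1$ built into $R_D^*(N)$ is used, and where the definitions of $u_\pm,v_\pm$ as the reduced forms of $(a_0\pm 1)/c_0$ play their role. Once that identification is in hand the rest is bookkeeping with the Pell equation.
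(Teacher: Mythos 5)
Your proposal is correct and follows essentially the same route as the paper: the paper's proof also reduces the equality $\frac{e+g\sqrt D}{e-g\sqrt D}=\pm\varepsilon_D$ to the linear condition $e=\frac{a_0\pm 1}{c_0}g$ (stated there as an equivalence, so the converse is built in) and then invokes $\gcd(e,g)=1$ together with the definition of $u_\pm,v_\pm$ as reduced fractions to conclude $(e,g)=\pm(u_\pm,v_\pm)$. Your explicit rationalisation check of the converse and the observation that the rational-part equation is implied by the Pell relation are both correct.
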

\begin{proof}
There can be equality in \eqref{hi} only for two possible values of $N$, as we see next.
We have
\begin{equation} \label{holds2}
     \frac{e+g\sqrt{D} }{e-g\sqrt{D} } = \pm \varepsilon_D=\pm(a_0+c_0\sqrt{D}) \iff e = \frac{a_0\pm 1}{c_0}g.
\end{equation}
If  $\gcd(e,g)=1$ then
\begin{align*}
    e = \frac{a_0+ 1}{c_0}g & \iff  e = \frac{u_+}{v_+} g  \iff (e,g)=\pm(u_+,v_+), \\
    e = \frac{a_0- 1}{c_0}g & \iff  e = \frac{u_-}{v_-} g  \iff (e,g)=\pm(u_-,v_-).
\end{align*}
We also note that $D_+>0$ and $D_-<0$ since $D_+= 2a_0 v_+^2/c_0^2$, $D_-= -2a_0 v_-^2/c_0^2$.
\end{proof}

The points $\pm(u_+,v_+)$ lie on both the ellipse $e^2+Dg^2=a_0|N|$ and the hyperbola $e^2-Dg^2=N$ for $N=D_+$. Similarly for
$\pm(u_-,v_-)$ when $N=D_-$. See Figure \ref{bfig}.

\begin{lemma}
If $(e,g,N)=(\pm u_+, \pm v_+,D_+)$ or $(\pm u_-, \pm v_-,D_-)$ then
\begin{equation}\label{sun}
    \e  \left(\frac{m}{\ell_\eta} \log \left|\frac{e+g\sqrt{D}}{e-g\sqrt{D}}\right| -  \frac{n e g^{-1}}{N} \right) = (-1)^{m+c_0 \cdot n}.
\end{equation}
\end{lemma}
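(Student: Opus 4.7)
The target \eqref{sun} is a product of two factors; the plan is to evaluate each separately. For the factor involving $\log|\cdot|$, the proof of Lemma~\ref{przw} gives $|(e+g\sqrt{D})/(e-g\sqrt{D})| = \varepsilon_D$ in all eight specified cases. Since $\ell_\eta = 2\log\varepsilon_D$, this factor collapses to $\e(m/2) = (-1)^m$, and it remains only to show that
\[
\e\left(-\frac{n e g^{-1}}{N}\right) = (-1)^{c_0 n} \qquad (n\in\Z),
\]
which by taking $n$th powers is equivalent to the single congruence
\[
2 e g^{-1} + c_0 N \equiv 0 \pmod{2N}.
\]

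Next I would reduce the eight pairs to just the two representatives $(e,g,N) = (u_+, v_+, D_+)$ and $(u_-, v_-, D_-)$: since $(-g)^{-1} \equiv -g^{-1} \pmod{|N|}$, the residue of $eg^{-1}$ modulo $|N|$ is invariant under $(e,g) \mapsto (-e,-g)$. The key algebraic inputs are the identities
\[
c_0 D_+ = 2 u_+ v_+ \qquad \text{and} \qquad c_0 D_- = -2 u_- v_-,
\]
each immediate from $D_\pm = \pm 2(a_0 \pm 1)/d_\pm^2$ together with $u_\pm = (a_0\pm 1)/d_\pm$ and $v_\pm = c_0/d_\pm$, where $d_\pm = \gcd(a_0\pm1,c_0)$.

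Substituting these identities into the congruence above turns the $+$ case into $u_+(v_+^{-1}+v_+) \equiv 0 \pmod{D_+}$ and the $-$ case into $u_-(v_-^{-1}-v_-) \equiv 0 \pmod{|D_-|}$. Any prime dividing $\gcd(v_\pm, D_\pm)$ would also divide $u_\pm^2$, hence $u_\pm$, contradicting $\gcd(u_\pm, v_\pm)=1$; so $\gcd(v_\pm, D_\pm) = 1$, and multiplying each congruence by $v_\pm$ reduces the task to the two integrality claims
\[
\frac{u_+(1+v_+^2)}{D_+} = \frac{d_+(1+v_+^2)}{2} \in \Z \qquad \text{and} \qquad \frac{u_-(1-v_-^2)}{|D_-|} = \frac{d_-(1-v_-^2)}{2} \in \Z.
\]

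The main obstacle is this final parity check. If $v_\pm$ is odd, then $1 \pm v_\pm^2$ is even and the claim is immediate. If $v_\pm$ is even, then $d_\pm$ must also be even: otherwise $d_\pm$ odd and $v_\pm$ even would give $c_0 = d_\pm v_\pm$ even, so Pell's equation $a_0^2 - D c_0^2 = 1$ would force $a_0$ odd and hence $a_0 \pm 1$ even, making $u_\pm = (a_0\pm 1)/d_\pm$ even, contradicting $\gcd(u_\pm, v_\pm) = 1$.
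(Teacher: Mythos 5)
Your proof is correct and follows essentially the same route as the paper's: both rest on the identity $d_\pm D_\pm = 2u_\pm$ (equivalently $c_0 D_\pm = \pm 2u_\pm v_\pm$) extracted from Pell's equation, followed by a parity case analysis. Your packaging is slightly cleaner -- reducing to the single congruence $2eg^{-1}+c_0N\equiv 0 \pmod{2N}$, treating the $+$ and $-$ cases uniformly, and explicitly checking $\gcd(v_\pm,D_\pm)=1$ -- whereas the paper computes the residue $u_+(v_+)^{-1}\bmod D_+$ in two cases according to the parity of $c_0$ and declares the $-$ case similar, but the underlying argument is the same.
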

\begin{proof}
Lemma \ref{przw} implies
\begin{equation*}
    \e  \left(\frac{m}{\ell_\eta} \log \left|\frac{e+g\sqrt{D}}{e-g\sqrt{D}}\right|  \right) =
    \e  \left(\frac{m}{\ell_\eta} \log \varepsilon_D  \right) =
    \e  \left(\frac{m}{2}   \right) = (-1)^m.
\end{equation*}
Writing $a_0+ 1 = \lambda u_+$ and $c_0 = \lambda v_+$ for $\lambda \in \Z_{\gqs 1}$ we find
\begin{align}
    a_0^2-Dc_0^2=1 & \implies (\lambda u_+ -1)^2 - D \lambda^2 (v_+)^2 = 1 \notag\\
    & \implies \lambda D_+ = 2u _+. \label{udp}
\end{align}
Consider $u_+(v_+)^{-1} \bmod D_+$.
If $c_0$ is even then $a_0$ must be odd and so $\lambda$ is even. Hence \eqref{udp} implies $D_+$ divides $u_+$ and therefore $u_+(v_+)^{-1} \equiv 0 \bmod D_+$ and
\begin{equation*}
    \e  \left( -  \frac{n e g^{-1}}{N} \right) = \e  \left( -  \frac{n u_+(v_+)^{-1}}{D_+} \right) = 1 = (-1)^{c_0 \cdot n}.
\end{equation*}
On the other hand,
if $c_0$ is odd then $\lambda$ is odd and $D_+$ is even. Hence \eqref{udp} implies $u_+ \equiv D_+/2 \bmod D_+$. In this case we must also have $v_+$ and $(v_+)^{-1}$ odd so that
$u_+(v_+)^{-1} \equiv D_+/2 \bmod D_+$ and
\begin{equation*}
    \e  \left( -  \frac{n e g^{-1}}{N} \right) = \e  \left( -  \frac{n u_+(v_+)^{-1}}{D_+} \right) = (-1)^n = (-1)^{c_0 \cdot n}.
\end{equation*}

This completes the proof for  $(e,g,N)=(u_+, v_+,D_+)$, $(-u_+, -v_+,D_+)$.
The proof for $(e,g,N)=(\pm u_-,\pm v_-,D_-)$ is similar.
\end{proof}

\begin{proof}[Proof of Theorem \ref{final_k}]
We see now that the sum for $S_{\eta\infty}$ in \eqref{sei} may be replaced by the sum over $(e,g) \in R_D^*(N)$ in \eqref{kloost4} except that the extra summands with $(e,g)=\pm(u_+,v_+)$, $\pm(u_-,v_-)$ must be removed from $R_D^*(N)$. This is accomplished by the term $-\psi_D(m,n;N)$ in \eqref{kloost4}.
The factor $1/2$ in both sums comes from the equivalence of $\left(\smallmatrix e  & f \\ g & h
\endsmallmatrix\right)$ and $\left(\smallmatrix -e  & -f \\ -g & -h
\endsmallmatrix\right)$.
\end{proof}

\section{Parabolic Poincar\'e series and their hyperbolic Fourier expansions} \label{secph}
The results in the section are similar to those in Section \ref{sechp}, switching $\eta$ with $\ca$.
Note the relation
\begin{equation} \label{ccre}
    c_\ca(m;P_{\eta,n}) = \overline{c_\eta(n;P_{\ca,m})} \left[\frac{(2\pi)^k \ell_\eta m^{k-1}e^{-2\pi^2n/\ell_\eta}}
    {|\G(k/2+2\pi i n/\ell_\eta)|^2 }
     \right] \qquad (m\in \Z_{\gqs 0}, n\in \Z)
\end{equation}
coming from \eqref{pin} and \eqref{epe} applied to $\s{P_{\ca,m}}{P_{\eta,n}}= \overline{\s{P_{\eta,n}}{P_{\ca,m}}}$.
However,  \eqref{ccre} is not quite symmetrical. For $m \lqs 0$ we have that $c_\ca(m;P_{\eta,n})=0$ since $P_{\eta,n} \in S_k(\G)$, but we don't expect $c_\eta(n;P_{\ca,m})$ to be zero since $P_{\ca,m} \in M^!_k(\G)$ for $m<0$ and $P_{\ca,0} \in M_k(\G)$.

\subsection{The parabolic/hyperbolic integral}
For $m$, $n \in \Z$ and $r \in \R_{\neq 0}$ define
\begin{equation} \label{lirr}
    I_{par \, \eta}(m,n;r):= \int_{-\infty+i y}^{\infty+i y}
    \frac{\e\left(m\left(\frac{\sgn(r)  e^{u} - 1}{2r(\sgn(r) e^{u} +  1)} \right)\right) e^{u(k/2-2\pi i n  /\ell_{\eta})}}
    { (\sgn(r) e^{u} +  1)^{k}}  \, \frac{du}{\ell_{\eta}} \qquad(0<y<\pi, \ k>0).
\end{equation}
This is the integral that appears in the proof of Theorem \ref{CISph} and we develop its properties here first.

\begin{prop} \label{bndph}
The integral \eqref{lirr} is absolutely convergent and we have the estimates
\begin{align}
    I_{par \, \eta}(m,n;r) & \ll   \exp\bigl(\pi e(|m|-m)/|r| \bigr)/\ell_{\eta}  &(n=0), \label{iphx1}\\
    I_{par \, \eta}(m,n;r) & \ll n^{k/2} \exp\left(\pi^2 n^{1/2} \left(\frac{1}{\ell_{\eta}}+\frac{|m|-m)}{|r|} \right) \right)/\ell_{\eta}  & (n >0 ), \label{iphx2}\\
    I_{par \, \eta}(m,n;r) & \ll |n|^{k/2} e^{2\pi^2 n/\ell_{\eta}} \exp\left(\pi^2 |n|^{1/2} \left(\frac{1}{\ell_{\eta}}+\frac{|m|-m)}{|r|} \right) \right)/\ell_{\eta}  & (n < 0), \label{iphx3}
\end{align}
for  implied constants depending only on $k>0$.
\end{prop}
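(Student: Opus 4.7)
My plan is to mirror the proof of Proposition \ref{bndhp}: bound the modulus of the integrand, establish absolute convergence, and then choose the level $y$ of the contour to obtain each of the three estimates.

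The first and most important step is to compute the modulus of the integrand of \eqref{lirr} for $u = x + iy$, $0 < y < \pi$. Put $w = \sgn(r) e^u$ so that $\Im w = \sgn(r)e^x \sin y$. Using the identity $\Im\bigl(\tfrac{w-1}{w+1}\bigr) = 2\Im(w)/|w+1|^2$ and $\sgn(r)/r = 1/|r|$, one finds
\begin{equation*}
\Im\!\left(\frac{\sgn(r)e^u-1}{2r(\sgn(r)e^u+1)}\right) = \frac{e^x \sin y}{|r|\cdot|\sgn(r)e^u+1|^2} > 0.
\end{equation*}
Since $|\e(z)| = e^{-2\pi \Im z}$ and $-2\pi m \alpha \leq \pi(|m|-m)\alpha$ for any $\alpha \geq 0$, the modulus of the integrand is bounded by
\begin{equation*}
\frac{1}{\ell_\eta}\cdot\frac{e^{xk/2 + 2\pi n y/\ell_\eta}}{|\sgn(r)e^u+1|^k}\exp\!\left(\frac{\pi(|m|-m) e^x \sin y}{|r|\cdot|\sgn(r)e^u+1|^2}\right).
\end{equation*}

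Next I would verify absolute convergence. Since $|\sgn(r)e^u+1|$ is comparable to $\max(e^x,1)$ away from $x=0$, the quotient $e^{xk/2}/|\sgn(r)e^u+1|^k$ decays like $e^{-k|x|/2}$ as $|x|\to\infty$, while the $m$-exponential stays bounded (its argument $\to 0$). Hence \eqref{lirr} is absolutely convergent for all $k>0$, and by holomorphy in $u$ the integral is independent of $y \in (0,\pi)$.

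For the three explicit bounds I would shift the contour to an $n$-adapted height:
\begin{itemize}
\item For $n = 0$, take $y = \pi/2$, so that $|\sgn(r)e^u+1|^2 = e^{2x}+1$ and $e^x/(e^{2x}+1) \leq 1/2$. The $m$-exponential is then bounded by $\exp(\pi(|m|-m)/(2|r|))$, and the remaining integral $\int_{-\infty}^\infty e^{xk/2}(e^{2x}+1)^{-k/2}\,dx$ converts (via $s = e^{2x}$) to a beta-function value $\tfrac12 B(k/4,k/4)$ that is finite for $k>0$. Absorbing the constants into the implicit constant yields \eqref{iphx1}.
\item For $n > 0$, take $y = \pi/(2\sqrt{n})$ so that $2\pi n y/\ell_\eta = \pi^2 n^{1/2}/\ell_\eta$. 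Bound $\sin y \leq y$ and split the $x$-integral at $|x| = \log n$ (say), handling the two regions to control the denominator $|\sgn(r)e^u+1|^2 = e^{2x}+2\sgn(r)\cos y\cdot e^x+1$, which can be small only in a shrinking region around $x=0$ for $\sgn(r)=-1$. Tracking the dependence on $n$ produces the claimed factor $n^{k/2}$.
\item For $n < 0$, take $y = \pi - \pi/(2\sqrt{|n|})$. Now $e^{2\pi n y/\ell_\eta} = e^{2\pi^2 n/\ell_\eta}\exp(\pi^2|n|^{1/2}/\ell_\eta)$, producing the exact $e^{2\pi^2 n/\ell_\eta}$ factor in \eqref{iphx3}. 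The roles of $\sgn(r) = \pm1$ in the denominator are reversed from the $n>0$ case, and the analogous splitting argument yields the required estimate.
\end{itemize}

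The main technical obstacle is the denominator $|\sgn(r)e^u+1|^2$ becoming small when the contour passes near the pole $w = -1$. This happens for $\sgn(r)=-1$ near $y = 0$ and for $\sgn(r)=1$ near $y=\pi$, so the contour must be kept safely away from whichever endpoint is problematic while still exploiting the exponential decay from the $e^{2\pi n y/\ell_\eta}$ factor. Apart from this careful bookkeeping, the argument is routine and of the same nature as Proposition \ref{bndhp}.
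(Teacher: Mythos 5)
Your proposal is correct and follows essentially the same route as the paper: the same modulus computation for the integrand (the paper's bound \eqref{lab}), absolute convergence plus contour independence, and the same $n$-adapted contour heights $y=\pi/2$, $y\asymp \pi n^{-1/2}$, $y\asymp\pi(1-|n|^{-1/2})$. The only cosmetic difference is that the paper packages the denominator control into the single uniform two-case estimate \eqref{123a} (split at $|x|=1$, valid for both signs of $r$), yielding the master bound \eqref{uiii} from which all three estimates drop out, whereas you propose an equivalent sign-by-sign bookkeeping.
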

\begin{proof}
Let $u=x+iy$ to get
\begin{align}
    \left|\e\left(m\left(\frac{\sgn(r)  e^{u} - 1}{2r(\sgn(r) e^{u} +  1)} \right)\right) \right| & =
    \exp\left( \frac{2\pi m}{r\left|\sgn(r) e^{x+iy}+1\right|^2} \Im \left(\sgn(r) e^{x-iy}+1 \right)\right) \notag\\
    & =
    \exp\left( \frac{-2\pi m \sin(y)}{|r|} \frac{e^x}{\left|e^x +\sgn(r) e^{-iy}\right|^2}\right). \label{lab}
\end{align}
Hence
\begin{equation*}
    \left| I_{par \, \eta}(m,n;r) \right| \lqs \frac{e^{2\pi  n y/\ell_{\eta}}}{\ell_{\eta}}\int_{-\infty}^\infty
    \exp\left( \frac{-2\pi m \sin(y)}{|r|} \frac{e^x}{\left|e^x +\sgn(r) e^{-iy}\right|^2}\right)
    \left( \frac{e^x}{\left|e^x +\sgn(r) e^{-iy}\right|^2} \right)^{k/2} \, dx.
\end{equation*}
Also
\begin{equation} \label{123a}
    \frac{e^x}{\left|e^x +\sgn(r) e^{-iy}\right|^2} \lqs \begin{cases}
    e^{1-|x|} &\text{ \ when \ } |x|\gqs 1 \\ e\sin^{-2}(y) &\text{ \ when \ } |x|\lqs 1.
    \end{cases}
\end{equation}
It follows that \eqref{lab} is bounded by
\begin{equation*}
    \exp\left( \frac{\pi (|m|-m) \sin(y)}{|r|} e^{1-|x|} \right)  \text{ \ when \ } |x|\gqs 1, \quad  \exp\left( \frac{\pi (|m|-m) \sin(y)}{|r|} \frac{e}{\sin^2(y)}  \right)\text{ \ when \ } |x|\lqs 1.
\end{equation*}
Altogether, for an implied constant depending only on $k>0$,
\begin{equation} \label{uiii}
     I_{par \, \eta}(m,n;r) \ll \frac{e^{2\pi  n y/\ell_{\eta}}}{\ell_{\eta}}
    \left( \exp\left( \frac{\pi (|m|-m) \sin(y)}{|r|}  \right)
    + \exp\left( \frac{\pi (|m|-m) e}{|r| \sin(y)} \right)  \frac{1}{\sin^k(y)}
    \right)
\end{equation}
proving convergence. We have that \eqref{lirr} is independent of $y$ by Cauchy's theorem.
Finally, letting $y=\pi n^{-1/2}/2$, $y=\pi/2$ and $y=\pi(1-|n|^{-1/2}/2)$ in \eqref{uiii} for $n>0$, $n=0$ and $n<0$, respectively, and using
\begin{equation} \label{sini}
    2y/\pi \lqs \sin(y), \ \sin(\pi-y) \lqs y \quad \text{for} \quad 0\lqs y\lqs \pi/2,
\end{equation}
completes the proof.
\end{proof}

\begin{prop} Let $k\in \R_{>0}$.
For $m$, $n \in \Z$ and $r \in \R_{\neq 0}$ we have
\begin{multline}\label{iiiph}
     I_{par \, \eta}(m,n;r) = \frac{1}{\ell_{\eta}}
    \exp\left( \frac{\pi i}2\left( \frac k2 -\frac{2\pi i n}{\ell_{\eta}}  \right) (1-\sgn(r)) -\frac{\pi i m}{r}\right)
    \\
    \times
    B\left(\frac k2+\frac{2\pi i n}{\ell_{\eta}}, \frac k2-\frac{2\pi i n}{\ell_{\eta}}\right)
    {_1}F_1\left(\frac k2-\frac{2\pi i n}{\ell_{\eta}};k; \frac{2\pi i m}{r}\right).
\end{multline}
Also $I_{par \, \eta}(m,n;r)$ is real-valued when $k$ is even.
\end{prop}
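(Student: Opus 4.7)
The plan is to reduce \eqref{lirr} to Euler's integral representation
$$\int_0^1 e^{zs} s^{a-1}(1-s)^{b-1}\,ds=B(a,b)\,{_1}F_1(a;a+b;z) \qquad(\Re(a), \Re(b) > 0)$$
via a single conformal substitution, treating $\sgn(r)=\pm 1$ separately.

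For $\sgn(r)=+1$ I substitute $s=e^u/(e^u+1)$, which sends the strip $0<\Im u<\pi$ conformally onto $\H$ and carries the original contour $\Im u=y$ onto an arc in $\H$ from $0$ to $1$. A direct check using $du=ds/(s(1-s))$, $e^u+1=1/(1-s)$ and $(e^u-1)/(e^u+1)=2s-1$ converts the integrand into
$$\frac{e^{-\pi im/r}}{\ell_\eta}\, e^{2\pi ims/r}\, s^{a-1}(1-s)^{b-1}\,ds,$$
where $a:=k/2-2\pi in/\ell_\eta$ and $b:=k/2+2\pi in/\ell_\eta$, so that $a+b=k$. The principal branches of $s^{a-1}$ and $(1-s)^{b-1}$ are holomorphic throughout $\H$ with integrable singularities at the endpoints (since $\Re(a)=\Re(b)=k/2>0$), so Cauchy's theorem lets me collapse the arc onto the real segment $[0,1]$ and apply Euler's formula, proving \eqref{iiiph} in this case (the exponential prefactor reducing to $e^{-\pi im/r}/\ell_\eta$).

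For $\sgn(r)=-1$ I shift $u=v+i\pi$. Then $\sgn(r)e^u=e^v$, so the new integrand equals the $\sgn=+1$ integrand at parameters $(-m,n,-r)$ multiplied by $e^{i\pi(k/2-2\pi in/\ell_\eta)}$, the factor arising from $e^{u(k/2-2\pi in/\ell_\eta)}$. The shifted contour sits in $\Im v\in(-\pi,0)$; since the integrand is holomorphic in the strip $-\pi<\Im v<\pi$ (the only poles, $e^v=-1$, sit on its boundary) and decays rapidly as $\Re v\to\pm\infty$, Cauchy allows a further shift back into $(0,\pi)$. The just-proven $\sgn=+1$ formula applied at $(-m,n,-r)$, combined with $-m/(-r)=m/r$, gives \eqref{iiiph}, since $e^{i\pi(k/2-2\pi in/\ell_\eta)}=\exp\bigl((\pi i/2)(k/2-2\pi in/\ell_\eta)(1-\sgn(r))\bigr)$ when $\sgn(r)=-1$.

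For reality when $k$ is even, write $X:=B(a,b){_1}F_1(a;k;2\pi im/r)$. Kummer's transformation \eqref{kummer}, with $k-a=b$, gives $X=e^{2\pi im/r}B(a,b){_1}F_1(b;k;-2\pi im/r)$; complex conjugating and noting $\overline a=b$ and $\overline{2\pi im/r}=-2\pi im/r$ shows this right-hand side equals $e^{2\pi im/r}\overline X$. Hence $e^{-\pi im/r}X\in\R$, and for $\sgn(r)=-1$ the extra factor $(-1)^{k/2}e^{2\pi^2 n/\ell_\eta}$ is also real when $k$ is even. The only genuinely technical step is the contour deformation to $[0,1]$, which I would handle on a family of contours avoiding small disks around $0$ and $1$ and letting the radii tend to zero, using that $s^{a-1}$ and $(1-s)^{b-1}$ are integrable there.
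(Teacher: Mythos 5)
Your proof is correct and follows essentially the same route as the paper: both reduce \eqref{lirr}, after an exponential substitution and a contour deformation onto $[0,1]$, to the Euler integral $\int_0^1 e^{\beta x}x^{\mu-1}(1-x)^{\nu-1}\,dx=B(\mu,\nu)\,{_1}F_1(\mu;\mu+\nu;\beta)$. The only differences are organizational: your substitution $s=e^u/(e^u+1)$ (which is $1-x$ in the paper's notation) lands directly on the first parameter $k/2-2\pi i n/\ell_\eta$, so Kummer's transformation is needed only for the reality claim, and you treat $\sgn(r)=-1$ by the shift $u\mapsto u+i\pi$ rather than carrying $\sgn(r)$ through the branch analysis as the paper does.
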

\begin{proof}
Let $u=t+iy$ for $y=\pi/2$ and then write $v=e^t$ so that \eqref{lirr} becomes
 \begin{equation} \label{lirrr}
     \exp\left( \frac{\pi i}2\left( \frac k2 -\frac{2\pi i n}{\ell_{\eta}} +\frac{2m}{r} \right)\right)
    \int_{0}^{\infty}
    \frac{\exp\left(\frac{-2\pi i m}{r(\sgn(r)i v +  1)} \right) v^{k/2-2\pi i n  /\ell_{\eta}-1}}
    { (\sgn(r)i v +  1)^{k}}  \, \frac{dv}{\ell_{\eta}}.
\end{equation}
Substitute $x=1/(\sgn(r)i v +  1)$ and the integral in \eqref{lirrr} is now
\begin{equation*}
    \frac{1}{\sgn(r)i \ell_{\eta}} \int_0^1 \exp\left(-\frac{2\pi i m}{r} x \right) x^{k-2} \left( \frac{1-x}{\sgn(r)i x}\right)^{k/2-2\pi i n/\ell_{\eta}-1} \, dx
\end{equation*}
where the path of integration runs along a semicircle centered at $1/2$. Except for the endpoints, we have $-\pi/2< \sgn(r)\arg(x) < 0$ and $0< \sgn(r)\arg(1-x) < \pi/2$. Hence
\begin{equation*}
    \left( \frac{1-x}{\sgn(r)i x}\right)^{w} = (1-x)^w (\sgn(r) i)^{-w} x^{-w} \qquad(w\in \C).
\end{equation*}
Finally, move the contour of integration to the interval $[0,1] \subset \R$ and use
\begin{equation*}
    \int_0^1  x^{\mu-1}(1-x)^{\nu -1} e^{\beta x} \, dx = B(\mu,\nu) \ {_1}F_1(\mu;\mu+\nu;\beta)
\end{equation*}
when $\Re(\mu)$, $\Re(\nu) >0$ from \cite[3.383.1]{GR},
along with an application of  Kummer's transformation  \eqref{kummer}, to show \eqref{iiiph}. It now follows from \eqref{iiiph}, as in Proposition \ref{usop}, that $I_{par \, \eta}(m,n;r) \in \R$ for $k$ even.
\end{proof}

\subsection{Double cosets and Kloosterman sums in the parabolic/hyperbolic case}
All of the results in Subsections \ref{sec42} and \ref{sec43} translate directly to the parabolic/hyperbolic case here by means of the map $\se^{-1}\G \sa \to \sa^{-1}\G\se$ given by $\g \mapsto \g^{-1}$. We summarize the main things we need:

\begin{enumerate}
\item If $\left(\smallmatrix a
& b \\ c & d
\endsmallmatrix\right) \in \sa^{-1}\G\se$ then $cd \neq 0$.
\item Let $\G(\ca,\eta)$ be a complete set of inequivalent
representatives for $\G_\ca\backslash \G/\G_\eta$.
Then
\begin{equation} \label{phjiu}
    \Big\{ \delta \tau \ \Big| \ \delta \in \G(\ca,\eta), \ \tau \in \G_\eta/Z  \Big\}
\end{equation}
is a complete set of inequivalent
representatives for
$\G_\ca\backslash \G$.
\item
In this case \cite[Lemma 1]{G83} says:
\begin{lemma} \label{bruph}
For $M=\left(\smallmatrix a
& b \\ c & d
\endsmallmatrix\right) \in \SL_2(\R)$ with $cd \neq 0$ we have
\begin{equation} \label{ph}
    \begin{pmatrix} a & b \\ c & d \end{pmatrix} = \frac{\sgn(d)}{\sqrt{2}}
    \begin{pmatrix} 1 & \frac{a}{2c}+\frac{b}{2d} \\ 0 & 1 \end{pmatrix}
    \begin{pmatrix} 1/\nu & 0 \\ 0 & \nu \end{pmatrix}
    \begin{pmatrix} 1 & -\sgn(cd)  \\ \sgn(cd)  & 1  \end{pmatrix}
    \begin{pmatrix} \left| \frac dc \right|^{-1/2} & 0 \\ 0 & \left| \frac dc \right|^{1/2} \end{pmatrix}
\end{equation}
for $\nu = \sideset{_\text{par}}{_\text{hyp}}{\opv}(M) = |2cd|^{1/2}$.
\end{lemma}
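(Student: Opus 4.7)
The plan is to verify \eqref{ph} by direct matrix multiplication, following exactly the pattern of Lemmas \ref{brupp} and \ref{bruhp}. First I would observe the structural parallel with \eqref{tms}: the diagonal ``hyperbolic'' factor now sits on the right (corresponding to how $\se$ acts) while the unipotent ``parabolic'' factor $\left(\smallmatrix 1 & \alpha \\ 0 & 1 \endsmallmatrix\right)$ with $\alpha = \frac{a}{2c}+\frac{b}{2d}$ sits on the left (where $\sa^{-1}$ acts); this is the expected shape for a parabolic/hyperbolic Bruhat decomposition, and reassures me the proposed formula is of the right form.

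My preferred route is to collapse the three rightmost factors first, using the key simplifications $\nu \cdot |d/c|^{1/2} = |d|\sqrt{2}$ and $\nu \cdot |c/d|^{1/2} = |c|\sqrt{2}$ (with $\nu = |2cd|^{1/2}$). This produces a matrix whose bottom row, after applying the scalar $\sgn(d)/\sqrt{2}$, is immediately $(c,d)$ via $\sgn(d)^2 = 1$ and $\sgn(cd)\sgn(d) = \sgn(c)$. Multiplying on the left by the unipotent does not affect the bottom row, and the top row then requires the defining identity $ad-bc=1$ to reduce $(ad-1)/(2c) + b/2$ to $b$ and $(1+bc)/(2d) + a/2$ to $a$. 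None of this is a real obstacle---the verification is entirely mechanical once the signs and absolute values are tracked carefully.

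A more conceptual alternative would be to apply Lemma \ref{bruhp} to $M^{-1} = \left(\smallmatrix d & -b \\ -c & a \endsmallmatrix\right)$, which satisfies the hyperbolic/parabolic hypothesis $d \cdot (-c) \neq 0$, and then invert the resulting decomposition factor-by-factor, reversing the order. This mirrors the $\g \mapsto \g^{-1}$ map used at the start of this section to transfer the double-coset and Kloosterman results from Subsections \ref{sec42} and \ref{sec43} to the parabolic/hyperbolic setting, and explains conceptually why \eqref{ph} is the ``reverse'' of \eqref{tms}. Either approach suffices; the only subtlety worth flagging is the sign identification $\sgn(cd)\sgn(d) = \sgn(c)$, which is what turns the absolute values $|c|$, $|d|$ appearing naturally from $\nu$ and $|d/c|^{1/2}$ back into the signed entries $c$, $d$.
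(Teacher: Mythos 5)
Your verification is correct: I checked the key steps, and the collapse of the three rightmost factors via $\nu\,|d/c|^{1/2}=\sqrt{2}\,|d|$ and $\nu\,|c/d|^{1/2}=\sqrt{2}\,|c|$ does produce $\left(\smallmatrix 1/(2d) & -1/(2c) \\ c & d \endsmallmatrix\right)$ after the scalar $\sgn(d)/\sqrt{2}$ is applied (using $\sgn(cd)\sgn(d)=\sgn(c)$), and the left unipotent then restores the top row exactly through $ad-bc=1$ as you describe. The one thing to note is that the paper does not actually prove this lemma at all: it is quoted verbatim as the parabolic/hyperbolic case of Good's Lemma 1 in \cite{G83}, just as Lemmas \ref{brupp} and \ref{bruhp} are (only Lemma \ref{bruhh} gets a sketched verification). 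So your write-up is strictly more detailed than the paper's treatment. Of your two routes, the second --- applying Lemma \ref{bruhp} to $M^{-1}=\left(\smallmatrix d & -b \\ -c & a \endsmallmatrix\right)$ and inverting factor by factor --- is the one most in the spirit of the paper, which explicitly transfers all the Subsection \ref{sec42}--\ref{sec43} results to the parabolic/hyperbolic setting via $\g\mapsto\g^{-1}$; it also works cleanly here, since $\sgn(-cd)$ flips back to $-\sgn(cd)$ upon inverting the middle factor of determinant $2$, and the scalar $\sqrt{2}\sgn(d)\cdot\tfrac12$ recombines to $\sgn(d)/\sqrt{2}$. Either argument is a complete and correct proof.
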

\item
Define
\begin{equation*}
R_{\ca\eta}  :=  \left\{ \left. \begin{pmatrix} a  & b \\ c & d \end{pmatrix} \in \sa^{-1}\G \se \ \right|   \ 0 \lqs \frac{a}{2c}+\frac{b}{2d} <1, \ \frac{1}{\varepsilon_\eta} \lqs \left| \frac dc \right| < \varepsilon_\eta   \right\}
\end{equation*}
and we may take $\sa^{-1} \G(\ca,\eta)\se =  R_{\ca\eta} /Z$.
\item Put $C_{\ca\eta}=\left\{cd \ \left| \ \left(\smallmatrix a
& b \\ c & d
\endsmallmatrix\right) \in \sa^{-1}\G\se \right. \right\}$. We have $C_{\ca\eta}=-C_{\eta\ca}$.
\item For $C \in C_{\ca\eta}$  define
\begin{equation}\label{kloostph}
S_{\ca\eta}(m,n;C):= \sum_{\substack{ \g \in \G_\ca \backslash \G / \G_{\eta} \\ \left(\smallmatrix a
& b \\ c & d
\endsmallmatrix\right) = \sa^{-1}\g\se, \ cd=C} } \e\left(m\left(\frac{a}{2c}+\frac{b}{2d}\right)  +  \frac{n}{\ell_{\eta}} \log \left| \frac{c}{d} \right|\right).
\end{equation}
It is related to Good's sum by
\begin{equation*}
    S_{\ca\eta}(m,n;C) =
    \sideset{_\ca^{0}}{_{\eta}^{\delta'}}{\opS}(m,n;|2C|^{1/2})  \qquad \text{for} \qquad \delta' = \frac{1+\sgn(C)}{2}.
\end{equation*}
Also
\begin{equation}
    S_{\ca\eta}(m,n;C)  = S_{\eta\ca}(-n,-m;-C) = \overline{S_{\eta\ca}(n,m;-C)}, \label{po2}
\end{equation}
so the formula  in Theorem \ref{final_k} for $\ca=\ci$ and $\eta=(-\sqrt{D},\sqrt{D})$ also evaluates $ S_{\ca\eta}(m,n;C)$.
\item With
\begin{equation*}
    \mathcal N_{\ca\eta}(C)  :=  S_{\ca \eta}(0,0;C)
      = \#\left\{ \g \in \G_\ca \backslash \G / \G_{\eta} \ \left| \ \sa^{-1}\g\se = \begin{pmatrix} a  & b \\ c & d \end{pmatrix} \text{ with } cd= C\right. \right\}
\end{equation*}
it is clear that $\mathcal N_{\ca\eta}(C) = \mathcal N_{\eta\ca}(-C)$. Therefore
\begin{gather} \label{pkl}
    \sum_{C \in C_{\ca\eta}, \ |C| \lqs X}  \mathcal N_{\ca\eta}(C) \ll X, \\
    \mathcal N_{\ca\eta}(C) \ll C, \qquad S_{\ca\eta}(m,n;C) \ll C, \qquad \#\{C \in C_{\ca\eta} \ : \ |C| \lqs X\}   \ll X. \label{impaph}
\end{gather}
\end{enumerate}

\subsection{The hyperbolic expansion of $P_{\ca,m}$}
In the next result we prove a formula for $c_{\eta}(n;P_{\ca,m})$ using the same approach as in Theorems \ref{CISpp} and \ref{CIShp2}. An alternative derivation may be given using \er{kcq} as the starting point.
\begin{theorem} \label{CISph}
For all $m$, $n \in \Z$, the $n$th hyperbolic Fourier coefficient at $\eta$ of the parabolic Poincar\'e series $P_{\ca,m}$ is given by
\begin{equation} \label{sum3ph}
c_{\eta}(n;P_{\ca,m})  =   \sum_{C \in C_{\ca\eta}}
|C|^{-k/2}
 I_{\ca\eta}(m,n;C) S_{\ca\eta}(m,n;C).
\end{equation}
\end{theorem}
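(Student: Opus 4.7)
The plan is to follow the alternative derivation suggested in the remark preceding the statement, starting from the contour formula \eqref{kcq} applied to $f = P_{\ca,m}$:
\[
c_\eta(n;P_{\ca,m}) = \frac{1}{\ell_\eta}\int_{r_0}^{\varepsilon_\eta^2 r_0}(P_{\ca,m}|_k\se)(re^{i\theta_0})\,(re^{i\theta_0})^{k/2-2\pi in/\ell_\eta}\,\frac{dr}{r}
\]
for any $r_0>0$ and $0<\theta_0<\pi$. Substitute the defining series for $P_{\ca,m}$ and use the double-coset decomposition \eqref{phjiu}: every term is indexed by $\delta\in\G(\ca,\eta)$ and $\tau\in\G_\eta/Z$. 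Writing $\se^{-1}\tau\se = \pm\left(\smallmatrix \varepsilon_\eta^\ell & 0 \\ 0 & \varepsilon_\eta^{-\ell}\endsmallmatrix\right)$ for $\ell\in\Z$, and setting $M_\delta := \sa^{-1}\delta\se$ and $f(z):=e^{2\pi imz}$, the inner $\tau$-sum becomes $\sum_{\ell\in\Z}\varepsilon_\eta^{\ell k}(f|_k M_\delta)(\varepsilon_\eta^{2\ell}z)$.

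The hyperbolic analog of the Poisson summation used in the earlier theorems is a simple tiling identity. In each $\ell$-th summand, substitute $s=\varepsilon_\eta^{2\ell}r$ in the arc integral; the factor $\varepsilon_\eta^{\ell k}$ from the slash action is cancelled by $\varepsilon_\eta^{-\ell k}$ coming from $r^{k/2}$, while the remaining phase $\varepsilon_\eta^{4\pi i\ell n/\ell_\eta}=e^{2\pi i\ell n}=1$ is trivial. This converts the arc $[r_0,\varepsilon_\eta^2 r_0]e^{i\theta_0}$ into $[\varepsilon_\eta^{2\ell}r_0,\varepsilon_\eta^{2\ell+2}r_0]e^{i\theta_0}$; the union over $\ell\in\Z$ tiles the entire ray $(0,\infty)e^{i\theta_0}$. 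Thus for each $\delta$ the combined sum-plus-arc equals
\[
\frac{1}{\ell_\eta}\int_0^\infty (f|_k M_\delta)(se^{i\theta_0})(se^{i\theta_0})^{k/2-2\pi in/\ell_\eta}\frac{ds}{s}.
\]
Now write $M_\delta = \left(\smallmatrix a & b\\ c & d\endsmallmatrix\right)$, which satisfies $cd\neq 0$ by item (1) of Section~\ref{secph}, and substitute $z=|d/c|e^u$ with $\Im u = \theta_0$. Using $ad-bc=1$, one verifies uniformly in the four sign combinations of $c,d$ the identities
\[
cz+d = d(\sgn(cd)e^u+1),\qquad Mz = \frac{a}{c} - \frac{1}{cd(\sgn(cd)e^u+1)},
\]
which give the clean factorization
\[
e^{2\pi im Mz} = \e\!\left(\frac{m}{2}\left(\frac{a}{c}+\frac{b}{d}\right)\right)\,\e\!\left(m\,\frac{\sgn(cd)e^u-1}{2(cd)(\sgn(cd)e^u+1)}\right).
\]
Since $k$ is even, $|d/c|^{k/2}/d^k = |cd|^{-k/2}$, and $|d/c|^{-2\pi in/\ell_\eta} = \e\bigl(\tfrac{n}{\ell_\eta}\log|c/d|\bigr)$. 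Comparing with the definition \eqref{lirr}, the remaining $u$-integral is exactly $\ell_\eta\,I_{\ca\eta}(m,n;cd)$ (the contour shift from $\Im u=\theta_0$ to any generic $y\in(0,\pi)$ is legal by Cauchy's theorem and holomorphy in the strip). Collecting the phases produces precisely the summand of the Kloosterman sum \eqref{kloostph}; grouping representatives by $C=cd$ then yields \eqref{sum3ph}.

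To justify interchanges of summation and integration, I would majorize the absolute value of the full double sum using Proposition~\ref{bndph} together with $\mathcal N_{\ca\eta}(C)\ll|C|$ from \eqref{impaph}; the bound reduces to $\sum_{C\in C_{\ca\eta}}|C|^{-k/2}\mathcal N_{\ca\eta}(C)$, which is finite for $k>2$ by partial summation and \eqref{pkl}, exactly as at the end of the proof of Theorem~\ref{CIShp2}. The main obstacle is the sign bookkeeping in the change of variable $z=|d/c|e^u$: deriving the uniform identity $cz+d=d(\sgn(cd)e^u+1)$ across the four sign patterns of $(c,d)$, and the ensuing clean factorization of $e^{2\pi im Mz}$, is precisely what produces the asymmetric $\sgn(cd)$ inside the definition of $I_{par\eta}$ and the averaged combination $\tfrac{a}{2c}+\tfrac{b}{2d}$ inside $S_{\ca\eta}$.
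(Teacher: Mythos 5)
Your argument is correct, and it takes the route that the paper only alludes to (``An alternative derivation may be given using \eqref{kcq} as the starting point'') rather than the one actually carried out. The paper's proof writes $z=e^{A}$, decomposes via \eqref{phjiu}, and then applies Poisson summation to the inner sum over $\G_\eta/Z$, which requires checking the smoothness and integrability hypotheses (the explicit computation of $f''$) before the Fourier-transform integral is massaged into $I_{\ca\eta}$. You instead start from the coefficient integral \eqref{kcq} over one fundamental arc and unfold it against the $\G_\eta/Z$-sum into an integral over the full ray; your cancellation of $\varepsilon_\eta^{\ell k}$ against $(\varepsilon_\eta^{-2\ell})^{k/2}$ and the triviality of the phase $e^{2\pi i\ell n}$ are exactly right, and the subsequent substitution $z=|d/c|e^{u}$, the identity $cz+d=d(\sgn(cd)e^{u}+1)$, and the split $\tfrac ac=\tfrac{a}{2c}+\tfrac{b}{2d}+\tfrac{1}{2cd}$ reproduce precisely the integrand of \eqref{lirr} and the phases of \eqref{kloostph}, so the computational core coincides with the paper's. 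What your route buys is that Poisson summation (and the $f''$ verification) is replaced by the elementary tiling of $(0,\infty)e^{i\theta_0}$ by the arcs $[\varepsilon_\eta^{2\ell}r_0,\varepsilon_\eta^{2\ell+2}r_0]e^{i\theta_0}$, justified by the absolute convergence of the ray integral from Proposition~\ref{bndph}; it also produces the single $n$th coefficient directly rather than the whole expansion at once. What the paper's route buys is uniformity: the same Poisson-summation template is used verbatim in all four parabolic/hyperbolic combinations (Theorems \ref{CISpp}, \ref{CIShp2}, \ref{CISph}, \ref{CIShh2}). Your final interchange-of-summation bound, reducing to $\sum_{C}|C|^{-k/2}\mathcal N_{\ca\eta}(C)<\infty$ for $k>2$ via \eqref{impaph} and \eqref{pkl}, matches the paper's.
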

\begin{proof}
With \eqref{phjiu} and $z \in \H$, write the absolutely convergent
\begin{equation} \label{sparph}
    \left(P_{\ca,m}|_k \se\right)(z) = \sum_{\g \in \G_\ca \backslash \G }
    \frac{\e(m(\sa^{-1}\g \se z))}
    {j(\sa^{-1}\g \se, z)^{k}}
\end{equation}
as
\begin{equation} \label{fsumph}
    \sum_{C \in C_{\ca\eta}}
    \sum_{\substack{ \g \in \G(\ca,\eta) \\ \left(\smallmatrix a
& b \\ c & d
\endsmallmatrix\right) = \sa^{-1}\g\se,
\
 cd=C }}
    \sum_{n\in \Z}
    \frac{\e(m(\sa^{-1}\g \se (e^{n \ell_{\eta}+A})))}
    {j(\sa^{-1}\g \se , e^{n \ell_{\eta}+A})^{k} e^{-n \ell_{\eta} k/2}}
\end{equation}
where we let $z=e^A$ for $0<\Im A< \pi$. With $
\sa^{-1}\g\se = \left(\smallmatrix a
& b \\ c & d
\endsmallmatrix\right) \in \SL_2(\R)
$
for $cd \neq 0$,  the inner sum is $\sum_{n\in \Z}f_\g(n)$ for
\begin{equation*}
    f_\g(t) = f(t):=\frac{\exp\left( 2\pi i m \frac{a e^{t \ell_{\eta}+A}+b}{c e^{t \ell_{\eta}+A}+d} +t \ell_{\eta} k/2\right)}{(c e^{t \ell_{\eta}+A}+d)^k}.
\end{equation*}
As in the proof of Theorem \ref{CISpp}, we may apply Poisson summation if
$\int_{-\infty}^\infty f(t)\, dt$ and $\int_{-\infty}^\infty |f''(t)|\, dt$  exist. The first integral exists for $k>0$ by similar arguments to Proposition \ref{bndph} with $n=0$. It follows that the second also exists since,
with $g(t):=c e^{t \ell_{\eta}+A}+d$,
\begin{equation*}
    \frac{f''(t)}{\ell_{\eta}^2 }=f(t)\left[
    \frac{2\pi i m e^{t \ell_{\eta}+A}}{g(t)^2}
    \left( \frac{2\pi i m e^{t \ell_{\eta}+A}}{g(t)^2} +
    \frac{2d(k+1)}{g(t)}-k-1\right)
    +\frac{d k(k+1)}{g(t)}\left(\frac{d}{g(t)} +1\right)
    +\frac{k^2}{4}
    \right].
\end{equation*}
With Poisson summation, as in \eqref{psfhp},
the inner sum in \eqref{fsumph} is now
\begin{equation} \label{intyy2ph}
    \sum_{n\in \Z}\int_{-\infty}^\infty
    \frac{\e\left( m\left(\frac ac - \frac 1{c(c e^{\ell_{\eta} t + A}+d)}\right)\right)}
    {(c e^{\ell_{\eta} t + A}+d)^{k}} e^{t\ell_{\eta}(k/2 -2\pi i n/\ell_{\eta})}\, dt.
\end{equation}
Let $u=\ell_{\eta} t +A+ \log \left| \frac{c}{d} \right|$ in \eqref{intyy2ph} and use $\frac ac = \frac{a}{2c}+\frac{b}{2d}+\frac{1}{2cd}$ to get that the integral equals
\begin{multline} \label{intyy3ph}
z^{-k/2+2\pi i n /\ell_{\eta}} \left| \frac{d}{c} \right|^{k/2}
\e\left(m\left(\frac{a}{2c}+\frac{b}{2d}\right)  -  \frac{n}{\ell_{\eta}} \log \left| \frac{d}{c} \right|\right) \\
    \times \int_{-\infty+i \Im A}^{\infty+i \Im A}
    \frac{\e\left(m\left(\frac{c \left| \frac{d}{c} \right| e^u -d}{2cd (c\left| \frac{d}{c} \right| e^u +d)} \right)\right) }
    {(c\left| \frac{d}{c} \right| e^u +d)^{k} }
    e^{u(k/2-2\pi i n /\ell_{\eta})}
    \, \frac{du}{\ell_{\eta}}.
\end{multline}
Writing the integral in \eqref{intyy3ph} with
 \eqref{lirr}, we have shown that \eqref{fsumph} equals
\begin{equation} \label{fsumph2}
    \sum_{C \in C_{\ca\eta}}
    \sum_{\substack{ \g \in \G(\ca,\eta) \\ \left(\smallmatrix a
& b \\ c & d
\endsmallmatrix\right) = \sa^{-1}\g\se,
\
 cd=C }}
    \sum_{n\in \Z}
    z^{-k/2+2\pi i n /\ell_{\eta}}
    \e\left(m\left(\frac{a}{2c}+\frac{b}{2d}\right)  -  \frac{n}{\ell_{\eta}} \log \left| \frac{d}{c} \right|\right)
    \frac{I_{\ca\eta}(m,n;C)}{|C|^{k/2}}.
\end{equation}
For $z=e^A$ with $0<\Im A<\pi$ as before, we have $z^{-k/2+2\pi i n /\ell_{\eta}} \ll e^{-2\pi n \Im A/\ell_{\eta}}$. With Proposition \ref{bndph} we then have
\begin{equation*}
    z^{-k/2+2\pi i n /\ell_{\eta}} I_{\ca\eta}(m,n;C) \ll e^{-\varepsilon |n|}
\end{equation*}
for $\varepsilon >0$, depending on $z$. Therefore
 \eqref{fsumph2} is majorized by
$
     \sum_{C \in C_{\ca\eta}} \mathcal  |C|^{-k/2} N_{\ca\eta}(C)
$
and thus convergent for $k>2$ by \eqref{pkl}. This proves that changing the order of summation in \eqref{fsumph2} is  valid,
and moving the sum over $n$ to the outside completes the proof.
\end{proof}

Set
\begin{align}
S^\star_{\ca\eta}(m,n;C) & := \exp\left(-\pi^2 n  (\sgn(C)+1)/\ell_\eta - \pi i m/C \right) S_{\ca\eta}(m,n;C) \label{poy}
\\
& \phantom{:} = \sum_{\substack{\g \in \G_\ca \backslash \G / \G_\eta  \\ \left(\smallmatrix a
& b \\ c & d
\endsmallmatrix\right) = \sa^{-1}\g\se, \ cd=C} } \e\left( m  \frac{b}{d} + \frac{n}{\ell_\eta} \log \left(-\frac cd\right) \right)
\notag
\end{align}
and note the relation with the sum \eqref{kloos}
\begin{equation}
S^\star_{\ca\eta}(m,n;C)  = \overline{S^\star_{\eta\ca}(n,m;-C)}. \label{po3}
\end{equation}
Rewriting $I_{\ca\eta}(m,n;C) S_{\ca\eta}(m,n;C)$ with \eqref{iiiph} and \eqref{poy} gives a more explicit version of Theorem \ref{CISph}:
\begin{theorem} \label{CISph2}
For all $m$, $n \in \Z$
\begin{multline} \label{sum3phb}
    c_{\eta}(n;P_{\ca,m})
     =
     \frac{e^{2\pi^2n/\ell_\eta}}{\ell_\eta}
    B\left(\frac k2+\frac{2\pi i n}{\ell_{\eta}}, \frac k2-\frac{2\pi i n}{\ell_{\eta}}\right)
    \\
    \times
    \sum_{C \in C_{\ca\eta}}
    {_1}F_1\left(\frac k2-\frac{2\pi i n}{\ell_{\eta}};k; \frac{2\pi i m}{C}\right)
    \frac{S^\star_{\ca\eta}(m,n;C)}{C^{k/2}}.
\end{multline}
\end{theorem}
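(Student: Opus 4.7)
The statement of Theorem \ref{CISph2} is just a rewriting of Theorem \ref{CISph} in terms of the renormalized Kloosterman sum $S^\star_{\ca\eta}$ and the explicit hypergeometric expression for $I_{\ca\eta}$. So the plan is purely computational: substitute \eqref{iiiph} and \eqref{poy} into \eqref{sum3ph} and track the factors carefully.

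Starting from
$$
c_{\eta}(n;P_{\ca,m}) = \sum_{C \in C_{\ca\eta}} |C|^{-k/2} I_{\ca\eta}(m,n;C) \, S_{\ca\eta}(m,n;C),
$$
I would first use the definition \eqref{poy} of $S^\star_{\ca\eta}$ to write $S_{\ca\eta}(m,n;C) = \exp\bigl(\pi^2 n(\sgn(C)+1)/\ell_\eta + \pi i m/C\bigr) S^\star_{\ca\eta}(m,n;C)$. Next I would plug in the evaluation \eqref{iiiph} of $I_{\ca\eta}(m,n;C)$, which produces a prefactor $\frac{1}{\ell_\eta} B\bigl(\frac{k}{2}+\frac{2\pi i n}{\ell_\eta}, \frac{k}{2}-\frac{2\pi i n}{\ell_\eta}\bigr)\,{_1}F_1\bigl(\frac{k}{2}-\frac{2\pi i n}{\ell_\eta};k;\frac{2\pi i m}{C}\bigr)$ times the exponential $\exp\bigl(\frac{\pi i}{2}(\frac{k}{2}-\frac{2\pi i n}{\ell_\eta})(1-\sgn(C)) - \frac{\pi i m}{C}\bigr)$.

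The $C$-dependent exponentials then combine into something transparent. The two occurrences of $\pi i m/C$ cancel outright. The remaining exponent is
$$
\frac{\pi i k}{4}(1-\sgn(C)) + \frac{\pi^2 n(1-\sgn(C))}{\ell_\eta} + \frac{\pi^2 n(1+\sgn(C))}{\ell_\eta} = \frac{\pi i k}{4}(1-\sgn(C)) + \frac{2\pi^2 n}{\ell_\eta}.
$$
The factor $e^{2\pi^2 n/\ell_\eta}$ is independent of $C$ and can be pulled outside the sum together with the beta function and the $1/\ell_\eta$.

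The only subtlety, and the one step worth checking carefully, is the identity $|C|^{-k/2} e^{\pi i k(1-\sgn(C))/4} = C^{-k/2}$ that converts the $|C|^{-k/2}$ from \eqref{sum3ph} into the $C^{-k/2}$ in \eqref{sum3phb}. For $C>0$ both sides equal $C^{-k/2}$. For $C<0$, the principal branch gives $C^{-k/2} = |C|^{-k/2} e^{-\pi i k/2}$, while the left side is $|C|^{-k/2} e^{\pi i k/2}$; these agree exactly because $k$ is even, so $e^{i\pi k} = 1$. Once this sign reconciliation is made, collecting everything yields \eqref{sum3phb} term by term, and the convergence of the rearranged series is inherited from \eqref{sum3ph} itself. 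The main (and only) obstacle is thus this branch-of-the-logarithm bookkeeping, which is where the evenness hypothesis on $k$ enters visibly.
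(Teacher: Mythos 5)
Your proposal is correct and is exactly the paper's route: Theorem \ref{CISph2} is obtained by substituting the evaluation \eqref{iiiph} of $I_{\ca\eta}(m,n;C)$ and the renormalization \eqref{poy} into \eqref{sum3ph}, with the $\pi i m/C$ terms cancelling and the remaining exponentials collapsing to $e^{2\pi^2 n/\ell_\eta}$ times the branch factor that turns $|C|^{-k/2}$ into $C^{-k/2}$ because $k$ is even. The paper compresses all of this into one sentence, so your write-up simply supplies the bookkeeping it leaves implicit.
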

Theorem \ref{CISph2} combined with the identity \eqref{ccre} and the symmetry \eqref{po3} implies Theorem  \ref{CIShp}. Theorem  \ref{CIShp} does not imply Theorem \ref{CISph2} in the same way.

\section{Computations} \label{num}
In this section we restrict our attention to $\G = \SL_2(\Z)$, its cusp at $\ci$ with scaling matrix $\si=I$ and its hyperbolic pairs $\eta=(-\sqrt{D},\sqrt{D})$ with scaling matrix $\hat\sigma_\eta$ given by \eqref{hypscat}.

\subsection{Parabolic coefficients} \label{numa}
We have $C_{\ci\ci}=\Z_{\gqs 1}$ in the notation of Subsection \ref{klpp}. With $c\in C_{\ci\ci}$, using for example Lemma \ref{ac01pp} and the sentences following it, we obtain the classical Kloosterman sum
\begin{equation*}
    S_{\ci\ci}(m,n;c)=\sum_{0 \lqs d <c, \ (c,d)=1, \ ad \equiv 1 \bmod c} \e\left(m\frac ac + n\frac dc\right).
\end{equation*}
 It is necessarily real-valued since each term with $a$, $d$ gets added to (or equals) its conjugate with $c-a$, $c-d$. For all $m,n \in \Z$, Theorem \ref{CISpp} then gives
\begin{multline}
c_\ci(n;P_{\ci,m})  = \delta_{mn}+\begin{cases}\displaystyle \frac{(2\pi i)^k n^{k-1}}{\G(k)} \sum_{C=1}^\infty
 {_0}F_1 \left(;k; - \frac{4\pi^2 mn}{C^2} \right)
 \frac{S_{\ci\ci}(m,n;C)}{C^k} \text{ \ \ \ if \ $n \gqs 1$}.
 \end{cases}
 \label{sum3pp2}
\end{multline}
This is usually stated using $J$-Bessel functions (recall \eqref{jb1}) when $m \gqs 1$ and $I$-Bessel functions  when $m \lqs -1$. See for example  \cite[Thm. 5.3.2]{R77}.

With \eqref{sum3pp2}, we can investigate the Poincar\'e series $P_{\ci,m}$ numerically.
The coefficient $c_\ci(n;P_{\ci,m})$ evaluates to zero when $m\in \Z_{\gqs 1}$ and $k\in\{4,6,8,10,14\}$ since $\dim S_k(\G)=0$ in these cases. The space $S_{12}(\G)$ is one-dimensional, containing $\Delta(z):=q\prod_{n=1}^\infty (1-q^n)^{24}=\sum_{m=1}^\infty \tau(m) q^m$. It follows that, when $k=12$, each $P_{\ci,m}$ for $m\in \Z_{\gqs 1}$ must equal $\lambda_m \Delta$ for some $\lambda_m \in \R$. Since $\lambda_m = c_\ci(1;P_{\ci,m})$, we find for example:
\begin{equation}\label{lams}
    \lambda_1 \approx 2.840287, \ \lambda_2 \approx -0.0332846, \ \lambda_3 \approx 0.004040443, \ \lambda_4 \approx -0.0009968.
\end{equation}
This is consistent with \eqref{pin}, which implies $\lambda_m = \G(11)\tau(m)/(\s{\Delta}{\Delta} (4\pi m)^{11})$ for $m\in \Z_{\gqs 1}$.

 If $m=0$ then $P_{\ci,0}$ is the Eisenstein series $E_k(z):=1-2k/B_k\sum_{n=1}^\infty \sigma_{k-1}(n) q^n$ where $B_k$ is the $k$th Bernoulli number and $\sigma_{k-1}(n) := \sum_{d|n}d^{k-1}$.

For $m \in \Z_{\lqs -1}$, each $P_{\ci,m}$ is related to the $j$-function, defined as
$$j(z):=E_4^3(z)/\Delta(z) =q^{-1}+744+196884 q+ \cdots \in M_0^!(\G).$$
For example, in weight $k=12$, Rankin \cite[(4.4)]{rank} constructed $\bigl(j(z)+264\bigr)E^2_6(z)= q^{-1}-598428q+\cdots$  with integer coefficients. Then we must have
\begin{equation}\label{rank}
    P_{\ci,-1}(z)=\bigl(j(z)+264 \bigr)E^2_6(z) + \lambda_{-1} \Delta(z)
\end{equation}
for some $\lambda_{-1} \in \R$. Computing, we find $\lambda_{-1} \approx 600270.8947$, agreeing with \cite[(4.12)]{rank}.  Following Rankin's method we similarly have
\begin{equation*}
    P_{\ci,-2}(z)=\bigl(j(z)^2-480j(z)+205128 \bigr)E^2_6(z) + \lambda_{-2} \Delta(z) \quad \text{for} \quad \lambda_{-2} \approx 321214058.075.
\end{equation*}
It would be interesting to  identify the $\lambda_{m}$s resulting from continuing this sequence.
For more on parabolic Poincar\'e series and weakly holomorphic forms, see for example \cite{DJ08}, \cite{Rh12} and their contained references.

With Theorems \ref{CIShp} and \ref{final_k} we may calculate the parabolic Fourier coefficients at $\ci$ of the series $P_{\eta,m}$ with $\eta=(-\sqrt{D},\sqrt{D})$. In weight $k=12$ each is again a constant times $\Delta$. This constant, $c_\ci(1;P_{\eta,m})$, is given numerically  for $-2\lqs m \lqs 2$ and $D=2$, $3$ and $5$ in Table \ref{pk12}.
\begin{table}[h]
\begin{center}
\begin{tabular}{r|rrr}
$m$ & $D=2$ & $D=3$ & $D=5$   \\ \hline
$2$ & $23.43$ &  $7.93$ & $-130.37$  \\
$1$ & $252.41$ &  $114.79$ & $-311.81$  \\
$0$ & $1529.46$ &  $-1665.07$ & $1857.25$  \\
$-1$ & $-68190.34$ &  $78417.86$ & $9515.95$  \\
$-2$ & $1709726.97$ &  $-12443941.21$ & $-121422.56$
\end{tabular}
\caption{Computations of $c_\ci(1;P_{\eta,m})$ for $\eta=(-\sqrt{D},\sqrt{D})$ with $k=12$.} \label{pk12}
\end{center}
\end{table}
To see why all the entries in the table are real it is simplest to use \eqref{sum3b} in Theorem \ref{CIShp2}. We know that the factor $I_{\eta\ci}(m,n;1/(2C))$ there is real by Proposition \ref{usop}. The factor $S_{\eta\ci}(m,n;C)$ is also real since in the formula \eqref{kloost4} each term with $e$, $g$ is added to (or equals) its conjugate with $e$, $-g$.

\subsection{Hyperbolic coefficients}
\label{numb}

With Theorems \ref{CISph} or \ref{CISph2} we may numerically compute the hyperbolic expansion coefficients at $\eta=(-\sqrt{D},\sqrt{D})$ of $P_{\ci,m}$ for $m\in \Z$. As above, the Kloosterman sums are computed with Theorem \ref{final_k}, now combined with the symmetries \eqref{po2} or \eqref{po3}, and the coefficients are necessarily real.

The first column of Table \ref{pk12b} shows part of the hyperbolic expansion of the weight $k=12$ series $P_{\ci,1}$ at $\eta=(-\sqrt{2},\sqrt{2})$. As we saw  earlier, $P_{\ci,1} = \lambda_1 \Delta$ for $\lambda_1$ given in \eqref{lams}. Hence, on renormalizing, we obtain the expansion \eqref{dl2}. As in the parabolic and elliptic cases, we suspect that these coefficients should have some arithmetic significance, but this remains to be determined.

A noticeable feature of these hyperbolic coefficients, first shown by Hiramatsu in \cite[Thm. 1]{Hir}, is that they  have exponential decay as $n \to -\infty$. A slightly more precise version of his result,  appearing in \cite{IMO}, is that for all $f \in S_k(\G)$ we have
\begin{equation} \label{h1}
c_{\eta}(m;f) \ll |m|^{k/2}\times
\begin{cases}1 & \text{if \ \ $  m>0$},\\ e^{-2\pi^2|m|/\ell_\eta} & \text{if \ \ $m<0$}.
\end{cases}
\end{equation}
This is the analog of the usual Hecke bound for parabolic Fourier coefficients.
\begin{table}[h]
\begin{center}
\begin{tabular}{r|d{11} d{6} d{4}}
 $n$  &  \multicolumn{1}{c}{$P_{\ci,1}$}  &        \multicolumn{1}{c}{$P_{\ci,0}$}  &     \multicolumn{1}{c}{$P_{\ci,-1}$}    \\ \hline
 $3$  &  -0.0039  &       -10417.11  &  -798957.50   \\
 $2$  &  0.2114    &   445.10  &  3632.46    \\
 $1$  &  0.0418  &   -7.88  &   -4.4001   \\
 $0$  &  0.00165  &   0.106  &  0.0017   \\
 $-1$  &  -0.000155  &   0.0292  &  0.0163    \\
 $-2$  &  0.00000290   &   0.00610  &  0.0498   \\
 $-3$  &  0.000000000198   &   0.000528  &  0.0405
\end{tabular}
\caption{Coefficients  $c_\eta(n;P_{\ci,m})$ for $m=1$, $0$, $-1$ and $\eta=(-\sqrt{2},\sqrt{2})$ with $k=12$.} \label{pk12b}
\end{center}
\end{table}
The second and third columns in Table \ref{pk12b} give the larger hyperbolic coefficients of $E_{12} \in M_{12}(\G)$ and  $P_{\ci,-1}  \in M^!_{12}(\G)$ as seen in \eqref{rank}.

\section{Hyperbolic Poincar\'e series and their hyperbolic Fourier expansions} \label{sechh}
Returning to a general $\G$, let $\eta$ and $\eta'$ be two pairs of hyperbolic fixed points: $\eta=(\eta_1,\eta_2)$ and $\eta'=(\eta'_1,\eta'_2)$.
We describe the hyperbolic Fourier expansion of $P_{\eta,m}$ at $\eta'$,
$$
\left(P_{\eta,m}|_k \sep \right)(z) = \sum_{n \in \Z} c_{\eta'}(n; P_{\eta,m}) z^{-k/2+2\pi i n /\ell_{\eta'}},
$$
in this section. Here, the group $\G$ may or may not have parabolic elements.

\subsection{The hyperbolic/hyperbolic integral}
The integral we will need shortly in \eqref{intyy4} is the following one.
Let $r \in \R_{\neq 0,1}$ and $\alpha$, $\beta =\pm1$ satisfy $\alpha \beta =\sgn(r)$. For $m,$ $n \in \Z$ put
\begin{equation} \label{iefa}
    I_{\eta\eta'}(m,n;r,\alpha,\beta):= \int_{-\infty+i y}^{\infty+i y}
    \frac{
    \left(\bigl| \frac{r}{r-1} \bigr|^{1/2}
    \cdot
    \frac{\alpha   e^{u} + \sgn(r-1) \left| \frac{r-1}{r} \right|^{1/2} }
    {  e^{u}  + \beta  \left| \frac{r}{r-1} \right|^{1/2}} \right)^{2\pi i m /\ell_\eta}
    e^{u(k/2-2\pi i n /\ell_{\eta'})}
    }
    {\scriptstyle
    \left( \alpha  e^{u} + \sgn(r-1) \left| \frac{r-1}{r} \right|^{1/2}   \right)^{k/2}
    \left( e^{u}  + \beta  \left| \frac{r}{r-1} \right|^{1/2} \right)^{k/2}}  \, \frac{du}{\ell_{\eta'}}
\end{equation}
 where $0<y<\pi$ and $k>0$.  We next establish good bounds for $I_{\eta\eta'}(m,n;r,\alpha,\beta)$ with respect to $n$. These bounds will be required at the end of the proof of Theorem \ref{CIShh2}.

\begin{prop} \label{bndhh}
The integral \eqref{iefa} is absolutely convergent. We have
\begin{align}
    I_{\eta \eta'}(m,n;r,\alpha,\beta) & \ll   \exp\left(\pi^2 (|m|-m)/\ell_{\eta} \right)/\ell_{\eta'}  &(n=0), \label{ihhx1}\\
    I_{\eta \eta'}(m,n;r,\alpha,\beta) & \ll n^{k/2} \exp\left(\frac{\pi^2(|m|-m)}{\ell_{\eta}} + \frac{\pi^2 n^{1/2}}{\ell_{\eta'}}  \right)/\ell_{\eta'}  & (n>0), \label{ihhx2}\\
    I_{\eta \eta'}(m,n;r,\alpha,\beta) & \ll |n|^{k/2} e^{2\pi^2 n/\ell_{\eta'}} \exp\left(\frac{\pi^2(|m|-m)}{\ell_{\eta}} + \frac{\pi^2 |n|^{1/2}}{\ell_{\eta'}}  \right)/\ell_{\eta'}  & (n <0), \label{ihhx3}
\end{align}
for  implied constants depending only on $k>0$.
\end{prop}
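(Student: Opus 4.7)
My approach would mirror Propositions \ref{bndhp} and \ref{bndph}. I would write $u = x + iy$ with $0 < y < \pi$, so that $v := e^u \in \H$. A brief rearrangement shows that the base of the $(2\pi i m/\ell_\eta)$-power in the integrand of \eqref{iefa} is
\[
w \;=\; \frac{A\alpha\, v + \sgn(r-1)}{v + \beta A}, \qquad A := \left|\tfrac{r}{r-1}\right|^{1/2},
\]
i.e.\ $v$ transformed by a M\"obius map with real coefficients. Its determinant equals $A^2\alpha\beta - \sgn(r-1) = A^2 \sgn(r) - \sgn(r-1)$ after invoking $\alpha\beta = \sgn(r)$, and I would evaluate this in three cases: for $r>1$, $0<r<1$, $r<0$ it comes out to $1/(r-1)$, $1/(1-r)$, $1/(1+|r|)$ respectively, always strictly positive. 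Hence $w \in \H$, so $\arg w \in (0,\pi)$, and consequently
\[
\left| w^{2\pi i m/\ell_\eta}\right| \;\lqs\; \exp\!\left(\pi^2(|m|-m)/\ell_\eta\right)
\]
uniformly in $r$, $\alpha$, $\beta$. This is where I expect the main work to lie, since the four sign variables $\alpha,\beta,\sgn(r),\sgn(r-1)$ and the location of $r$ relative to $0$ and $1$ must all be reconciled; it is the source of the $m$-dependent factor in \eqref{ihhx1}--\eqref{ihhx3}.

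For the remaining pieces, taking absolute values yields $|e^{u(k/2-2\pi in/\ell_{\eta'})}| = e^{xk/2}\,e^{2\pi n y/\ell_{\eta'}}$, while the two denominator factors have the form $|e^u + D_1|^{k/2}|e^u + D_2|^{k/2}$ with real $D_1 = \alpha\sgn(r-1)/A$ and $D_2 = \beta A$. The crucial identity is $|D_1 D_2| = |\alpha\beta\sgn(r-1)| = 1$, and it is what keeps the bound independent of $r$. From the two ways of completing the square,
\[
|e^u + D|^2 = e^{2x} + 2D e^x \cos y + D^2 = (e^x + D\cos y)^2 + D^2 \sin^2 y = (D + e^x \cos y)^2 + e^{2x} \sin^2 y,
\]
one reads off $|e^u + D| \gqs |D\sin y|$ and $|e^u + D| \gqs e^x |\sin y|$. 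Applying the first bound for $x \lqs 0$ and the second for $x \gqs 0$, together with $|D_1 D_2|=1$, gives
\[
\frac{e^{xk/2}}{|e^u+D_1|^{k/2}|e^u+D_2|^{k/2}} \;\lqs\; \frac{e^{-|x|k/2}}{\sin^k y},
\]
whose $x$-integral is $O_k(1/\sin^k y)$. This proves absolute convergence and yields the uniform majorant
\[
\left|I_{\eta\eta'}(m,n;r,\alpha,\beta)\right| \;\ll\; \frac{e^{\pi^2(|m|-m)/\ell_\eta}\, e^{2\pi n y/\ell_{\eta'}}}{\ell_{\eta'}\sin^k y}.
\]

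Finally, the integrand is holomorphic on the strip $0 < \Im u < \pi$ (the zeros of the denominator sit on the line $\Im u = \pi$) and decays like $e^{-|x|k/2}$, so Cauchy's theorem shows the integral is independent of $y$ on this strip. I would then optimize the majorant exactly as in Proposition \ref{bndph}: take $y = \pi/2$ for $n=0$, $y = \pi n^{-1/2}/2$ for $n>0$, and $y = \pi(1-|n|^{-1/2}/2)$ for $n<0$. Invoking $2y/\pi \lqs \sin y$ and $\sin(\pi-y) \lqs y$ from \eqref{sini} turns $1/\sin^k y$ into $O(|n|^{k/2})$ and $e^{2\pi n y/\ell_{\eta'}}$ into $e^{\pi^2 n^{1/2}/\ell_{\eta'}}$ when $n>0$, respectively $e^{2\pi^2 n/\ell_{\eta'}}\, e^{\pi^2 |n|^{1/2}/\ell_{\eta'}}$ when $n<0$, producing exactly \eqref{ihhx1}--\eqref{ihhx3}.
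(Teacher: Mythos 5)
Your proof is correct and takes essentially the same route as the paper's: control the numerator by observing that the M\"obius image $w$ lies in $\H$ (you justify this via the positive determinant, which the paper leaves to a direct check), majorize the rest of the integrand uniformly in $r$ by $e^{-|x|k/2}\sin^{-k}(y)$, and then shift the contour to $y=\pi/2$, $\pi n^{-1/2}/2$ or $\pi(1-|n|^{-1/2}/2)$ exactly as in Proposition \ref{bndph}. The only difference is in the middle step, where the paper substitutes $t=x+\frac 12\log\bigl|\frac{r}{r-1}\bigr|$, $u=x-\frac 12\log\bigl|\frac{r}{r-1}\bigr|$ and applies \eqref{123a} case by case, while you use the completed-square bounds $|e^u+D|\gqs |D|\sin y$ and $|e^u+D|\gqs e^x\sin y$ together with $|D_1D_2|=1$; both exploit the same cancellation and yield the same majorant (your parenthetical that the denominator's zeros lie on $\Im u=\pi$ should read $\Im u\in\{0,\pi\}$ depending on the signs, but either way they avoid the open strip, so the contour shift is valid).
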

\begin{proof}
Note that $w:=\frac{\alpha   e^{u} + \sgn(r-1) \left| \frac{r-1}{r} \right|^{1/2} }
    {  e^{u}  + \beta  \left| \frac{r}{r-1} \right|^{1/2}} \in \H$ so that $0 < \arg w < \pi$. Hence, with $u=x+iy$,
\begin{equation*}
     \left|w^{2\pi i m /\ell_\eta}e^{-2\pi i n u/\ell_{\eta'}}\right| \lqs e^{\pi^2 (|m|-m)/\ell_\eta} \cdot e^{2\pi ny/\ell_{\eta'}}.
\end{equation*}
The remaining part of the integrand in \eqref{iefa} is bounded by
\begin{equation} \label{rej}
    \frac{e^{xk/2}}{\left|  \bigl| \frac{r}{r-1} \bigr|^{1/2} e^{x} +\alpha \sgn(r-1)e^{-iy}\right|^{k/2} \left| \bigl| \frac{r-1}{r} \bigr|^{1/2} e^{x} + \beta e^{-iy} \right|^{k/2}}.
\end{equation}
Let $t=x+\frac 12 \log\bigl| \frac{r}{r-1} \bigr|$ and $u=x-\frac 12 \log\bigl| \frac{r}{r-1} \bigr|$.
Using \eqref{123a},
\begin{align*}
    \frac{e^{x}}{\left|  \bigl| \frac{r}{r-1} \bigr|^{1/2} e^{x} +\alpha \sgn(r-1)e^{-iy}\right|^{2}} & \lqs
    \left| \frac{r}{r-1} \right|^{1/2} \times \begin{cases}
    e^{1-|t|} & \text{ if } |t| \gqs 1\\
    e \sin^{-2}(y) & \text{ if } |t| \lqs 1,
    \end{cases}
    \\
    \frac{e^{x}}{\left| \bigl| \frac{r-1}{r} \bigr|^{1/2} e^{x} + \beta e^{-iy} \right|^{2}} & \lqs
    \left| \frac{r-1}{r} \right|^{1/2} \times \begin{cases}
    e^{1-|u|} & \text{ if } |u| \gqs 1\\
    e \sin^{-2}(y) & \text{ if } |u| \lqs 1.
    \end{cases}
\end{align*}
Therefore, \eqref{rej} is bounded by
\begin{align*}
    \left( e^{2-|t|-|u|} \right)^{k/4} \lqs e^{(2-2|x|)k/4} \qquad & \text{if} \qquad |t| \gqs 1 \quad \text{and} \quad |u| \gqs 1,\\
    \left( e^2 \sin^{-4}(y) \right)^{k/4} = e^{k/2} \sin^{-k}(y) \qquad & \text{if} \qquad |t| \lqs 1 \quad \text{or} \quad |u| \lqs 1.
\end{align*}
Altogether, for an implied constant depending only on $k>0$,
\begin{equation*}
    I_{\eta \eta'}(m,n;r,\alpha,\beta) \ll e^{\pi^2 (|m|-m)/\ell_\eta} \cdot e^{2\pi ny/\ell_{\eta'}}\left( 1+ \sin^{-k}(y)\right).
\end{equation*}
Therefore \eqref{iefa} is absolutely convergent. Since the integrand is holomorphic, it  is independent of $y$ with $0<y<\pi$.
Recalling \eqref{sini} and choosing $y=\pi n^{-1/2}/2$, $y=\pi/2$ and $y=\pi(1-|n|^{-1/2}/2)$ for $n>0$, $n=0$ and $n<0$, respectively,  finishes the proof.
\end{proof}

\begin{prop} \label{long}
Let $k \in \R_{>0}$.
If $r \not\in (0,1)$ or if $r \in (0,1)$ and $\alpha=1$ then
\begin{multline} \label{ffv2}
    I_{\eta\eta'}(m,n;r,\alpha,\beta)=\frac{\sgn(r)^{k/2}}{\ell_{\eta'}} \left| \frac{r-1}{r} \right|^{k/4} e^{2\pi^2 n/\ell_{\eta'}}
    \\
    \times
    \e\left(\frac{m}{2\ell_\eta}\left[ \log \left| \frac{r}{r-1} \right| +\pi i (1-\alpha)\right]
    + \frac{n}{2\ell_{\eta'}}\left[ \log \left| \frac{r-1}{r} \right| +\pi i (1+\beta)\right] \right)
    \\
    \times
     B\left( \frac{k}{2} - \frac{2\pi i n}{\ell_{\eta'}}, \frac{k}{2} + \frac{2\pi i n}{\ell_{\eta'}}\right)
     {_2}F_1\left( \frac{k}{2} - \frac{2\pi i m}{\ell_{\eta}}, \frac{k}{2} + \frac{2\pi i n}{\ell_{\eta'}};k;\frac{1}{r}\right).
\end{multline}
Also, if $r \not\in (0,1)$ or if $r \in (0,1)$ and $\alpha=-1$ then
\begin{multline} \label{ffv3}
    I_{\eta\eta'}(m,n;r,\alpha,\beta)= \frac{\sgn(r-1)^{k/2}}{\ell_{\eta'}} \left| \frac{r}{r-1} \right|^{k/4} e^{2\pi^2 n/\ell_{\eta'}}
    \\
    \times
    \e\left(\frac{m}{2\ell_\eta}\left[ \log \left| \frac{r}{r-1} \right| +\pi i (1-\alpha)\right]
    + \frac{n}{2\ell_{\eta'}}\left[ \log \left| \frac{r}{r-1} \right| +\pi i (1+\alpha \sgn(r-1))\right] \right)
    \\
    \times
     B\left( \frac{k}{2} - \frac{2\pi i n}{\ell_{\eta'}}, \frac{k}{2} + \frac{2\pi i n}{\ell_{\eta'}}\right)
     {_2}F_1\left( \frac{k}{2} + \frac{2\pi i m}{\ell_{\eta}}, \frac{k}{2} + \frac{2\pi i n}{\ell_{\eta'}};k;\frac{1}{1-r}\right).
\end{multline}
\end{prop}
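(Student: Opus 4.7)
The plan is to evaluate $I_{\eta\eta'}(m,n;r,\alpha,\beta)$ by reducing it to Euler's integral representation
\begin{equation*}
{_2}F_1(a,b;c;z) \;=\; \frac{1}{B(b,c-b)} \int_0^1 x^{b-1}(1-x)^{c-b-1}(1-xz)^{-a}\,dx \qquad (\Re c > \Re b > 0)
\end{equation*}
through a two-stage change of variable. This is the direct analogue of the proof of \eqref{iiiph} in Section \ref{secph} for the parabolic/hyperbolic integral, the new feature being that the denominator now contains \emph{two} linear-in-$e^u$ factors, namely $\alpha e^u + \sgn(r-1)|(r-1)/r|^{1/2}$ and $e^u + \beta|r/(r-1)|^{1/2}$, rather than one.

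First I would shift the contour in \eqref{iefa} to $y = \pi/2$, which is permissible because the integrand is holomorphic on the strip $0 < \Im u < \pi$ and Proposition \ref{bndhh} supplies absolute convergence. Writing $u = t + i\pi/2$ and then $v = e^t$, the integral becomes an integral over $v \in (0,\infty)$ with $e^u = iv$. To derive \eqref{ffv2} I would then apply the M\"obius substitution $iv + \beta|r/(r-1)|^{1/2} = \beta|r/(r-1)|^{1/2}/x$. A short calculation using the standing hypothesis $\alpha\beta = \sgn(r)$ produces the identity
\begin{equation*}
\alpha\, iv + \sgn(r-1)\left|\tfrac{r-1}{r}\right|^{1/2} \;=\; \sgn(r)\left|\tfrac{r}{r-1}\right|^{1/2}\frac{1-x/r}{x},
\end{equation*}
so the quotient inside the $m$-power collapses neatly to $\alpha|r/(r-1)|^{1/2}(1-x/r)$, while $dv/v = -dx/(x(1-x))$. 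Collecting powers, the integrand takes the Euler form $x^{b-1}(1-x)^{c-b-1}(1-xz)^{-a}$ with $a = k/2 - 2\pi im/\ell_\eta$, $b = k/2 + 2\pi in/\ell_{\eta'}$, $c = k$, and $z = 1/r$. The image of $(0,\infty)$ under this substitution is a semicircular arc in the $x$-plane running from $1$ to $0$, lying in the lower or upper half-plane according to $\beta = \pm 1$, which I would deform to the real segment $[0,1]$ by Cauchy's theorem; this deformation is legitimate precisely when no branch point of $(1-xz)^{-a}$ lies in the swept region, namely when $r \notin (0,1)$, or when $r \in (0,1)$ with $\alpha = 1$ forcing the correct side of the cut. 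For \eqref{ffv3} the symmetric substitution $\alpha e^u + \sgn(r-1)|(r-1)/r|^{1/2} = \sgn(r-1)|(r-1)/r|^{1/2}/x$ gives the Euler form with $z = 1/(1-r)$ and first parameter $k/2 + 2\pi im/\ell_\eta$, under the analogous hypothesis. Alternatively \eqref{ffv3} can be deduced directly from \eqref{ffv2} by Pfaff's transformation ${_2}F_1(a,b;c;z) = (1-z)^{-b}{_2}F_1(c-a,b;c;z/(z-1))$ applied at $z = 1/r$, since $z/(z-1) = 1/(1-r)$ and the factor $(1-z)^{-b}$ recombines with the existing prefactor.

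The main obstacle will be the careful bookkeeping of principal-branch phases. Under the convention $-\pi < \arg \leq \pi$, each of the powers $(iv)^{k/2-2\pi in/\ell_{\eta'}}$, $\alpha^{2\pi im/\ell_\eta}$, $(-i\beta)^{-2\pi in/\ell_{\eta'}}$, and $((r-1)/r)^{\pm s}$ contributes a phase factor that must combine to produce exactly the arguments $\pi i(1-\alpha)$ and $\pi i(1+\beta)$ (respectively $\pi i(1+\alpha\sgn(r-1))$) appearing inside $\e(\cdot)$ in \eqref{ffv2} and \eqref{ffv3}; the cases $r < 0$, $r \in (0,1)$ and $r > 1$ with the sign constraint $\alpha\beta = \sgn(r)$ each need to be checked separately, and it is precisely this case analysis that is responsible for the hypothesis dichotomy between the two formulas. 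Once these phases are tracked correctly, the explicit prefactors drop out together with $B(b,c-b) = \Gamma(b)\Gamma(c-b)/\Gamma(c)$ from Euler's identity.
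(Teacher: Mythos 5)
Your route is sound and would yield the proposition, but it is genuinely different from the paper's. After the common first step ($u=t+i\pi/2$, $v=e^t$, giving an integral over $(0,\infty)$ as in \eqref{ksg}), the paper separates the $m$-power into the two linear factors via \eqref{ksg2}, lands on $\int_0^\infty (x+a)^{-\mu}(x+b)^{-\rho}x^{\nu-1}\,dx$, and evaluates that by \cite[3.197.1]{GR} together with a dedicated branch lemma: the Pfaff transformation produces two expressions, \eqref{pfff} and \eqref{pfff2}, each on the principal branch only in a half-open range of $\arg b-\arg a$, and the dichotomy in the proposition comes from checking when $\arg b - \arg a$ leaves $(-\pi,\pi]$ (exactly when $\alpha=\beta=-\sgn(r-1)=1$, forcing $r\in(0,1)$). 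You instead transplant the method the paper uses for the parabolic/hyperbolic integral \eqref{iiiph}: a M\"obius substitution sending $(0,\infty)$ to the semicircle $|x-\tfrac12|=\tfrac12$, deformation to $[0,1]$, and Euler's representation. I checked your key identity $\alpha iv+\sgn(r-1)|(r-1)/r|^{1/2}=\sgn(r)|r/(r-1)|^{1/2}(1-x/r)/x$ (it uses $\alpha\beta=\sgn(r)$ and $\sgn(r(r-1))|(r-1)/r|=(r-1)/r$), the Jacobian $dv/v=-dx/(x(1-x))$, and the exponent count giving $a=k/2-2\pi im/\ell_\eta$, $b=k/2+2\pi in/\ell_{\eta'}$, $c=k$, $z=1/r$; all correct. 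What your route buys is uniformity with Section \ref{secph}; what the paper's buys is that the branch analysis is isolated in a clean standalone lemma about $\arg b-\arg a$ rather than in a contour-deformation argument.

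Two refinements you should make when writing this up. First, for $r\in(0,1)$ the branch point $x=r$ of $(1-x/r)^{-a}$ lies \emph{on} the target segment $[0,1]$, not in the interior of the swept region, so the deformation is ``legitimate'' for both signs of $\alpha$; what changes is whether the limiting contour passes just below or just above $x=r$ (according to $\beta=\pm1$, i.e. $\alpha=\pm1$ since $\alpha\beta=\sgn(r)=1$ there), hence which boundary value of ${_2}F_1(\cdot;1/r)$ on its cut $[1,\infty)$ you obtain. Only one sign gives the principal value in the paper's convention; the other must be rewritten at the argument $1/(1-r)$, which is what \eqref{ffv3} is for. Second, your shortcut ``\eqref{ffv3} follows from \eqref{ffv2} by Pfaff'' is valid only for $r\not\in(0,1)$, where both formulas hold simultaneously and both arguments $1/r$, $1/(1-r)$ avoid the cut. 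For $r\in(0,1)$ the two formulas apply under mutually exclusive hypotheses and differ by a monodromy factor, so neither can be derived from the other by a principal-branch Pfaff transformation; you must run the contour argument (or the paper's branch lemma) separately in that case. With those two points fixed, and the phase bookkeeping you already flag carried out, your proof goes through.
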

\begin{proof}
Writing $u=t+i\pi/2$ and then $x=e^t$ in \eqref{iefa} gives
\begin{equation} \label{ksg}
    \frac{I_{\eta\eta'}(m,n;r,\alpha,\beta)}{(-i\alpha)^{k/2} e^{\pi^2n/\ell_{\eta'}}}=  \left| \frac{r}{r-1} \right|^{\pi i m /\ell_\eta} \int_{0}^{\infty}
    \frac{
    \left(\alpha \frac{  x - \alpha\sgn(r-1) \left| \frac{r-1}{r} \right|^{1/2} i }
    {  x  - \beta  \left| \frac{r}{r-1} \right|^{1/2} i} \right)^{2\pi i m /\ell_\eta}
    x^{k/2-2\pi i n /\ell_{\eta'}-1}
    }
    {\scriptstyle
    \left( x - \alpha\sgn(r-1) \left| \frac{r-1}{r} \right|^{1/2} i  \right)^{k/2}
    \left( x  - \beta  \left| \frac{r}{r-1} \right|^{1/2} i \right)^{k/2}}  \, \frac{dx}{\ell_{\eta'}}.
\end{equation}
With $\alpha = \pm 1$ and $x>0$ as above, we have
\begin{equation} \label{ksg2}
    \left( \alpha \frac{x+iu}{x+iv} \right)^w = e^{\pi i w (1-\alpha)/2} (x+iu)^w (x+iv)^{-w} \qquad (u,v \in \R, \ w\in \C)
\end{equation}
if $\alpha \frac{x+iu}{x+iv} \in \H$
since $|\arg(x+iu)|$, $|\arg(x+iv)| <\pi/2$. We may apply the identity \eqref{ksg2} to \eqref{ksg} since the quotient to be exponentiated is in $\H$, as can be verified by a direct check or by noting that it originates as $\se^{-1}\g \sep z$ in the proof of Theorem \ref{CIShh2}. Therefore
\begin{multline} \label{ffv}
    I_{\eta,\eta'}(m,n;r,\alpha,\beta)= \frac{(-i\alpha)^{k/2}}{\ell_{\eta'}} e^{\pi^2n/\ell_{\eta'}}
    \e\left(\frac{m}{2\ell_\eta}\left[ \log \left| \frac{r}{r-1} \right| +\pi i (1-\alpha)\right]\right)
    \\
    \times
    \int_{0}^{\infty}
    \left(   x +a \right)^{-k/2+2\pi i m /\ell_\eta}
    \left(  x  +b \right)^{-k/2-2\pi i m /\ell_\eta}
    x^{k/2-2\pi i n /\ell_{\eta'}-1}
      \, dx
\end{multline}
for $a=- \alpha \,\sgn(r-1) \bigl| \frac{r-1}{r} \bigr| ^{1/2} i$ and $b=- \beta  \bigl|  \frac{r}{r-1} \bigr|^{1/2} i$. The evaluation of the integral in \eqref{ffv} has some subtleties so we give it in the following lemma. Recall that ${_2}F_1(a,b;c;1-z)$ is a multi-valued function of $z$ in general, and by convention we take the principal branch with $-\pi<\arg z\lqs \pi$.

\begin{lemma}
Suppose $a,$ $b\in \C_{\neq 0}$ with $|\arg(a)|$, $|\arg(b)| <\pi$. For any $\mu,$ $\rho,$ $\nu \in \C$ with $0<\Re(\nu)<\Re(\mu+\rho)$ we have
\begin{multline} \label{phyg}
\int_0^\infty (x+a)^{-\mu} (x+b)^{-\rho} x^{\nu-1}\, dx  = B(\nu,\mu+\rho-\nu)\\
\times \begin{cases}
a^{\nu-(\mu+\rho)}  {_2}F_1(\rho,\mu+\rho-\nu;\mu+\rho;1-b/a) & \text{ \ if \ \ } -\pi < \arg b -\arg a \lqs \pi \\
b^{\nu-(\mu+\rho)}  {_2}F_1(\mu,\mu+\rho-\nu;\mu+\rho;1-a/b) & \text{ \ if \ \ } -\pi \lqs \arg b -\arg a < \pi
\end{cases}
\end{multline}
using the principal value of the hypergeometric function in \eqref{phyg}.
\end{lemma}
\begin{proof}
From \cite[3.197.1]{GR} we have
\begin{equation}\label{grihh}
    \int_0^\infty (x+a)^{-\mu} (x+b)^{-\rho} x^{\nu-1}\, dx = a^{-\mu} b^{\nu-\rho} B(\nu,\mu+\rho-\nu) {_2}F_1(\mu,\nu;\mu+\rho;1-b/a).
\end{equation}
 If $\arg b -\arg a \in (-\pi,\pi]$ then the right side of \eqref{grihh}
requires the principal branch of ${_2}F_1$. For   $\arg b -\arg a \not\in (-\pi,\pi]$ we require values of ${_2}F_1$ on the branch reached by crossing the branch-cut from above or below. Applying the Pfaff transformation \cite[Thm. 2.2.5]{AAR} to ${_2}F_1$ converts \eqref{grihh} into
\begin{align} \label{pfff}
     & a^{\nu-(\mu+\rho)} B(\nu,\mu+\rho-\nu) {_2}F_1(\rho,\mu+\rho-\nu;\mu+\rho;1-b/a) \\
   \text{or} \quad  & b^{\nu-(\mu+\rho)} B(\nu,\mu+\rho-\nu) {_2}F_1(\mu,\mu+\rho-\nu;\mu+\rho;1-a/b) \label{pfff2}
\end{align}
by switching $a$ and $b$. Clearly we remain in the principal branch of ${_2}F_1$ in \eqref{pfff} for $-\pi < \arg b -\arg a \lqs \pi$ and the principal branch of ${_2}F_1$ in \eqref{pfff2} for the overlapping range $-\pi < \arg a -\arg b \lqs \pi$. This proves the lemma.
\end{proof}

In our case, with $a$ and $b$ given after \er{ffv}, we have $\arg a$, $\arg b = \pm \pi/2$. Therefore,
$-\pi<\arg b-\arg a \lqs \pi$ unless $\beta=1$ and $\alpha \sgn(r-1)=-1$ which is equivalent to
\begin{equation}\label{gca}
     \alpha = \beta = -\sgn(r-1) =1,
\end{equation}
since it is not possible to have $-\alpha = \beta = \sgn(r-1) =1$. Note that \er{gca} implies $r$ is in the interval $(0,1)$. Hence we have $-\pi<\arg b-\arg a \lqs \pi$ if $r \not\in (0,1)$ or if $r \in (0,1)$ and $\alpha=-1$. In this case we may evaluate the integral in \eqref{ffv} using the top option in \er{phyg}, with for example
\begin{equation*}
    a^{\nu-(\mu+\rho)}= (-i \alpha)^{-k/2} \sgn(r-1)^{k/2} \left| \frac{r}{r-1} \right|^{k/4}
    \e\left(\frac{n}{2\ell_{\eta'}}\left[ \log \left| \frac{r}{r-1} \right| +\pi i \alpha \sgn(r-1)\right] \right).
\end{equation*}
The result is \er{ffv3}.
Similarly, if $r \not\in (0,1)$ or if $r \in (0,1)$ and $\alpha=1$, we may evaluate the integral in \eqref{ffv} using the bottom option in \er{phyg} and the result is \er{ffv2}.
\end{proof}

\subsection{Double cosets in the hyperbolic/hyperbolic case}
We need some preliminary material to understand the double cosets appearing in the Kloosterman sum $S_{\eta\eta'}$.
Let $L$ be a complete set of inequivalent representatives for $\G_\eta\backslash \G/\G_{\eta'}$. Partition $L$ into two subsets:
\begin{equation*}
    \G(\eta,\eta')_0 := \Big\{ \delta \in L \ \Big| \ \delta \eta' = \eta \text{ or } \eta^* \Big\}, \qquad \G(\eta,\eta') := \Big\{ \delta \in L \ \Big| \ \delta \eta' \neq \eta \text{ or } \eta^* \Big\}.
\end{equation*}

\begin{lemma} \label{2elts}
There exist $a,$ $b\in \R_{\neq 0}$ such that $\G(\eta,\eta')_0$ is a subset of
\begin{equation}\label{gg0n}
    \left\{ \se \left(\smallmatrix
    a & 0 \\ 0 & \frac1a
\endsmallmatrix\right) \sep^{-1}, \ \se \left(\smallmatrix
    0 & b \\ -\frac{1}{b} & 0
\endsmallmatrix\right) \sep^{-1}  \right\}.
\end{equation}
Then $\G(\eta,\eta')_0$ contains the first element of \eqref{gg0n} if  $\eta' \equiv \eta \bmod \G$ and the second if  $\eta' \equiv \eta^* \bmod \G$. The numbers $a$ and $b$ depend on the choice of the scaling matrices $\se$ and $\sep$.
\end{lemma}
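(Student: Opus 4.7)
The plan is to characterize each $\delta \in \G(\eta,\eta')_0$ by examining the matrix $M := \se^{-1}\delta\sep \in \SL_2(\R)$, exploiting the fact that $\se$ sends the ordered pair $(0,\infty)$ to $\eta=(\eta_1,\eta_2)$ and $\sep$ sends $(0,\infty)$ to $\eta'=(\eta'_1,\eta'_2)$.

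First I would treat the case $\delta\eta' = \eta$. Then both $\se$ and $\delta\sep$ are scaling matrices for $\eta$, so $M$ fixes $0$ and $\infty$ under the linear fractional action. The elements of $\SL_2(\R)$ fixing $0$ and $\infty$ pointwise are exactly the diagonal matrices, so $M = \left(\smallmatrix a & 0 \\ 0 & 1/a \endsmallmatrix\right)$ for some $a \in \R_{\neq 0}$, which rearranges to the first element of \er{gg0n}. Next, for $\delta\eta'=\eta^*$, I would invoke the observation just above the statement of Theorem \ref{CIShh} that $\se S$ with $S=\left(\smallmatrix 0 & -1 \\ 1 & 0 \endsmallmatrix\right)$ is a scaling matrix for $\eta^*$. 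Both $\se S$ and $\delta\sep$ then send $(0,\infty)$ to $\eta^*$, so $(\se S)^{-1}\delta\sep$ is diagonal, which gives $\se^{-1}\delta\sep = S\left(\smallmatrix a' & 0 \\ 0 & 1/a' \endsmallmatrix\right) = \left(\smallmatrix 0 & -1/a' \\ a' & 0 \endsmallmatrix\right)$ for some $a' \in \R_{\neq 0}$. Setting $b=-1/a'$ produces the second element of \er{gg0n}.

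To finish the proof I would verify that $\G(\eta,\eta')_0$ contains at most one element of each type. If $\delta,\delta'\in L$ both satisfy $\delta\eta'=\delta'\eta'=\eta$, then $\delta(\delta')^{-1}$ fixes $\eta_1$ and $\eta_2$ pointwise, so it lies in $\G_\eta$; since $L$ is a set of inequivalent representatives for $\G_\eta\backslash\G/\G_{\eta'}$, this forces $\delta=\delta'$. The argument for the $\eta^*$ type is identical, after noting that $\G_{\eta^*}=\G_\eta$ as subgroups of $\G$. Existence of a representative of the respective type when $\eta'\equiv\eta\bmod\G$ or $\eta'\equiv\eta^*\bmod\G$ is obtained by picking any $\rho\in\G$ realizing the equivalence and then selecting its $L$-representative.

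The only minor wrinkle is confirming $a,b\in\R_{\neq 0}$, but this is automatic because $\se$, $\sep$, $\delta \in \SL_2(\R)$ and the determinant is nonzero. I do not anticipate any substantive obstacle; the lemma is a direct structural consequence of the definition of scaling matrices, and the dependence of $a,b$ on the choice of $\se$, $\sep$ is visible from the formulas $M=\se^{-1}\delta\sep$ and $\se^{-1}\delta\sep = S \cdot \mathrm{diag}(a',1/a')$, since changing $\se$ or $\sep$ on the right by a diagonal matrix rescales the parameter.
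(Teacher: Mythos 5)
Your proposal is correct and follows essentially the same route as the paper: uniqueness of each type of representative via the double coset structure (the paper uses $\tau^{-1}\delta\in\G_{\eta'}$ where you use $\delta(\delta')^{-1}\in\G_\eta$, which is equivalent), and the diagonal/anti-diagonal form via the action of $\se^{-1}\delta\sep$ on $0$ and $\infty$. Your detour through the scaling matrix $\se S$ for $\eta^*$ is only a cosmetic variation on the paper's direct computation.
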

\begin{proof}
If $\G(\eta,\eta')_0$ contains $\delta$  and $\tau$ such that  $\delta \eta'=\eta$ and $\tau \eta'=\eta$ then $\tau^{-1} \delta \in \G_{\eta'}$ and hence $\tau=\delta$. Similarly, if $\g$, $\tau \in \G(\eta,\eta')_0$ with $\g \eta'=\eta^*$ and $\tau \eta'=\eta^*$ then we must have $\tau=\g$ also. Therefore $\G(\eta,\eta')_0$ contains at most one element $\delta$ satisfying $\delta \eta'=\eta$ and at most one  $\g$ satisfying $\g \eta'=\eta^*$. If $\G(\eta,\eta')_0$ contains such a $\delta$ and such a $\g$ then they must be distinct since $\eta \neq \eta^*$.
The computations $\se^{-1} \delta \sep 0 = 0$ and $\se^{-1} \delta \sep \ci = \ci$ show $\delta$ takes the form of the first element of \eqref{gg0n} and similarly for $\g$ taking the form of the second element.
\end{proof}

It may be shown that
\begin{equation}\label{lorc}
    \ell_\eta = \ell_{\eta'} \qquad \text{if} \qquad \eta' \equiv \eta \bmod \G \qquad \text{or} \qquad \eta' \equiv \eta^*  \bmod \G.
\end{equation}
Then we see that the effect of a different choice of $L$ on the $a$ and $b$ in Lemma \ref{2elts} is multiplication by a factor of the form $\pm e^{m\ell_\eta/2}$ for $m \in \Z$. In other words, for fixed scaling matrices $\se$ and $\sep$, the sets $\log(a^2)+\ell_\eta\Z$ and $\log(b^2)+\ell_\eta\Z$ are independent of $L$.

\begin{prop} \label{hh_reps}
With the above notation,
\begin{equation} \label{hhjiu}
    \G(\eta,\eta')_0 \cup \Big\{ \g \tau \ \Big| \ \g \in \G(\eta,\eta'), \ \tau \in \G_{\eta'}/Z\Big\}
\end{equation}
is a complete set of inequivalent representatives for $\G_\eta\backslash \G$.
\end{prop}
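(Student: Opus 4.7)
The strategy follows Propositions \ref{pp_reps} and \ref{doubhp}: start from the double coset decomposition $\G = \bigsqcup_{\delta \in L} \G_\eta \delta \G_{\eta'}$, where $L = \G(\eta,\eta')_0 \sqcup \G(\eta,\eta')$, and refine each double coset into a union of $\G_\eta$-cosets. The twist compared to the parabolic case is that $\G(\eta,\eta')_0$ may now contain up to two elements, one for each of the $\G$-equivalences $\eta' \equiv \eta$ and $\eta' \equiv \eta^*$, but both behave the same way.

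For any $\delta \in \G(\eta,\eta')_0$ we have $\delta \eta' \in \{\eta, \eta^*\}$, so conjugation by $\delta$ maps the two-point set $\{\eta'_1,\eta'_2\}$ bijectively onto $\{\eta_1,\eta_2\}$ (possibly swapping them). Consequently $\delta \G_{\eta'} \delta^{-1} = \G_\eta$, which forces $\G_\eta \delta \G_{\eta'} = \G_\eta \delta$: the entire double coset collapses to the single $\G_\eta$-coset represented by $\delta$, and the inner sum over $\tau \in \G_{\eta'}/Z$ is replaced by the single term $\delta$.

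For $\g \in \G(\eta,\eta')$ the claim is that
\[
\G_\eta \g \G_{\eta'} = \bigsqcup_{\tau \in \G_{\eta'}/Z} \G_\eta \g \tau.
\]
The union covers $\G_\eta \g \G_{\eta'}$ because $Z \subseteq \G_\eta$ absorbs the $\pm$ sign when $-I \in \G$ (and is trivial otherwise). For disjointness, suppose $\G_\eta \g \tau = \G_\eta \g \tau'$; then $\alpha := \g(\tau'\tau^{-1})\g^{-1}$ lies in $\G_\eta$, so $\alpha$ fixes the pair $\eta$ (because $\alpha \in \G_\eta$) and also fixes the pair $\g\eta'$ (because $\tau'\tau^{-1}$ fixes $\eta'$). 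Since any non-$\pm I$ element of $\SL_2(\R)$ has at most two fixed points in $\R\cup\{\infty\}$, either $\alpha = \pm I$ or $\{\g\eta'_1,\g\eta'_2\} = \{\eta_1,\eta_2\}$; the latter means $\g\eta' \in \{\eta,\eta^*\}$, contradicting $\g \in \G(\eta,\eta')$. Hence $\tau'\tau^{-1} \in Z$ and $\tau = \tau'$ in $\G_{\eta'}/Z$.

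Combining the two cases with the disjointness of distinct double cosets yields the decomposition \eqref{hhjiu}, together with the fact that the $\G(\eta,\eta')_0$ piece is automatically disjoint from the $\G(\eta,\eta')$ piece. The delicate step is the rigidity argument in the third paragraph: it is precisely the exclusion of $\eta'\equiv\eta$ and $\eta'\equiv\eta^*$ in the definition of $\G(\eta,\eta')$ that prevents a four-point coincidence and forces $\alpha = \pm I$.
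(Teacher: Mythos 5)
Your proof is correct and follows essentially the same route as the paper's: both start from the double coset decomposition, observe that the cosets $\G_\eta\delta\G_{\eta'}$ with $\delta\eta'\in\{\eta,\eta^*\}$ collapse to a single $\G_\eta$-coset, and for the remaining double cosets use the rigidity that a non-central element of $\G_\eta$ fixes exactly the pair $\eta$, so it cannot also fix $\g\eta'$ unless $\g\eta'$ equals $\eta$ or $\eta^*$. The paper phrases this as checking which elements of $L'=\{\delta\tau\}$ are $\G_\eta$-equivalent rather than as a disjointness statement for $\bigsqcup_\tau \G_\eta\g\tau$, but the substance is identical.
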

\begin{proof}
The set $L':=\{ \delta\tau \ | \ \delta \in L, \ \tau \in \G_{\eta'}/Z  \}$ clearly gives a complete set of representatives for $\G_\eta\backslash \G$. To see which of its elements are equivalent modulo $\G_\eta$, suppose
\begin{equation}\label{hhjul}
    \G_\eta\delta\tau = \G_\eta \delta'\tau' \quad \text{ for } \quad \delta, \delta' \in L  \quad \text{and}  \quad \tau, \tau' \in \G_{\eta'}/Z.
\end{equation}
Arguing as in Proposition \ref{pp_reps},  we must have $\delta'=\delta$ and there exists $\g \in \G_\eta$ so that $\g \delta\tau = \delta\tau'$. It follows that $\g$ fixes $\eta$ and $\delta \eta'$. This can happen if $\g=\pm I$, in which case $\tau=\tau'$. The other possibility is that $\delta \eta' = \eta$ or $\eta^*$. In these cases  $\G_\eta\delta\tau = \G_\eta \delta'\tau' = \G_\eta \delta$. Hence, with \eqref{hhjiu}, we have removed all of the equivalent elements from the set $L'$ we started with.
\end{proof}

To give another characterization of the sets $\G(\eta,\eta')_0$ and $\G(\eta,\eta')$, we first prove the following two results, contained in \cite[Lemma 6 (iii)]{G83}.

\begin{lemma} \label{fix}
Suppose  $\g$ and $\delta$ are  hyperbolic elements of $\G$ and that $\g$ fixes $\eta_1$, $\eta_2$ while $\delta$ fixes $\eta_3$, $\eta_2$. Then $\eta_1=\eta_3$.
\end{lemma}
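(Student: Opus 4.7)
The plan is to derive a contradiction with the discreteness of $\G$ under the assumption $\eta_1 \neq \eta_3$. The whole argument reduces to a short matrix calculation once we normalize coordinates.

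First I would use a conjugation trick. Pick any $\tau \in \SL_2(\R)$ mapping the pair $(\eta_1,\eta_2)$ to $(0,\infty)$, and pass to the conjugate group $\tau^{-1}\G\tau$, which is still a Fuchsian group of the first kind (discreteness is preserved by conjugation in $\SL_2(\R)$). The elements $\g':=\tau^{-1}\g\tau$ and $\delta':=\tau^{-1}\delta\tau$ are hyperbolic; $\g'$ fixes $0$ and $\infty$, and $\delta'$ fixes $\infty$ together with $\eta_3':=\tau^{-1}\eta_3$. So after this reduction, we may assume $\eta_1=0$, $\eta_2=\infty$, $\eta_3=\eta_3'$ (with the price that we are now working inside a conjugate of $\G$, still discrete), and we want to show $\eta_3=0$.

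Since $\g$ fixes $0$ and $\infty$ and is hyperbolic, it has the diagonal form
\[
\g = \begin{pmatrix} \lambda & 0 \\ 0 & \lambda^{-1} \end{pmatrix}, \qquad \lambda \in \R_{\neq 0},\ \lambda^2\neq 1.
\]
Since $\delta$ fixes $\infty$, it is upper triangular,
\[
\delta = \begin{pmatrix} \mu & c \\ 0 & \mu^{-1} \end{pmatrix}, \qquad \mu \in \R_{\neq 0},\ \mu^2\neq 1,
\]
and a direct check shows its other fixed point in $\R$ is $c\mu/(1-\mu^2)$. Thus $\eta_3 = 0$ if and only if $c=0$. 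Suppose for contradiction that $\eta_3 \neq 0$, so $c \neq 0$.

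The key calculation is then the commutator. A direct multiplication gives
\[
[\g,\delta] \;=\; \g\,\delta\,\g^{-1}\,\delta^{-1} \;=\; \begin{pmatrix} 1 & c\mu(\lambda^2-1) \\ 0 & 1 \end{pmatrix},
\]
which is a nontrivial parabolic element (using $c\neq 0$, $\mu\neq 0$, $\lambda^2\neq 1$). Conjugating this further by $\g^n$ yields
\[
\g^n [\g,\delta] \g^{-n} \;=\; \begin{pmatrix} 1 & \lambda^{2n} c\mu(\lambda^2-1) \\ 0 & 1 \end{pmatrix},
\]
and the entry $\lambda^{2n} c\mu(\lambda^2-1)$ tends to $0$ as $n\to -\infty$ (if $|\lambda|>1$) or as $n\to +\infty$ (if $|\lambda|<1$). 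This produces an infinite sequence of distinct elements of the (conjugated) Fuchsian group accumulating at $I$, contradicting discreteness. Hence $c=0$, i.e.\ $\eta_3=\eta_1$, as required.

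The main obstacle is really just choosing a normalization that makes the computation transparent; once $\g$ is put in diagonal form, the commutator is forced to be parabolic with an upper-right entry proportional to $c$, and the scaling action of $\g$ on that entry kills discreteness. Everything else is routine.
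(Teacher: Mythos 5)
Your proof is correct and is essentially the paper's argument: both normalize coordinates so that $\g$ becomes diagonal and $\delta$ upper triangular with off-diagonal entry vanishing precisely when $\eta_3=\eta_1$, and both then conjugate by powers of the diagonal hyperbolic element to scale that entry geometrically and contradict discreteness. The only cosmetic difference is that you first pass to the commutator $[\g,\delta]$ so that the conjugates form a sequence of distinct parabolics tending to $I$ in the group, whereas the paper conjugates $\delta$ itself and derives the contradiction from infinitely many orbit points of $i$ accumulating in a compact subset of $\H$.
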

\begin{proof}
Suppose $\eta_1 \neq \eta_3$ and let $\eta=(\eta_1,\eta_2)$. We have $\se^{-1} \g \se = \left(\smallmatrix u
& 0 \\ 0 & 1/u
\endsmallmatrix\right)$ and $\se^{-1} \delta \se = \left(\smallmatrix v
& w \\ 0 & 1/v
\endsmallmatrix\right)$ for some $u$, $v$, $w$ in $\R_{\neq 0}$. Then  $\se^{-1} \g^k \delta \g^{-k} \se = \left(\smallmatrix v
& w \cdot u^{2k} \\ 0 & 1/v
\endsmallmatrix\right)$ for $k \in \Z$.
Applying these elements to $i \in \H$ gives $v^2 i + vw \cdot u^k$, with infinitely many points contained in a compact neighborhood of $v^2 i$.
But this is impossible since $\se^{-1} \G \se$ is a discrete group. Hence we must have  $\eta_1=\eta_3$.
\end{proof}

\begin{lemma} \label{86}
For $\delta \in \G$, write $\se^{-1} \delta \sep = \left(\smallmatrix a
& b \\ c & d
\endsmallmatrix\right)$. Then
\begin{gather}\label{enu1}
    b=0 \text{ or } c=0 \iff  b=c=0 \iff \delta \eta' = \eta \\
    a=0 \text{ or } d=0 \iff  a=d=0  \iff \delta \eta' = \eta^*. \label{enu2}
\end{gather}
\end{lemma}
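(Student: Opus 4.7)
The plan is to translate each of the vanishing conditions on $a$, $b$, $c$, $d$ into a statement about how the Möbius transformation $M := \se^{-1}\delta\sep$ moves the points $0$ and $\infty$, then use Lemma \ref{fix} to upgrade a one-fixed-point match into a two-fixed-point match. Since $M \cdot 0 = b/d$ and $M\cdot \infty = a/c$, we get immediately the four elementary equivalences
\begin{align*}
b=0 & \iff M\cdot 0 = 0 \iff \delta\eta'_1 = \eta_1, &
c=0 & \iff M\cdot\infty = \infty \iff \delta\eta'_2 = \eta_2, \\
d=0 & \iff M\cdot 0 = \infty \iff \delta\eta'_1 = \eta_2, &
a=0 & \iff M\cdot\infty = 0 \iff \delta\eta'_2 = \eta_1.
\end{align*}
In particular $b=c=0 \iff \delta\eta' = \eta$ and $a=d=0 \iff \delta\eta' = \eta^*$, giving the second equivalences in \eqref{enu1}, \eqref{enu2}.

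The heart of the argument is showing $b=0 \Rightarrow c=0$ (and the other three analogous implications). For this I would pick a hyperbolic generator $\g' \in \G_{\eta'}$; then $\delta \g' \delta^{-1}$ is a hyperbolic element of $\G$ fixing the pair $\{\delta\eta'_1,\delta\eta'_2\}$. Choosing also a hyperbolic $\g \in \G_\eta$, which fixes $\{\eta_1,\eta_2\}$, if $b=0$ then $\delta\eta'_1 = \eta_1$, so $\g$ and $\delta\g'\delta^{-1}$ are two hyperbolic elements of $\G$ sharing the fixed point $\eta_1$. Lemma \ref{fix} then forces the other two fixed points to coincide, namely $\delta\eta'_2 = \eta_2$, i.e.\ $c=0$. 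The implications $c=0 \Rightarrow b=0$, $a=0 \Rightarrow d=0$, and $d=0 \Rightarrow a=0$ follow by exactly the same recipe (in the last two cases the shared fixed point is $\eta_1$ or $\eta_2$ depending on which of $a$, $d$ vanishes).

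The only subtle point, and what I would flag as the main obstacle to check, is that the "other" implications of \eqref{enu1} and \eqref{enu2} cannot overlap: one should verify that $b=c=0$ and $a=d=0$ cannot happen simultaneously, which is automatic since $\det M = ad-bc = 1 \neq 0$. Beyond that, the argument is entirely a bookkeeping exercise: read off what the vanishing of each matrix entry says geometrically, then invoke Lemma \ref{fix} once to rule out the half-matching case.
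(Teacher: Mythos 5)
Your proof is correct and follows essentially the same route as the paper's: in both, the key step is to apply Lemma \ref{fix} to a hyperbolic $\g\in\G_\eta$ and the conjugate $\delta\g'\delta^{-1}$, which share one fixed point, to force the second fixed points to agree. The only cosmetic difference is that you establish \eqref{enu2} directly by repeating this argument, whereas the paper deduces it from \eqref{enu1} by replacing $\sep$ with $\sep S$.
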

\begin{proof} Write $\eta=(\eta_1,\eta_2)$, $\eta'=(\eta'_1,\eta'_2)$ and let $\g \in \G_\eta$ and $\g' \in \G_{\eta'}$.
Suppose that $b=0$. This implies
\begin{align*}
    \se^{-1} \delta \sep 0 = 0 & \implies \delta \eta'_1 = \eta_1 \\
    & \implies (\delta \g' \delta^{-1}) \eta_1 = \eta_1.
\end{align*}
Since $\delta \g' \delta^{-1}$ and $\g$ both fix $\eta_1$, they must both fix $\eta_2$ by Lemma \ref{fix}. It follows that $\delta \eta'=\eta$ and $c=0$. Similarly, starting with $c=0$ instead of $b=0$ we also find that $\delta \eta'=\eta$ and $b=0$.

Conversely, if $\delta \eta'=\eta$ then $\sep$ must be of the form $\delta^{-1} \se \left(\smallmatrix t
& 0 \\ 0 & 1/t
\endsmallmatrix\right)$ for some $t \in \R_{\neq 0}$. Hence $\se^{-1} \delta \sep$ has $b=c=0$.
This finishes the proof of \eqref{enu1}.

If $a=0$ or $d=0$  or $\delta \eta' = \eta^*$ we may choose  $\sigma_{(\eta')^*} = \sep S$. Applying \eqref{enu1} to
\begin{equation*}
    \se^{-1} \delta \sigma_{(\eta')^*}  = \se^{-1} \delta \sep S = \left(\smallmatrix a'
& b' \\ c' & d'
\endsmallmatrix\right)
\end{equation*}
implies
\begin{equation*}
    b'=0 \text{ or } c'=0 \iff  b'=c'=0 \iff \delta (\eta')^* = \eta
\end{equation*}
which is equivalent to \eqref{enu2}
\end{proof}

The next corollary follows directly.

\begin{cor} \label{hh_reps2} We have
\begin{align*}
    \G(\eta,\eta')_0  & = \left\{  \delta  \in L \ \left| \ \se^{-1} \delta \sep = \begin{pmatrix} a  & b \\ c & d \end{pmatrix} \text{ with } abcd = 0 \right.\right\}, \\
    \G(\eta,\eta')  & = \left\{  \delta  \in L \ \left| \ \se^{-1} \delta \sep = \begin{pmatrix} a  & b \\ c & d \end{pmatrix} \text{ with } abcd\neq 0 \right.\right\}.
\end{align*}
\end{cor}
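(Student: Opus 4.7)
The statement is essentially a direct translation of Lemma \ref{86} into the partition language of $L$, so the proposal is short.

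My plan is to unpack the definitions of the two sets and then invoke Lemma \ref{86} verbatim. By definition, $\delta \in \G(\eta,\eta')_0$ if and only if $\delta\eta' \in \{\eta,\eta^*\}$, while $\G(\eta,\eta')$ is its complement in $L$. Writing $\se^{-1}\delta\sep = \left(\smallmatrix a & b \\ c & d \endsmallmatrix\right)$, the condition $abcd = 0$ says that at least one of the four entries vanishes. I would split this into the two sub-cases ``some off-diagonal entry vanishes'' ($bc = 0$) and ``some diagonal entry vanishes'' ($ad = 0$), since Lemma \ref{86} is stated for exactly these two patterns.

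First I would note that by \eqref{enu1}, $bc = 0$ (equivalently, $b=0$ or $c=0$) forces $b=c=0$ and is equivalent to $\delta\eta' = \eta$. Similarly, by \eqref{enu2}, $ad = 0$ forces $a=d=0$ and is equivalent to $\delta\eta' = \eta^*$. Conjoining these two biconditionals gives
\[
abcd = 0 \iff bc = 0 \text{ or } ad = 0 \iff \delta\eta' = \eta \text{ or } \delta\eta' = \eta^* \iff \delta \in \G(\eta,\eta')_0,
\]
which is the first displayed characterization. The second one is then immediate by complementation: $\delta \in \G(\eta,\eta')$ iff $\delta \notin \G(\eta,\eta')_0$ iff $abcd \neq 0$.

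The only point that warrants a sanity check, rather than being a real obstacle, is the internal consistency of the two sub-cases. Since $\eta \neq \eta^*$, no $\delta$ can satisfy $\delta\eta' = \eta$ and $\delta\eta' = \eta^*$ simultaneously; correspondingly, one cannot have $b=c=0$ and $a=d=0$ at the same time (the matrix would be zero, contradicting $\det = 1$). Thus the logical ``or'' in both the set definition and the condition $abcd = 0$ is never ambiguous, and no further work is needed beyond citing Lemma \ref{86}.
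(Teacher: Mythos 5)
Your proposal is correct and matches the paper, which simply states that the corollary ``follows directly'' from Lemma \ref{86}; your unpacking of $abcd=0$ into the cases $bc=0$ and $ad=0$ and the complementation argument is exactly the intended reasoning.
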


  Good's decomposition of $\SL_2(\R)$ in this hyperbolic/hyperbolic case, see \cite[Lemma 1]{G83} and \cite[Lemma 1]{G85}, says the following.
\begin{lemma} \label{bruhh}
Let $M=\left(\smallmatrix a
& b \\ c & d
\endsmallmatrix\right) \in \SL_2(\R)$.
\begin{enumerate}
\item
When $|ad|+|bc| \neq 1$ we have
\begin{multline} \label{hh}
    \begin{pmatrix} a & b \\ c & d \end{pmatrix} = \frac{\sgn(a)}{2}
    \begin{pmatrix} \left| \frac{ab}{cd} \right|^{1/4} & 0 \\ 0 & \left| \frac{ab}{cd} \right|^{-1/4} \end{pmatrix}
    \begin{pmatrix} 1 & -\sgn(ac)  \\ \sgn(ac)  & 1  \end{pmatrix}\\
    \times
    \begin{pmatrix} \nu & 0 \\ 0 & 1/\nu \end{pmatrix}
    \begin{pmatrix} 1 & \sgn(cd)  \\ -\sgn(cd)  & 1  \end{pmatrix}
    \begin{pmatrix} \left| \frac{bd}{ac} \right|^{-1/4} & 0 \\ 0 & \left| \frac{bd}{ac} \right|^{1/4} \end{pmatrix}
\end{multline}
for $\nu = \sideset{_\text{hyp}}{_\text{hyp}}{\opv}(M) = |ad|^{1/2}+ |bc|^{1/2} $.
\item
When $|ad|+|bc| = 1$ and $abcd \neq 0$ we have
\begin{equation} \label{hh2}
    \begin{pmatrix} a & b \\ c & d \end{pmatrix} = -\sgn(c)
    \begin{pmatrix} \left| \frac{ab}{cd} \right|^{1/4} & 0 \\ 0 & \left| \frac{ab}{cd} \right|^{-1/4} \end{pmatrix}
    \begin{pmatrix} \cos \theta/2 & \sin \theta/2  \\ -\sin \theta/2  & \cos \theta/2  \end{pmatrix}
    \begin{pmatrix} \left| \frac{bd}{ac} \right|^{-1/4} & 0 \\ 0 & \left| \frac{bd}{ac} \right|^{1/4} \end{pmatrix}
\end{equation}
for $\theta = \theta(M) = 2\cos^{-1}\left(-\sgn(ac)|ad|^{1/2}\right)$ and $0<\theta<2\pi$.
\end{enumerate}
\end{lemma}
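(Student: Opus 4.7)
Both identities \eqref{hh} and \eqref{hh2} are explicit matrix factorizations, so my plan is direct verification by multiplying out the right-hand sides and comparing entries, with the subtlety lying in careful sign tracking. The determinant condition $ad-bc=1$ together with the hypothesis on $|ad|+|bc|$ dictates exactly two sign scenarios for Part (1): either $ad, bc >0$ (forcing $|ad|-|bc|=1$) or $ad, bc <0$ (forcing $|bc|-|ad|=1$). The excluded case $|ad|+|bc|=1$ with $abcd\neq 0$ is precisely the setting of Part (2), where $ad>0$ and $bc<0$.

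For Part (1), I would expand $D_1 R_1 D_2 R_2 D_3$, where $D_1,D_2,D_3$ are the three diagonal factors and $R_1=\left(\smallmatrix 1 & -\sgn(ac) \\ \sgn(ac) & 1 \endsmallmatrix\right)$, $R_2=\left(\smallmatrix 1 & \sgn(cd) \\ -\sgn(cd) & 1 \endsmallmatrix\right)$. The key observation is that $1/\nu = |ad|^{1/2}-|bc|^{1/2}$ when $ad>0$ and $1/\nu = |bc|^{1/2}-|ad|^{1/2}$ when $ad<0$, so the combinations $\nu+\sgn(ad)/\nu$ and $\sgn(ad)\,\nu+1/\nu$ collapse uniformly to $2|ad|^{1/2}$ and $2\sgn(ad)|ad|^{1/2}$ respectively, with analogous formulas producing $2|bc|^{1/2}$. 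Fourth-root identities such as $|ab/cd|^{1/4}|bd/ac|^{-1/4}=(|a|/|d|)^{1/2}$ then reduce $D_1 R_1 D_2 R_2 D_3$ to $2\left(\smallmatrix |a| & \sgn(cd)|b| \\ \sgn(ac)|c| & \sgn(ad)|d|\endsmallmatrix\right)$. Multiplying by $\sgn(a)/2$ and using $\sgn(abcd)=\sgn(ad)\sgn(bc)=1$ (so that $\sgn(a)\sgn(cd)=\sgn(b)$, $\sgn(a)\sgn(ac)=\sgn(c)$, $\sgn(a)\sgn(ad)=\sgn(d)$) recovers $M$.

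For Part (2), the constraints force $ad>0$ and $bc<0$, so $|ad|^{1/2}$ and $|bc|^{1/2}$ are the legs of a unit-hypotenuse right triangle, and there is a unique $\theta \in (0,2\pi)$ with $\cos(\theta/2)=-\sgn(ac)|ad|^{1/2}$. Since $\theta/2 \in (0,\pi)$, one has $\sin(\theta/2)>0$, hence $\sin(\theta/2)=|bc|^{1/2}$. Direct multiplication then yields $D_1 R D_3 = \left(\smallmatrix -\sgn(ac)|a| & |b| \\ -|c| & -\sgn(ac)|d|\endsmallmatrix\right)$; multiplying by $-\sgn(c)$ and invoking $\sgn(b)=-\sgn(c)$ (from $bc<0$) and $\sgn(a)=\sgn(d)$ (from $ad>0$) produces $M$ on the nose.

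The main obstacle is purely bookkeeping: keeping track of several interdependent sign choices across cases. Once the uniform identities for $\nu\pm 1/\nu$ are identified and the constraint $\sgn(abcd)=1$ (Part (1)) or $\sgn(ad)=-\sgn(bc)=1$ (Part (2)) is applied, the scalar prefactors $\sgn(a)/2$ and $-\sgn(c)$ correct the remaining off-diagonal signs automatically, leaving no genuine case analysis beyond the two sign regimes of Part (1).
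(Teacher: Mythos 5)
Your verification is correct and follows essentially the same route as the paper, which simply states that both identities ``follow from a direct calculation'' after setting $r=ad$, $bc=r-1$, and observing that $|ad|+|bc|\neq 1$ is equivalent to $r\notin[0,1]$ (so the two sign regimes are $r>1$ and $r<0$, with $r\in(0,1)$ giving part (ii)). Your explicit identities $1/\nu=\bigl||ad|^{1/2}-|bc|^{1/2}\bigr|$ and the collapse of $\nu\pm 1/\nu$ are precisely the reductions the paper alludes to with $|r-1|+\sgn(r)=|r|$, so you have correctly supplied the details the paper omits.
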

\begin{proof}
 Let $r=ad$ so that $bc=r-1$ and $|ad|+|bc| \neq 1$ is equivalent to $r \notin [0,1]$. The identity \eqref{hh} in (i) follows from a direct calculation, reducing to $|r-1|+\sgn(r)=|r|$ or $|r-1|+\sgn(r-1)=|r|$. Part (ii) corresponds to $r \in (0,1)$ and may be easily verified also.
\end{proof}

Based on the above decomposition we define
\begin{equation} \label{ree}
R_{\eta\eta'}  :=  \left\{ \left. \begin{pmatrix} a  & b \\ c & d \end{pmatrix} \in \se^{-1}\G \sep \ \right|  \ abcd \neq 0, \ \frac{1}{\varepsilon_\eta} \lqs \left| \frac{ab}{cd} \right|^{1/2} < \varepsilon_\eta, \ \frac{1}{\varepsilon_{\eta'}} \lqs \left| \frac{bd}{ac} \right|^{1/2} < \varepsilon_{\eta'}  \right\}
\end{equation}
and a similar proof to Lemma \ref{ac01} shows the next result.
\begin{lemma} \label{ac01hh}
We may take $\se^{-1} \G(\eta,\eta') \sep = R_{\eta\eta'}/Z$.
\end{lemma}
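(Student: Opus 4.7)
\textbf{Proof plan for Lemma \ref{ac01hh}.} The strategy is to follow the templates of Lemmas \ref{ac01pp} and \ref{ac01}, using Corollary \ref{hh_reps2} to handle the non-degeneracy condition $abcd\neq 0$ automatically. By Corollary \ref{hh_reps2}, every element of $\se^{-1}\G(\eta,\eta')\sep$ already has $abcd \neq 0$, so the content of the lemma is that the two window conditions on $|ab/cd|^{1/2}$ and $|bd/ac|^{1/2}$ can be imposed uniquely up to $Z$ by left multiplication by $\se^{-1}\G_\eta\se$ and right multiplication by $\sep^{-1}\G_{\eta'}\sep$. I would begin by assuming $-I\not\in\G$, so that by \eqref{sgsh} these two groups are generated by the diagonal matrices $A_\eta:=\bigl(\smallmatrix \varepsilon_\eta &0\\ 0&\varepsilon_\eta^{-1}\endsmallmatrix\bigr)$ and $A_{\eta'}:=\bigl(\smallmatrix \varepsilon_{\eta'} &0\\ 0&\varepsilon_{\eta'}^{-1}\endsmallmatrix\bigr)$.

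The key observation is a decoupling: a direct computation shows that for $M=\bigl(\smallmatrix a & b \\ c & d\endsmallmatrix\bigr)$,
\begin{equation*}
    A_\eta^m M \colon \ |ab/cd|^{1/2} \mapsto \varepsilon_\eta^{2m}|ab/cd|^{1/2}, \qquad |bd/ac|^{1/2} \mapsto |bd/ac|^{1/2},
\end{equation*}
while
\begin{equation*}
    M A_{\eta'}^n \colon \ |ab/cd|^{1/2} \mapsto |ab/cd|^{1/2}, \qquad |bd/ac|^{1/2} \mapsto \varepsilon_{\eta'}^{-2n}|bd/ac|^{1/2}.
\end{equation*}
Since $[1/\varepsilon_\eta,\varepsilon_\eta)$ has log-length exactly $\ell_\eta=2\log\varepsilon_\eta$, and similarly for the $\eta'$ window, the integer $m$ needed to bring $|ab/cd|^{1/2}$ into $[1/\varepsilon_\eta,\varepsilon_\eta)$ is unique, and independently the integer $n$ needed to bring $|bd/ac|^{1/2}$ into $[1/\varepsilon_{\eta'},\varepsilon_{\eta'})$ is unique. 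Therefore each double coset in $\G(\eta,\eta')$ has exactly one representative in $R_{\eta\eta'}$.

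For uniqueness modulo $Z$, suppose two elements of $R_{\eta\eta'}$ satisfy $M_1 = A_\eta^m M_2 A_{\eta'}^n$. The displayed identities force $\varepsilon_\eta^{2m}=\varepsilon_{\eta'}^{-2n}=1$, hence $m=n=0$ and $M_1=M_2$. If $-I\in\G$ then $-I$ lies in both $\se^{-1}\G_\eta\se$ and $\sep^{-1}\G_{\eta'}\sep$, so $\bigl(\smallmatrix a&b\\ c&d\endsmallmatrix\bigr)$ and $\bigl(\smallmatrix -a&-b\\ -c&-d\endsmallmatrix\bigr)$ become equivalent, which is precisely the identification $R_{\eta\eta'}/Z$. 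The only real obstacle is setting up the two independent window computations cleanly; once the decoupling above is in hand, both existence and uniqueness follow by the same one-line argument used in Lemma \ref{ac01}.
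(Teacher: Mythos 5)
Your proof is correct and is essentially the paper's argument: the authors state only that ``a similar proof to Lemma \ref{ac01} shows the next result,'' and your decoupled-window computation (left multiplication by $A_\eta$ scaling $|ab/cd|^{1/2}$ by $\varepsilon_\eta^{2}$ while fixing $|bd/ac|^{1/2}$, and right multiplication by $A_{\eta'}$ doing the reverse) is precisely the intended adaptation of that lemma, with Corollary \ref{hh_reps2} disposing of the $abcd\neq 0$ condition and the $-I$ case handled as before.
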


\subsection{The hyperbolic/hyperbolic Kloosterman sum}

Recall from \eqref{cetet} that $C_{\eta\eta'}=\left\{ad \ \left| \ \left(\smallmatrix a
& b \\ c & d
\endsmallmatrix\right) \in \se^{-1}\G\sep, \ abcd \neq 0 \right. \right\}$.
For $C \in C_{\eta\eta'}$ and $\alpha$, $\beta=\pm1$ define
\begin{equation}\label{kloosthh2}
S_{\eta\eta'}(m,n;C,\alpha,\beta):= \sum_{\substack{ \g \in \G_\eta \backslash \G / \G_{\eta'}, \ \left(\smallmatrix a
& b \\ c & d
\endsmallmatrix\right) = \se^{-1}\g\sep
\\ ad=C, \ \sgn(ac)=\alpha, \ \sgn(cd)=\beta} } \e\left(\frac{m}{2\ell_{\eta}}  \log \left| \frac{ab}{cd} \right| +  \frac{n}{2\ell_{\eta'}} \log \left| \frac{ac}{bd} \right|\right).
\end{equation}
Then $S_{\eta\eta'}(m,n;C,\alpha,\beta)$ is related to Good's generalized Kloosterman sum \eqref{kloost} by
\begin{equation} \label{hhgo}
    S_{\eta\eta'}(m,n;C,\alpha,\beta) =
    \sideset{_\eta^{\delta}}{_{\eta'}^{\delta'}}{\opS}(m,n;|C|^{1/2}+|C-1|^{1/2})  \quad \text{for} \quad \delta = \frac{1-\alpha}{2}, \ \delta' = \frac{1+\beta}{2}
\end{equation}
when $C$ is not in the interval $(0,1)$. For $C \in (0,1)$, Good made the right side of \eqref{hhgo} zero and treated this case separately with another sum: $\sideset{_\eta^{}}{_{\eta'}^{}}{\opsmall}(m,n;\theta)$ for $\theta$ as in \eqref{hh2}. See \cite[(5.11)]{G83}.

To show that \eqref{kloosthh2} is a finite sum, and to bound it, we start with the following analog of \cite[Lemma 1.24]{S71}.

\begin{lemma} \label{fic}
Given $M>0$, there are only finitely many double cosets $\G_\eta \g \G_{\eta'}$ where $\left(\smallmatrix
a & b \\ c & d
\endsmallmatrix\right) = \se^{-1}\g\sep$ has $|abcd| \lqs M$. Note that $|abcd|$ is independent of the double coset representative and also the choice of scaling matrices.
\end{lemma}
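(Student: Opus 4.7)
The plan is to reduce the claim to showing that only finitely many representatives in the set $R_{\eta\eta'}$ of Lemma \ref{ac01hh} satisfy $|abcd|\lqs M$, and then to invoke discreteness of $\se^{-1}\G\sep\subset\SL_2(\R)$. The parenthetical invariance of $|abcd|$ is a quick direct check: replacing $\g$ by $\g_\eta\g\g_{\eta'}$ with $\g_\eta\in\G_\eta$ and $\g_{\eta'}\in\G_{\eta'}$ left- and right-multiplies $\se^{-1}\g\sep$ by diagonal matrices of the form $\pm\mathrm{diag}(t,1/t)$ by \eqref{sgsh}, and such conjugation scales $a,b,c,d$ by $t_1t_2$, $t_1/t_2$, $t_2/t_1$, $1/(t_1t_2)$ respectively, leaving the product $abcd$ unchanged. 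The ambiguity in the choice of scaling matrices acts in the same way.

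If $abcd=0$, then $\g$ represents a double coset in $\G(\eta,\eta')_0$, which by Lemma \ref{2elts} contains at most two elements, so there is nothing to prove in that case. Assume henceforth $abcd\neq 0$. By Lemma \ref{ac01hh} we may pick representatives in $R_{\eta\eta'}/Z$, in which both $|ab/cd|$ and $|bd/ac|$ are confined to intervals determined by $\varepsilon_\eta$ and $\varepsilon_{\eta'}$. Multiplying these ratios yields $|b/c|^2$ bounded above and below, and dividing them yields $|a/d|^2$ likewise bounded. Writing $x:=ad$ so that $bc=x-1$ (from $ad-bc=1$), the hypothesis $|abcd|\lqs M$ becomes $|x(x-1)|\lqs M$, which forces $x$ into a bounded set; hence both $|ad|$ and $|bc|$ are bounded. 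Combining $|ad|$ bounded with $|a/d|$ bounded gives $|a|^2=|ad|\cdot|a/d|$ bounded, and analogous identities bound $|b|$, $|c|$, $|d|$ individually by a constant depending only on $M$, $\varepsilon_\eta$ and $\varepsilon_{\eta'}$.

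Thus the set of candidate representatives lies in a bounded subset of $\se^{-1}\G\sep$. Since $\G$ is a discrete (hence closed) subgroup of $\SL_2(\R)$ and left- and right-multiplication by fixed elements of $\SL_2(\R)$ are homeomorphisms, $\se^{-1}\G\sep$ is itself a closed discrete subset of $\SL_2(\R)$; a bounded closed discrete set is finite. This yields the required finiteness. The only real work is the elementary algebraic step that bounds the four matrix entries from $|abcd|\lqs M$ together with the $R_{\eta\eta'}$-constraints; once that is in hand, the conclusion is immediate from the standard fact that bounded subsets of a discrete set are finite.
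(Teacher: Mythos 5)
Your proof is correct, but it takes a genuinely different route from the paper's. You normalize each double coset into $R_{\eta\eta'}$ via Lemma \ref{ac01hh}, observe that the two defining inequalities of $R_{\eta\eta'}$ pin down $|b/c|$ and $|a/d|$ between positive constants, use $ad-bc=1$ together with $|abcd|=|ad(ad-1)|\lqs M$ to bound $|ad|$ and $|bc|$, and hence bound all four entries individually; finiteness then follows because $\se^{-1}\G\sep$ is a closed discrete subset of $\SL_2(\R)$ and a bounded closed discrete set is finite. The paper instead normalizes a representative on the fly (choosing powers of the diagonal elements in $\se^{-1}\G_\eta\se$ and $\sep^{-1}\G_{\eta'}\sep$) so that $\se^{-1}\delta\se$ maps the point $z_0=\se^{-1}\sep i$ into a fixed compact set $K\subset\H$, and then invokes the properly discontinuous action of the discrete group $\se^{-1}\G\se$ on $\H$. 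Your argument is the more elementary of the two and has the side benefit of anticipating exactly the entry bounds \eqref{inq1}--\eqref{inq3} that the paper redevelops in the proof of Proposition \ref{812}; the paper's version avoids relying on the full statement of Lemma \ref{ac01hh}, needing only the existence of the diagonal $\varepsilon$-powers in the conjugated stabilizers. Two small cosmetic points: the word ``conjugation'' in your invariance check is imprecise (you are left- and right-multiplying by two independent diagonal matrices, as your own $t_1,t_2$ computation correctly shows), and for the $abcd=0$ case you should cite Corollary \ref{hh_reps2} alongside Lemma \ref{2elts} to identify those double cosets with $\G(\eta,\eta')_0$. Neither affects the validity of the argument.
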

\begin{proof}
There are at most two double cosets with $abcd=0$ by Lemma \ref{2elts} and Corollary \ref{hh_reps2}. Assume now that $abcd \neq 0$. Since $\left\{ \left( \left. \smallmatrix \varepsilon_\eta^m
& 0 \\ 0 & \varepsilon_\eta^{-m}
\endsmallmatrix\right)
\ \right| \ m\in \Z \right\} \subseteq \se^{-1}\G_\eta\se$ and $\left\{\left( \left. \smallmatrix
\varepsilon_{\eta'}^n & 0 \\ 0 & \varepsilon_{\eta'}^{-n}
\endsmallmatrix\right)
\ \right| \ n\in \Z \right\} \subseteq \sep^{-1}\G_{\eta'}\sep$ we may choose representatives $\delta$ for $\G_\eta \g \G_{\eta'}$ satisfying
\begin{equation*}
    \se^{-1}\delta\sep = \begin{pmatrix} \varepsilon_\eta^m & 0 \\ 0 & \varepsilon_\eta^{-m} \end{pmatrix}
\begin{pmatrix} a  & b \\ c & d \end{pmatrix}
\begin{pmatrix} \varepsilon_{\eta'}^n & 0 \\ 0 & \varepsilon_{\eta'}^{-n} \end{pmatrix}
\end{equation*}
so that
\begin{equation*}
    \se^{-1}\delta \se \left( \se^{-1} \sep i \right)= \varepsilon_\eta^{2m} \frac{a \varepsilon_{\eta'}^{2n}i + b}{c \varepsilon_{\eta'}^{2n}i + d}.
\end{equation*}
We will show that  distinct double cosets satisfying the statement of the lemma give distinct elements in the discrete group $\se^{-1}\G \se$ mapping $z_0:=\se^{-1} \sep i \in \H$ into a compact set  $K \subset \H$ of the form
\begin{equation*}
    K = \left\{ r e^{i \theta} \ \left| \ 1 \lqs r \lqs \varepsilon_\eta^2, \ \theta_1 \lqs \theta \lqs \theta_2\right. \right\}
\end{equation*}
with $\theta_1$, $\theta_2$ depending only on $\eta'$ and $M$. This forces the number of double cosets to be finite.

Choose $n \in \Z$ so that $\lambda:=\varepsilon_{\eta'}^{2n}$ satisfies
\begin{equation} \label{qkl}
    \left| \frac{bd}{ac}\right|^{1/2} \lqs \lambda < \left| \frac{bd}{ac}\right|^{1/2} \varepsilon_{\eta'}^{2}.
\end{equation}
We have
\begin{equation*}
    \arg\left( \frac{a \lambda i + b}{c \lambda i + d}\right) = \arg\left( ac \lambda +bd/\lambda +i \right)
\end{equation*}
and, using \eqref{qkl},
\begin{equation*}
    \left| ac \lambda +bd/\lambda \right| \lqs M^{1/2}(1+ \varepsilon_{\eta'}^{2}).
\end{equation*}
Hence $\arg(\se^{-1}\delta \se z_0)$ is bounded between constants $\theta_1$, $\theta_2$ that depend only on $\eta'$ and $M$. Choose $m \in \Z$ so that $1 \lqs |\se^{-1}\delta \se z_0| < \varepsilon_\eta^2$ and $\se^{-1}\delta \se z_0$ is contained in the compact set $K$ as required.
\end{proof}

\begin{cor} \label{811}  Given two hyperbolic fixed point pairs $\eta$ and $\eta'$ for $\G$, there exists $M_{\eta\eta'}>0$ with the following properties. For all $\left(\smallmatrix
a & b \\ c & d
\endsmallmatrix\right) \in \se^{-1}\G\sep$ we have
\begin{enumerate}
   \item $ |bc| \gqs M_{\eta\eta'}$ \ if \  $bc \neq 0$,
   \item $ |ad| \gqs M_{\eta\eta'}$ \ if \  $ad \neq 0$,
    \item $ |abcd| \gqs M_{\eta\eta'}^2$ \ if \ $abcd \neq 0$.
\end{enumerate}
\end{cor}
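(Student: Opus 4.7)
The strategy is to show that the three quantities $|bc|$, $|ad|$, and $|abcd|$ are all constant on each double coset $\G_\eta \g \G_{\eta'}$, and then invoke Lemma \ref{fic} to conclude that each of them takes values in a discrete subset of $[0,\infty)$.

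First I would verify the invariance. Any other representative of the double coset $\G_\eta \g \G_{\eta'}$ gives, after conjugation by $\se^{-1}\cdot\sep$, a matrix of the form
\begin{equation*}
    \pm \begin{pmatrix} \varepsilon_\eta^m & 0 \\ 0 & \varepsilon_\eta^{-m} \end{pmatrix}
    \begin{pmatrix} a & b \\ c & d \end{pmatrix}
    \begin{pmatrix} \varepsilon_{\eta'}^n & 0 \\ 0 & \varepsilon_{\eta'}^{-n} \end{pmatrix}
    = \pm \begin{pmatrix} a\varepsilon_\eta^m \varepsilon_{\eta'}^n & b\varepsilon_\eta^m \varepsilon_{\eta'}^{-n} \\ c\varepsilon_\eta^{-m} \varepsilon_{\eta'}^n & d\varepsilon_\eta^{-m}\varepsilon_{\eta'}^{-n} \end{pmatrix},
\end{equation*}
and a direct computation shows that each of $ad$, $bc$, and $abcd$ is unchanged.

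Next, Lemma \ref{fic} tells us that for any $M>0$ there are only finitely many double cosets $\G_\eta \g \G_{\eta'}$ with $|abcd| \lqs M$. Together with invariance, this means $\{|abcd| \ : \ \g \in \G_\eta\backslash \G / \G_{\eta'}\}$ is a discrete subset of $[0,\infty)$, so its intersection with $(0,\infty)$ has a strictly positive infimum. Define $M_{\eta\eta'}^2$ to be this infimum, which establishes (iii).

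Finally I would deduce (i) and (ii) from (iii) and the determinant relation $ad - bc = 1$. Shrinking $M_{\eta\eta'}$ if necessary we may assume $M_{\eta\eta'} \lqs 1$. For (i): if $bc \neq 0$ but $ad = 0$, then $bc = -1$ and $|bc|=1 \gqs M_{\eta\eta'}$; otherwise $abcd \neq 0$ and, using $|ad| \lqs 1 + |bc|$,
\begin{equation*}
    |bc|\bigl(1 + |bc|\bigr) \gqs |bc|\cdot|ad| = |abcd| \gqs M_{\eta\eta'}^2,
\end{equation*}
so $|bc| \gqs \bigl(\sqrt{1+4M_{\eta\eta'}^2}-1\bigr)/2$, and a further shrinking of $M_{\eta\eta'}$ gives (i). Part (ii) follows by the same argument with the roles of $ad$ and $bc$ reversed. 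The only step that requires any real work is Lemma \ref{fic}, which is already in hand; the remainder is bookkeeping with the discreteness of the invariant $|abcd|$.
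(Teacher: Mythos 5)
Your proof is correct and rests on the same ingredients as the paper's: the invariance of $ad$, $bc$ and $abcd$ on double cosets, Lemma \ref{fic}, and the determinant relation $ad-bc=1$. The only difference is the order of deduction — you establish (iii) first and derive (i) and (ii) from it, whereas the paper bounds $|abcd|=|bc(bc+1)|$ to prove (i) and (ii) directly via Lemma \ref{fic} and then obtains (iii) as their product.
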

\begin{proof}
Consider a double coset $\G_\eta \g \G_{\eta'}$ with $\left(\smallmatrix
a & b \\ c & d
\endsmallmatrix\right) = \se^{-1}\g\sep$. If $\delta \in \G_\eta \g \G_{\eta'}$ has $\left(\smallmatrix
a' & b' \\ c' & d'
\endsmallmatrix\right) = \se^{-1}\delta\sep$
then $b'c'=bc$. So distinct values of $bc$ correspond to different double cosets. Take any $N>0$ and we want to examine the possible values for $|bc| \in [0,N]$ where $\left(\smallmatrix
a & b \\ c & d
\endsmallmatrix\right) \in \se^{-1}\G\sep$.
Clearly $|abcd|=|bc(bc+1)|\lqs N(N+1)$. It follows from Lemma \ref{fic} that there are only finitely many values for $|bc| \in [0,N]$. Hence the nonzero ones are bounded from below, proving part (i). The proof of (ii) is similar and we may take $M_{\eta\eta'}$ as the smaller of the two lower bounds. Part (iii) is a consequence of (i) and (ii).
\end{proof}

We next set
\begin{equation*}
    \mathcal N_{\eta\eta'}(C)  :=  \#\left\{ \g \in \G_\eta \backslash \G / \G_{\eta'} \ \left| \ \se^{-1}\g\sep = \begin{pmatrix} a  & b \\ c & d \end{pmatrix} \text{ with } ad = C\right. \right\}.
\end{equation*}
Then $\mathcal N_{\eta\eta'}(C)$ is well defined and independent of the scaling matrices $\se$ and $\sep$. It bounds the number of terms in the sum \eqref{kloosthh2}, though at the outset it may be infinite.

\begin{prop} \label{812}
With the above notation
\begin{equation*}
    \sum_{C \in C_{\eta\eta'}, \ |C| \lqs X}  \mathcal N_{\eta\eta'}(C) \ll X^{3/2}.
\end{equation*}
\end{prop}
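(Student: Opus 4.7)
The plan is to follow the strategy of Proposition \ref{schp}. By Lemma \ref{ac01hh} each double coset in $\G_\eta\backslash\G/\G_{\eta'}$ has a unique representative in $R_{\eta\eta'}/Z$, so
\begin{equation*}
\sum_{C\in C_{\eta\eta'},\,|C|\lqs X}\mathcal N_{\eta\eta'}(C) = \#\bigl\{\g\in R_{\eta\eta'}/Z : |ad|\lqs X\bigr\}.
\end{equation*}

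First I would bound the matrix entries. For $\g=\left(\smallmatrix a&b\\c&d\endsmallmatrix\right)\in R_{\eta\eta'}$, multiplying the defining inequalities $1/\varepsilon_\eta \lqs |ab/cd|^{1/2} < \varepsilon_\eta$ and $1/\varepsilon_{\eta'}\lqs |bd/ac|^{1/2}<\varepsilon_{\eta'}$ shows $|b/c|$ lies in a fixed bounded interval, and dividing the two shows the same for $|a/d|$. Combined with $|ad|\lqs X$ and $|bc|=|ad-1|\lqs X+1$, this yields $|a|,\,|b|,\,|c|,\,|d|\ll X^{1/2}$ for every such $\g$.

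Next I would invoke a lattice-point count. The set $\se^{-1}\G\sep$ is discrete in $\SL_2(\R)$ as a conjugate of $\G$, so it has a positive uniform minimum separation between distinct elements. Splitting by whether $|a|\gqs 1$ or $|d|\gqs 1$ (the remaining region $|a|,|d|<1$ forces $|ad|<1$ and is handled by the finiteness assertion of Lemma \ref{fic}), I can locally parameterize $\SL_2(\R)$ as a graph $d=(1+bc)/a$ or $a=(1+bc)/d$ over $\R^3$, with uniformly bounded Jacobians. The discrete image in $\R^3$ then has a positive uniform minimum separation, so the number of its points in the $3$-cube $\{|a|,|b|,|c|\lqs cX^{1/2}\}$ (or its analog) is at most $O((X^{1/2})^3)=O(X^{3/2})$, giving the stated bound.

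The main obstacle, in contrast to Proposition \ref{schp}, is that the analogous Corollary \ref{811} bound for $\g^{-1}\delta\in\sep^{-1}\G\sep$ only provides the product lower bound $|(\g^{-1}\delta)_{12}(\g^{-1}\delta)_{21}|\gqs M_{\eta'\eta'}$ rather than an individual bound on $(\g^{-1}\delta)_{21}$ as in the cusp case. This precludes the sharper one-dimensional ordering argument of Proposition \ref{schp} (which would yield the stronger $O(X)$ bound); the crude three-dimensional lattice count above is the easiest substitute and suffices for the convergence arguments needed in the sequel.
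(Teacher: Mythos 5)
Your reduction to counting elements of $R_{\eta\eta'}/Z$ with $|ad|\lqs X$, and the entry bounds $|a|,|b|,|c|,|d|\ll X^{1/2}$ (which follow from \eqref{inq1} and \eqref{inq2} exactly as in the paper), are fine. The gap is in the lattice-point count, in two places. First, discreteness of $\se^{-1}\G\sep$ does \emph{not} by itself give a uniform minimum Euclidean separation between matrix entries: discreteness only guarantees $\|\g^{-1}\delta-I\|\gqs \varepsilon_0$, whence $\|\g-\delta\|=\|\g(\g^{-1}\delta-I)\|\gqs \varepsilon_0/\|\g\|_{op}$, a lower bound that decays like $X^{-1/2}$ on the box you are counting in. A uniform separation does in fact hold for a lattice, but proving it requires the identity $\|\g(h-I)\|_F^2=\tr\bigl((h-I)(h-I)^t\g^t\g\bigr)\gqs 2|\det(h-I)|=2|2-\tr h|$ together with a positive lower bound on $|2-\tr h|$ for non-parabolic $h\neq I$ (the systole bound) and a separate Shimizu-type argument when $h$ is parabolic (where $h-I$ is singular). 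These are quantitative discreteness inputs of exactly the kind Corollary \ref{811} packages, and you cannot simply assert them. Second, even granting uniform separation in $\R^4$, your chart $d=(1+bc)/a$ over $\{|a|\gqs 1\}$ does \emph{not} have uniformly bounded Jacobian on the region $|b|,|c|\ll X^{1/2}$: its gradient is of size up to $\asymp X^{1/2}$ (e.g.\ $\partial d/\partial b=c/a$). Projection to $(a,b,c)$-space therefore only preserves separation up to a factor $X^{-1/2}$, and the packing count you get is $O\bigl((X^{1/2}\cdot X^{1/2})^3\bigr)=O(X^{3})$, not $O(X^{3/2})$. Repairing this would require bounding the three-dimensional surface measure of the quadric $ad-bc=1$ inside the ball of radius $X^{1/2}$ and packing on the hypersurface itself, which is a different computation from the one you describe.

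You correctly identify the obstacle — Corollary \ref{811} only lower-bounds the product $|b''c''|$ for $\g\delta^{-1}$, not an individual entry — but the paper's proof shows how to use that product bound directly: it yields $\left|\frac{c}{d}-\frac{c'}{d'}\right|\left|\frac{b}{a}-\frac{b'}{a'}\right|\gqs M_{\eta\eta}/X^2$ for distinct representatives, the slopes $c/d$ and $b/a$ are confined to intervals of length $\ll X^{1/2}$ by \eqref{inq3}, and then ordering each list of slopes, telescoping, and applying the AM--GM inequality to the product of consecutive gaps gives $(1/X^2)^{n-1}\ll\bigl((X+1)/(n-1)^2\bigr)^{n-1}$, hence $n\ll X^{3/2}$. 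That two-slope argument is the substitute for the one-dimensional ordering of Proposition \ref{schp}, and it is what your proposal is missing.
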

\begin{proof}
We may write $\mathcal N_{\eta\eta'}(C)$ more explicitly as $\#\left\{ \left. \left(\smallmatrix
a & b \\ c & d
\endsmallmatrix\right) \in R_{\eta\eta'}/Z \ \right| \ ad = C\right\}$. Also let
\begin{equation*}
    R(X):=\left\{ \left. \begin{pmatrix} a  & b \\ c & d \end{pmatrix}  \in R_{\eta\eta'} \ \right| \ |ad| \lqs X \right\} \subset \se^{-1}\G \sep.
\end{equation*}
Suppose $\g=\left(\smallmatrix
a & b \\ c & d
\endsmallmatrix\right)$ and $\delta=\left(\smallmatrix
a' & b' \\ c' & d'
\endsmallmatrix\right)$  are in $R(X)$. Then $\g \delta^{-1} = \left(\smallmatrix
a'' & b'' \\ c'' & d''
\endsmallmatrix\right) \in \se^{-1}\G\se$ for
\begin{equation*}
    |b''c''|=\left| dd' aa'\left(\frac{c}{d}-\frac{c'}{d'}\right) \left(\frac{b}{a}-\frac{b'}{a'}\right)\right|.
\end{equation*}
If $b''c''=0$ then $\g \delta^{-1} \in  \se^{-1}\G_\eta\se$ by Lemma \ref{86} and so we must have $\g=\delta$. Otherwise, it follows that $|b''c''| \gqs M_{\eta\eta}>0$ by Corollary \ref{811}. Hence
\begin{equation} \label{topb}
    \left| \frac{c}{d}-\frac{c'}{d'}\right| \left|\frac{b}{a}-\frac{b'}{a'}\right| \gqs \frac{M_{\eta\eta}}{X^2}.
\end{equation}

We next determine how large $ \left| \frac{c}{d}\right|$ and  $ \left| \frac{b}{a}\right|$ can be for $\left(\smallmatrix
a & b \\ c & d
\endsmallmatrix\right) \in R(X)$. Combining the inequalities in the definition \eqref{ree} implies
\begin{equation} \label{inq1}
    \frac{1}{\varepsilon_\eta \varepsilon_{\eta'}} \lqs \left| \frac{b}{c}\right| \lqs \varepsilon_\eta \varepsilon_{\eta'}, \qquad
    \frac{1}{\varepsilon_\eta \varepsilon_{\eta'}} \lqs \left| \frac{a}{d}\right| \lqs \varepsilon_\eta \varepsilon_{\eta'}.
\end{equation}
We also know that
\begin{equation} \label{inq2}
   M_{\eta\eta'} \lqs \left| bc \right| \lqs X+1, \qquad
    M_{\eta\eta'} \lqs \left| ad \right| \lqs X.
\end{equation}
Together \eqref{inq1} and \eqref{inq2} prove
\begin{equation} \label{inq3}
    \left| \frac{c}{d}\right|, \ \left| \frac{b}{a}\right| \lqs \frac{\varepsilon_\eta \varepsilon_{\eta'} (X+1)^{1/2}}{M_{\eta\eta'}^{1/2}}.
\end{equation}
 Use \eqref{inq3} in \eqref{topb} to bound $\left|\frac{b}{a}-\frac{b'}{a'}\right|$ and show
\begin{equation} \label{inq4}
    \left| \frac{c}{d}-\frac{c'}{d'}\right| \gqs \frac{M_{\eta\eta}}{X^2} \times \frac{ M_{\eta\eta'}^{1/2}}{2\varepsilon_\eta \varepsilon_{\eta'} (X+1)^{1/2}}
\end{equation}
for any two distinct $\left(\smallmatrix
a & b \\ c & d
\endsmallmatrix\right)$ and $\left(\smallmatrix
a' & b' \\ c' & d'
\endsmallmatrix\right)$  in $R(X)$. Since we have seen in \eqref{inq3} that $c/d$ is restricted to a finite interval, it follows from \eqref{inq4} that $R(X)$ has a finite number of elements, say $n$. List the corresponding fractions as $c_1/d_1<c_2/d_2< \dots < c_n/d_n$.  Then
\begin{equation} \label{gfd}
    \sum_{j=1}^{n-1} \left| \frac{c_{j+1}}{d_{j+1}}-\frac{c_{j}}{d_{j}}\right|
    = \sum_{j=1}^{n-1} \left(\frac{c_{j+1}}{d_{j+1}}-\frac{c_{j}}{d_{j}}  \right)
    \lqs 2\frac{\varepsilon_\eta \varepsilon_{\eta'} (X+1)^{1/2}}{M_{\eta\eta'}^{1/2}}
\end{equation}
using \eqref{inq3}.
With \eqref{gfd} and the inequality of the arithmetic and geometric means, we have
\begin{equation}\label{amgm}
    \prod_{j=1}^{n-1} \left| \frac{c_{j+1}}{d_{j+1}}-\frac{c_{j}}{d_{j}}\right| \ll \left(\frac{(X+1)^{1/2}}{n-1}\right)^{n-1}
\end{equation}
and the same bound holds for $\prod_{j=1}^{n-1} \left| \frac{b_{j+1}}{a_{j+1}}-\frac{b_{j}}{a_{j}}\right|$ by a similar argument.
Combining these bounds with \eqref{topb} shows
\begin{equation*}
    \left(\frac{1}{X^2}\right)^{n-1} \ll \left(\frac{X+1}{(n-1)^2}\right)^{n-1}
\end{equation*}
and therefore $n \ll X^{3/2}$,
as desired.
\end{proof}

\begin{cor}
For  implied constants depending only on $\G$, $\eta$ and $\eta'$
\begin{equation*} 
    \mathcal N_{\eta\eta'}(C) \ll C^{3/2}, \qquad S_{\eta\eta'}(m,n;C,\alpha,\beta) \ll C^{3/2}, \qquad \#\{C \in C_{\eta\eta'} \ : \ |C| \lqs X\}   \ll X^{3/2}.
\end{equation*}
\end{cor}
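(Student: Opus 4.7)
The plan is to derive all three bounds directly from Proposition \ref{812}, in complete parallel with how the bounds in \eqref{impa} and \eqref{impaph} were extracted from Proposition \ref{schp} and \eqref{pkl}. The key observation is that $\mathcal N_{\eta\eta'}(C)$ is a non-negative integer, equal to zero precisely when $C \notin C_{\eta\eta'}$, and at least one when $C \in C_{\eta\eta'}$.

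For the first bound, I would observe that for any $C \in C_{\eta\eta'}$ the single term $\mathcal N_{\eta\eta'}(C)$ is dominated by the full sum up to $|C|$:
\begin{equation*}
    \mathcal N_{\eta\eta'}(C) \lqs \sum_{\substack{C' \in C_{\eta\eta'} \\ |C'| \lqs |C|}} \mathcal N_{\eta\eta'}(C') \ll |C|^{3/2}
\end{equation*}
by Proposition \ref{812}. For $C\notin C_{\eta\eta'}$ the bound is trivial.

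For the second bound, each term in the Kloosterman sum \eqref{kloosthh2} has the form $\e(\cdot)$ and hence modulus $1$. The additional sign conditions $\sgn(ac)=\alpha$ and $\sgn(cd)=\beta$ only restrict the summation, so
\begin{equation*}
    \bigl|S_{\eta\eta'}(m,n;C,\alpha,\beta)\bigr| \lqs \mathcal N_{\eta\eta'}(C) \ll |C|^{3/2}
\end{equation*}
using the first bound. For the third bound, since $\mathcal N_{\eta\eta'}(C) \gqs 1$ whenever $C \in C_{\eta\eta'}$,
\begin{equation*}
    \#\{C \in C_{\eta\eta'} \ : \ |C| \lqs X\} \lqs \sum_{\substack{C \in C_{\eta\eta'} \\ |C| \lqs X}} \mathcal N_{\eta\eta'}(C) \ll X^{3/2},
\end{equation*}
again by Proposition \ref{812}.

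There is no real obstacle here; the entire content of the corollary is packaged inside Proposition \ref{812}, whose proof (the non-trivial work involving the discreteness argument via \eqref{topb}, \eqref{inq4}, and the arithmetic–geometric mean step \eqref{amgm}) has already been carried out. The corollary is merely an unpacking, exactly analogous to the deduction of \eqref{impa} from Proposition \ref{schp} in the hyperbolic/parabolic case.
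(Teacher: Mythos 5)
Your proposal is correct and is exactly the (implicit) argument of the paper, which states this corollary as an immediate consequence of Proposition \ref{812} in the same way that \eqref{impa} follows from Proposition \ref{schp}: a single term is dominated by the partial sum, the Kloosterman sum has unimodular terms and is restricted by the sign conditions so it is bounded by $\mathcal N_{\eta\eta'}(C)$, and each $C$ in the set contributes at least one to the sum. Nothing to add.
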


\subsection{The hyperbolic expansion of $P_{\eta,m}$}

\begin{theorem} \label{CIShh2}
Recall the numbers  $a$ and $b$  from Lemma \ref{2elts}. For $m$, $n \in \Z$, the $n$th hyperbolic Fourier coefficient at $\eta'$ of the hyperbolic Poincar\'e series $P_{\eta,m}$ is given by
\begin{align}
c_{\eta'}(n;P_{\eta,m})  =  & \sum_{C \in C_{\eta\eta'}, \ \alpha,\beta=\pm 1}
 I_{\eta\eta'}(m,n;C,\alpha,\beta) \frac{S_{\eta\eta'}(m,n;C,\alpha,\beta)}{|C(C-1)|^{k/4}} \notag\\
  \label{xab1}
     & \qquad + \begin{cases}
 (a^2)^{2\pi i n/\ell_{\eta'}}  \text{ \ if \ }\eta' \equiv \eta \bmod \G  \text{ and } n=m,
 \end{cases}\\
   & \qquad + \begin{cases}
 (-1)^{k/2} e^{2\pi^2 n/\ell_{\eta'}}(b^2)^{-2\pi i n/\ell_{\eta'}}   \text{ \ if \ }\eta' \equiv \eta^* \bmod \G  \text{ and }  n=-m.
 \end{cases} \label{yab1}
\end{align}
\end{theorem}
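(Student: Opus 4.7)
\smallskip

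\noindent\textbf{Proof proposal for Theorem \ref{CIShh2}.}

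The plan is to follow the template established in the proofs of Theorems \ref{CISpp}, \ref{CIShp2}, and \ref{CISph}: decompose $P_{\eta,m}|_k\sep$ along double cosets, handle the exceptional orbits directly, and apply Poisson summation to the remaining orbits to produce the integral $I_{\eta\eta'}$ and the Kloosterman sum $S_{\eta\eta'}$. The complicating new feature is that $\G(\eta,\eta')_0$ need no longer be empty (in contrast to the hyperbolic/parabolic case), so there are extra contributions \er{xab1}, \er{yab1}, and the hyperbolic/hyperbolic Bruhat decomposition splits into cases governed by the signs $\alpha=\sgn(ac)$ and $\beta=\sgn(cd)$.

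First, using Proposition \ref{hh_reps}, I would write
\begin{equation*}
\left(P_{\eta,m}|_k\sep\right)(z) \;=\; T_0(z)+T_1(z),
\end{equation*}
where $T_0$ is the (at most two-term) sum over $\G(\eta,\eta')_0$ and $T_1$ is the double sum over $\g\in\G(\eta,\eta')$ and $\tau\in\G_{\eta'}/Z$. For $T_0$ I would apply Lemma \ref{2elts}: the representative of the form $\delta_1=\se\left(\smallmatrix a & 0 \\ 0 & 1/a\endsmallmatrix\right)\sep^{-1}$ gives $\se^{-1}\delta_1\sep z=a^2z$ with $j(\se^{-1}\delta_1\sep,z)=1/a$, so its contribution is exactly $(a^2)^{-k/2+2\pi i m/\ell_\eta}a^k \cdot z^{-k/2+2\pi i m/\ell_\eta}=(a^2)^{2\pi i m/\ell_\eta}z^{-k/2+2\pi i m/\ell_\eta}$. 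Using \eqref{lorc} this matches the $n=m$ coefficient in \eqref{xab1}. For the representative $\delta_2=\se\left(\smallmatrix 0 & b \\ -1/b & 0\endsmallmatrix\right)\sep^{-1}$ I would compute $\se^{-1}\delta_2\sep z=-b^2/z$ and carefully use the principal branch convention (writing $-b^2/z=b^2|z|^{-1}e^{i(\pi-\arg z)}$ for $z\in\H$) to pick up the factor $(-1)^{k/2}e^{2\pi^2 n/\ell_{\eta'}}(b^2)^{-2\pi i n/\ell_{\eta'}}$ at $n=-m$, which is \eqref{yab1}.

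For $T_1$, I would parameterize $\tau\in\G_{\eta'}/Z$ by $n\in\Z$ via $\sep^{-1}\tau\sep=\left(\smallmatrix \varepsilon_{\eta'}^n & 0 \\ 0 & \varepsilon_{\eta'}^{-n}\endsmallmatrix\right)$. Setting $z=e^A$ with $0<\Im A<\pi$ and writing $\se^{-1}\g\sep=\left(\smallmatrix a & b \\ c & d\endsmallmatrix\right)$ (with $abcd\neq 0$), the inner sum over $\tau$ becomes $\sum_{n\in\Z}f_\g(n)$ where
\begin{equation*}
f_\g(t)\;=\;e^{t\ell_{\eta'}k/2}\,\frac{\bigl(\se^{-1}\g\sep(e^{t\ell_{\eta'}+A})\bigr)^{-k/2+2\pi i m/\ell_\eta}}{(ce^{t\ell_{\eta'}+A}+d)^k}.
\end{equation*}
I would verify, by essentially the same computation as in the proof of Theorem \ref{CISph}, that $f_\g$ and $f_\g''$ are absolutely integrable (using Proposition \ref{bndhh} with $n=0$ and a logarithmic differentiation for $f_\g''$), hence Poisson summation applies.

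The core computational step is to substitute $u=t\ell_{\eta'}+A+\tfrac12\log\bigl|\tfrac{bd}{ac}\bigr|$ in the Poisson integral, which symmetrizes the integrand and, after pulling out the constants, yields exactly $I_{\eta\eta'}(m,n;C,\alpha,\beta)$ with $C=ad$, $\alpha=\sgn(ac)$, $\beta=\sgn(cd)$. The extracted prefactors combine into $z^{-k/2+2\pi i n/\ell_{\eta'}}|C(C{-}1)|^{-k/4}\e\bigl(\tfrac{m}{2\ell_\eta}\log|ab/cd|+\tfrac{n}{2\ell_{\eta'}}\log|ac/bd|\bigr)$, whereupon summing the inner two sums (over $\g$ and over the four $(\alpha,\beta)$-classes) collapses to $S_{\eta\eta'}(m,n;C,\alpha,\beta)/|C(C{-}1)|^{k/4}$ as in \eqref{kloosthh2}. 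Finally, to justify the interchange of summation, I would bound $T_1$ using Proposition \ref{bndhh} together with the factor $|z^{-k/2+2\pi i n/\ell_{\eta'}}|\ll e^{-\varepsilon|n|}$, reducing the question to convergence of $\sum_{C\in C_{\eta\eta'}}\mathcal N_{\eta\eta'}(C)/|C(C{-}1)|^{k/4}$; summation by parts using Proposition \ref{812} (which gives $\sum_{|C|\lqs X}\mathcal N_{\eta\eta'}(C)\ll X^{3/2}$) handles this for $k$ large enough.

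\emph{The main obstacle} I anticipate is the bookkeeping in the substitution step: matching the precise form \eqref{iefa} of $I_{\eta\eta'}$ requires rewriting $ce^{t\ell_{\eta'}+A}+d$ and $ae^{t\ell_{\eta'}+A}+b$ as constants times $e^u+\beta|r/(r{-}1)|^{1/2}$ and $\alpha e^u+\sgn(r{-}1)|(r{-}1)/r|^{1/2}$ respectively, where $r=ad$. Keeping track of the four sign cases (the pair $(\alpha,\beta)$ with $\alpha\beta=\sgn(r)$), the branch of the logarithm in the exponent $(\,\cdot\,)^{2\pi i m/\ell_\eta}$, and the factor $(-1)^{k/2}e^{2\pi^2 n/\ell_{\eta'}}$ in \eqref{yab1}, is the most delicate part of the argument; the rest is a direct adaptation of the Poisson-summation machinery already developed.
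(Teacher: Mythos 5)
Your proposal follows essentially the same route as the paper's proof: the decomposition via Proposition \ref{hh_reps}, the direct evaluation of the $\G(\eta,\eta')_0$ terms using Lemma \ref{2elts}, \eqref{lorc} and the identity $(-1/z)^s=e^{\pi i s}z^{-s}$, Poisson summation on the remaining orbits, the symmetrizing substitution in the Fourier integral, and the interchange of summation justified by Proposition \ref{bndhh} and Proposition \ref{812}. The only quibble is that the paper shifts by $+\tfrac12\log|ac/bd|$ rather than $+\tfrac12\log|bd/ac|$ in the substitution; this is exactly the kind of sign bookkeeping you flag as delicate, and it is resolved the way you anticipate.
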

\begin{proof}
We have
\begin{equation} \label{spar}
    \left(P_{\eta,m}|_k \sep\right)(z) = \sum_{\g \in \G_\eta \backslash \G }
    \frac{(\se^{-1}\g \sep z)^{-k/2+2\pi i m /\ell_\eta}}
    {j(\se^{-1}\g \sep, z)^{k}}
\end{equation}
which is absolutely   convergent for $z$  in $\H$ and $k>2$.
We use the set of representatives for $\G_\eta \backslash \G$ given by Proposition \ref{hh_reps}.
The elements of $\G(\eta,\eta')_0$, as described in Lemma \ref{2elts}, easily yield the contributions \er{xab1} and \er{yab1} -- using \er{lorc} and for \er{yab1} that $(-1/z)^s = e^{\pi i s} z^{-s}$ for all $z\in \H$ and $s\in \C$.

Write the remaining terms in \eqref{spar} as
\begin{multline} \label{rem}
  \sum_{\g \in \G(\eta,\eta')} \sum_{\tau \in \G_{\eta'}/Z }
    \frac{(\se^{-1}\g \tau \sep z)^{-k/2+2\pi i m /\ell_\eta}}
    {j(\se^{-1}\g \sep, z)^{k}} \\
    = \sum_{C \in C_{\eta\eta'}}
    \sum_{\alpha,\beta = \pm 1}
    \sum_{\substack{ \g \in \G(\eta,\eta'), \ \left(\smallmatrix a
& b \\ c & d
\endsmallmatrix\right) = \se^{-1}\g\sep
\\
 ad=C, \ \sgn(ac)=\alpha, \ \sgn(cd)=\beta} }
\sum_{\tau \in \G_{\eta'}/Z }
    \frac{(\se^{-1}\g \tau \sep z)^{-k/2+2\pi i m /\ell_\eta}}
    {j(\se^{-1}\g \tau\sep, z)^{k}}.
\end{multline}
The inner series is
\begin{multline} \label{intyy}
   \sum_{n\in \Z}
    \frac{(\se^{-1}\g \sep (e^{n \ell_{\eta'}} z))^{-k/2+2\pi i m /\ell_\eta}}
    {j(\se^{-1}\g \sep , e^{n \ell_{\eta'}} z)^{k} e^{-n \ell_{\eta'} k/2}}  \\
   =  \sum_{n\in \Z} \int_{-\infty}^\infty
    \frac{(\se^{-1}\g \sep (e^{\ell_{\eta'} t + A}))^{-k/2+2\pi i m /\ell_\eta}}
    {j(\se^{-1}\g \sep , e^{\ell_{\eta'} t + A})^{k}} e^{\ell_{\eta'} t k/2 -2\pi i n t} \, dt
\end{multline}
where  $z=e^A$ for $0<\Im A< \pi$ and we used Poisson summation which may be justified as in Theorem \ref{CISph}.
Here $
\se^{-1}\g\sep = \left(\smallmatrix a
& b \\ c & d
\endsmallmatrix\right)
$
with $abcd \neq 0$ and
the integral in \eqref{intyy}  equals
\begin{equation} \label{intyy2}
    \int_{-\infty}^\infty
    \frac{\left(\frac{a e^{\ell_{\eta'} t + A} +b}{c e^{\ell_{\eta'} t + A}+d} \right)^{2\pi i m /\ell_\eta}}
    {(a e^{\ell_{\eta'} t + A} +b)^{k/2}(c e^{\ell_{\eta'} t + A}+d)^{k/2}} e^{t \ell_{\eta'}(  k/2 -2\pi i n/\ell_{\eta'})} \, dt.
\end{equation}
Substitute $u=\ell_{\eta'} t +A+\frac 12 \log \left| \frac{ac}{bd} \right|$ and \eqref{intyy2} equals
\begin{multline} \label{intyy3}
    z^{-k/2+2\pi i n /\ell_{\eta'}} \left| \frac{ac}{bd} \right|^{-k/4}
    \e\left(\frac{m}{2\ell_{\eta}}  \log \left| \frac{ab}{cd} \right| +  \frac{n}{2\ell_{\eta'}}  \log \left| \frac{ac}{bd} \right|\right)\\
    \times
    \int_{-\infty+i \Im A}^{\infty+i \Im A}
    \frac{\left( \left| \frac{cd}{ab} \right|^{1/2}  \frac{a \left| \frac{bd}{ac} \right|^{1/2} e^u +b}{c \left| \frac{bd}{ac} \right|^{1/2} e^u +d} \right)^{2\pi i m /\ell_\eta} e^{u(k/2-2\pi i n /\ell_{\eta'})}}
    {(a \left| \frac{bd}{ac} \right|^{1/2} e^u +b)^{k/2} (c \left| \frac{bd}{ac} \right|^{1/2} e^u +d)^{k/2}}  \, \frac{du}{\ell_{\eta'}}.
\end{multline}
The integrand is holomorphic for $0<\Im u<\pi$ and therefore independent of $\Im A$ provided $0<\Im A<\pi$.
The equalities
\begin{align*}
    a \left| \frac{bd}{ac} \right|^{1/2} e^{u} +b  & = \left| \frac{b}{c} \right|^{1/2} |ad|^{1/2}
    \left( \sgn(a)  e^{u} + \sgn(b) \left| \frac{bc}{ad} \right|^{1/2} \right),\\
    c \left| \frac{bd}{ac} \right|^{1/2} e^{u} +d  & =
    \left| \frac{d}{a} \right|^{1/2} |bc|^{1/2}
    \left( \sgn(c)  e^{u}  + \sgn(d)  \left| \frac{ad}{bc} \right|^{1/2}\right)
\end{align*}
show the integral in \eqref{intyy3} is
\begin{equation} \label{intyy4}
    \left| \frac{ac}{bd} \right|^{k/4} \frac{1}{|abcd|^{k/4}}
    \int_{-\infty+i y}^{\infty+i y}
    \frac{
    \left( \left| \frac{ad}{bc} \right|^{1/2}
    \frac{\sgn(a)  e^{u} + \sgn(b) \left| \frac{bc}{ad} \right|^{1/2} }
    {\sgn(c)  e^{u}  + \sgn(d)  \left| \frac{ad}{bc} \right|^{1/2}} \right)^{2\pi i m /\ell_\eta}
    e^{u(k/2-2\pi i n /\ell_{\eta'})}
    }
    {\scriptstyle
    \left( \sgn(a)  e^{u} + \sgn(b) \left| \frac{bc}{ad} \right|^{1/2} \right)^{k/2}
    \left( \sgn(c)  e^{u}  + \sgn(d)  \left| \frac{ad}{bc} \right|^{1/2} \right)^{k/2}}  \, \frac{du}{\ell_{\eta'}}
\end{equation}
for any $y$ with $0<y<\pi$. Finally, multiplying through by $\sgn(c)$, \eqref{intyy3} is now
\begin{equation} \label{intyy5}
    z^{-k/2+2\pi i n /\ell_{\eta'}}
    \e\left(\frac{m}{2\ell_{\eta}} \log \left| \frac{ab}{cd} \right| +  \frac{n}{2\ell_{\eta'}}  \log \left| \frac{ac}{bd} \right|\right)
    \frac{I_{\eta \eta'}(m,n;C,\sgn(ac),\sgn(cd))}{|C(C-1)|^{k/4}}.
\end{equation}
Hence \eqref{rem} is
\begin{multline} \label{rem2}
\sum_{C \in C_{\eta\eta'}}
    \sum_{\alpha,\beta = \pm 1}
    \sum_{\substack{ \g \in \G(\eta,\eta'), \ \left(\smallmatrix a
& b \\ c & d
\endsmallmatrix\right) = \se^{-1}\g\sep
\\
 ad=C, \ \sgn(ac)=\alpha, \ \sgn(cd)=\beta} }
\sum_{n \in \Z }
    z^{-k/2+2\pi i n /\ell_{\eta'}}
    \\
    \times
    \e\left(\frac{m}{2\ell_{\eta}} \log \left| \frac{ab}{cd} \right| +  \frac{n}{2\ell_{\eta'}}  \log \left| \frac{ac}{bd} \right|\right)
    \frac{I_{\eta \eta'}(m,n;C,\alpha,\beta)}{|C(C-1)|^{k/4}}.
\end{multline}
With Proposition \ref{bndhh} we  have
\begin{equation*}
    z^{-k/2+2\pi i n /\ell_{\eta'}} I_{\eta\eta'}(m,n;C,\alpha,\beta) \ll e^{-\varepsilon |n|}
\end{equation*}
for $\varepsilon >0$ depending on $z$. Therefore
 \eqref{rem2} is majorized by a constant times
$
     \sum_{C \in C_{\eta\eta'}} \mathcal  |C|^{-k/2} N_{\eta\eta'}(C)
$
and thus convergent for $k>3$ by Proposition \ref{812}. This proves that changing the order of summation in \eqref{rem2} is  valid.
Rearranging completes the proof.
\end{proof}

\begin{proof}[Proof of Theorem \ref{CIShh}]
Set
\begin{multline} \label{mst}
    S^\star_{\eta\eta'}(m,n;C,\alpha) := S_{\eta\eta'}(m,n;C,\alpha,\beta)\\
    \times
    \e\left(\frac{m}{2\ell_\eta}\left[ \log \left| \frac{C}{C-1} \right| +\pi i (1-\alpha)\right]
    + \frac{n}{2\ell_{\eta'}}\left[ \log \left| \frac{C-1}{C} \right| +\pi i (1+\beta)\right] \right)
\end{multline}
for $\beta=\alpha \sgn(C)$ and this agrees with our earlier definition \eqref{klooshh}.
Combining Theorem \ref{CIShh2} with Proposition \ref{long} and \eqref{mst} gives Theorem \ref{CIShh}.
\end{proof}

By choosing the scaling matrices $\se$ and $\sep$ we can make $a$ and $b$ in the statements of Theorems \ref{CIShh} and \ref{CIShh2}  explicit as follows.
\begin{prop} \label{trpr}
\begin{enumerate}
\item Suppose $\eta$ and $\eta^*$ are not $\G$-equivalent. If $\eta' =  \rho \eta$ for some $\rho \in \G$ put $\sep :=  \rho \se$ and if $\eta' =  \rho \eta^*$ for some $\rho \in \G$ put $\sep :=  \rho \se S$.  In this case  \eqref{xab1}, \eqref{yab1} become
\begin{align} \label{xab2}
    + & \begin{cases}
 1  \text{ \ if \ }\eta' \equiv \eta \bmod \G  \text{ and } n=m,
 \end{cases}\\
  + & \begin{cases}
 (-1)^{k/2} e^{2\pi^2 n/\ell_{\eta'}}  \text{ \ if \ }\eta' \equiv \eta^* \bmod \G  \text{ and }  n=-m.
 \end{cases} \label{yab2}
\end{align}
\item Suppose $\eta$ and $\eta^*$ are  $\G$-equivalent. If $\tau \eta^* = \eta$ for $\tau \in \G$. It follows that $\se^{-1}\tau \se = \left(\smallmatrix 0 & t \\ -\frac1t & 0
\endsmallmatrix\right)$ for some $t\in \R_{\neq 0}$. If $\eta' \equiv \eta \bmod \G$ with $\eta' = \rho \eta$, choose $\sep = \rho\se$.
Then  \eqref{xab1}, \eqref{yab1} become
\begin{align} \label{xab3}
    + & \begin{cases}
 1  \text{ \ if \ }\eta' \equiv \eta \bmod \G  \text{ and } n=m,
 \end{cases}\\
  + & \begin{cases}
 (-1)^{k/2} e^{2\pi^2 n/\ell_{\eta'}}(t^2)^{-2\pi i n/\ell_{\eta'}}  \text{ \ if \ }\eta' \equiv \eta^* \bmod \G  \text{ and }  n=-m.
 \end{cases} \label{yab3}
\end{align}
\end{enumerate}
\end{prop}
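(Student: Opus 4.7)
The plan is to trace, under each of the specified choices of $\sep$, which element of $\G(\eta,\eta')_0$ produces each of the two possible contributions described in Lemma \ref{2elts}, then read off the values $a$ and $b$ directly from $\se^{-1}\delta\sep$. The contributions \eqref{xab1} and \eqref{yab1} were already computed during the proof of Theorem \ref{CIShh2}, so the remaining task is purely algebraic.

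For Part (i), first case ($\eta' = \rho\eta$, $\sep = \rho\se$), the representative with $\delta\eta' = \eta$ is $\delta = \rho^{-1}$, giving $\se^{-1}\delta\sep = \se^{-1}\rho^{-1}\rho\se = I$. Hence in the form $\left(\smallmatrix a & 0 \\ 0 & 1/a \endsmallmatrix\right)$ one has $a=1$, so $(a^2)^{2\pi i n/\ell_{\eta'}} = 1$ as in \eqref{xab2}. For Part (i), second case ($\eta' = \rho\eta^*$, $\sep = \rho\se S$), the representative with $\delta\eta' = \eta^*$ is again $\delta = \rho^{-1}$, and $\se^{-1}\delta\sep = \se^{-1}\rho^{-1}\rho\se S = S = \left(\smallmatrix 0 & -1 \\ 1 & 0\endsmallmatrix\right)$, matching $\left(\smallmatrix 0 & b \\ -1/b & 0\endsmallmatrix\right)$ with $b = -1$. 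Thus $b^2 = 1$ and \eqref{yab1} collapses to $(-1)^{k/2}e^{2\pi^2 n/\ell_{\eta'}}$, as in \eqref{yab2}.

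For Part (ii), the element with $\delta\eta' = \eta$ is handled exactly as in Part (i), yielding $a = 1$. For the element with $\delta\eta' = \eta^*$, the relation $\tau^{-1}\eta = \eta^*$ lets us take $\delta = \tau^{-1}\rho^{-1}$, and then
\begin{equation*}
\se^{-1}\delta\sep \;=\; \se^{-1}\tau^{-1}\rho^{-1}\rho\se \;=\; (\se^{-1}\tau\se)^{-1} \;=\; \begin{pmatrix} 0 & t \\ -1/t & 0\end{pmatrix}^{-1} \;=\; \begin{pmatrix} 0 & -t \\ 1/t & 0\end{pmatrix},
\end{equation*}
which is of the form $\left(\smallmatrix 0 & b \\ -1/b & 0 \endsmallmatrix\right)$ with $b = -t$. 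Hence $b^2 = t^2$, so \eqref{yab1} becomes \eqref{yab3}.

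The main subtlety, beyond this bookkeeping, is that $a$ and $b$ must be shown to be well-defined modulo the ambiguity in choosing a double coset representative in $L$. Replacing $\delta$ by any $\g\delta\g'$ with $\g \in \G_\eta$ and $\g' \in \G_{\eta'}$ alters $\se^{-1}\delta\sep$ by left and right multiplication by diagonal matrices of the form $\pm\left(\smallmatrix \varepsilon_\eta^j & 0 \\ 0 & \varepsilon_\eta^{-j} \endsmallmatrix\right)$ and $\pm\left(\smallmatrix \varepsilon_{\eta'}^{j'} & 0 \\ 0 & \varepsilon_{\eta'}^{-j'} \endsmallmatrix\right)$, so $a^2$ and $b^2$ get multiplied by $\varepsilon_\eta^{\pm 2j}\varepsilon_{\eta'}^{\pm 2j'} = e^{\pm j\ell_\eta \pm j'\ell_{\eta'}}$. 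Raising this to the $2\pi i n/\ell_{\eta'}$ power and using \eqref{lorc} to equate $\ell_\eta = \ell_{\eta'}$ in the $\G$-equivalent cases shows the change is a power of $e^{2\pi i(\pm j \pm j')n} = 1$, so the stated formulas are representative-independent.
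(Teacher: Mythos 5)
Your proof is correct and follows essentially the same route as the paper: in both arguments the key point is that composing the coset representative with $\rho$ (resp.\ $\rho\tau$) lands in $\G_\eta$, so that $a^2$ and $b^2$ are pinned down up to a factor in $e^{\ell_\eta\Z}$, which is invisible after raising to the power $\pm 2\pi i n/\ell_{\eta'}$ via \eqref{lorc}. The only organizational difference is that you first compute $a$ and $b$ for the explicit representatives $\rho^{-1}$ and $\tau^{-1}\rho^{-1}$ and then check independence of the choice of $L$, whereas the paper works directly with the (unspecified) elements of $L$ and absorbs that ambiguity into the membership $\se^{-1}\delta\sep \in \se^{-1}\G_\eta\se$.
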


\begin{proof} 
To prove part (ii), note that
  $\G(\eta,\eta')_0=\{\delta, \ \g\}$ (if $\eta' \equiv \eta \bmod \G$) with
\begin{equation*}
    \delta \eta'=\eta, \quad \delta=\se \left(\smallmatrix
    a & 0 \\ 0 & \frac1a
\endsmallmatrix\right) \sep^{-1}, \quad  \g \eta'=\eta^*, \quad \g=\se \left(\smallmatrix
    0 & b \\ -\frac{1}{b} & 0
\endsmallmatrix\right) \sep^{-1}.
\end{equation*}
Clearly $\delta \rho \in \G_\eta$. Then $$\left(\smallmatrix
    a & 0 \\ 0 & \frac1a
\endsmallmatrix\right) = \se^{-1}\delta\sep = \se^{-1}\delta \rho\se \in \se^{-1}\G_\eta\se
$$ and \eqref{xab3} follows from \eqref{xab1}.
To show \eqref{yab3}, we note that $\g \rho \tau \eta^*=\eta^*$ implying $\g \rho \tau \in \G_\eta$. Hence
\begin{align*}
    \begin{pmatrix}  -\frac bt & 0 \\ 0 & -\frac tb  \end{pmatrix} = \begin{pmatrix}  0 & b \\ -\frac 1b & 0 \end{pmatrix}
    \begin{pmatrix}  0 & t \\ -\frac 1t & 0 \end{pmatrix} & = (\se^{-1} \g \sep)(\se^{-1} \tau \se) \\
    & = \se^{-1} \g \rho \tau \se \in \se^{-1} \G_\eta \se
\end{align*}
so that $b^2 = t^2 e^{r \ell_\eta}$ for some $r\in \Z$.
Then \eqref{yab3} follows from \eqref{yab1}.
The proof of part (i) is similar.
\end{proof}

With \eqref{epe},  the identity $\s{P_{\eta',n}}{P_{\eta,m}} = \overline{\s{P_{\eta,m}}{P_{\eta',n}}}$ implies
\begin{equation}\label{epe2}
c_{\eta'}(n;P_{\eta,m}) \frac{ \ell_{\eta'}   e^{-2\pi^2 n/\ell_{\eta'}}}{\bigl|\G\bigl(\frac{k}{2}+\frac{2\pi i n}{\ell_{\eta'}} \bigr)\bigr|^2  }
=
  \overline{c_{\eta}(m;P_{\eta',n})} \frac{\ell_\eta   e^{-2\pi^2 m/\ell_\eta}}{ \bigl|\G\bigl(\frac{k}{2}+\frac{2\pi i m}{\ell_\eta} \bigr)\bigr|^2  }.
\end{equation}
To check that our formulas satisfy this symmetry, first note that $C_{\eta\eta'} = C_{\eta'\eta}$ and
\begin{equation*}
    S_{\eta\eta'}(m,n;C,\alpha,\beta) = \overline{S_{\eta'\eta}(n,m;C,-\beta,-\alpha)}.
\end{equation*}
It follows that \eqref{epe2} is a consequence of Theorem \ref{CIShh2} if we can show that
\begin{equation} \label{ver}
    I_{\eta\eta'}(m,n;C,\alpha,\beta)
    \frac{ \ell_{\eta'}   e^{-2\pi^2 n/\ell_{\eta'}}}{\bigl|\G\bigl(\frac{k}{2}+\frac{2\pi i n}{\ell_{\eta'}} \bigr)\bigr|^2  }
     = \overline{I_{\eta'\eta}(n,m;C,-\beta,-\alpha)}
     \frac{\ell_\eta   e^{-2\pi^2 m/\ell_\eta}}{ \bigl|\G\bigl(\frac{k}{2}+\frac{2\pi i m}{\ell_\eta} \bigr)\bigr|^2  }.
\end{equation}
When $I_{\eta\eta'}(m,n;C,\alpha,\beta)$ is given by \eqref{ffv2} then \eqref{ver} is straightforward to verify. When $I_{\eta\eta'}(m,n;C,\alpha,\beta)$ is given by \eqref{ffv3}, the final step of the verification of \eqref{ver} requires Euler's transformation formula, \cite[(2.2.7)]{AAR}:
\begin{equation*}
    {_2}F_1(a,b;c;1-z) = z^{c-a-b}{_2}F_1(c-a,c-b;c;1-z) \qquad (-\pi<\arg z\lqs \pi).
\end{equation*}

\subsection{Examples} \label{last}
As in Sections \ref{sect_ex} and \ref{num}, we  take the example $\G=\SL_2(\Z)$ with $\eta=\eta'=(-\sqrt{D},\sqrt{D})$ and $\sigma_\eta=\sigma_{\eta'}$ given by $\hat\sigma_\eta$.
For $\left(\smallmatrix
e  &  f \\ g & h
\endsmallmatrix\right) \in \G$, write
\begin{equation*}
    \se^{-1}\begin{pmatrix} e  &  f \\ g & h \end{pmatrix}\se =  \frac{1}{2}\begin{pmatrix} e+g\sqrt{D} +f/\sqrt{D}+h  &  -e-g\sqrt{D} +f/\sqrt{D}+h  \\  -e+g\sqrt{D} -f/\sqrt{D}+h &  e-g\sqrt{D} -f/\sqrt{D}+h  \end{pmatrix} = \begin{pmatrix} a  &  b \\ c & d \end{pmatrix}.
\end{equation*}
Then
\begin{equation} \label{hse}
    ad=\frac 12+\frac 14\left(e^2-Dg^2-\frac{f^2-Dh^2}{D} \right).
\end{equation}
Recall the determination of $\varepsilon_D$ and $ \sigma_\eta^{-1} \G_\eta \sigma_\eta$ in \eqref{deth}. Set $H_D:= \sigma_\eta R_{\eta\eta} \sigma^{-1}_\eta$ (for $R_{\eta\eta}$ defined in \eqref{ree}) to get explicitly
\begin{multline}
       H_D= \left\{  \begin{pmatrix} e  &  f \\ g & h \end{pmatrix}\in \SL_2(\Z) \ \left|  \ \frac 1{\varepsilon_D} \lqs  \left|\frac{(e+g\sqrt{D})^2-(f+h\sqrt{D})^2/D }{(e-g\sqrt{D})^2-(f-h\sqrt{D})^2/D  }\right|^{1/2} < \varepsilon_D, \right. \right.
       \\
        \left.\frac 1{\varepsilon_D} \lqs  \left|\frac{(h-g\sqrt{D})^2-(f-e\sqrt{D})^2/D }{(h+g\sqrt{D})^2-(f+e\sqrt{D})^2/D  }\right|^{1/2} < \varepsilon_D
        \right\} \label{H_D}
\end{multline}
and
let $H_D(C)$ be the elements of $H_D$ with $ad$, given by \eqref{hse}, equalling $C$.

\begin{lemma}
If $\left(\smallmatrix
e  &  f \\ g & h
\endsmallmatrix\right) \in H_D(C)$ for $C \neq 0$, $1$ then
\begin{equation} \label{blas}
    e^2+Dg^2+\frac{f^2+Dh^2}{D} \lqs \left(\varepsilon_D^2+\varepsilon_D^{-2}\right)\left(|C|+|C-1|\right).
\end{equation}
\end{lemma}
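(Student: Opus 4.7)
My approach is to translate both sides of \eqref{blas} into information about the conjugated matrix $\left(\smallmatrix a & b \\ c & d \endsmallmatrix\right) := \sigma_\eta^{-1} \left(\smallmatrix e & f \\ g & h \endsmallmatrix\right) \sigma_\eta$ (whose entries are written out explicitly just above the statement of the lemma), and then exploit the defining inequalities of $R_{\eta\eta}$ with $\varepsilon_\eta = \varepsilon_D$.

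The first step is to establish the identity
$$a^2 + b^2 + c^2 + d^2 \;=\; e^2 + Dg^2 + \frac{f^2+Dh^2}{D}$$
by direct expansion. Setting $X := e + g\sqrt{D}$, $X' := e - g\sqrt{D}$, $Y := f/\sqrt{D} + h$, $Y' := -f/\sqrt{D} + h$, the explicit formulas yield $a = (X+Y)/2$, $b = (Y-X)/2$, $c = (Y'-X')/2$, $d = (X'+Y')/2$, so that $a^2 + b^2 = (X^2+Y^2)/2$ and $c^2 + d^2 = ((X')^2+(Y')^2)/2$. Summing, together with $X^2 + (X')^2 = 2(e^2+Dg^2)$ and $Y^2 + (Y')^2 = 2(f^2+Dh^2)/D$, completes the identity.

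Next, I would bound $a^2+b^2+c^2+d^2$ using the conditions defining $R_{\eta\eta}$. Since $ad = C \neq 0$, $bc = C-1 \neq 0$ and $abcd \neq 0$, I can write $|a| = t\sqrt{|C|}$, $|d| = \sqrt{|C|}/t$, $|b| = s\sqrt{|C-1|}$, $|c| = \sqrt{|C-1|}/s$ for some positive real numbers $s$, $t$. A short computation gives $|ab/cd|^{1/2} = st$ and $|bd/ac|^{1/2} = s/t$, so the two inequalities from \eqref{ree} place both $st$ and $s/t$ in $[\varepsilon_D^{-1}, \varepsilon_D)$. Multiplying and dividing these constraints shows $s^2, t^2 \in [\varepsilon_D^{-2}, \varepsilon_D^2]$.

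Finally, the function $x + 1/x$ is convex on $(0,\infty)$ with minimum at $x=1$, so its maximum on the symmetric interval $[\varepsilon_D^{-2}, \varepsilon_D^2]$ is attained at the endpoints and equals $\varepsilon_D^2 + \varepsilon_D^{-2}$. Hence $a^2+d^2 = |C|(t^2 + t^{-2}) \lqs |C|(\varepsilon_D^2 + \varepsilon_D^{-2})$ and $b^2+c^2 \lqs |C-1|(\varepsilon_D^2 + \varepsilon_D^{-2})$; adding these inequalities and invoking the identity from the first step yields \eqref{blas}. The whole argument reduces to a direct algebraic identity plus a one-variable convexity bound, so I do not anticipate any real obstacle.
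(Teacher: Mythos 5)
Your proposal is correct and follows essentially the same route as the paper: both arguments reduce \eqref{blas} to the identity $a^2+b^2+c^2+d^2 = e^2+Dg^2+(f^2+Dh^2)/D$ for the conjugated matrix, extract $\varepsilon_D^{-2}\lqs |a/d|,\,|b/c|\lqs \varepsilon_D^2$ from the defining inequalities of $R_{\eta\eta}$ (your $t^2$ and $s^2$ are exactly $|a/d|$ and $|b/c|$), and then bound $a^2+d^2$ and $b^2+c^2$ by $(\varepsilon_D^2+\varepsilon_D^{-2})|C|$ and $(\varepsilon_D^2+\varepsilon_D^{-2})|C-1|$ respectively. Your convexity observation for $x+1/x$ is just a restatement of the paper's quadratic inequality $|a/d|^2+1\lqs(\varepsilon_D^2+\varepsilon_D^{-2})|a/d|$, and you usefully make explicit the algebraic identity the paper leaves as "which is equivalent to \eqref{blas}."
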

\begin{proof}
As in \eqref{inq1}, the inequalities in \eqref{ree} imply $\varepsilon_D^{-2} \lqs |a/d|, \ |b/c| \lqs \varepsilon_D^2$. Arguing as in Lemma \ref{ellipse!},
\begin{align*}
    \varepsilon_D^{-2} \lqs |a/d| \lqs \varepsilon_D^2 & \iff |a/d|^2+1 \lqs \left(\varepsilon_D^2+\varepsilon_D^{-2}\right)|a/d| \\
    & \iff a^2+d^2 \lqs \left(\varepsilon_D^2+\varepsilon_D^{-2}\right)|C|.
\end{align*}
Similarly for $|b/c|$, implying $a^2+b^2+c^2+d^2 \lqs \left(\varepsilon_D^2+\varepsilon_D^{-2}\right)\left(|C|+|C-1|\right)$ which is equivalent to \eqref{blas}.
\end{proof}

So we may calculate the sums in \eqref{mull} as sums over $\left(\smallmatrix
e  &  f \\ g & h
\endsmallmatrix\right) \in H_D(C)$, restricting our attention to entries satisfying \eqref{blas}.
For example, the hyperbolic coefficients at $\eta=(-\sqrt{2},\sqrt{2})$ of $P_{\eta,0}$ with weight $k=12$ are computed in Table \ref{tblh} using Theorem \ref{CIShh} and summing over all $C$ with $|C-1/2|\lqs 20$.
\begin{table}[h]
\begin{center}
\begin{tabular}{c|c|c|c|c|c|c|c}
$n$ & $-3$ & $-2$  & $-1$  & $0$   & $1$  & $2$  & $3$ \\ \hline
$c_{\eta}(n;P_{\eta,0})$ &
$1.0677 \times 10^{-7}$ &
$0.0015600$ &
$-0.083234$ &
$0.88859$ &
$22.4859$ &
$113.849$ &
$-2.105$
\end{tabular}
\caption{Hyperbolic coefficients of $P_{\eta,0}$ for $\eta=(-\sqrt{2},\sqrt{2})$ and $k=12$} \label{tblh}
\end{center}
\end{table}
Since $P_{\eta,0} \approx 1529.46 \Delta \approx  1529.46 P_{\ci,1}/2.840287$, (using Table \ref{pk12} and $\lambda_1$ from \eqref{lams}), we may verify that the coefficients  in Table \ref{tblh} and the first column of Table \ref{pk12b} agree.
It would be interesting to see if the sum $S_{\eta\eta'}(m,n;C,\alpha,\beta)$ has a simple explicit expression similar to that of $S_{\eta\ci}(m,n;C)$ in Theorem \ref{final_k}.

We finally note that Theorem \ref{CIShh}  may be used it to detect  when the negative Pell equation
\er{npell}
has integer solutions. To explain this, let
 $\eta=\eta'=(-\sqrt{D},\sqrt{D})$ for  $\G=\SL_2(\Z)$, as before, and define $\Phi(D)$ as the right side of \eqref{mull} (without \eqref{xab1q}, \eqref{yab1q}) for $m=n=0$, $k=10$:
 \begin{equation*}
    \Phi(D) := \frac{1}{1260\log \varepsilon_D}
    \left(\sum\nolimits_1+\sum\nolimits_2+\sum\nolimits_3\right).
 \end{equation*}
The fundamental solution $(a_0,c_0)$ to the Pell equation \eqref{pell} is built into  $\Phi(D)$ through $\varepsilon_D := a_0 + \sqrt{D} c_0$.
\begin{prop}
The function $\Phi(D)$ takes only the values $0$ and $-1$. The negative Pell equation \eqref{npell} has integer solutions if and only if $\Phi(D)=0$.
\end{prop}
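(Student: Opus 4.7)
The plan is to exploit the single weight $k=10$ in the definition of $\Phi(D)$, because the space $S_{10}(\SL_2(\Z))$ is trivial. Since $P_{\eta,0}\in S_k(\G)$ for any $\eta$ and $k>2$, we conclude $P_{\eta,0}\equiv 0$ as a weight--$10$ form on $\SL_2(\Z)$, and in particular its zeroth hyperbolic Fourier coefficient at $\eta$ vanishes. The proposition will then follow by reading off the identity $c_\eta(0;P_{\eta,0})=0$ from Theorem \ref{CIShh}.

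Applied with $\eta'=\eta=(-\sqrt{D},\sqrt{D})$, $m=n=0$, $k=10$, the prefactor in \eqref{mull} becomes
\[
\frac{1}{\ell_\eta}\,B\!\left(\tfrac k2,\tfrac k2\right)\Big|_{k=10}
=\frac{B(5,5)}{2\log\varepsilon_D}=\frac{1}{1260\log\varepsilon_D},
\]
which matches exactly the normalization in the definition of $\Phi(D)$. So the main contribution is $\Phi(D)$ itself. Next I would analyze the two diagonal correction terms \eqref{xab1q} and \eqref{yab1q}, choosing $\sigma_\eta=\sigma_{\eta'}=\hat\sigma_\eta$ so that Proposition \ref{trpr}(ii) applies with $\rho=I$. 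The condition $\eta'\equiv\eta\bmod\G$ in \eqref{xab1q} is trivially met (take $\rho=I$), and with $n=m=0$ one gets $(a^2)^0=+1$. The condition $\eta'\equiv\eta^*\bmod\G$ in \eqref{yab1q} is the nontrivial one, and when it holds it contributes $(-1)^{k/2}(t^2)^0=-1$ at $n=-m=0$.

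The remaining key step is to identify when $\eta\equiv\eta^*\bmod\G$, i.e.\ when there is a $\g=\left(\smallmatrix a&b\\c&d\endsmallmatrix\right)\in\SL_2(\Z)$ with $\g(-\sqrt{D})=\sqrt{D}$ and $\g(\sqrt{D})=-\sqrt{D}$. Solving $\frac{a\sqrt{D}+b}{c\sqrt{D}+d}=-\sqrt{D}$ forces $a=-d$ and $b=-Dc$, and then $\det\g=1$ reduces to $d^2-Dc^2=-1$. Conversely any solution $(c,d)$ to the negative Pell equation \eqref{npell} furnishes such a $\g$. Hence $\eta\equiv\eta^*\bmod\G$ if and only if \eqref{npell} has integer solutions.

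Putting the pieces together, the identity $0=c_\eta(0;P_{\eta,0})=\Phi(D)+1-\chi(D)$, where $\chi(D)\in\{0,1\}$ is the indicator that \eqref{npell} is solvable, yields $\Phi(D)=\chi(D)-1\in\{-1,0\}$, with $\Phi(D)=0$ exactly in the solvable case. The only real obstacles are bookkeeping: checking that Proposition \ref{trpr}(ii) really applies with the intended choices of scaling matrix (so that the constants $a,b,t$ drop out trivially at $n=0$), and the verification $\dim S_{10}(\SL_2(\Z))=0$, which is standard.
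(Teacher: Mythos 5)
Your proof is correct and follows essentially the same route as the paper: use $\dim S_{10}(\SL_2(\Z))=0$ to force $c_\eta(0;P_{\eta,0})=0$, read off that identity from Theorem \ref{CIShh} at $m=n=0$ (where the constants $a$, $b$, $t$ in the diagonal terms are raised to the zeroth power and drop out), and observe that $\eta\equiv\eta^*\bmod\G$ precisely when the negative Pell equation is solvable. The only difference is that you spell out the computation $a=-d$, $b=-Dc$, $d^2-Dc^2=-1$ identifying that equivalence, which the paper simply asserts.
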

\begin{proof}
Note that $S_{10}(\G)=\{0\}$ and that the hyperbolic pairs $(-\sqrt{D},\sqrt{D})$ and $(\sqrt{D},-\sqrt{D})$ are equivalent in $\SL_2(\Z)$ exactly when \eqref{npell} has integer solutions. With Proposition \ref{trpr}, Theorem \ref{CIShh} yields
\begin{equation*}
    0 = \Phi(D) + \begin{cases} 1 & \text{if} \quad \eta \not\equiv \eta^* \bmod \G, \\
    1+(-1)^5 & \text{if} \quad \eta \equiv \eta^* \bmod \G.
    \end{cases} \qedhere
\end{equation*}
\end{proof}

Examples of $\Phi(D)$ for some small values of $D$ are shown in Table \ref{phi}. They were found by computing $\sum\nolimits_1$, $\sum\nolimits_2$ and $\sum\nolimits_3$ in \eqref{mull} for all $C$ with $|C-1/2|\lqs 2$, using the techniques from earlier in this section.
 If \eqref{npell} has a solution then there is a fundamental one, $(x_0,y_0)$, and all other solutions $(x_n,y_n)$ are given by $x_n+\sqrt{D}y_n =(x_0+\sqrt{D}y_0)^{n+1}$ for $n+1$ odd. See for example \cite{MS12} and its contained references.
When $ (x_0,y_0)$ exists it is given by
\begin{equation*}
      x_0= {\textstyle\frac 12 }\left( \varepsilon^{1/2}_D - \varepsilon^{-1/2}_D \right),
    \quad y_0= {\textstyle\frac 1{2\sqrt{D}} }\left( \varepsilon^{1/2}_D + \varepsilon^{-1/2}_D \right).
\end{equation*}

\begin{table}[h]
\begin{center}
\begin{tabular}{c|c|c|c|c|c|c}
$D$         & $2$       &  $3$          & $5$       & $7$     & $11$ & $13$   \\ \hline
$\varepsilon_D$ & $3+2\sqrt{2} $ &  $2+\sqrt{3}$    &     $9+4\sqrt{5}$      &   $8+3\sqrt{7}$        &    $10+3\sqrt{11}$   &  $649+180\sqrt{13}$  \\ \hline
$\Phi(D)$   & $0.0$     &  $-0.99998$   &  $0.0$    &  $-1.00005$        &$-0.99997$ &  $0.0$ \\ \hline
$(x_0,y_0)$ & $(1,1)$   &  $ $          &  $(2,1)$  &           &     &   $(18,5)$
\end{tabular}
\caption{Solutions of the negative Pell equation} \label{phi}
\end{center}
\end{table}

 Table \ref{phi}, at least, serves as a check of Theorem \ref{CIShh}.
It is well known that \eqref{npell} has solutions if and only if the continued fraction expansion of $\sqrt{D}$ has an odd period. Recently, a very simple criterion was given in \cite{MS12}: for  $D \equiv 1,2 \bmod 4$, equation \eqref{npell} has solutions if and only if $a_0 \equiv -1 \bmod 2D$.


{\small
\bibliography{hc}
}

\end{document}